
\documentclass[11pt,singlespacing,a4paper, reqno]{amsart}
\usepackage{amsfonts}
\usepackage{amsthm}
\usepackage{amsmath}
\usepackage{amscd}
\usepackage[latin2]{inputenc}
\usepackage{t1enc}
\usepackage[mathscr]{eucal}
\usepackage{indentfirst}
\usepackage{graphicx}
\usepackage{graphics}
\usepackage{pict2e}
\usepackage{epic}
\numberwithin{equation}{section}
\usepackage[margin=3.4cm]{geometry}
\usepackage{epstopdf} 
\usepackage{nicefrac}
\usepackage{calrsfs}
\usepackage{amssymb}
\usepackage{color}
\usepackage[dvipsnames]{xcolor}
\usepackage{fancyhdr}
\usepackage{graphicx}
\usepackage{mathtools}
\usepackage[pdfborder={0 0 0},
  colorlinks=true,
  citecolor=newblue,
  linkcolor=newblue2,
  urlcolor=newblue]{hyperref}
\usepackage{amsthm}
\usepackage{caption,setspace}
\captionsetup{font={footnotesize,stretch=1}}
\graphicspath{{./Images/}} 
\usepackage{subcaption}
\usepackage{lipsum}
\usepackage{accents}
\usepackage{amsmath, amsthm, amssymb}
\newgeometry{top=2cm,bottom=2cm,left=2cm,right=2cm}
\usepackage{amsmath}
\usepackage{amsthm}
\usepackage{amsfonts}
\usepackage{tabulary}
\usepackage{booktabs}
\usepackage{color}
\usepackage{bbm}
\usepackage{multicol}
\usepackage{framed}
\usepackage[dvipsnames]{xcolor}
\usepackage{dcolumn}

\makeatletter
\newcommand*{\toccontents}{\@starttoc{toc}}
\makeatother

\def\XXint#1#2#3{{\setbox0=\hbox{$#1{#2#3}{\int}$ }
\vcenter{\hbox{$#2#3$ }}\kern-.6\wd0}}

\usepackage{xr}
\externaldocument{Passive}
\definecolor{blue(ryb)}{rgb}{0.01, 0.28, 1.0}
\hypersetup{
     colorlinks   = true,
     citecolor    = blue(ryb),
     linkcolor = blue(ryb)
}


\newcommand{\N}{\mathbb{N}}

\newcommand{\R}{\mathbb{R}}

\newcommand{\eps}{\varepsilon}

\newcommand{\tgrad}{\nabla_\Gamma}
\renewcommand{\L}{_{L^2}}
\renewcommand{\d}{\mathrm{d}}
\newcommand{\md}{\partial^\bullet}
\DeclareMathOperator*{\esssup}{ess\,sup}

\newcolumntype{2}{D{.}{}{2.0}}

\definecolor{light-gray}{gray}{0.9}

\newtheorem{theorem}{Theorem}[section]
\newtheorem{corollary}[theorem]{Corollary}
\newtheorem{lemma}[theorem]{Lemma}
\newtheorem{proposition}[theorem]{Proposition}

\theoremstyle{definition}

\theoremstyle{definition}
\newtheorem{problem}[theorem]{Problem}
\theoremstyle{definition}
\newtheorem{remark}[theorem]{Remark}
\theoremstyle{definition}
\newtheorem{example}[theorem]{Example}

\mathtoolsset{showonlyrefs}

\allowdisplaybreaks

\begin{document}

\title{Cahn-Hilliard equations on an evolving surface}

\author{Diogo Caetano, Charles M. Elliott}

\begin{abstract}
We describe a functional framework suitable to the analysis of the Cahn-Hilliard equation on an evolving surface whose evolution is assumed to be given a priori. The model is derived from balance laws for an order parameter with an associated Cahn-Hilliard energy functional and we establish well-posedness for general regular potentials, satisfying some prescribed growth conditions, and for two singular nonlinearities -- the thermodynamically relevant logarithmic potential and a double obstacle potential. We identify, for the singular potentials, necessary conditions on the initial data and the evolution of the surfaces for global-in-time existence of solutions, which arise from the fact that integrals of solutions are preserved over time, and prove well-posedness for initial data on a suitable set of admissible initial conditions. We then briefly describe an alternative derivation leading to a model that instead preserves a weighted integral of the solution, and explain how our arguments can be adapted in order to obtain global-in-time existence without restrictions on the initial conditions. Some illustrative examples and further research directions are given in the final sections. 
\end{abstract}

\maketitle

\numberwithin{equation}{section}

\section{Introduction}

In this paper, we study well-posedness of Cahn-Hilliard equations on a prescribed moving surface in $\R^3$, for the different cases of smooth, logarithmic and double obstacle nonlinearities and with a constant mobility. Even though there already exists some work concerning this problem on evolving domains, mostly from the point of view of numerical analysis, different models have been proposed and rigorous well-posedness results are still missing. We start this article by introducing these models and by discussing our main results.

\subsection*{The models and main results}

In what follows, we fix a parametrised evolving surface $\{\Gamma(t)\}_{t\in [0,T]}$ with velocity field $\mathbf{V} = \mathbf{V}_\nu + \mathbf{V}_\tau$, with $\mathbf{V}_\nu$ being the normal velocity of the evolution and $\mathbf{V}_\tau$ the tangential component of the velocity of the parametrization. We denote by $u$ a scalar order parameter satisfying, for all subregions $\Sigma(t)\subset \Gamma(t)$, the following balance law
\begin{align}
\dfrac{d}{dt} \int_{\Sigma(t)} u = -\int_{\partial \Sigma(t)} q\cdot \mu = -\int_{\Sigma(t)} \tgrad\cdot q,
\end{align}
where $\mu$ denotes the outer unit conormal and $q=q_d+q_a$ is a flux consisting of an advective term, $q_a = u\mathbf{V}_a$ with $\mathbf V_a$ a tangential vector field, and a diffusive term $q_d = - M(u)\tgrad w$, where $M(\cdot)$ is a mobility function and $$w=-\Delta_\Gamma u + F'(u)$$ denotes the chemical potential of the system, defined as the functional derivative of the Cahn-Hilliard functional 
\begin{align}\label{intro:energy}
\mathrm{E}^\mathrm{CH} (u) = \int_{\Gamma(t)} \dfrac{|\tgrad u|^2}{2} + F(u).
\end{align}
The functional \eqref{intro:energy} measures the total free energy with the gradient term accounting for the surface energy of the interface and the nonlinearity $F$ representing the homogeneous free energy, which will be:
\begin{itemize}
\item[(i)] a smooth potential generalising the frequently considered quartic polynomial
\begin{align}\label{intro:smooth}\tag{F$_\text{s}$}
F(r) = \dfrac{(r^2-1)^2}{4}, \quad r\in \R;
\end{align}
\item[(ii)] the thermodynamically relevant logarithmic potential defined by 
\begin{align}\label{intro:log}\tag{F$_\text{log}$}
F(r) = (1-r)\log(1-r) + (1+r)\log(1+r) + \dfrac{1-r^2}{2}, \quad r\in [-1,1];
\end{align}
\item[(iii)] a double obstacle potential
\begin{align}\label{intro:obs}\tag{F$_\text{obs}$}
F(r) = \begin{cases}
(1-r^2)/2 &\text{ if } |r|\leq 1 \\
+\infty &\text { otherwise }
\end{cases}.
\end{align}
\end{itemize}

We hence obtain, for the cases \eqref{intro:smooth}, \eqref{intro:log}, the system
\begin{align}\label{eq:model1}\tag{CH$_1$}
\begin{split}
\partial^\bullet u + u\tgrad\cdot\mathbf{V} - \tgrad\cdot \big(u \left(\mathbf{V}_\mathbf{\tau}-\mathbf{V}_\mathbf{a}\right)\big)  &= \tgrad\cdot \left(M(u) \tgrad w\right) \\
-\Delta_\Gamma u + F'(u) &= w,
\end{split}
\end{align}
and for \eqref{intro:obs} the system reads as 
\begin{align}\label{eq:model1obs}\tag{CH$'_1$}
\begin{split}
\partial^\bullet u + u\tgrad\cdot\mathbf{V} - \tgrad\cdot \big(u \left(\mathbf{V}_\mathbf{\tau}-\mathbf{V}_\mathbf{a}\right)\big)  &= \tgrad\cdot \left(M(u) \tgrad w\right) \\
-\Delta_\Gamma u + \partial I(u) - u &\ni w,
\end{split}
\end{align}
where $\partial I$ denotes the subdifferential of the indicator function of the set $[-1,1]$. For the system with \eqref{intro:smooth}, we establish in Section \ref{sec:smooth} well-posedness results analogous to those known for the equation on a fixed domain. However, it turns out that, for the singular potentials \eqref{intro:log}, \eqref{intro:obs}, some conditions relating the boundedness of the solutions, the evolution of the domains and the effect of the Cahn-Hilliard dynamics are necessary in order to obtain global in time results. These are the results of Sections \ref{sec:log}, \ref{sec:obs}. This same derivation is considered in \cite{EllRan15}, \cite{OcoSti16}. 

More recently, in \cite{OlsXuYus21, YusQuaOls20, ZimTosLan19} different derivations of a model have been proposed by considering instead a conservation law for the quantity $\rho c$, where $\rho$ denotes the total density of the system and $c$ is a scaled difference of concentrations, which can loosely be seen as the analogue of $u$ in the previous model. We do not intend to make any physical considerations about the models, but it is nonetheless interesting to collect the equations these authors consider and to see how our techniques could be adapted to analyse them.

The derivation in  \cite{ZimTosLan19} starts from the balance laws
\begin{align}\label{intro:rho_deriv}
\dfrac{d}{dt} \int_{\Sigma(t)} \rho = 0
\end{align}
for the total density $\rho$ and
\begin{align}\label{intro:newconservation}
\dfrac{d}{dt} \int_{\Sigma(t)} \rho c = -\int_{\partial \Sigma(t)} q\cdot \mu = -\int_{\Sigma(t)} \tgrad\cdot q,
\end{align}
where again $q$ is a flux for the  quantity $\rho c$. From \eqref{intro:rho_deriv} we thus obtain
\begin{align}\label{intro:rho}
\md \rho + \rho \tgrad \cdot \mathbf{V} = 0,
\end{align}
which can be viewed for each $x\in \Gamma(t)$ as an ODE for $\rho$. The free energy functional and the associated chemical potential are now defined as
\begin{align}
\tilde{\mathrm{E}}^\mathrm{CH}(c) = \int_{\Gamma(t)} \dfrac{|\tgrad c|^2}{2} + \rho F(c)  \quad \text{ and } \quad w = \dfrac{1}{\rho} \dfrac{\delta \tilde{\mathrm{E}}^\mathrm{CH}}{\delta c} = -\dfrac{1}{\rho} \Delta_\Gamma c + F'(c).
\end{align}
As a consequence we are led to the Cahn-Hilliard system
\begin{align}\label{eq:model2}\tag{CH$_2$}
\begin{split}
\rho \partial^\bullet c - \tgrad\cdot \big(\rho c \left(\mathbf{V}_\mathbf{\tau}-\mathbf{V}_\mathbf{a}\right)\big)  &= \tgrad\cdot \left(\rho M(c) \tgrad w\right) \\
-\Delta_\Gamma c + \rho F'(c) &= \rho w,
\end{split} 
\end{align} 
and for \eqref{intro:obs} the system reads as 
\begin{align}\label{eq:model2obs}\tag{CH$'_2$}
\begin{split}
\rho \partial^\bullet c - \tgrad\cdot \big(\rho c \left(\mathbf{V}_\mathbf{\tau}-\mathbf{V}_\mathbf{a}\right)\big)  &= \tgrad\cdot \left(\rho M(c) \tgrad w\right) \\
-\Delta_\Gamma c + \rho \partial I(c) - \rho c  &\ni \rho w.
\end{split}
\end{align}
In Section \ref{sec:alternative} we briefly explain how to adapt the arguments in Sections \ref{sec:smooth}, \ref{sec:log}, \ref{sec:obs} to establish well-posedness for \eqref{eq:model2}, \eqref{eq:model2obs}, by formally obtaining the necessary a priori bounds. This system is particularly interesting to us not only because it can be dealt with by essentially the same techniques, but also because we can now prove well-posedness without any extra conditions on the surfaces or the initial data, as opposed to the first model with the singular potentials. The reason for this is that \eqref{intro:rho} determines a relation between the weight function $\rho$ and the Jacobian determinant of the flow map, so that the evolution of the surfaces is now encoded into the equation via the presence of $\rho$. As we will see, this allows for global well-posedness for any initial data in $H^1(\Gamma_0)$ not identically equal to $\pm 1$.

In \cite{YusQuaOls20}, \cite{OlsXuYus21} yet another derivation is proposed (the former for a Cahn-Hilliard system and the latter for an Allen-Cahn equation). In \cite{YusQuaOls20}, a conservation law as in \eqref{intro:newconservation} is considered for $\rho c$ and the chemical potential is defined as the functional derivative of the energy \eqref{intro:energy}, and a derivation as above leads, for instance for the smooth potential \eqref{intro:smooth} and with a constant mobility $M\equiv 1$, to the system
\begin{align}\label{intro:ols1}\tag{CH$_3$}
\begin{split}
\rho \partial^\bullet c - \tgrad\cdot \big(\rho c \left(\mathbf{V}_\mathbf{\tau}-\mathbf{V}_\mathbf{a}\right)\big)  &= \Delta_\Gamma w\\
-\Delta_\Gamma c + F'(c) &= w.
\end{split}
\end{align}
In \cite{OlsXuYus21}, the authors consider a third different energy 
\begin{align}
\mathrm{E}(c) = \int_{\Gamma(t)} \rho \left( \dfrac{|\tgrad c|^2}{2} + F(c) \right),
\end{align}
and define the chemical potential as the functional derivative of $\mathrm{E}$ with respect to $c$. Again, proceeding in the same way as above, this leads to the system
\begin{align}\label{intro:ols2}\tag{CH$_4$}
\begin{split}
\rho \partial^\bullet c - \tgrad\cdot \big(\rho c \left(\mathbf{V}_\mathbf{\tau}-\mathbf{V}_\mathbf{a}\right)\big)  &= \Delta_\Gamma w\\
- \tgrad \cdot (\rho \tgrad c) + \rho F'(c) &= w.
\end{split}
\end{align}
Although these are similar to the models we consider in this article, the a priori estimates do not quite work out in the same way as for \eqref{eq:model1}, \eqref{eq:model2}, and as such we shall leave a detailed analysis of \eqref{intro:ols1}, \eqref{intro:ols2} for future work. In spite of this, we observe that, for the case of a smooth potential, local in time (or even global in time for the case of potentials with quadratic growth) existence is easy to establish using our techniques. 

In summary, in this work we present a rigorous derivation for \eqref{eq:model1}, \eqref{eq:model1obs} and establish existence, uniqueness and stability of weak solutions. We also address extra regularity results for all the potentials. We then briefly consider \eqref{eq:model2}, \eqref{eq:model2obs} and explain how our arguments can be adapted to obtain the same type of results, including global in time existence for the singular potentials without restricting the set of admissible initial conditions. The study of \eqref{intro:ols1}, \eqref{intro:ols2} seems to require a new approach for the a priori estimates, so we leave it for future work.


\subsection*{Background and motivation}

Our interest in this equation is part of the more general problem of understanding equations on non-static domains. The study of partial differential equations on moving domains, of which evolving hypersurfaces are an example, has been a very active area of research recently and many applications to physics, materials science, biology, among other sciences, have been considered. Generalising systems that are usually considered in stationary domains to spaces that evolve with time has been seen to lead to more accurate and realistic models, for instance for the study of surface dissolution of binary alloys, for phenomena of cell motility, and also for the modelling of elastic membranes (such as lipid bilayer membranes or cell tissues). Some references for these applications are \cite{BarEllMad11, EilEll08, EllStiVen12, ErlAziKar01, GarLamSti14, VenSekGaf11}.
These are also challenging problems from the point of view of mathematics. Indeed, equations on non-cylindrical domains have been considered as early as in \cite{Lio57, Bai65}, and more recently in \cite{Nae15} with applications to the Stokes problem and in \cite{LanSonTanThu21} for general quasilinear parabolic problems. Many other examples can be found in the references of these two articles. Of particular relevance to our work are \cite{DziEll07-a, DziEll13-a, DecDziEll05-a, Vie14}, where the authors consider partial differential equations in both fixed and moving surfaces, and also \cite{AlpEllSti15a, AlpEllSti15b} in which an abstract functional framework is established to treat linear PDEs in domains that evolve with time. 

The corresponding Cahn-Hilliard equation on a flat domain in $\R^n$ was introduced in \cite{CahHil58} to study spinodal decomposition in binary alloys; more precisely, it models the phase separation of an alloy consisting of two components when the temperature of the system has been quenched to a temperature below the critical temperature, leading to a spatially separated two-phase structure as opposed to the uniform mixed state of equilibrium. See also \cite{Cah61}. It has more recently been understood to provide a good model to describe later stages of the evolution of phase transition phenomena. The introduction of the equation in \cite{CahHil58} and subsequent studies in natural sciences, see for example \cite{NovSeg84} and included references, generated also interest on the equation from the mathematics community. Initial well posedness and numerical studies of the equation, \cite{EllSon86}, considered the problem with a polynomial nonlinearity. From then on, many generalizations of the equation were considered and several applications to different areas of mathematics were explored; some examples are \cite{Ell89, EllFre89, EllFreMil89, EllLar92} and \cite{EllGar96a, DaiDu14, DaiDu16} for the degenerate version of the equation. See also \cite{GarKno20, Hei15} where the gradient flow structure of the problem is exploited, \cite{CahEllNov96, Gar13} for relations with geometric flows and \cite{PraDeb96} for a stochastic version. It is also worthwhile mentioning the articles \cite{Peg86, CahEllNov96, OcoSti16} where an asymptotic analysis is performed for the parameter $\eps\to 0$; the latter does so for the equation on an evolving surface. We mention particularly the work in \cite{DebDet95, EllLuc91}, where the logarithmic potential is considered (see also the survey \cite{CheMirZel11}), and the article \cite{BloEll91, BloEll92}, where the authors analyse the double obstacle problem. These served as motivation for a significant part of this paper. For a general overview we refer also to the survey articles \cite{Ell89, NovCoh08} or the recent book \cite{Mir19}. 

The results we present in this article are mostly motivated by those in \cite{BloEll91, EllLuc91, EllRan15, GarKno20}, which we generalise to the case of moving surfaces in $\R^3$. As mentioned above, the biggest difficulties arise for \eqref{eq:model1}, \eqref{eq:model1obs} with the singular potentials, due to an interplay between the boundedness of solutions, the evolution of the surfaces and the initial data. As we shall see, different regimes need to be studied for these cases. 


\subsection*{Structure of the paper} 

Finally, let us describe the structure of the paper. We begin by introducing the necessary analytic background and notation for posing PDEs on evolving spaces in Section 2, and we proceed to a derivation of our model and the statement of our results in Section 3. Section 4 is devoted to the study of \eqref{eq:model1} with a smooth nonlinearity satisfying some polynomial growth conditions, and for this case we establish well-posedness of the equation by using on the Galerkin method. This is the starting point for the approximation method we use to tackle the singular potentials in Section 5,  for which it turns out that the moving nature of the domains has an impact on existence of (global in time) solutions. We identify some necessary conditions for well-posedness, and prove existence and uniqueness of solutions in these regimes by approximation with more regular nonlinearities. In Section 6 we analyse \eqref{eq:model2}, \eqref{eq:model2obs}, which does not preserve the integral of solutions, and Section 7 contains some simple examples. We finish with a discussion of our results and some open questions we propose to address in future work. Our results are, to the best of our knowledge, new in the literature, and generalise the classical results for the Cahn-Hilliard equation on a fixed domain or surface.

\subsection*{Notation} 


For simplicity of notation we will frequently omit the differentials $\d t$, $\d \Gamma$ on the integrals. It should be clear that, in time, we always integrate with respect to Lebesgue measure and over the surfaces we use the surface measure. For a function $u\in L^1(\Gamma)$, we denote its mean value over the surface $\Gamma$ by 
\begin{align*}
(u)_\Gamma := \dfrac{1}{|\Gamma|} \int_{\Gamma} u,
\end{align*}
where $|\Gamma|$ denotes the measure of $\Gamma$. As for the constants appearing in estimates, we use $C_1, C_2, \dots$ positive constants which might be different in different equations. More precisely, inside each equation the constants appear with indices in increasing order, but every time we start a new estimate the first constant will again be denoted $C_1$. If only one constant is involved, then we denote it simply by $C$. We believe this keeps the notation clear and helps keep track, inside each estimate, of when a new constants appear.

\numberwithin{equation}{subsection}

\section{Preliminaries}\label{sec:prelim}

\subsection{Evolving surfaces}

Fix $T>0$ and a $C^2$-evolving surface $\{\Gamma(t)\}_{t\in [0,T]}$ in $\R^3$. More precisely, this means we have a regular, closed, connected, orientable $C^2$-surface $\Gamma_0$ in $\R^3$ and a smooth \textit{flow map} 
\begin{align*}
\Phi\colon [0,T]\times \Gamma_0 \to \R^3
\end{align*}
such that 
\begin{itemize}
\item[(i)] denoting $\Gamma(t) := \Phi_t^0(\Gamma_0)$, the map $$\Phi_t^0 := \Phi(t, \cdot)\colon \Gamma_0\to \Gamma(t)$$ is a $C^2$-diffeomorphism, with inverse map $$ \Phi_0^t \colon \Gamma(t)\to \Gamma_0;$$
\item[(ii)] $\Phi_0^0 = \text{id}_{\Gamma_0}$. 
\end{itemize}
%

It follows from the definition above that, for each $t\in [0,T]$, $\Gamma(t)$ is also a regular, closed, connected, orientable $C^2$-surface. We can also naturally define diffeomorphisms between the surfaces at different instants $s, t\in [0,T]$ by 
\begin{align*}
\Phi_t^s := \Phi_t^0\circ \Phi_0^s \colon \Gamma(s)\to \Gamma(t).
\end{align*}
Now in order to analytically treat problems in an evolving surface it is convenient to assume that $\Phi$ is the flow of some prescribed vector field in $\R^3$, which we will do in this article:

\vskip 3mm

\hskip -5mm \textbf{Assumption $\mathbf{(A_\Phi)}$}:
There exists a velocity field $\mathbf{V}\colon [0, T]\times \R^3\to \R^3$ with regularity 
\begin{align}
\mathbf V\in C^0\left([0,T]; C^2(\R^3, \R^3)\right)
\end{align}
such that, for any $t\in [0,T]$ and every $x\in \Gamma_0$, 
\begin{align}
\dfrac{d}{dt} \, \Phi_t^0(x) &= \mathbf{V}\left(t, \Phi_t^0(x)\right), \quad \text{ in } [0,T] \\
\Phi_0^0(x) &= x.
\end{align}

\vskip 3mm


We denote the tangential and normal components of $\mathbf{V}$ by $\mathbf{V_\tau}, \mathbf{V_\nu}$, respectively. Note that, due to compactness, there exists $C_{\mathbf{V}}>0$ independent of $t$ such that, 
\begin{align}\label{eq:velbound}
\|\mathbf{V_\tau}(t)\|_{C^2(\Gamma(t))}, \, \|\mathbf{V_\nu}(t)\|_{C^2(\Gamma(t))} \leq \|\mathbf{V}(t)\|_{C^2(\Gamma(t))} \leq C_{\mathbf{V}}, \quad \text{ for } t\in [0,T].
\end{align}
In this setting we can define the \textit{normal material derivative} of a scalar quantity $u$ on $\Gamma(t)$ by
\begin{align}
\partial^\circ u := \tilde{u}_t + \nabla \tilde{u} \cdot \mathbf{V_\nu},
\end{align}
and its \textit{material time derivative} as
\begin{align}\label{eq:strongder}
\partial^\bullet u := \partial^\circ u + \nabla \tilde{u}\cdot \mathbf{V_\tau} = \tilde{u}_t + \nabla \tilde{u} \cdot \mathbf{V},
\end{align}
where $\tilde{u}$ denotes any extension of $u$ to a neighbourhood of $\Gamma(t)$. This last definition takes into account not only the evolution of the domains but also the movement of points in the surface. The following transport formula will be useful throughout the article: 

\begin{theorem}\label{thm:transportforms}
Let $\Sigma(t)\subset \Gamma(t)$ be evolving with velocity $\mathbf{V}=\mathbf{V_\nu}+\mathbf{V_\tau}$, where $\mathbf{V_\nu}, \mathbf{V_\tau}$ denote, respectively, the normal and tangential components of $\mathbf{V}$. Define 
\begin{align}
\mathbf{V}_{\partial M} := \mathbf{V}_\tau\cdot \mu,
\end{align}
where $\mu$ denotes the outer unit conormal to $\partial \Sigma(t)$. Then
\begin{align}
\dfrac{d}{dt} \int_{\Sigma(t)} u = \int_{\Sigma(t)} \partial^\bullet u + u \tgrad \cdot \mathbf{V} = \int_{\Sigma(t)} \partial^\circ u + u\tgrad\cdot\mathbf{V_\nu} + \int_{\partial \Sigma(t)} u \mathbf{V}_{\partial M}, 
\end{align}
for every function $u$ for which the quantities above make sense.
\end{theorem}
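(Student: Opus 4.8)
The plan is to prove the transport formula by reducing everything to the reference configuration $\Gamma_0$ via the flow map $\Phi_t^0$, where the time derivative commutes with a fixed domain of integration, and then to transport the resulting identity back to $\Gamma(t)$. Concretely, I would first parametrise $\Sigma(t) = \Phi_t^0(\Sigma_0)$ for a fixed $\Sigma_0 \subset \Gamma_0$ and pull the surface integral back: writing $J(t,\cdot)$ for the Jacobian (the factor by which $\Phi_t^0$ distorts the surface measure), one has
\begin{align}
\int_{\Sigma(t)} u \, \d\Gamma = \int_{\Sigma_0} (u\circ \Phi_t^0)\, J(t,\cdot)\, \d\Gamma_0.
\end{align}
Since $\Sigma_0$ is now independent of $t$, I can differentiate under the integral sign, which is legitimate by the $C^2$-regularity assumed on $\Phi$ and Assumption $\mathbf{(A_\Phi)}$. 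The time derivative of $u\circ\Phi_t^0$ is, by the chain rule and the defining ODE $\tfrac{d}{dt}\Phi_t^0 = \mathbf V(t,\Phi_t^0)$, exactly the material derivative $\partial^\bullet u$ pulled back, using definition \eqref{eq:strongder}.

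The key computation is the derivative of the Jacobian. The standard result, which I would either cite or derive from Jacobi's formula applied to the first fundamental form of the parametrisation, is
\begin{align}
\dfrac{\partial}{\partial t} J(t,\cdot) = \big((\tgrad\cdot \mathbf V)\circ \Phi_t^0\big)\, J(t,\cdot),
\end{align}
where $\tgrad\cdot\mathbf V$ is the tangential (surface) divergence of the full velocity field. Combining this with the product rule and transporting back to $\Sigma(t)$ immediately yields the first equality
\begin{align}
\dfrac{d}{dt}\int_{\Sigma(t)} u = \int_{\Sigma(t)} \partial^\bullet u + u\,\tgrad\cdot\mathbf V.
\end{align}
This is the heart of the argument; the main obstacle is establishing the Jacobian evolution identity correctly, since the surface divergence involves only the tangential part of the gradient of $\mathbf V$ and one must be careful that both normal and tangential components of $\mathbf V$ contribute to $\tgrad\cdot\mathbf V$ in the way the formula requires.

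For the second equality, I would decompose $\mathbf V = \mathbf V_\nu + \mathbf V_\tau$ and split the surface divergence and the material derivative accordingly. Using $\partial^\bullet u = \partial^\circ u + \nabla\tilde u\cdot\mathbf V_\tau$ from \eqref{eq:strongder}, the extra terms to reconcile are $\nabla\tilde u\cdot\mathbf V_\tau + u\,\tgrad\cdot\mathbf V_\tau$, which together form the tangential divergence of the vector field $u\mathbf V_\tau$, namely $\tgrad\cdot(u\mathbf V_\tau)$. Applying the surface divergence theorem on $\Sigma(t)$, the integral of $\tgrad\cdot(u\mathbf V_\tau)$ over $\Sigma(t)$ converts to the boundary integral $\int_{\partial\Sigma(t)} u\,(\mathbf V_\tau\cdot\mu) = \int_{\partial\Sigma(t)} u\,\mathbf V_{\partial M}$, with $\mu$ the outer unit conormal. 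This produces precisely the third expression in the statement. The regularity hypotheses ensure all integrands are integrable and the divergence theorem applies, so no delicate approximation is needed beyond standard density arguments for the class of admissible $u$.
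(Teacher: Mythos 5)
Your proposal is correct, but there is nothing in the paper to compare it against: the paper states Theorem \ref{thm:transportforms} without proof, treating it as a standard result from the evolving-surface literature (it is the transport lemma of Dziuk--Elliott, \cite{DziEll07-a}). Your argument is the standard proof of that result: pull back to the fixed reference portion $\Sigma_0$ via $\Phi_t^0$, differentiate under the integral sign, identify $\tfrac{d}{dt}(u\circ\Phi_t^0)$ with $(\partial^\bullet u)\circ\Phi_t^0$ via the flow ODE, and establish the Jacobian evolution $\partial_t J = \big((\tgrad\cdot\mathbf V)\circ\Phi_t^0\big) J$ by Jacobi's formula applied to $J=\sqrt{\det g}$ with $g_{ij}=\partial_i\Phi\cdot\partial_j\Phi$; the second equality then follows from $\nabla\tilde u\cdot\mathbf V_\tau = \tgrad u\cdot\mathbf V_\tau$ (since $\mathbf V_\tau$ is tangential), the surface product rule $\tgrad u\cdot\mathbf V_\tau + u\,\tgrad\cdot\mathbf V_\tau = \tgrad\cdot(u\mathbf V_\tau)$, and the surface divergence theorem, in which the mean-curvature term $\int_{\Sigma(t)} u\,\mathbf V_\tau\cdot\nu H$ drops out precisely because $u\mathbf V_\tau$ is tangential — the same observation the paper uses in its derivation of the conservation law in Section \ref{sec:model}. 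One caution on your phrase ``either cite or derive'' for the Jacobian identity: within this paper the formula $\tfrac{d}{dt}J_t^0 = J\,\tgrad\cdot\mathbf V$, leading to \eqref{eq:formulaJ}, is itself \emph{deduced from} the transport theorem, so citing it from here would be circular; your independent derivation from the first fundamental form is the right move and keeps the logic sound. It is also worth noting that your direction of argument inverts the paper's: you prove the Jacobian ODE first and obtain the transport formula as a corollary, whereas the paper assumes the transport formula and extracts the Jacobian ODE from it — both orderings are legitimate, and yours is the one that actually proves the theorem from first principles.
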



In our context, by taking $\Sigma(t)=\Gamma(t)$ and recallng that $\Gamma(t)$ is a closed surface we obtain 
\begin{align}
\dfrac{d}{dt} \int_{\Gamma(t)} u = \int_{\Gamma(t)} \partial^\bullet u + u \tgrad \cdot \mathbf{V} = \int_{\Gamma(t)} \partial^\circ u + u\tgrad\cdot\mathbf{V_\nu},
\end{align}
which shows that the time evolution of integral quantities depends only on the normal component of the velocity. 

Denoting now by $J_t^0$, respectively $J_0^t$, the change of area element from $\Gamma_0$ to $\Gamma(t)$, respectively from $\Gamma(t)$ to $\Gamma_0$, satisfying
\begin{align}\label{eq:changeofvariables}
\int_{\Gamma(t)} \eta = \int_{\Gamma_0} \tilde \eta \, J_t^0 , \quad \int_{\Gamma_0} \tilde \eta = \int_{\Gamma(t)} \eta \, J_0^t
\end{align}
where, given $\eta\colon \Gamma(t)\to \R$, we are writing $\tilde \eta\colon \Gamma_0\to \R$ for the function
\begin{align}
\tilde{\eta}(p) = \eta \big(\Phi_0^t (p)\big), \quad \forall p\in \Gamma_0.
\end{align}
Combining \eqref{eq:changeofvariables} with the transport theorem above, it follows that, for each $p\in \Gamma_0$, 
\begin{align}
\dfrac{d}{dt} J_t^0(p) = J(t,p) \, \tgrad \cdot \mathbf{V}\left(t, \Phi(t,p)\right),
\end{align}
from where
\begin{align}\label{eq:formulaJ}
J_t^0(p) = \exp\left\{\int_0^t \tgrad \cdot \mathbf{V}\left(s, \Phi(s,p)\right) \right\}.
\end{align}
Due to \eqref{eq:velbound}, this can then be used to prove that there exists a constant $C_A>0$, depending only on the bound for $\mathbf{V_\nu}$, such that
\begin{align}\label{eq:areas}
0<\dfrac{|\Gamma_0|}{C_A} \leq |\Gamma(t)| \leq C_A |\Gamma_0|, \quad \forall t\in [0,T].
\end{align}
Under the stronger assumption $\mathbf{(A_\Phi)}$ we can prove the following additional results:

\begin{lemma}\label{lem:assumptionsgive}
Suppose $\mathbf{(A_\Phi)}$ holds.
\begin{enumerate}
\item[(a)] Given $t\in [0,T]$ and $u\in L^2(\Gamma(t))$ or $u\in H^1(\Gamma(t))$, define $\phi_{-t} u = u \circ \Phi_t^0$. Then
\begin{align}
\phi_{-t}\colon L^2(\Gamma(t))\to L^2(\Gamma_0), \quad \phi_{-t}\colon H^1(\Gamma(t))\to H^1(\Gamma_0),
\end{align}
and these maps are isomorphisms between the two spaces with constants of continuity independent of $t$, and the same result is true for $\phi_t:=(\phi_{-t})^{-1}$; \vskip 1mm
\item[(b)] Given $u\in L^2(\Gamma_0)$ (resp. $H^1(\Gamma_0)$), the map 
\begin{align}
t\mapsto \|\phi_t u\|_{L^2(\Gamma(t))} \quad \text{\big(resp. } t\mapsto \|\phi_t u\|_{H^1(\Gamma(t))}\big)
\end{align}
 is continuous; \vskip 1mm
\item[(c)] \textnormal{[Poincar\'{e} inequality]} For any $t\in [0,T]$, there exists a constant $C_P>0$, independent of $t$, such that, for any $u\in H^1(\Gamma(t))$,
\begin{align}\label{eq:poincare}
\|u - (u)_{\Gamma(t)}\|_{L^2(\Gamma(t))}^2 \leq C_P \|\nabla_{\Gamma(t)} u\|^2_{L^2(\Gamma(t))};
\end{align} \vskip 1mm
\item[(d)] \textnormal{[Sobolev embedding theorems]} For any $t\in [0,T]$, the following continuous embeddings hold with continuity constants independent of $t$:
\begin{itemize}
\item[(d1)] for all $p\in [1,+\infty)$, $H^1(\Gamma(t))\hookrightarrow L^p(\Gamma(t))$;
\item[(d2)] for $k, r\geq 0$ integers with $k-r>1$, $H^k(\Gamma(t))\hookrightarrow C^r(\Gamma(t))$;
\item[(d3)] for $k, r\geq 0$ integers and $\alpha\in (0,1)$ with $k-r-\alpha\geq 1$, $H^k(\Gamma(t))\hookrightarrow C^{r+\alpha}(\Gamma(t))$.
\end{itemize}
\end{enumerate}
\end{lemma}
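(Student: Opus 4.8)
The unifying idea is to transport every statement to the fixed reference surface $\Gamma_0$ through the pullback $u\mapsto\tilde u=u\circ\Phi_t^0=\phi_{-t}u$, so that all $t$-dependence is absorbed into the change-of-area element and the derivatives of the flow. The engine is a pair of uniform bounds. First, \eqref{eq:formulaJ} together with \eqref{eq:velbound} gives $e^{-C_{\mathbf V}T}\le J_t^0(p),\,J_0^t\le e^{C_{\mathbf V}T}$ for all $t\in[0,T]$ and all base points. Second, differentiating the flow equation in $\mathbf{(A_\Phi)}$ yields the variational equation $\tfrac{d}{dt}D\Phi_t^0=(D\mathbf V)(t,\Phi_t^0)\,D\Phi_t^0$, so Gronwall's inequality and \eqref{eq:velbound} bound $\|D\Phi_t^0\|$ and $\|D\Phi_0^t\|$, uniformly in $t$, from above and away from zero; since $\mathbf V\in C^0([0,T];C^2)$ the same holds for second derivatives. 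These are the only estimates the proof needs.

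For (a), the $L^2$ statement is immediate from \eqref{eq:changeofvariables}: $\|\phi_{-t}u\|_{L^2(\Gamma_0)}^2=\int_{\Gamma_0}\widetilde{|u|^2}=\int_{\Gamma(t)}|u|^2\,J_0^t$, and the two-sided Jacobian bound makes this uniformly equivalent to $\|u\|_{L^2(\Gamma(t))}^2$; running the same computation for $\phi_t$ gives the inverse. For $H^1$ one adds the tangential chain rule $\nabla_{\Gamma_0}\tilde u=(D_{\Gamma_0}\Phi_t^0)^{T}(\nabla_{\Gamma(t)}u)\circ\Phi_t^0$, so the uniform bounds on $D\Phi_t^0$ and its inverse, combined again with the Jacobian bound, make $\|\nabla_{\Gamma_0}\tilde u\|_{L^2(\Gamma_0)}$ and $\|\nabla_{\Gamma(t)}u\|_{L^2(\Gamma(t))}$ uniformly equivalent.

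For (b) I would pull the norm back to the $t$-independent set $\Gamma_0$. Since $\widetilde{\phi_t u}=u$, change of variables gives $\|\phi_t u\|_{L^2(\Gamma(t))}^2=\int_{\Gamma_0}|u|^2\,J_t^0$; by \eqref{eq:formulaJ} the map $t\mapsto J_t^0(p)$ is continuous for each $p$ and is dominated by $e^{C_{\mathbf V}T}|u|^2\in L^1(\Gamma_0)$, so dominated convergence yields continuity. The $H^1$ case is the same in spirit: the gradient contribution takes the form $\int_{\Gamma_0}(\nabla_{\Gamma_0}u)^{T}A(t,\cdot)(\nabla_{\Gamma_0}u)\,J_t^0$ with a matrix field $A$ built from $D\Phi_0^t$ that is continuous in $t$ and uniformly bounded, so dominated convergence applies once more.

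For (c) I transfer the Poincar\'e inequality on the fixed surface $\Gamma_0$ using (a) and the fact that the mean minimises the $L^2$-distance to constants: with $v=\tilde u$, $\|u-(u)_{\Gamma(t)}\|_{L^2(\Gamma(t))}^2\le\|u-(v)_{\Gamma_0}\|_{L^2(\Gamma(t))}^2=\int_{\Gamma_0}|v-(v)_{\Gamma_0}|^2 J_t^0\le e^{C_{\mathbf V}T}\|v-(v)_{\Gamma_0}\|_{L^2(\Gamma_0)}^2$, and Poincar\'e on $\Gamma_0$ followed by the gradient equivalence of (a) closes the bound with a $t$-independent $C_P$. Part (d) follows by the same pullback scheme applied to the fixed Sobolev and Morrey embeddings on $\Gamma_0$: $L^p$ norms transfer with uniform constants via the Jacobian bound, and $C^r$, $C^{r+\alpha}$ norms transfer with uniform constants because $\|u\|_{C^r(\Gamma(t))}$ is controlled by $\|\tilde u\|_{C^r(\Gamma_0)}$ through the uniformly bounded $C^r$ norm of $\Phi_0^t$, the intermediate $H^k$ norm being moved as in (a). The main obstacle is the regularity bookkeeping: the pullback is a uniform $H^k$-isomorphism and a uniformly bounded map on $C^r/C^{r+\alpha}$ only up to the order of smoothness available for the flow, so the higher cases of (d2)--(d3) implicitly demand flow regularity matching $k$; with the baseline $C^2$ hypothesis the cases $k\le 2$ come out directly, and the remainder follows once $\mathbf V$ is taken correspondingly smoother. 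Everything else reduces to the uniform Jacobian and flow-derivative bounds of the first step.
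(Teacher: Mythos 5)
Your proposal is correct and follows essentially the same route the paper takes: the paper does not write out the argument but defers to the standard change-of-variables calculations via the flow map (citing \cite[Section 3]{Vie14}), which is precisely what you execute, with the uniform Jacobian bound from \eqref{eq:formulaJ}--\eqref{eq:velbound} and the Gronwall bound on $D\Phi_t^0$ doing all the work. Your closing caveat that the higher-order cases of (d2)--(d3) require flow and surface regularity matching $k$ is well taken and consistent with the paper, which indeed imposes $C^3$/$C^4$ regularity on $\Gamma_0$ and $\Phi_t^0$ when it later invokes $H^3$- and $H^4$-regularity in Theorem \ref{thm:regularity}.
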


\begin{remark}\label{rem:compatibility}
In the spirit of \cite[Definition 2.4]{AlpEllSti15a}, given a family of Banach spaces $\{X(t)\}_{t\in [0,T]}$ with maps $\phi_t\colon X_0\to X(t)$, we call the pair $(X(t), \phi_t)_{t\in [0,T]}$ \textit{compatible} if: 
\begin{itemize}
\item[(i)] $\phi_t\colon X_0\to X(t)$ is a linear homeomorphism such that $\phi_0= \text{Id}_{X_0}$, with inverse $\phi_{-t} = (\phi_t)^{-1}$;
\item[(ii)] $\phi_t$, $\phi_{-t}$ are bounded with continuity constant independent of $t$;
\item[(iii)] the map $t\mapsto \|\phi_t u\|_{X(t)}$ is continuous, for each $u\in X_0$.
\end{itemize}
The statements in (a), (b) above then simply say that the families $(L^2(\Gamma(t)), \phi_t)_{t\in [0,T]}$ and $(H^1(\Gamma(t)), \phi_t)_{t\in [0,T]}$ are compatible pairs of Hilbert spaces. 
\end{remark}

The proofs follow from calculations involving the change of variables and integration by parts formulas (see, for example, \cite[Section 3]{Vie14}). We emphasize that, in parts (c) and (d), the constants can be chosen independently of $t$, and the statements in (d) make use of the fact that $\Gamma(t)$ is $2$-dimensional.

\subsection{Time-dependent Bochner spaces}

Observe that, in part (b) of the result above, we used the flow map to define a pullback operator for functions defined on the hypersurface by
\begin{align}\label{eq:pull}
\phi_{-t} u = u \circ \Phi_t^0, \quad t\in [0,T], \,\, u\in L^2(\Gamma(t)) \text{ or } u\in H^1(\Gamma(t)) 
\end{align}
Now if we denote, for $t\in [0,T]$, $H(t)=L^2(\Gamma(t))$ and $V(t) = H^1(\Gamma(t))$, then the results in (b), (c) above show that the pairs $(H, \phi_{(\cdot)})$ and $(V, \phi_{(\cdot)}|_V)$ are compatible in the sense of \cite{AlpEllSti15a}. We can thus consider, for $t\in [0,T]$, the \textit{evolving Hilbert spaces} $L^2(\Gamma(t))$ and $H^1(\Gamma(t))$, in which case the work in \cite{AlpEllSti15a} gives rise to the following time-dependent Bochner spaces: \vskip 1mm

\begin{itemize}
\item[(i)] The separable Hilbert spaces $L^2_{L^2}$ and $L^2_{H^1}$ consisting of (equivalence classes) of functions 
\begin{figure}[h]
\begin{minipage}{0.4\textwidth}
\begin{align}
u\colon [0, T]&\to \bigcup_{t\in[0,T]} L^2(\Gamma(t))\times \{t\} \\
 t&\mapsto (\bar{u}(t), t)
\end{align}
\end{minipage}
\quad or \quad
\begin{minipage}{0.4\textwidth}
\begin{align}
u\colon [0, T]&\to \bigcup_{t\in[0,T]} H^1(\Gamma(t))\times \{t\} \\
 t&\mapsto (\bar{u}(t), t)
\end{align}
\end{minipage}
\end{figure} 
 such that $\phi_{-(\cdot)}\bar{u}(\cdot)\in L^2(0, T; L^2(\Gamma_0))$ or $\phi_{-(\cdot)}\bar{u}(\cdot)\in L^2(0, T; H^1(\Gamma_0))$; we identify $u\equiv \bar{u}$. These spaces are endowed with the inner products 
\begin{align}
(u,v)_{L^2_{L^2}} &:= \int_0^T (u(t), v(t))_{L^2(\Gamma(t))}, \quad u, v\in L^2_{L^2}, \\
(u,v)_{L^2_{H^1}} &:= \int_0^T (u(t), v(t))_{H^1(\Gamma(t))} , \quad u, v\in L^2_{H^1}.
\end{align}
We also identify $(L^2_{L^2})^* \equiv L^2_{L^2}$ via the Riesz map, and set $L^2_{H^{-1}} := (L^2_{H^1})^*$. \vskip 1mm
\item[(ii)] For $X(t)=L^2(\Gamma(t))$ or $X(t)=H^1(\Gamma(t))$, the Banach space $L^\infty_{X}$, of those functions $u\in L^2_{X}$ such that $t\mapsto \phi_{-t}u(t) \in L^\infty(0,T; X(0))$, with the norm
\begin{align}
\|u\|_{L^\infty_{X}} := \esssup_{t\in [0,T]} \|u(t)\|_{X(t)};
\end{align}

\item[(iii)] For $X(t)=L^2(\Gamma(t))$ or $X(t)=H^1(\Gamma(t))$, the Banach space $C^0_{X}$ of functions $u\in L^2_{X}$ such that $t\mapsto \phi_{-t}u(t)\in C^0([0,T]; X(0))$, with the norm 
\begin{align}
\|u\|_{C^0_{X}} := \sup_{t\in [0,T]} \|u(t)\|_{X(t)};
\end{align}

\item[(iv)] More generally, for $X(t)=L^2(\Gamma(t))$ or $X(t)=H^1(\Gamma(t))$ and $k\in\N$, the space of $k$ times differentiable functions $C^k_{X}$ of functions $u\in L^2_{X}$ such that $t\mapsto \phi_{-t}u(t)\in C^k([0,T]; X(0))$; \vskip 1mm

\item[(v)] For $X(t)=L^2(\Gamma(t))$ or $X(t)=H^1(\Gamma(t))$, the space of \textit{test functions} $\mathcal{D}_{X}(0,T)$ consisting of $u\in L^2_{X}$ such that $$t\mapsto \phi_{-t}u(t)\in \mathcal{D}((0,T); X(0)):=C_c^\infty((0,T); X(0)).$$
\end{itemize}

\begin{remark}\label{rem:compat_spaces}
The definitions above extend naturally to the case of a general compatible pair $(X(t), \phi_t)_{t\in [0,T]}$, with $X(t)$ a Banach space for each $t$.
\end{remark}

It remains to define an appropriate notion of a time derivative for functions in the above time-dependent spaces. For regular functions, we do it in the natural way, by pulling back to the reference domain $X(0)$, differentiating in time, and pushing forward to return to $X(t)$.  For $X(t)=L^2(\Gamma(t))$ or $X(t)=H^1(\Gamma(t))$, the \textit{(strong) time derivative} of a function $u\in C^1_{X}$ is defined to be
\begin{align}\label{eq:newstrong}
\partial^\bullet u(t) := \phi_t \dfrac{d}{dt} \phi_{-t}u(t) \in C^0_{X}.
\end{align}
By considering values along curves that follow the evolution, it can be seen that, whenever both definitions \eqref{eq:newstrong} and \eqref{eq:strongder} make sense, they coincide. 

The definition \eqref{eq:strongder} can now be abstracted to a weaker sense as follows. Let $u\in L^2_{H^1}$. A functional $v\in L^2_{H^{-1}}$ is said to be the \textit{weak time derivative} of $u$, and we write $v=\partial^\bullet u$, if, for any $\eta\in \mathcal{D}_{H^1}(0,T)$, we have
\begin{align}
\int_0^T \langle v(t), \eta(t)\rangle_{H^{-1}\times H^1}  = -\int_0^T (u(t),\md\eta(t))\L - \int_0^T \int_{\Gamma(t)} u(t)\eta(t) \nabla_{\Gamma(t)}\cdot \mathbf{V}(t).
\end{align}
Observe that $\md \eta$ is the strong material derivative of $\eta$. Of course, for a regular function, the strong time derivative \eqref{eq:strongder} coincides with the weak derivative defined above. We are then interested in looking for solutions to parabolic PDEs lying in the space
\begin{align}
H^1_{H^{-1}} := \left\{ u\in L^2_{H^1} \colon \md u\in L^2_{H^{-1}}\right\}.
\end{align}
Often the following criterion for weak differentiability is easier to apply (see \cite[Lemma 3.5]{AlpEllSti15a}):

\begin{lemma}\label{lem:criterionweakderiv}
Let $u\in L^2_{H^1}$ and $g\in L^2_{H^{-1}}$. Then $u$ is weakly differentiable with $\md u = g$ if and only if, for all $\eta\in H^1(\Gamma_0)$, 
\begin{align}
\dfrac{d}{dt} \left( u(t), \phi_t \eta\right)_{L^2(\Gamma(t))} = \langle g(t), \phi_t \eta\rangle_{H^{-1}(\Gamma(t))\times H^1(\Gamma(t))} + \int_{\Gamma(t)} u(t) \phi_t \eta \nabla_{\Gamma(t)} \cdot \mathbf{V}(t).
\end{align}
\end{lemma}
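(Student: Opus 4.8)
The plan is to test the defining identity of the weak time derivative against the special class of functions $\eta_\psi(t):=\psi(t)\,\phi_t\eta$, with $\psi\in C_c^\infty(0,T)$ and $\eta\in H^1(\Gamma_0)$ fixed, and to recognise the resulting relation as the distributional formulation in the $t$-variable of the claimed identity. The key computational observation is that, since $\eta$ does not depend on time, definition \eqref{eq:newstrong} gives $\md(\phi_t\eta)=\phi_t\tfrac{d}{dt}\phi_{-t}\phi_t\eta=\phi_t\tfrac{d}{dt}\eta=0$, so that $\md\eta_\psi=\psi'(t)\,\phi_t\eta$. Before starting, I would record the integrability facts that make every term meaningful: by compatibility (Lemma \ref{lem:assumptionsgive}, Remark \ref{rem:compatibility}) the map $t\mapsto\|\phi_t\eta\|_{H^1(\Gamma(t))}$ is continuous, hence bounded on $[0,T]$, so that $t\mapsto(u(t),\phi_t\eta)\L$ and $t\mapsto\langle g(t),\phi_t\eta\rangle+\int_{\Gamma(t)}u\,\phi_t\eta\,\tgrad\cdot\mathbf V$ both lie in $L^1(0,T)$, using $u\in L^2_{H^1}$, $g\in L^2_{H^{-1}}$ and the velocity bound \eqref{eq:velbound}.

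For the forward implication, assume $\md u=g$. Inserting $\eta_\psi$ into the definition of the weak time derivative and using $\md\eta_\psi=\psi'\phi_t\eta$ yields
\begin{align*}
\int_0^T\psi(t)\Big[\langle g(t),\phi_t\eta\rangle+\int_{\Gamma(t)}u\,\phi_t\eta\,\tgrad\cdot\mathbf V\Big]\,\d t=-\int_0^T\psi'(t)\,(u(t),\phi_t\eta)\L\,\d t.
\end{align*}
Since this holds for every $\psi\in C_c^\infty(0,T)$, the $L^1$-function $t\mapsto(u(t),\phi_t\eta)\L$ has distributional derivative equal to the $L^1$-function in brackets on the left. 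A function in $L^1$ with $L^1$ weak derivative lies in $W^{1,1}(0,T)$ and thus admits an absolutely continuous representative, so after passing to this representative the identity of the lemma holds.

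For the converse, assume the stated identity for every $\eta\in H^1(\Gamma_0)$. Multiplying it by $\psi\in C_c^\infty(0,T)$, integrating over $(0,T)$ and integrating by parts in $t$ (legitimate because $t\mapsto(u,\phi_t\eta)\L$ is absolutely continuous with $L^1$ derivative and $\psi$ has compact support, so the boundary terms vanish) reproduces exactly the weak-derivative identity for the test function $\eta_\psi=\psi\,\phi_t\eta$. It then remains to upgrade this from the special test functions to arbitrary $\zeta\in\mathcal D_{H^1}(0,T)$. I would do this by density: finite sums $\sum_i\psi_i(t)\,\phi_t\eta_i$ with $\psi_i\in C_c^\infty(0,T)$, $\eta_i\in H^1(\Gamma_0)$ correspond under the pullback $\phi_{-(\cdot)}$ to elements of $C_c^\infty(0,T)\otimes H^1(\Gamma_0)$, which is dense in $\mathcal D((0,T);H^1(\Gamma_0))$; since by linearity the identity holds for all such sums, passing to the limit in the appropriate topology gives it for all $\zeta$, that is, $\md u=g$.

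The main obstacle is precisely this density/continuity step: one must check that the tensor-product test functions are dense in $\mathcal D_{H^1}(0,T)$ in a topology strong enough that each of the three terms — $\int_0^T\langle g,\zeta\rangle$, $\int_0^T(u,\md\zeta)\L$ and $\int_0^T\int_{\Gamma(t)}u\,\zeta\,\tgrad\cdot\mathbf V$ — is continuous in $\zeta$, the delicate point being the term containing $\md\zeta$, which requires controlling not only $\zeta$ but also its material derivative under the approximation. Everything else reduces to bookkeeping with the change of variables \eqref{eq:changeofvariables} and the uniform equivalence of norms and continuity constants supplied by Lemma \ref{lem:assumptionsgive}.
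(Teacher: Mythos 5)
The paper itself does not prove this lemma --- it is quoted directly from \cite[Lemma 3.5]{AlpEllSti15a} --- and your argument is essentially the standard proof from that reference: test the definition of the weak derivative with $\eta_\psi(t)=\psi(t)\phi_t\eta$, use the transport property $\md(\phi_t\eta)=0$ (the same identity the paper exploits for its Galerkin basis, $\md\chi_j^t\equiv 0$), and go back and forth via density of tensor-type test functions. Both directions are sound: the forward one correctly upgrades the distributional identity to an absolutely continuous representative via $W^{1,1}(0,T)$, and the converse correctly reduces everything to the density step you flag as the main obstacle.

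That step is not a genuine obstruction, and you can close it more cheaply than your phrasing suggests: you do not need density in the full topology of $\mathcal{D}((0,T);H^1(\Gamma_0))$, because the three functionals $\zeta\mapsto\int_0^T m_\ast(g,\zeta)$, $\zeta\mapsto\int_0^T (u,\md\zeta)_{L^2(\Gamma(t))}$ and $\zeta\mapsto\int_0^T g(u,\zeta)$ are continuous along any sequence with $\zeta^k\to\zeta$ in $L^2_{H^1}$ and $\md\zeta^k\to\md\zeta$ merely in $L^2_{L^2}$; after pullback this amounts to $\phi_{-(\cdot)}\zeta^k\to\phi_{-(\cdot)}\zeta$ in $L^2(0,T;H^1(\Gamma_0))$ together with the time derivatives in $L^2(0,T;L^2(\Gamma_0))$, using \eqref{eq:velbound} and the uniform bounds on $J_t^0$ to transfer the integrals. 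Such approximants exist by a basis expansion: for $f=\phi_{-(\cdot)}\zeta\in C_c^\infty((0,T);H^1(\Gamma_0))$ and an orthonormal basis $(e_j)$ of $H^1(\Gamma_0)$, the coefficients $t\mapsto (f(t),e_j)_{H^1(\Gamma_0)}$ are smooth with support contained in $\operatorname{supp} f$, and the tails $\sum_{j>N}|(f(t),e_j)_{H^1(\Gamma_0)}|^2$ and $\sum_{j>N}|(f'(t),e_j)_{H^1(\Gamma_0)}|^2$ decrease monotonically and continuously to zero, so Dini's theorem gives uniform convergence of the partial sums and of their derivatives, with fixed compact support. With that inserted, your proof is complete and coincides with the argument in the cited source.
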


We conclude this section with two results which will be of use later in the text. The first one relates some of the spaces defined above. 

\begin{proposition}\label{prop:inclusions}
Suppose assumption $(\mathbf{A_\Phi})$ holds.
\begin{itemize}
\item[(a)] $L^2_{H^{1}}\hookrightarrow L^2_{L^2}\cong (L^2_{L^2})^* \hookrightarrow L^2_{H^{-1}}$ with each inclusion continuous and dense; \vskip 1mm
\item[(b)] The space $H^1_{H^{-1}}$ is continuously embedded in $C^0_{L^2}$; \vskip 1mm
\item[(c)] \textnormal{[Aubin--Lions Lemma]} The space $H^1_{H^{-1}}$ is compactly embedded in $L^2_{L^2}$.
\end{itemize} 
\end{proposition}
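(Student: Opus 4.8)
The plan is to pull everything back to the fixed reference surface $\Gamma_0$ via the isomorphisms $\phi_{-t}$ of Lemma \ref{lem:assumptionsgive}, reducing each assertion to a classical statement about Bochner spaces over $\Gamma_0$. For (a), the continuity of $L^2_{H^1}\hookrightarrow L^2_{L^2}$ is immediate from the pointwise bound $\|v\|_{L^2(\Gamma(t))}\le\|v\|_{H^1(\Gamma(t))}$ integrated in $t$. By the definition of the spaces and the uniform continuity constants in Lemma \ref{lem:assumptionsgive}(a)--(b), the map $u\mapsto\phi_{-(\cdot)}u(\cdot)$ is an isomorphism carrying $L^2_{H^1}$ onto $L^2(0,T;H^1(\Gamma_0))$ and $L^2_{L^2}$ onto $L^2(0,T;L^2(\Gamma_0))$, under which the inclusion becomes the standard dense continuous inclusion $L^2(0,T;H^1(\Gamma_0))\hookrightarrow L^2(0,T;L^2(\Gamma_0))$; density hence lifts back. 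The second inclusion $L^2_{L^2}\hookrightarrow L^2_{H^{-1}}$ is the adjoint of the first, composed with the Riesz identification $L^2_{L^2}\cong(L^2_{L^2})^*$; since all spaces are Hilbert, hence reflexive, the adjoint of a dense continuous injection is again a dense continuous injection.

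\textbf{The key computation for (b) and (c).} The technical core common to (b) and (c) is the relation between the weak material derivative and the ordinary time derivative of the pullback. Writing $\hat u(t):=\phi_{-t}u(t)$ and using the change of variables \eqref{eq:changeofvariables}, the pairing $(u(t),\phi_t\eta)_{L^2(\Gamma(t))}$ equals $\int_{\Gamma_0}\hat u(t)\,\eta\,J_t^0$ for $\eta\in H^1(\Gamma_0)$. Differentiating in $t$ via Lemma \ref{lem:criterionweakderiv} and using $\tfrac{d}{dt}J_t^0=J_t^0\,\widehat{\tgrad\cdot\mathbf V}$ from \eqref{eq:formulaJ}, the two occurrences of the $\tgrad\cdot\mathbf V$-term cancel, leaving
\begin{align}
\langle \md u(t),\phi_t\eta\rangle_{H^{-1}(\Gamma(t))\times H^1(\Gamma(t))}=\int_{\Gamma_0}\Big(\tfrac{d}{dt}\hat u(t)\Big)\,\eta\,J_t^0 .
\end{align}
Since \eqref{eq:formulaJ} together with \eqref{eq:velbound} bounds $J_t^0$ uniformly above and below, this identifies $\md u\in L^2_{H^{-1}}$ with $\tfrac{d}{dt}\hat u\in L^2(0,T;H^{-1}(\Gamma_0))$ up to the harmless weight $J_t^0$, so that $u\in H^1_{H^{-1}}$ if and only if $\hat u$ lies in the classical space $W:=\{\hat v\in L^2(0,T;H^1(\Gamma_0)):\tfrac{d}{dt}\hat v\in L^2(0,T;H^{-1}(\Gamma_0))\}$, with equivalent norms.

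\textbf{Part (b).} With this correspondence, I would obtain (b) by the Lions--Magenes argument transported to $\Gamma_0$: for $u$ in the dense subspace $C^1_{H^1}\subset H^1_{H^{-1}}$ the transport formula of Theorem \ref{thm:transportforms} applied to $|u|^2$ gives the energy identity $\tfrac{d}{dt}\|u(t)\|_{L^2(\Gamma(t))}^2=2\langle\md u(t),u(t)\rangle+\int_{\Gamma(t)}|u(t)|^2\,\tgrad\cdot\mathbf V$, which, after integrating and invoking \eqref{eq:velbound}, yields the a priori bound $\|u\|_{C^0_{L^2}}^2\le C\|u\|_{H^1_{H^{-1}}}^2$; a Cauchy sequence in $H^1_{H^{-1}}$ is then Cauchy in $C^0_{L^2}$, and the embedding extends by density and continuity. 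The density of $C^1_{H^1}$ in $H^1_{H^{-1}}$ is itself transported from the fixed-domain statement via the isomorphism of the previous paragraph.

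\textbf{Part (c) and the main obstacle.} For (c), given a bounded sequence $(u_n)$ in $H^1_{H^{-1}}$, the correspondence above makes $(\hat u_n)$ bounded in $W$; since $\Gamma_0$ is a compact surface, Rellich--Kondrachov gives $H^1(\Gamma_0)\hookrightarrow\hookrightarrow L^2(\Gamma_0)$, so the classical Aubin--Lions lemma furnishes a subsequence converging strongly in $L^2(0,T;L^2(\Gamma_0))$; pushing forward with $\phi_{(\cdot)}$, whose continuity constants are uniform in $t$, yields strong convergence in $L^2_{L^2}$, which is the asserted compactness. I expect the main obstacle to be exactly the computation of the second paragraph: correctly transferring the weak material derivative -- whose definition carries the $\tgrad\cdot\mathbf V$ term -- across the \emph{moving} pivot space $L^2(\Gamma(t))$, tracking the Jacobian weight $J_t^0$ and verifying the uniform bounds that render it harmless. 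All of the above is the concrete instance, for $(L^2(\Gamma(t)),\phi_t)$ and $(H^1(\Gamma(t)),\phi_t)$, of the abstract Gelfand-triple, Lions--Magenes and Aubin--Lions results for compatible evolving Hilbert spaces in \cite{AlpEllSti15a}, on which one may alternatively rely directly once compatibility (Lemma \ref{lem:assumptionsgive}, Remark \ref{rem:compatibility}) is in hand.
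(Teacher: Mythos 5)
Your argument is correct, but it is organised differently from the paper's treatment: the paper gives no inline proof at all and instead defers to the abstract compatible-spaces framework, citing \cite{AlpEllSti15a} (and \cite{AlpEllSti15b}) for (a)--(b) and \cite[Lemma B.3]{AlpEllTer17} for the Aubin--Lions result, whereas you reconstruct the mechanism behind those citations concretely: the identity $\langle \md u,\phi_t\eta\rangle=\int_{\Gamma_0}(\tfrac{d}{dt}\hat u)\,\eta\,J_t^0$ (the cancellation of the $\tgrad\cdot\mathbf V$ terms against $\tfrac{d}{dt}J_t^0=J_t^0\,\widehat{\tgrad\cdot\mathbf V}$ is exactly right, by Lemma \ref{lem:criterionweakderiv} and \eqref{eq:changeofvariables}), the resulting isomorphism $H^1_{H^{-1}}\cong W(H^1(\Gamma_0),H^{-1}(\Gamma_0))$, and then the classical Gelfand-triple, Lions--Magenes and Aubin--Lions theorems on the fixed surface. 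This is essentially how the cited references prove their abstract statements, so your proof buys self-containedness and makes visible where the hypotheses enter, at the cost of one point you should make explicit: the equivalence of norms in your second paragraph requires that multiplication by $J_t^0$ and $(J_t^0)^{-1}$ be bounded on $H^1(\Gamma_0)$ uniformly in $t$ (so that the weight can be absorbed on the $H^{-1}$--$H^1$ duality), which needs not just the two-sided bound on $J_t^0$ from \eqref{eq:velbound} but also a uniform bound on $\tgrad J_t^0$; this is available because $(\mathbf A_\Phi)$ gives $\mathbf V\in C^0([0,T];C^2)$, hence $J_t^0\in C^1$ in space via \eqref{eq:formulaJ}, but as stated your "harmless weight" claim silently uses it. Two smaller remarks: in (b) you could bypass the density-of-$C^1_{H^1}$ step entirely, since Proposition \ref{prop:derivinnerprod} already asserts the energy identity for arbitrary $u\in H^1_{H^{-1}}$, giving absolute continuity of $t\mapsto\|u(t)\|_{L^2(\Gamma(t))}^2$ directly; and in (a) your duality argument for the second inclusion is fine since all spaces involved are Hilbert and the paper defines $L^2_{H^{-1}}:=(L^2_{H^1})^*$.
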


For details on the above, see \cite{AlpEllSti15a} (or \cite[Sections 3, 4]{AlpEllSti15b} for a more summarized version), and \cite[Lemma B.3]{AlpEllTer17} for the Aubin--Lions-type result. We will also need the following result which serves as a replacement for the Aubin--Lions lemma when no estimate on the time derivative is available. It is a generalisation of \cite[Theorem 1]{RakTem01} to the evolving space context. For generality, we state and prove it in the context of abstract families of Hilbert spaces. See Remarks \ref{rem:compatibility}, \ref{rem:compat_spaces} for the definitions of compatibility and the abstract spaces below.

\begin{theorem}\label{lem:temam_general}
Let $T<\infty$ and $(H(t), \phi_t)_{t\in [0,T]}$, $(V(t), \phi_t)_{t\in [0,T]}$ be compatible families of separable Hilbert spaces such that $V(t)\subset H(t)$ is compact and dense. Let $(u_n)_n\subset L^2_V$ be such that $u_n\rightharpoonup u$ in $L^2_V$. Then $u_n\to u$ in $L^2_H$ if and only if 
\begin{itemize}
\item[(i)] $u_n(t)\rightharpoonup u(t)$ in $H(t)$ for a.e. $t$;
\vskip 2mm
\text{and}
\vskip 2mm
\item[(ii)] it holds that
\begin{align}\label{eq:lemma_gen}
\lim_{\underset{E\subset [0,T]}{|E|\to 0}} \sup_{n\in\N} \int_E \|u_n(t)\|_{H(t)}^2 \, \d t = 0.
\end{align}
\end{itemize}
\end{theorem}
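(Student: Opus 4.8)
The plan is to reduce the statement to its fixed-space counterpart \cite[Theorem 1]{RakTem01} by transporting everything to the reference spaces $H(0)$ and $V(0)$ through the pullback. Define $(Pw)(t) := \phi_{-t} w(t)$. By the very definition of the evolving Bochner spaces, $P$ restricts to linear bijections $P\colon L^2_V \to L^2(0,T;V(0))$ and $P\colon L^2_H \to L^2(0,T;H(0))$, and compatibility~(ii) of both families supplies constants $c, C>0$, independent of $t$, with $c\,\|w(t)\|_{H(t)} \le \|\phi_{-t} w(t)\|_{H(0)} \le C\,\|w(t)\|_{H(t)}$, together with the analogous bound for the $V$-norms. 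Integrating in $t$ shows that $P$ and $P^{-1}$ are bounded on both spaces, so $P$ is a Banach-space isomorphism in each case. Since $V(0)\subset H(0)$ is compact and dense, the fixed-space theorem is available for the pulled-back sequences, and it suffices to verify that each hypothesis and each conclusion of the statement is equivalent, under $P$, to the corresponding assertion for $(Pu_n)_n$ and $Pu$.

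These equivalences are routine, the point being that every notion involved is stable under a uniformly bounded linear isomorphism. A bounded linear bijection with bounded inverse is a homeomorphism for the weak topologies, so $u_n\rightharpoonup u$ in $L^2_V$ is equivalent to $Pu_n \rightharpoonup Pu$ in $L^2(0,T;V(0))$; this transfers the standing hypothesis. Strong convergence transfers through the two-sided bound above: comparing $\|\phi_{-t}(u_n(t)-u(t))\|_{H(0)}^2$ with $\|u_n(t)-u(t)\|_{H(t)}^2$ and integrating shows that $u_n\to u$ in $L^2_H$ if and only if $Pu_n\to Pu$ in $L^2(0,T;H(0))$. For condition~(i), fix $t$ with $u_n(t)\rightharpoonup u(t)$ in $H(t)$; as $\phi_{-t}\colon H(t)\to H(0)$ is a fixed bounded isomorphism, hence weak-weak continuous, one gets $(Pu_n)(t)\rightharpoonup (Pu)(t)$ in $H(0)$ at the same $t$, leaving the exceptional null set unchanged. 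Finally, the same two-sided bound gives $\int_E \|(Pu_n)(t)\|_{H(0)}^2\,\d t$ comparable to $\int_E \|u_n(t)\|_{H(t)}^2\,\d t$ uniformly in $n$ and in $E\subset[0,T]$, so the equi-integrability condition~(ii) is equivalent to the corresponding condition for $(Pu_n)_n$.

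Applying \cite[Theorem 1]{RakTem01} to $(Pu_n)_n$ and $Pu$ in the compact dense embedding $V(0)\subset H(0)$ then yields both implications at once, which completes the proof. The one point deserving care, and the main conceptual obstacle, is that the maps $\phi_{-t}$ are isomorphisms but not isometries: $P$ distorts the Hilbert inner products and does not preserve norms pointwise in $t$. This is harmless precisely because the three properties at stake --- weak convergence, norm convergence, and the equi-integrability bound~(ii) --- are either purely topological or are governed by the uniform two-sided norm equivalence, neither of which is disturbed by a uniformly bounded change of inner product; this is exactly the content of the compatibility assumption. It is also worth noting that~(i) is asserted for a.e. fixed $t$, so transferring it raises no joint-measurability issue.
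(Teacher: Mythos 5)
Your proof is correct and takes essentially the same route as the paper: both arguments pull the sequence back to the fixed reference spaces $H(0)$, $V(0)$ via $\phi_{-t}$, apply \cite[Theorem 1]{RakTem01} there, and use compatibility (the uniform two-sided norm bounds and the weak--weak continuity of the isomorphisms $\phi_{\pm t}$) to transfer the hypotheses (i), (ii) and the strong $L^2_H$-convergence in both directions. Your write-up is merely more explicit than the paper's about why each of the three properties is stable under the pullback, but no new idea is involved.
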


\begin{proof}
Since $u_n\rightharpoonup u$ in $L^2_V$, then by compatibility we also have $\phi_{-(\cdot)} u_n \rightharpoonup \phi_{-(\cdot)} u$ in $L^2(0,T; V)$.

Suppose $u_n \to u$ in $L^2_H$, then also $\phi_{-(\cdot)} u_n \to \phi_{-(\cdot)} u$ in $L^2(0,T; H)$. By Theorem \cite[Theorem 1]{RakTem01} we have, on the one hand, that $\phi_{-t} u_n(t) \rightharpoonup \phi_{-t} u(t)$ in $H_0$, which implies again by compatibility that $u_n(t) \rightharpoonup u(t)$ in $H(t)$, and, on the other hand, 
\begin{align}
\begin{split}
0 = \dfrac{1}{C}\lim_{\underset{E\subset [0,T]}{|E|\to 0}} \sup_{n\in\N} \int_E \|\phi_{-t} u_n(t)\|_{H_0}^2 \, \d t &\leq \lim_{\underset{E\subset [0,T]}{|E|\to 0}} \sup_{n\in\N} \int_E \|u_n(t)\|_{H(t)}^2 \, \d t \\
&\leq C\lim_{\underset{E\subset [0,T]}{|E|\to 0}} \sup_{n\in\N} \int_E \|\phi_{-t} u_n(t)\|_{H_0}^2 \, \d t = 0,
\end{split}
\end{align}
which implies \eqref{eq:lemma_gen}. Thus (i) and (ii) hold.

Conversely, suppose (i) and (ii) are satisfied. Then arguing as above with the pullback map $\phi_{-t}$ it follows that (i) and (ii) hold for $\phi_{-t} u_n(t)$ on $H_0$. By Theorem \cite[Theorem 1]{RakTem01}, $\phi_{-(\cdot)} u_n\to \phi_{-t} u$ in $L^2(0,T; H)$, and compatibility implies $u_n \to u$ in $L^2_H$, proving the result.
\end{proof} 

In the next proposition, we collect two formulas for the time derivatives of the $L^2$-inner product, and a proof can be found e.g. in \cite[Lemma 5.2]{DziEll07-a}. 

\begin{proposition}\label{prop:derivinnerprod}
Let $u, v\in H^1_{H^{-1}}$. Then the function $t\mapsto (u(t), v(t))_{L^2(\Gamma(t))}$ is absolutely continuous, and we have
\begin{align}
\dfrac{d}{dt} (u, v)_{L^2} &= \langle \partial^\bullet u, v\rangle_{H^{-1}\times H^1} + \langle \partial^\bullet v, u\rangle_{H^{-1}\times H^1} + \int_{\Gamma(t)} uv\nabla_{\Gamma}\cdot \mathbf{V}.
\end{align}
If additionally $\tgrad \md u, \tgrad \md v\in L^2_{L^2}$, then also $t\mapsto (\nabla_{\Gamma(t)} u(t), \nabla_{\Gamma(t)}v(t))_{L^2(\Gamma(t))}$ is absolutely continuous, and
\begin{align}
\dfrac{d}{dt} (\tgrad u, \tgrad v)\L = (\tgrad\partial^\bullet u, \tgrad v)\L + (\tgrad u, \tgrad \partial^\bullet v)\L + \int_{\Gamma(t)} \mathrm{B}(\mathbf{V}) \tgrad u\cdot \tgrad v,
\end{align}
where, for a vector field $v=(v_1, v_2, v_3)$, $\mathrm{B}(v)= (\nabla_\Gamma \cdot v)I - 2D(v)$ and the matrix $D(v)=(D_{ij}(v))_{i,j=1}^{3}$ is defined by
\begin{align}
D_{ij}(v) = \dfrac{1}{2}\sum_{k=1}^{3} \delta_{ik} \underline{D}_k v_j + \delta_{jk} \underline{D}_k v_i  = \dfrac{\underline{D}_iv_j + \underline{D}_jv_i}{2}.
\end{align}
\end{proposition}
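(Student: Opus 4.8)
The plan is to establish both identities first for a dense class of smooth functions, where everything reduces to classical differentiation of integrals over the moving surface, and then to extend to all of $H^1_{H^{-1}}$ (respectively to the subclass with the extra gradient regularity) by a density and limiting argument. The two workhorses are the transport theorem (Theorem \ref{thm:transportforms}) applied to the scalar integrands $uv$ and $\nabla_\Gamma u\cdot\nabla_\Gamma v$, together with the fact that the material derivative is a derivation, so that $\partial^\bullet(fg) = (\partial^\bullet f)g + f(\partial^\bullet g)$.

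For the first identity, take $u,v$ smooth, concretely functions whose pullbacks $\phi_{-(\cdot)}u$, $\phi_{-(\cdot)}v$ are $C^1$ in time with values in $H^1(\Gamma_0)$, so that in particular $\partial^\bullet u,\partial^\bullet v\in L^2_{L^2}$. Writing $(u(t),v(t))_{L^2(\Gamma(t))} = \int_{\Gamma(t)} uv$ and applying Theorem \ref{thm:transportforms} (with $\Sigma(t)=\Gamma(t)$, which is closed) to the scalar $uv$ yields
\[
\frac{d}{dt}\int_{\Gamma(t)} uv = \int_{\Gamma(t)} (\partial^\bullet u)v + u(\partial^\bullet v) + uv\,\nabla_\Gamma\cdot\mathbf{V};
\]
since the duality pairings reduce to genuine $L^2$-products for such functions, this is the claimed formula. (Equivalently, one pulls back to $\Gamma_0$, writes the inner product as $\int_{\Gamma_0}(\phi_{-t}u)(\phi_{-t}v)J_t^0$, and differentiates directly using $\tfrac{d}{dt}J_t^0 = J_t^0\,\nabla_\Gamma\cdot\mathbf{V}$, obtained by differentiating \eqref{eq:formulaJ}.) To pass to general $u,v\in H^1_{H^{-1}}$, I would approximate them in $H^1_{H^{-1}}$ by smooth $u_n,v_n$ (using density of $C^1_{H^1}$ in the framework of \cite{AlpEllSti15a}), write the smooth identity in integrated form over $[s,t]$, and take limits: the right-hand side passes to the limit by Cauchy--Schwarz in the $L^2_{H^1}$/$L^2_{H^{-1}}$ duality and in $L^2_{L^2}$, while the left-hand side passes to the limit through the embedding $H^1_{H^{-1}}\hookrightarrow C^0_{L^2}$ of Proposition \ref{prop:inclusions}(b). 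The limiting integrated identity exhibits $t\mapsto(u(t),v(t))_{L^2}$ as the indefinite integral of an $L^1(0,T)$ function, giving both absolute continuity and the pointwise derivative.

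For the second identity I would run the same scheme on the scalar $\nabla_\Gamma u\cdot\nabla_\Gamma v$, now assuming $\nabla_\Gamma\partial^\bullet u,\nabla_\Gamma\partial^\bullet v\in L^2_{L^2}$. The transport theorem and the Leibniz rule give
\[
\frac{d}{dt}\int_{\Gamma(t)}\nabla_\Gamma u\cdot\nabla_\Gamma v = \int_{\Gamma(t)} \partial^\bullet(\nabla_\Gamma u)\cdot\nabla_\Gamma v + \nabla_\Gamma u\cdot\partial^\bullet(\nabla_\Gamma v) + (\nabla_\Gamma u\cdot\nabla_\Gamma v)\,\nabla_\Gamma\cdot\mathbf{V}.
\]
The new ingredient is the commutator relating the material derivative and the surface gradient, an identity of the form $\partial^\bullet(\nabla_\Gamma u) = \nabla_\Gamma(\partial^\bullet u) - \mathrm{A}(\mathbf{V})\nabla_\Gamma u$, where $\mathrm{A}(\mathbf{V})$ is built from the tangential gradient of $\mathbf{V}$. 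Substituting this for both $u$ and $v$, only the symmetric part $D(\mathbf{V})$ of $\mathrm{A}(\mathbf{V})$ survives the symmetric combination $(\mathrm{A}(\mathbf{V})\nabla_\Gamma u)\cdot\nabla_\Gamma v + \nabla_\Gamma u\cdot(\mathrm{A}(\mathbf{V})\nabla_\Gamma v) = 2(D(\mathbf{V})\nabla_\Gamma u)\cdot\nabla_\Gamma v$, so the three terms collapse to $(\nabla_\Gamma\partial^\bullet u,\nabla_\Gamma v)_{L^2} + (\nabla_\Gamma u,\nabla_\Gamma\partial^\bullet v)_{L^2} + \int_{\Gamma(t)}\big((\nabla_\Gamma\cdot\mathbf{V})I - 2D(\mathbf{V})\big)\nabla_\Gamma u\cdot\nabla_\Gamma v$, which is exactly the asserted formula with $\mathrm{B}(\mathbf{V})$. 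A density argument in the stronger topology that also controls $\nabla_\Gamma\partial^\bullet$ in $L^2_{L^2}$ then extends the identity and yields absolute continuity as before; the regularity $\mathbf{V}\in C^0([0,T];C^2)$ from $(\mathbf{A_\Phi})$ guarantees $\mathrm{B}(\mathbf{V})$ is bounded, so the final term is integrable in time.

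The genuinely delicate point, I expect, is the commutator identity for $\partial^\bullet$ and $\nabla_\Gamma$ with the correct tensor: it encodes how the surface metric and the tangential projection evolve along the flow, and verifying that its symmetric part is precisely $D(\mathbf{V})$ is what converts the naive gradient-of-velocity contribution into the symmetric tensor $\mathrm{B}(\mathbf{V})$. A secondary, more technical obstacle is selecting a dense class and a topology for the gradient identity strong enough to carry $\nabla_\Gamma\partial^\bullet u_n\to\nabla_\Gamma\partial^\bullet u$ in $L^2_{L^2}$ through the limit while keeping every term, in particular the $\mathrm{B}(\mathbf{V})$ term, integrable in time.
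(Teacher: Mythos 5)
Your proposal is correct and follows the standard route: the paper itself gives no proof of this proposition, deferring to \cite[Lemma 5.2]{DziEll07-a}, and the argument there is exactly your scheme --- the transport theorem applied to $uv$ and to $\nabla_\Gamma u\cdot\nabla_\Gamma v$ for smooth functions, with the commutator between $\partial^\bullet$ and $\nabla_\Gamma$ contributing its symmetric part $2D(\mathbf{V})$ so that the divergence term and the commutator combine into $\mathrm{B}(\mathbf{V})$, followed by a density/integrated-identity argument in the framework of \cite{AlpEllSti15a} to reach $H^1_{H^{-1}}$ and to conclude absolute continuity. You also correctly isolate the two genuinely delicate points (the precise commutator identity and the choice of dense class strong enough to control $\nabla_\Gamma\partial^\bullet$ in $L^2_{L^2}$), which is where the cited proof does its real work.
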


We now have all the background needed to state our problem. 

\begin{remark}
We presented the precise framework necessary to treat the Cahn-Hilliard equation, but the setting described can be made more general.
\begin{itemize}
\item[(i)] On the one hand, it can be extended with the natural changes to compact hypersurfaces $\Gamma(t)\subset\R^{n+1}$, with any $n\in \N$, see for instance \cite{DziEll07-a}.  Restriction to dimension $n=2$ plays a role only in the statement of the Sobolev embeddings in Lemma \ref{lem:assumptionsgive}(d).
\item[(ii)] On the other hand, and as discussed in Remarks \ref{rem:compatibility}, \ref{rem:compat_spaces}, the work in \cite{AlpEllSti15a} allows to define $L^p_X$ for any $p\in [1,+\infty]$ and a family of time-dependent Banach spaces $\{X(t)\}_{t\in [0,T]}$ with maps $\phi_{-t}\colon X(t)\to X_0$ satisfying the same properties as the pullback map in \eqref{eq:pull}.
\end{itemize}
\end{remark}

\section{Problem setup}\label{sec:model}

For the rest of the text, fix $T>0$ and an evolving $C
^2$-surface $\{\Gamma(t)\}_{t\in [0,T]}$ in $\R^3$ satisfying assumption $(\mathbf{A_\Phi})$.

\subsection{Derivation of the equation}

We start by deriving the Cahn-Hilliard equation on the evolving surface $\{\Gamma(t)\}_{t\in[0,T]}$ from a conservation law. Fix $t\in [0,T]$, let $u$ represent a scalar quantity on $\Gamma(t)$ and let $q$ be a surface flux for $u$. Consider the following balance law for an arbitrary portion $\Sigma(t)\subseteq \Gamma(t)$ evolving with the given purely normal velocity $\mathbf{V_\nu}$
\begin{align}
\dfrac{d}{dt}\int_{\Sigma(t)} u = -\int_{\partial \Sigma(t)} q\cdot \mu,
\end{align}
where $\partial \Sigma(t)$ is the boundary of $\Sigma(t)$ and $\mu$ is the conormal on $\partial \Sigma(t)$, i.e. the unit normal to $\partial \Sigma(t)$ which is tangential to $\Sigma(t)$. Note that the normal component of $q$ does not contribute to the flux, and hence we can assume that $q$ is tangential. Using integration by parts we have 
\begin{align}
\int_{\partial \Sigma(t)} q\cdot  \mu = \int_{\Sigma(t)} \nabla_\Gamma\cdot q - \int_{\Sigma(t)} q\cdot \nu H = \int_{\Sigma(t)} \nabla_\Gamma\cdot q,
\end{align}
where $H$ denotes the mean curvature. The second term vanishes because $q$ is tangential. 

On the other hand, using the transport formula in Theorem \ref{thm:transportforms} gives
\begin{align}
\dfrac{d}{dt}\int_{\Sigma(t)} u = \int_{\Sigma(t)} \partial^\circ u + u\nabla_\Gamma \cdot \mathbf{V}_\nu,
\end{align}
so combining the previous equations yields 
\begin{align}
\int_{\Sigma(t)} \partial^\circ u + u\nabla_\Gamma \cdot \mathbf{V}_\nu + \nabla_\Gamma \cdot q = 0,
\end{align}
for any portion $\Sigma(t)\subseteq \Gamma(t)$. This implies that
\begin{align}\label{eq:chnormal}
\partial^\circ u + u\nabla_\Gamma \cdot \mathbf{V}_\nu + \tgrad\cdot q = 0 \quad \text{ on } \Gamma(t).
\end{align}


We are interested in considering the surface flux $q$ to be the sum of a diffusive term of the form $q_d = -M(u)\nabla_\Gamma w$ and an advection term $q_a = u\mathbf{V}_a$, with an advective tangential velocity $\mathbf{V}_a$.
In the diffusive flux, $M(u)$ is a \textit{mobility term} and $w$, the \textit{chemical potential}, is defined by
\begin{align}
w = - \Delta_\Gamma u +F'(u),
\end{align} 
where the homogeneous free energy $F$ will be taken of different types in the following sections. The chemical potential $w$ is the functional derivative of the free energy 
\begin{align}\label{eq:CHfunctional}
\mathrm{E}^\mathrm{CH}[u] = \int_{\Gamma(t)} \dfrac{|\nabla_\Gamma u|^2}{2} + F(u),
\end{align}
which we will refer to throughout the text as the \textit{Cahn-Hilliard energy functional}. Substituting $q=q_d+q_a$ in \eqref{eq:chnormal} leads to
\begin{align}
\partial^\circ u + u \tgrad\cdot \mathbf{V_\nu} + \tgrad \cdot (u\mathbf{V}_a) = \tgrad \cdot \left( M(u) \tgrad \left(- \Delta_\Gamma u + F'(u)\right)\right),
\end{align}
which we rewrite using the definition of the material time derivative to obtain the 4th order advective Cahn-Hilliard equation
\begin{align}
\partial^\bullet u + u\tgrad\cdot \mathbf{V}-\tgrad\cdot\left(u(\mathbf{V}_\tau-\mathbf{V}_a)\right) =\tgrad \cdot \left( M(u) \tgrad \left(- \Delta_\Gamma u + F'(u)\right)\right).
\end{align}
We will treat the problem above as the system of two second-order equations
\begin{align}
\begin{split}\label{eq:CHsystem_mob}
\partial^\bullet u + u\tgrad\cdot \mathbf{V} - \tgrad\cdot\left(u\mathbf{V}^\tau_a\right) &= \tgrad\cdot\left(M(u) \tgrad w\right)  \\
- \Delta_\Gamma u + F'(u) &= w 
\end{split} 
\end{align}
together with some initial condition for the function $u$, where we denote $\mathbf{V}^\tau_a := \mathbf{V}_\tau - \mathbf{V}_a$. There are no boundary conditions because the hypersurfaces are closed.

We shall make some simplifying assumptions on the system above. The thermodynamically relevant mobility term $M(u)$ should vanish at the pure phases $\pm 1$, effectively restricting diffusion to the interface between the two components. It is generally taken to be $M(u)=1-u^2$, and \eqref{eq:CHsystem} becomes a degenerate system. Even in the classical setting, there are not many results available for the degenerate Cahn-Hilliard equation (see \cite{EllGar96a, DaiDu14, DaiDu16}), and it is common to consider instead the constant mobility Cahn-Hilliard problem. We will do so in this text. Taking $M\equiv 1$, we are thus led to the system
\begin{align}
\begin{split}\label{eq:CHsystem}
\partial^\bullet u + u\tgrad\cdot \mathbf{V} - \tgrad\cdot\left(u\mathbf{V}^\tau_a\right) &= \Delta_\Gamma w  \\
- \Delta_\Gamma u + F'(u) &= w 
\end{split},
\end{align}
and this is the problem we analyse over the subsequent sections. We also make some assumptions on the different velocity fields involved in the derivation:
\begin{itemize}
\item[(i)] \eqref{eq:velbound} holds;
\item[(ii)] for the advective velocity $\mathbf{V}_\mathbf{a}$, we assume a uniform bound $\|\mathbf{V}_\mathbf{a}\|_{L^\infty}\leq C_\mathbf{a}$.
\end{itemize}

We now have all the conditions to formulate the weak form of \eqref{eq:CHsystem}.

\subsection{Problem setup}

We start by introducing some notation. Define, for $t\in [0,T]$, the bilinear forms:
\begin{itemize}
\item[(i)] for $\eta, \varphi\in L^2(\Gamma(t))$, the terms of order $0$
\begin{align}
m(t; \eta, \varphi) &:= \int_{\Gamma(t)} \eta \, \varphi \\
g(t; \eta, \varphi) &:= \int_{\Gamma(t)}\eta\, \varphi \,\nabla_{\Gamma(t)}\cdot \mathbf{V}(t);
\end{align}
\item[(ii)] for $\eta\in L^2(\Gamma(t))$, $\varphi\in H^1(\Gamma(t))$, the first order term
\begin{align}
a_N(t; \eta, \varphi) := \int_{\Gamma(t)} \eta \, \mathbf{V}_a^\tau(t) \cdot \nabla_{\Gamma(t)} \varphi;
\end{align}
\item[(iii)] for $\eta, \varphi\in H^1(\Gamma(t))$, the second order terms
\begin{align}
a_S(t; \eta, \varphi) &:= \int_{\Gamma(t)} \nabla_{\Gamma(t)} \eta \cdot \nabla_{\Gamma(t)} \varphi  \\
b(t; \eta, \varphi) &:= \int_{\Gamma(t)} \mathrm{B}(\mathbf{V}(t))\,\nabla_{\Gamma(t)} \eta\cdot\nabla_{\Gamma(t)} \varphi;
\end{align}
\item[(iv)] for $\eta\in H^{-1}(\Gamma(t))$, $\varphi\in H^1(\Gamma(t))$, the duality pairing
\begin{align}
m_\ast(t; \eta, \varphi) := \langle \eta, \varphi \rangle_{H^{-1}(\Gamma(t)), \, H^1(\Gamma(t))}
\end{align}
\end{itemize}
For simplicity of notation, we will from now on omit the dependence on $t\in [0,T]$. The weak form of the equations in \eqref{eq:CHsystem} then reads as
\begin{align}\label{eq:firsteq_setup}
\begin{split}
m_\ast (\partial^\bullet u, \eta) + g(u, \eta) + a_N(u, \eta) + a_S (w, \eta) &= 0, \\
a_S(u, \eta) + m(F'(u), \eta) - m(w, \eta) &= 0,
\end{split}
\end{align}
where the equations are satisfied for almost all $t\in [0,T]$ and all test functions $\eta\in L^2_{H^1}$. As usual, this is obtained by assuming $u, w$ are sufficiently smooth functions satisfying \eqref{eq:CHsystem}, multiplying the equations by an arbitrary $\eta\in L^2_{H^1}$, integrating over $\Gamma(t)$ and using integration by parts. 

As for the nonlinear term $F'(u)$, as we explained in the Introduction, it will be taken of smooth, logarithmic and double obstacle type, see \eqref{intro:smooth}, \eqref{intro:log}, \eqref{intro:obs}, and the second equation in \eqref{eq:firsteq_setup} is to be interpreted as a variational inequality for the third case. Regardless of the choice of the potential, we observe that, if $u$ solves the first equation of \eqref{eq:firsteq_setup}, then its integral is preserved over time; indeed, by testing the first equation with $\eta=1$, we obtain from Theorem \ref{thm:transportforms} that
\begin{align}\label{eq:integral_pres}
\dfrac{d}{dt} \int_{\Gamma(t)} u(t) = \langle \md u(t), 1\rangle_{H^{-1}, \, H^1} + \int_{\Gamma(t)} u(t) \tgrad\cdot\mathbf{V} = 0.
\end{align}
This is an important feature of the Cahn-Hilliard dynamics, and it will play an important role in the analysis of the equation, particularly for the study of the singular potentials.

\section{Smooth potentials}\label{sec:smooth}

In this section, we establish well-posedness of the Cahn-Hilliard system for the case of a potential $F$ satisfying, for some positive constants $\alpha_1, \alpha_2, \alpha_3, \alpha_4 > 0$ and real numbers $\beta_0, \beta_1, \beta_2, \beta_3, \beta_4\in \R$:
\begin{itemize}
\item[(A1)] $F(\varphi)\geq \beta_0$; \vskip 1mm
\item[(A2)] $F = F_1 + F_2$, where: \vskip 1mm
\begin{itemize}
\item[(A2.1)] $F_1$ and $F_2$ are $C^2(\R)$; \vskip 1mm
\item[(A2.2)] $F_1\geq 0$ is convex and satisfies, for some $q\in [1, \infty)$, $|F'_1(\varphi)| \leq \alpha_1 |\varphi|^q + \beta_1$; \vskip 1mm
\item[(A2.3)] $|F_1'(\varphi)| \leq \alpha_2 F_1(\varphi) + \beta_2$ and  $|\varphi F_1'(\varphi)| \leq \alpha_3 F_1(\varphi) + \beta_3$; \vskip 1mm
\item[(A2.4)] $|F'_2(\varphi)| \leq \alpha_4|\varphi| + \beta_4$.
\end{itemize}
\end{itemize}

The growth conditions above prescribe the behaviour of the potential at infinity, and are motivated by the ones given in \cite{GarKno20}. They require the nonlinearity to behave polynomially at infinity, and to be the sum of a convex term with a non-convex part which is essentially quadratic. These encapsulate the idea expressed in the introduction that we consider potentials whose graphs have a W-shape, and are satisfied in particular by the double-well polynomial potentials
\begin{align}
F(r) = \dfrac{r^4}{4} - \dfrac{r^2}{2}, \quad F(r) = \dfrac{(r^2-1)^2}{2},
\end{align}
which are frequently considered in the literature. In Section \ref{sec:nonsmooth}, we will establish well-posedness for the singular potentials by regularisation of these nonlinearities, and the conditions above are also general enough to be applied to these approximating problems.

The goal of this section is to prove well-posedness for the following problem.

\begin{problem}
Given $u_0\in H^1(\Gamma_0)$ and a potential $F$ satisfying the assumptions (A1)-(A2) above, the \textit{Cahn-Hilliard system with a smooth potential} is the following problem: find a pair $(u, w)$ such that:
\begin{itemize}
\item[(a)] $u\in H^1_{H^{-1}}\cap L^\infty_{H^1}$ and $w\in L^2_{H^1}$; \vskip 1mm
\item[(b)] the equations 
\begin{align}\label{eq:prob1}\tag{$\text{CH}_{\text{s}}$}
\begin{split}
m_\ast(\partial^\bullet u, \eta)+ g(u, \eta) + a_N(u, \eta) + a_S (w, \eta) &= 0, \\
a_S(u, \eta) + m(F'(u), \eta) - m(w, \eta) &= 0.
\end{split}
\end{align}
hold, for all $\eta\in L^2_{H^1}$ and almost every $t\in [0, T]$; \vskip 1mm
\item[(c)] $u(0)=u_0$ almost everywhere in $\Gamma_0$.
\end{itemize}
The pair $(u,w)$ is called a \textit{weak solution} of \eqref{eq:prob1}.
\end{problem}

\begin{remark}
Part (a) in the definition above ensures that our notion of a weak solution makes sense. Indeed:
\begin{itemize}
\item[(i)] the nonlinear term in \eqref{eq:prob1} is well-defined; since $u(t)\in H^1(\Gamma(t))$ for a.a. $t\in [0,T]$, the growth conditions (A1)-(A2) for $F$ imply that, for such $t$,
\begin{align}\label{eq:f_is_l2}
\begin{split}
\|F'(u(t))\|_{L^2(\Gamma(t))} &\leq C_1 \, \|\, |u(t)|^{q} \,\|_{L^2(\Gamma(t))} + C_2 \| \, |u| \, \|_{L^2(\Gamma(t))} \\
&\leq C_3 \, \|u\|_{H^1(\Gamma(t))}^q + C_2 \| u\|_{L^2(\Gamma(t))},
\end{split}
\end{align}
due to the Sobolev embedding theorem in Lemma \ref{lem:assumptionsgive}d), so that $F'(u(t))\in L^2(\Gamma(t))$;
\item[(ii)] the initial condition in (c) is also well-defined because $u\in H^1_{H^{-1}}\hookrightarrow C^0_{L^2}$, see Lemma \ref{prop:inclusions}b).
\end{itemize}  
\end{remark}

We prove existence of a solution by employing an evolving space Galerkin method.


\subsection{Galerkin approximation}\label{subsec:galerkin}

In order to define the approximating spaces, we pick a basis $\{\chi_j^0\colon j\in\N\}\subset H^1(\Gamma_0)$ consisting of smooth functions such that $\chi_0^1$ is constant, which we transport using the flow map to $\{\chi_j^t := \phi_t(\chi_j^0) \colon j\in\N\}\subset H^1(\Gamma(t))$ basis for $H^1(\Gamma(t))$. This definition implies the following transport formula for the basis functions,
\begin{align}
\md \chi_j^t \equiv 0, \quad \forall j\in \N,
\end{align}
which will be useful in setting up the approximating problem. We then define the approximation spaces as
\begin{align}
V_M(t) = \text{span}\{\chi_1^t, \dots, \chi_M^t\} \quad \text{ and } \quad L^2_{V_M} := \left\{\eta\in L^2_{H^1} \colon \eta(t)\in V_M(t), \,\, t\in [0,T]\right\}.
\end{align}
It follows that $L^2_{V_M}$ is dense in $L^2_{H^1}$ (and hence so it is in $L^2_{L^2}$). 

We consider also the $L^2$-projection operator 
\begin{align}
P_M^t\colon L^2(\Gamma(t))\to V_M(t) \subset H^1(\Gamma(t))
\end{align}
determined by the formula
\begin{align}
(P_M^t \eta - \eta, \varphi)_{L^2(\Gamma(t))} = 0, \quad \text{ for } \eta\in L^2(\Gamma(t)) \text{ and }  \varphi\in V_M(t).
\end{align}
It follows that $P_M^t$ satisfies, for all $\eta\in L^2(\Gamma(t))$,
\begin{align}
\|P_M^t \eta\|_{L^2(\Gamma(t))} \leq \|\eta\|_{L^2(\Gamma(t))} \quad \text{ and } \quad P_M^t \eta \to \eta \, \text{ in } L^2(\Gamma(t)).
\end{align}
We make the following additional assumption:

\vskip 3mm

\hskip -4mm \textbf{Assumption $\mathbf{(A_{P})}$}: We suppose that the projections $P_M^t$ satisfy:
\begin{itemize}
\item[(i)] At time $t=0$, if $\eta\in H^1(\Gamma_0)$ then 
\begin{align*}
\|P_M^0 \eta \|_{H^1(\Gamma_0)} \leq \|\eta\|_{H^1(\Gamma_0)} \quad \text{ and } \quad P_M^0 \eta \to \eta \,\, \text{ in } H^1(\Gamma_0).
\end{align*}
\item[(ii)] For any $\eps>0$, $t\in [0,T]$, and $\eta\in H^1(\Gamma(t))$, there exists $\tilde M\in \N$ such that we have the approximation estimate
\begin{align}\label{eq:projestimate}
\|P_M^t \eta- \eta\|_{L^2(\Gamma(t))} \leq \eps \,\|\eta\|_{H^1(\Gamma(t))}, \quad \forall M\geq \tilde M.
\end{align}
\end{itemize}

\vskip 3mm

The assumption above is reasonable, and satisfied by the usual choices of a Galerkin scheme; this can be seen with an easy calculation for the Fourier expansion, and in \cite{EllRan20} for a finite element approximation. We now set up the Galerkin approximation for \eqref{eq:prob1} in these spaces $L^2_{V_M}$ as follows.

\begin{problem}
The \textit{Galerkin approximation for} \eqref{eq:prob1} is the following problem:  for each $M\in\N$, find functions $u^M, w^M\in L^2_{V_M}$ with $\md u^M\in L^2_{V_M}$ such that, for any $\eta\in L^2_{V_M}$ and all $t\in [0, T]$,
\begin{align}\label{eq:prob1ga}\tag{$\text{CH}^M_{\text{s}}$}
\begin{split}
m_\ast(\md u^M, \eta) + g(u^M, \eta) + a_N(u^M, \eta) + a_S(w^M, \eta) &= 0, \\
a_S(u^M, \eta) + m(F'(u^M), \eta) - m(w^M, \eta) &= 0,
\end{split}
\end{align}
and $u^M(0)=P_M^0 u_0$ almost everywhere in $\Gamma_0$.
\end{problem}


We now proceed to find a local solution for the problem.

\begin{proposition}[Well-posedness for \eqref{eq:prob1ga}]
There exists a unique local solution pair to \eqref{eq:prob1ga}. More precisely, there exist functions $(u^M, w^M)$ satisfying \eqref{eq:prob1ga} on an interval $[0, t^*)$, $0<t^*\leq T$, together with the initial condition $u^M(0)=P_M^0 u_0$. The functions are of the form
\begin{align}\label{eq:linearcomb}
u^M(t) = \sum_{i=1}^M u_i^M(t) \chi_i^t, \quad w^M(t) = \sum_{i=1}^M w_i^M(t) \chi_i^t, \quad \quad t\in [0, t^*)
\end{align}
with coefficient functions $u_i^M\in C^1([0,t^*))$ and $w_i^M\in C^0([0, t^*))$, for every $i\in\{1, \dots, M\}$.
\end{proposition}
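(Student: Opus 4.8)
This is a local existence/uniqueness result for a finite-dimensional Galerkin approximation. The problem has:
- A coupled system: first equation involves time derivative, second is an elliptic constraint relating $u^M$ and $w^M$
- Expansion in time-dependent basis $\{\chi_i^t\}$ with $\md \chi_j^t \equiv 0$ (this is crucial!)
- Need to convert to an ODE system for coefficients $u_i^M(t)$, $w_i^M(t)$

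**Strategy:**

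The standard approach: substitute the expansions into the weak equations, choosing test functions $\eta = \chi_j^t$. This converts everything to a system of ODEs for the coefficient vectors. Then use the second equation to solve for $w^M$ coefficients in terms of $u^M$ coefficients (algebraically), substitute into the first equation to get a closed ODE system for $u_i^M$, and apply Picard-Lindelöf (Cauchy-Lipschitz).

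**Key steps:**

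1. Plug in expansions. Use $\md \chi_j^t = 0$ to simplify $m_\ast(\md u^M, \chi_j)$ — since $\md u^M = \sum \dot{u}_i^M \chi_i^t$ (the basis functions have zero material derivative), this gives $\sum_i \dot{u}_i^M m(\chi_i, \chi_j)$.

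2. Set up mass matrices. Define $\mathbf{M}(t) = (m(\chi_i, \chi_j))_{ij}$, which is the Gram/mass matrix — symmetric positive definite, hence invertible. Similarly stiffness matrix $\mathbf{S}(t) = (a_S(\chi_i, \chi_j))$ and the other bilinear form matrices.

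3. From the second equation: $\mathbf{S} \vec{u}^M + \vec{F}(\vec{u}^M) = \mathbf{M}\vec{w}^M$ where $\vec{F}$ is the nonlinear term $m(F'(u^M), \chi_j)$. Since $\mathbf{M}$ is invertible, solve $\vec{w}^M = \mathbf{M}^{-1}(\mathbf{S}\vec{u}^M + \vec{F}(\vec{u}^M))$.

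4. Substitute into first equation to get: $\mathbf{M}\dot{\vec{u}}^M = -(\text{terms in } \vec{u}^M, \vec{w}^M)$, then eliminate $\vec{w}^M$. This yields $\dot{\vec{u}}^M = G(t, \vec{u}^M)$.

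5. Verify $G$ is locally Lipschitz in $\vec{u}^M$ and continuous in $t$. The nonlinear term $\vec{F}$ requires $F' \in C^1$ (which holds since $F \in C^2$ by A2.1) — so $F'$ is $C^1$, hence locally Lipschitz. Time-regularity of matrices comes from smoothness of basis and flow.

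6. Apply Picard-Lindelöf to get local $C^1$ solution $\vec{u}^M$ on $[0, t^*)$. Then $\vec{w}^M$ is $C^0$ via the algebraic relation (continuity, not differentiability, since it's built from $\vec{u}^M \in C^1$ and continuous matrices).

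**Main obstacle:** Invertibility of the mass matrix $\mathbf{M}(t)$ uniformly and its continuous dependence on $t$ (so that $G$ is continuous in $t$), and carefully verifying the Lipschitz property of $\vec{F}$.

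---

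The plan is to reduce the functional system \eqref{eq:prob1ga} to a system of ordinary differential equations for the coefficient functions $\{u_i^M\}$ and then invoke the Picard--Lindel\"of theorem. First I would substitute the ansatz \eqref{eq:linearcomb} into both equations and test against each basis function $\chi_j^t$, for $j\in\{1,\dots,M\}$. The crucial simplification is the transport formula $\md \chi_j^t\equiv 0$: writing $u^M(t)=\sum_i u_i^M(t)\chi_i^t$, the product rule for the material derivative gives $\md u^M = \sum_i \dot{u}_i^M\,\chi_i^t$, so that the leading term becomes $m_\ast(\md u^M,\chi_j)=\sum_i \dot{u}_i^M\, m(\chi_i^t,\chi_j^t)$, with no extra contribution from differentiating the (moving) basis. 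Introducing the time-dependent mass matrix $\mathbf{M}(t)=\big(m(\chi_i^t,\chi_j^t)\big)_{i,j}$, the stiffness matrix $\mathbf{S}(t)=\big(a_S(\chi_i^t,\chi_j^t)\big)_{i,j}$, and analogous matrices $\mathbf{G}(t)$, $\mathbf{A}_N(t)$ encoding $g$ and $a_N$, the system becomes, for the coefficient vectors $\vec{u}^M=(u_i^M)_i$ and $\vec{w}^M=(w_i^M)_i$,
\begin{align}\label{eq:galerkin_matrix}
\begin{split}
\mathbf{M}\,\dot{\vec{u}}^M + \big(\mathbf{G}+\mathbf{A}_N\big)\vec{u}^M + \mathbf{S}\,\vec{w}^M &= 0, \\
\mathbf{S}\,\vec{u}^M + \vec{F}(\vec{u}^M) - \mathbf{M}\,\vec{w}^M &= 0,
\end{split}
\end{align}
where $\vec{F}(\vec{u}^M)_j := m\big(F'(u^M),\chi_j^t\big)$ collects the nonlinear contributions.

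The next step is elimination. Since $\{\chi_i^t\}$ is a basis of $V_M(t)$, the mass matrix $\mathbf{M}(t)$ is the Gram matrix of a linearly independent family in $L^2(\Gamma(t))$ and is therefore symmetric positive definite, hence invertible for every $t$. Solving the second equation of \eqref{eq:galerkin_matrix} gives $\vec{w}^M=\mathbf{M}^{-1}\big(\mathbf{S}\,\vec{u}^M+\vec{F}(\vec{u}^M)\big)$, and substituting into the first equation yields the closed, explicit system
\begin{align}
\dot{\vec{u}}^M = -\,\mathbf{M}^{-1}\Big[\big(\mathbf{G}+\mathbf{A}_N\big)\vec{u}^M + \mathbf{S}\,\mathbf{M}^{-1}\big(\mathbf{S}\,\vec{u}^M+\vec{F}(\vec{u}^M)\big)\Big] =: G\big(t,\vec{u}^M\big),
\end{align}
together with the initial datum $\vec{u}^M(0)$ determined by $u^M(0)=P_M^0 u_0$.

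It then remains to verify the hypotheses of Picard--Lindel\"of for $G$. Continuity in $t$ follows from the $C^0$-in-time regularity of the flow map and the smoothness of the basis functions, which make each entry of $\mathbf{M}$, $\mathbf{S}$, $\mathbf{G}$, $\mathbf{A}_N$ continuous in $t$; since $\mathbf{M}(t)$ is invertible with continuously varying entries, $\mathbf{M}(t)^{-1}$ is continuous as well (by Cramer's rule). For the dependence on $\vec{u}^M$, the only nonlinear term is $\vec{F}$, and here I would use assumption (A2.1), which guarantees $F\in C^2(\R)$, so that $F'$ is $C^1$ and hence locally Lipschitz; on a finite-dimensional space this transfers directly to local Lipschitz continuity of $\vec{u}^M\mapsto \vec{F}(\vec{u}^M)$. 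Picard--Lindel\"of then provides a unique maximal $C^1$ solution $\vec{u}^M$ on some interval $[0,t^*)$ with $0<t^*\leq T$. Finally, $\vec{w}^M=\mathbf{M}^{-1}\big(\mathbf{S}\,\vec{u}^M+\vec{F}(\vec{u}^M)\big)$ is $C^0$ on $[0,t^*)$, being a composition of continuous matrix-valued maps with the $C^1$ function $\vec{u}^M$, which yields the claimed regularity $u_i^M\in C^1([0,t^*))$, $w_i^M\in C^0([0,t^*))$.

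The main obstacle is essentially a matter of bookkeeping rather than depth: one must confirm that the elimination of $\vec{w}^M$ is legitimate, i.e.\ that $\mathbf{M}(t)$ is uniformly invertible with continuous inverse, and that all the abstract bilinear forms translate into matrices that are continuous in $t$. The uniform invertibility will later be important when passing to global-in-time existence, but for the purely local statement it suffices that $\mathbf{M}(t)$ is invertible pointwise with continuous entries. I expect no difficulty from the nonlinearity at this stage, precisely because we are working in the smooth-potential regime where $F\in C^2$.
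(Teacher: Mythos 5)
Your proposal is correct and follows essentially the same route as the paper's proof: both reduce \eqref{eq:prob1ga} to a matrix ODE system by testing with the transported basis (exploiting $\md\chi_j^t\equiv 0$), eliminate $\underline{w}^M$ through the invertible mass matrix, and conclude by standard ODE theory using $F'\in C^1$. Your write-up is in fact more careful than the paper's, which leaves the positive-definiteness of the Gram matrix and the continuity in $t$ implicit (and contains a slip, saying the \emph{first} equation is solved for $\underline{w}^M$ when it is the second, exactly as you do).
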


\begin{proof}
We write $u^M(t)=\sum_{i=1}^M u_i^M(t) \chi_i^t$ and $w^M(t)=\sum_{i=1}^M w_i^M(t) \chi_i^t$ and test \eqref{eq:prob1ga} with the basis function $\chi_j^t$ to write the problem above as
\begin{align}
\sum_{i=1}^M \dot{u}_i^M(t)m_{ij}(t) + u_i^M(t) g_{ij}(t) + u_i^M(t) a^N_{ij}(t) + w_i^M(t)a^S_{ij}(t)&=0 \\
\sum_{i=1}^M  u_i^M(t) a^S_{ij}(t) + f'_j(u^M(t)) - w_i^M(t) m_{ij}(t) &= 0,
\end{align}
which in matrix form reads as 
\begin{align}
M(t)\dot{\underline{u}}^M(t) + \underline{G}(t)(\underline{u}^M(t)) + A_N(t) \underline{u}^M(t) + A_S(t)\underline{w}^M(t) = 0 \\
 A_S(t)\underline{u}^M(t) +  \underline{F}'(\underline{u}^M(t)) - M(t)\underline{w}^M(t) = 0,
\end{align}
where we denote, for $i, j=1,\dots, M$ and any $t$, the solution vectors 
\begin{align}
\underline{u}^M(t) = (u_1^M(t), \dots, u_M^M(t)), \quad \underline{w}^M(t) = (w_1^M(t), \dots, w_M^M(t)),
\end{align}
the coefficient matrices
\begin{align}
\big(M(t)\big)_{ij} = m_{ij}(t):= m(t; \chi_i^t, \chi_j^t), &\quad \big(G(t)\big)_{ij} = g_{ij}(t) := g(t; \chi_i^t, \chi_j^t), \\
\big(A_S(t)\big)_{ij}= a^S_{ij}(t) := a_S(t; \chi_i^t, \chi_j^t), &\quad \big(A_N(t)\big)_{ij}= a^N_{ij}(t) := a_N(t; \chi_i^t, \chi_j^t)
\end{align}
and the nonlinear term by
\begin{align}
\underline{F}'(\underline{u}^M(t))_j &= F_j'(u^M(t)) := m(t; F'(u^M(t)), \chi_j^t).
\end{align}
Solving the first equation for $\underline{w}^M(t)$  and substituting on the second we obtain a semilinear first-order ODE for $\underline{u}^M$ together with an initial condition $u^M(0)=P_M^0 u_0$. Since $F'$ is $C^1$, the result follows from the general theory of ODEs.
\end{proof}
We are now ready to establish a priori bounds for the equation. Before we state the next result, let us recall the definition of the Cahn-Hilliard energy functional:
\begin{align}
\mathrm{E}^\mathrm{CH}[u] = \int_{\Gamma(t)} \dfrac{|\nabla_{\Gamma(t)} u|^2}{2} + F(u).
\end{align}

\begin{proposition}\label{prop:propenergy}
There exists $\tilde{M}\in\N$ such that the local solution $(u^M,w^M)$ of \eqref{eq:prob1ga} satisfies the energy estimate
\begin{align}\label{eq:energy}
\sup_{[0, t^*)}\mathrm{E}^\mathrm{CH}[u^M] + \dfrac{1}{2}\int_0^{t^*} \|\nabla_\Gamma w^M\|_{L^2}^2 &\leq C, \quad \text{ for } M\geq \tilde{M},
\end{align}
where the constant $C>0$ depends only on the final time $T$, the constants in (A1)-(A2) for the potential $F$ and the initial condition $u_0$. In particular, solutions $(u^M, w^M)$ are defined on $[0,T]$, and we have
\begin{align}\label{eq:energy}
\sup_{[0, T]}\mathrm{E}^\mathrm{CH}[u^M] +  \dfrac{1}{2}\int_0^{T} \|\nabla_\Gamma w^M\|_{L^2}^2  &\leq C, \quad \text{ for } M\geq \tilde{M}.
\end{align}
\end{proposition}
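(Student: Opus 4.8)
The plan is to derive an energy--dissipation identity by testing the two equations of \eqref{eq:prob1ga} with their natural multipliers and then to estimate the extra terms produced by the motion of $\Gamma(t)$. Concretely, I would test the first equation with $\eta=w^M$ and the second with $\eta=\md u^M$ (both admissible, since $\md u^M\in L^2_{V_M}$). Using $a_S(w^M,w^M)=\|\tgrad w^M\|_{L^2}^2$, and reassembling $a_S(u^M,\md u^M)+m(F'(u^M),\md u^M)=m(w^M,\md u^M)$ into $\tfrac{d}{dt}\mathrm{E}^\mathrm{CH}[u^M]$ by means of the transport formula of Proposition \ref{prop:derivinnerprod} (which produces $\tfrac12 b(u^M,u^M)$ from $\tfrac{d}{dt}\tfrac12\|\tgrad u^M\|_{L^2}^2$) and Theorem \ref{thm:transportforms} (which gives $\tfrac{d}{dt}\int_{\Gamma(t)}F(u^M)=m(F'(u^M),\md u^M)+\int_{\Gamma(t)}F(u^M)\,\tgrad\cdot\mathbf V$), I arrive at
\begin{align}
\frac{d}{dt}\mathrm{E}^\mathrm{CH}[u^M] + \|\tgrad w^M\|_{L^2}^2 = \tfrac12 b(u^M,u^M) + \int_{\Gamma(t)}F(u^M)\,\tgrad\cdot\mathbf V - g(u^M,w^M) - a_N(u^M,w^M).
\end{align}
The right-hand side collects exactly the contributions of the evolving geometry and the advection, and the aim is to dominate each by $C(\mathrm{E}^\mathrm{CH}[u^M]+1)$ plus a small multiple of the dissipation $\|\tgrad w^M\|_{L^2}^2$.

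The first two terms are benign. Since $\mathrm B(\mathbf V)$ is bounded by \eqref{eq:velbound}, one has $|b(u^M,u^M)|\le C\|\tgrad u^M\|_{L^2}^2\le 2C\,\mathrm{E}^\mathrm{CH}[u^M]+C$; and using (A1), $F\ge\beta_0$, together with the uniform area bound \eqref{eq:areas}, $\int|F(u^M)|\le\int F(u^M)+C\le\mathrm{E}^\mathrm{CH}[u^M]+C$, so the potential term is likewise $\le C(\mathrm{E}^\mathrm{CH}[u^M]+1)$. To treat the advection term I need an $L^2$--bound on $u^M$ itself: testing the first equation with the constant basis function shows, as in \eqref{eq:integral_pres}, that $\int_{\Gamma(t)}u^M$ is conserved, hence its mean is uniformly bounded (recall $u^M(0)=P_M^0u_0$), and the Poincar\'e inequality \eqref{eq:poincare} then gives $\|u^M\|_{L^2}^2\le C\|\tgrad u^M\|_{L^2}^2+C\le C(\mathrm{E}^\mathrm{CH}[u^M]+1)$. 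The advection term is then immediate by Young's inequality: $|a_N(u^M,w^M)|\le C\|u^M\|_{L^2}\|\tgrad w^M\|_{L^2}\le\eps\|\tgrad w^M\|_{L^2}^2+C(\mathrm{E}^\mathrm{CH}[u^M]+1)$.

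The delicate term is $g(u^M,w^M)=\int_{\Gamma(t)}u^Mw^M\,\tgrad\cdot\mathbf V$, which involves $w^M$ rather than $\tgrad w^M$. I would split $w^M=(w^M-(w^M)_{\Gamma})+(w^M)_\Gamma$; the fluctuation is absorbed into the dissipation by Poincar\'e and Young as above, while testing the second equation with the constant gives $(w^M)_\Gamma=\tfrac{1}{|\Gamma|}\int F'(u^M)$. Here lies the main obstacle: the crude bound $|(w^M)_\Gamma|\le C(\mathrm{E}^\mathrm{CH}[u^M]+1)$, paired with $\|u^M\|_{L^2}\sim(\mathrm{E}^\mathrm{CH})^{1/2}$, produces a superlinear $(\mathrm{E}^\mathrm{CH})^{3/2}$ contribution that Gronwall cannot close. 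The resolution is to exploit that the \emph{genuine} size of $\int u^Mw^M$ is only $O(\mathrm{E}^\mathrm{CH})$: testing the second equation with $u^M$ yields $\int u^Mw^M=\|\tgrad u^M\|_{L^2}^2+\int u^MF'(u^M)$, and the correlation $\int u^MF'(u^M)$ is controlled linearly by (A2.3)--(A2.4), so $|\int u^MF'(u^M)|\le C(\mathrm{E}^\mathrm{CH}[u^M]+1)$. I therefore expect the crux to be organising $g(u^M,w^M)$ so as to expose this correlation structure rather than estimating $(w^M)_\Gamma$ and $u^M$ separately; for the mean--value piece the convexity of $F_1$, through $F_1'(\varphi)\big((u^M)_\Gamma-\varphi\big)\le F_1\big((u^M)_\Gamma\big)-F_1(\varphi)$, turns the offending product of integrals back into the linearly controlled correlation. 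It is precisely the weight $\tgrad\cdot\mathbf V$ (absent in the fixed-domain case) that makes this step nontrivial, and the Galerkin projection must be handled with care, which is where assumption $\mathbf{(A_P)}$ and the threshold $M\ge\tilde M$ intervene.

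Granting these bounds, the identity reduces to the differential inequality $\tfrac{d}{dt}\mathrm{E}^\mathrm{CH}[u^M]+\tfrac12\|\tgrad w^M\|_{L^2}^2\le C(\mathrm{E}^\mathrm{CH}[u^M]+1)$ on $[0,t^*)$, with $C$ independent of $M$ and depending only on $T$, the constants in (A1)--(A2), and the velocity bounds. Gronwall's lemma then gives $\sup_{[0,t^*)}\mathrm{E}^\mathrm{CH}[u^M]\le C\big(\mathrm{E}^\mathrm{CH}[u^M(0)]+1\big)e^{CT}$, and integrating the inequality controls $\int_0^{t^*}\|\tgrad w^M\|_{L^2}^2$. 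Uniformity in $M$ comes from $\mathbf{(A_P)}$(i): since $P_M^0u_0\to u_0$ in $H^1(\Gamma_0)$, the growth conditions and the Sobolev embeddings of Lemma \ref{lem:assumptionsgive}(d) give $\mathrm{E}^\mathrm{CH}[u^M(0)]=\mathrm{E}^\mathrm{CH}[P_M^0u_0]\le\mathrm{E}^\mathrm{CH}[u_0]+1$ for all $M\ge\tilde M$, which fixes $\tilde M$. Finally, the energy bound yields a uniform bound on $\|u^M(t)\|_{H^1(\Gamma(t))}$, hence on the coefficient vector $\underline u^M$, so the local solution cannot blow up and the standard ODE continuation argument extends it to all of $[0,T]$, giving the global form of the estimate.
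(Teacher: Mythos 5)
Your setup---the energy identity obtained by testing with $w^M$ and $\md u^M$, and the bounds for $b(u^M,u^M)$, $g(F(u^M),1)$ and $a_N(u^M,w^M)$ via conservation of the mean and the Poincar\'e inequality---matches the paper's proof exactly. The gap is in the one step you yourself flag as the crux: the term $g(u^M,w^M)=\int_{\Gamma(t)}u^Mw^M\,\tgrad\cdot\mathbf V$ is never actually estimated, only an ``expected'' organisation of it is described, and the resolution you sketch cannot deliver the linear bound you then assume. Testing the second equation with $u^M$ does show $m(u^M,w^M)=\|\tgrad u^M\|^2_{L^2}+m(F'(u^M),u^M)=O(\mathrm{E}^\mathrm{CH}[u^M]+1)$ via (A2.3)--(A2.4), but this controls the \emph{unweighted} correlation; to expose the same structure inside $g$ one must test with $u^M\tgrad\cdot\mathbf V$, which does not belong to $V_M(t)$, so the second Galerkin equation may only be used after inserting the projection, i.e.\ with $\eta=P_M(u^M\tgrad\cdot\mathbf V)$. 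This is what the paper does in \eqref{eq:alternativeg}: the weighted correlation $m(F'(u^M),u^M\tgrad\cdot\mathbf V)$ is indeed linear in the energy by (A2.3), but the projection error contributes $\|F'(u^M)\|_{L^2}\,\|P_M(u^M\tgrad\cdot\mathbf V)-u^M\tgrad\cdot\mathbf V\|_{L^2}\le C\,\eps\,\|u^M\|_{H^1}^{q+1}$ by $\mathbf{(A_P)}$(ii), which is superlinear in $\mathrm{E}^\mathrm{CH}$ whenever $q>1$. Your alternative route through the mean value fares no better: $(w^M)_\Gamma=O(\mathrm{E}^\mathrm{CH}+1)$ together with $\int_{\Gamma(t)} u^M\tgrad\cdot\mathbf V=O\big((\mathrm{E}^\mathrm{CH}+1)^{1/2}\big)$ leaves a genuine $(\mathrm{E}^\mathrm{CH})^{3/2}$ term, and the convexity inequality you invoke does not remove the weight $\tgrad\cdot\mathbf V$.

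Consequently your final differential inequality $\frac{d}{dt}\mathrm{E}^\mathrm{CH}[u^M]+\frac12\|\tgrad w^M\|_{L^2}^2\le C(\mathrm{E}^\mathrm{CH}[u^M]+1)$, and with it the closure by the classical Gronwall lemma, is unjustified for general $q$. What the paper actually obtains, in \eqref{eq:energyfinal}, is $\frac{d}{dt}\mathrm{E}^\mathrm{CH}[u^M]+\frac12\|\tgrad w^M\|^2_{L^2}\le C\big(C_0+\mathrm{E}^\mathrm{CH}[u^M]+\eps\,(\mathrm{E}^\mathrm{CH}[u^M])^{q+1}\big)$, and the closure requires the generalised Gronwall Lemma \ref{lem:gengronwall} under the smallness condition \eqref{eq:epsilon}. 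This is precisely the origin of the threshold $\tilde M$: one takes $M\ge\tilde M$ so that $\mathbf{(A_P)}$(ii) makes the coefficient $\eps$ of the superlinear term small enough for the generalised Gronwall lemma to apply uniformly in $t_*$. You instead attribute $\tilde M$ to $\mathbf{(A_P)}$(i), which only guarantees uniform boundedness of the initial energy $\mathrm{E}^\mathrm{CH}[P_M^0u_0]$ (a part you handle correctly). A purely linear inequality closed by classical Gronwall is available only when $F'$ has quadratic growth, $q=1$, as noted in Remark \ref{rem:extraassump}. Your continuation argument extending the solution to $[0,T]$ from the uniform energy bound is fine once the estimate itself is secured.
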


\begin{proof}
Differentiate the energy functional and use the equations \eqref{eq:prob1ga} to obtain
\begin{align}
\dfrac{d}{dt}\mathrm{E}^\mathrm{CH}[u^M] &= a_S\left(\partial^\bullet u^M, u^M\right) + b\left(u^M, u^M\right) + m\left(F'(u^M), \partial^\bullet u^M\right) + g\left(F(u^M), 1\right) \\
&= m\left(w^M, \partial^\bullet u^M\right) + b\left(u^M, u^M\right) + g\left(F(u^M), 1\right) \\
&= - g\left(u^M, w^M\right) - a_N\left(u^M, w^M\right) - a_S(w^M, w^M) + b\left(u^M, u^M\right) \\
&\hskip 8cm + g\left(F(u^M), 1\right),
\end{align}
which gives
\begin{align}\label{eq:1}
\dfrac{d}{dt}\mathrm{E}^\mathrm{CH}[u^M] + \|\tgrad w^M\|^2\L = - g(u^M, w^M) - a_N\left(u^M, w^M\right) + b\left(u^M, u^M\right) + g\left(F(u^M), 1\right)
\end{align}

To estimate the terms on the right hand side, we use the assumptions on the velocity field to obtain, for the last two terms,
\begin{align}\label{eq:eqdependent}
b\left(u^M, u^M\right) + g\left(F(u^M), 1\right) &\leq C_\mathbf{V} \|\tgrad u^M\|^2\L + C_\mathbf{V}\, m\left(F(u^M), 1\right) + C \\
&\leq C + C_\mathbf{V} \, \mathrm{E}^\mathrm{CH}[u^M].
\end{align}
Using the assumption on the tangential velocities and Young's inequality yields
\begin{align}
|a_N (u^M, w^M)| \leq C \|u^M\|^2_{L^2} + \dfrac{1}{2} \|\tgrad w^M\|^2_{L^2}.
\end{align}
Observe that testing the first equation with $\eta=1$ shows that the integral of $u^M$ is preserved, exactly as in \eqref{eq:integral_pres}, and thus there exist $C_1, C_2>0$ such that
\begin{align}\label{eq:l2h1estimate}
\|u^M\|^2_{L^2} \leq C_1 \|\tgrad u^M\|^2_{L^2} + C_2,
\end{align}
from where
\begin{align}
|a_N(u^M, w^M)| \leq C_1 \|\tgrad u^M\|^2_{L^2} + C_2 + \dfrac{1}{2} \|\tgrad w^M\|^2_{L^2}.
\end{align}
Combining the above leads to 
\begin{align}
\dfrac{d}{dt} \mathrm{E}^\mathrm{CH}[u^M] + \dfrac{1}{2}\|\tgrad w^M\|^2_{L^2} \leq -g(u^M, w^M) +  C_0 + C_1\, \mathrm{E}^\mathrm{CH}[u^M].
\end{align}
Now we note that by definition of the projections $P_M^t$ we have
\begin{align}
g(u^M, w^M) = m\big(w^M, P_M(u^M\tgrad\cdot\mathbf{V})\big),
\end{align}
and so testing the second equation with $\eta=P_M(u^M\tgrad\cdot\mathbf{V})$ leads to 
\begin{align}
\begin{split}\label{eq:alternativeg}
|g(u^M, w^M)| &\leq \big|a_S \big(u^M, u^M\tgrad\cdot\mathbf{V}\big)\big| + \big| m \big( F'(u^M), P_M(u^M\tgrad\cdot\mathbf{V})\big) \big| \\
&\leq C_0 + C_1 \|\tgrad u^M\|^2_{L^2} + \big| m \big( F'(u^M), P_M(u^M\tgrad\cdot\mathbf{V})\big) \big| \\
&\leq C_0 + C_1 \mathrm{E}^\mathrm{CH}[u^M] +  \big| m \big( F'(u^M), P_M(u^M\tgrad\cdot\mathbf{V})\big) \big|
\end{split}
\end{align}
As for the remaining term, we note that
\begin{align}
\big| m \big( F'(u^M), P_M(u^M\tgrad\cdot\mathbf{V})\big) \big| &\le \big| m \big( F'(u^M), u^M\tgrad\cdot\mathbf{V}) \big| \\
&\hskip 1cm + \big| m \big( F'(u^M), \left(P_M(u^M\tgrad\cdot\mathbf{V}) -u^M\tgrad\cdot\mathbf{V}\right) \big) \big| \\
&\hskip -2cm \leq C_1 \, m(F(u^M), 1) + \|F'(u^M)\|_{L^2} \, \|P_M(u^M\tgrad\cdot\mathbf{V}) -u^M\tgrad\cdot\mathbf{V}\|_{L^2}.
\end{align}
Using Assumption $\mathbf{(A_P)}$, for any $\eps>0$ we can  choose $\tilde M\in \N$ sufficiently large so that
\begin{align}
\|P_M(u^M\tgrad\cdot\mathbf{V}) -u^M\tgrad\cdot\mathbf{V}\|_{L^2} \leq \eps \, \|u^M\,\tgrad\cdot \mathbf{V}\|_{H^1} \leq C \, \eps \, \|u^M\|_{H^1},
\end{align}
which leads to, combining the estimate above with Remark \eqref{eq:f_is_l2},
\begin{align}
\big| m \big( F'(u^M), P_M(u^M\tgrad\cdot\mathbf{V})\big) \big| &\leq C_1 \, \mathrm{E}^\mathrm{CH}(u^M) + C_2 \, \eps \, \|u^M\|^{q+1}_{H^1} \\
&\leq C_0 + C_1 \, \mathrm{E}^\mathrm{CH}(u^M) + C_3\, \eps \left(\mathrm{E}^\mathrm{CH}[u^M]\right)^{q+1}. 
\end{align}
All in all, we have proved that
\begin{align}\label{eq:energyfinal}
\dfrac{d}{dt} \mathrm{E}^\mathrm{CH}[u^M] + \dfrac{1}{2}\|\tgrad w^M\|^2_{L^2} \leq C \left( C_0 + \, \mathrm{E}^\mathrm{CH}[u^M] + \, \eps \, \left(\mathrm{E}^\mathrm{CH}[u^M]\right)^{q+1} \right),
\end{align}
for some constants $C, C_0>0$ independent of both $M$ and $t$. By picking $\eps>0$ small enough, we can apply the generalised Gronwall inequality in Lemma \ref{lem:gengronwall} to obtain a uniform bound 
\begin{align}\label{eq:energyfinal}
\sup_{[0,t_\ast]} \mathrm{E}^\mathrm{CH}[u^M] + \int_0^{t_\ast} \dfrac{1}{2}\|\tgrad w^M\|^2_{L^2} \leq C_1 + C_2 \mathrm E^\mathrm{CH}(P_M^0 u_0) \leq C_3, \,\, M \geq \tilde{M},
\end{align}
where $C_3$ is independent of $M$ and $t_\ast$ due to Assumption $\mathbf{(A_P)}$(i).
\end{proof}

\begin{remark}\label{rem:extraassump}
Observe that Assumption $\mathbf{(A_P)}$(ii) was used in order to obtain a small coefficient in front of the higher order term in \eqref{eq:energyfinal}. However, in the case that the potential has quadratic growth at infinity (i.e. $q=1$ in (A2.2)), we can obtain the global energy estimate without resorting to Assumption $\mathbf{(A_P)}$. Indeed, in this case we would have $\|F'(u^M)\|_{L^2} \leq C_1 + C_2 \|u^M\|_{L^2}$, and so we directly replace \eqref{eq:alternativeg} with
\begin{align}
|g(u^M, w^M)| \leq C_0 + C_1 \|\tgrad u^M\|^2_{L^2} + C_2 \|u^M\|^2_{L^2} \leq C_3 + C_4 \mathrm{E}^\mathrm{CH}[u^M], 
\end{align}
which then leads, instead of \eqref{eq:energyfinal}, to 
\begin{align}
\dfrac{d}{dt} \mathrm{E}^\mathrm{CH}[u^M] + \dfrac{1}{2} \|\tgrad w^M\|^2_{L^2} \leq C_1 + C_2 \mathrm{E}^\mathrm{CH}[u^M].
\end{align}
The same conclusion as above now follows from the classical Gronwall inequality, valid for all $M$. 
\end{remark}
%
%

For the rest of the section, we always consider $M\geq \tilde{M}$ so that the energy estimate above is satisfied. Observe that, unlike the case of a stationary domain, in our setting the energy \eqref{eq:energy} does not necessarily decrease along a solution to the Cahn-Hilliard equation. It is also important to note that the constants involved in the previous estimates are independent of $M$ but rely strongly on those constants appearing in assumptions (A1)-(A2) for the potential. 

As a consequence of the result above:

\begin{corollary}[A priori bounds]\label{cor:cor1}
We have uniform bounds for $u^M$, $w^M$
in $L^\infty_{H^1}$, $L^2_{H^1}$, 
respectively. More precisely, 
\begin{align}
 \sup_{t\in [0,T]} \|u^M(t)\|_{H^1(\Gamma(t))} + \int_0^T \|w^M(t)\|_{H^1(\Gamma(t))}^2  \leq C 
\end{align}
where $C>0$ is a constant depending only on the initial condition, on the potential and on the final time $T$. 

In particular, we also have that $u^M$ is bounded in $L^\infty_{L^p}$, for all $p\in [1,+\infty)$, and $F'(u^M)$ is bounded in $L^\infty_{L^2}$. 
\end{corollary}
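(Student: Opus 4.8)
The plan is to deduce everything from the energy estimate \eqref{eq:energy} together with the conservation of the integral of $u^M$ and the Poincar\'e inequality, processing the claims in the order $L^\infty_{H^1}$ bound for $u^M$, then $L^\infty_{L^p}$ and the $L^\infty_{L^2}$ bound for $F'(u^M)$, then finally the $L^2_{H^1}$ bound for $w^M$. First I would extract the gradient bound: since $F\geq \beta_0$ by (A1) and $|\Gamma(t)|$ is controlled uniformly by \eqref{eq:areas}, the energy estimate gives
\begin{align}
\tfrac12 \|\tgrad u^M(t)\|_{L^2(\Gamma(t))}^2 \leq \mathrm{E}^\mathrm{CH}[u^M(t)] - \beta_0|\Gamma(t)| \leq C,
\end{align}
uniformly in $t\in[0,T]$ and $M\geq \tilde M$. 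To upgrade this to a full $H^1$ bound I would use that testing the first equation of \eqref{eq:prob1ga} with the constant function $\eta\equiv 1$ (which lies in $V_M(t)$, since the first basis element is constant and is transported to a constant by the flow) preserves the integral of $u^M$, exactly as in \eqref{eq:integral_pres}; hence $\int_{\Gamma(t)} u^M(t) = \int_{\Gamma_0} P_M^0 u_0$, which is bounded uniformly in $M$ because $P_M^0 u_0 \to u_0$ in $H^1(\Gamma_0)$ by Assumption $\mathbf{(A_P)}$(i). The mean value $(u^M)_{\Gamma(t)}$ is therefore uniformly bounded, and the Poincar\'e inequality \eqref{eq:poincare} then yields \eqref{eq:l2h1estimate}, so that $\sup_{[0,T]}\|u^M\|_{H^1(\Gamma(t))}^2 \leq C$, which is the desired $L^\infty_{H^1}$ bound.

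The $L^\infty_{L^p}$ bounds then follow immediately from the Sobolev embedding $H^1(\Gamma(t))\hookrightarrow L^p(\Gamma(t))$ of Lemma \ref{lem:assumptionsgive}(d1), whose continuity constants are independent of $t$; and the $L^\infty_{L^2}$ bound on $F'(u^M)$ follows by feeding the uniform $H^1$ bound into the growth estimate \eqref{eq:f_is_l2}.

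For $w^M$ I would argue analogously. The energy estimate already controls $\int_0^T \|\tgrad w^M\|_{L^2(\Gamma(t))}^2$, so only the mean value of $w^M$ remains. Testing the second equation of \eqref{eq:prob1ga} with $\eta\equiv 1$ kills the $a_S$-term and gives $\int_{\Gamma(t)} w^M = \int_{\Gamma(t)} F'(u^M)$; combined with the just-established $L^\infty_{L^2}$ bound on $F'(u^M)$ and the area bound \eqref{eq:areas}, this bounds $(w^M)_{\Gamma(t)}$ uniformly in $t$. Applying Poincar\'e \eqref{eq:poincare} to $w^M-(w^M)_{\Gamma(t)}$ and integrating in time then controls $\int_0^T \|w^M\|_{L^2(\Gamma(t))}^2$, which together with the gradient bound yields the $L^2_{H^1}$ bound.

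I expect the only genuinely delicate point to be the control of the mean (zeroth) modes, since neither $u^M$ nor $w^M$ has an a priori decaying average: everything hinges on the conservation law $\frac{d}{dt}\int u^M = 0$ for $u^M$ and on testing the constitutive equation with constants for $w^M$, and in both cases on the fact that constants genuinely belong to the discrete spaces $V_M(t)$ -- which is precisely why the basis was chosen with a constant first element transported to a constant by the flow. The remaining steps are routine applications of Poincar\'e, the uniform area bounds, and the growth conditions (A1)--(A2).
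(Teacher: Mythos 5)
Your proof is correct and takes essentially the same route as the paper: the energy estimate plus conservation of $\int_{\Gamma(t)} u^M$ and the Poincar\'e inequality give the $L^\infty_{H^1}$ bound, testing the second equation with $\eta\equiv 1$ plus Poincar\'e handles $w^M$, and the Sobolev embedding together with the growth conditions yields the remaining claims. The only cosmetic difference is that you invoke Assumption $(\mathbf{A_P})$(i) to bound $\int_{\Gamma_0} P_M^0 u_0$, whereas this integral in fact equals $\int_{\Gamma_0} u_0$ exactly, since constants belong to $V_M(0)$ and one may test the defining identity of the projection with $\varphi\equiv 1$.
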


\begin{proof}
The energy estimate \eqref{eq:energy} immediately yields a uniform bound for $\nabla_\Gamma u^M$ in $L^\infty_{L^2}$, and so \eqref{eq:l2h1estimate} implies that $u^M$ is uniformly bounded in $L^\infty_{H^1}$.

To show the bound for the $H^1$-norm of $w^M$, we first note that testing the second equation with $\eta=1$ leads to 
\begin{align}
m \left( w^M, 1\right) = m \left( F'(u^M), 1 \right),
\end{align}
and this is uniformly bounded due to \eqref{eq:f_is_l2} and the uniform estimate for $u^M$ in $H^1$. Combined with the uniform bound for $\nabla_\Gamma w^M$ in $L^2_{L^2}$ given by the previous result, a uniform bound for $w^M$ in $L^2_{H^1}$ follows again from an application of Poincar\'{e}'s inequality.

Finally, the fact that $u^M$ is bounded in $L^\infty_{L^p}$ follows from the Sobolev embedding result in Lemma \ref{lem:assumptionsgive}d), and this combined with the growth assumptions on $F$ imply that $F'(u^M)$ is bounded in $L^\infty_{L^2}$. 

\end{proof}

\subsection{Passage to the limit}

The a priori bounds in the previous result allow us to obtain limit functions $u\in L^\infty_{H^1}$ and $w\in L^2_{H^1}$ such that, as $M\to\infty$ and up to taking subsequences,
\begin{align}
u^M \overset{\ast}{\rightharpoonup} u \, \text{ in } \, L^\infty_{H^1} \quad \text{ and } \quad w^M \rightharpoonup w \, \text{ in } \, L^2_{H^1}.
\end{align}

We obtain the stronger convergence $u^M\to u$ in $L^2_{L^2}$ by using Lemma \ref{lem:temam_general}. Since the condition in (ii) follows from the uniform bound for $u^M$ in $L^\infty_{H^1}$, it suffices to prove:

\begin{lemma}
For a.e. $t\in [0,T]$, $u^M(t) \rightharpoonup u(t)$ in $L^2(\Gamma(t))$.
\end{lemma}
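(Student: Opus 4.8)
The goal is to establish the pointwise-in-time weak convergence $u^M(t) \rightharpoonup u(t)$ in $L^2(\Gamma(t))$ for a.e. $t$, which together with the uniform $L^\infty_{H^1}$ bound (giving condition (ii) of Theorem \ref{lem:temam_general}) upgrades the weak-$*$ convergence to strong convergence in $L^2_{L^2}$.

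\begin{proof}
The plan is to exploit the weak differentiability of $u^M$ together with the density of the Galerkin spaces. First, I would fix an arbitrary basis function $\chi_j^0\in H^1(\Gamma_0)$ and consider, for each $M\geq j$, the scalar function
\begin{align}
t\mapsto \big(u^M(t), \phi_t \chi_j^0\big)_{L^2(\Gamma(t))}.
\end{align}
Using Lemma \ref{lem:criterionweakderiv} and the transport formula $\md(\phi_t\chi_j^0)\equiv 0$ for the transported basis, the time derivative of this function is controlled by $m_\ast(\md u^M, \phi_t\chi_j^0)$ and the term involving $\tgrad\cdot\mathbf{V}$. The first Galerkin equation in \eqref{eq:prob1ga}, tested with $\eta=\phi_t\chi_j^0\in L^2_{V_M}$ (valid once $M\geq j$), expresses $m_\ast(\md u^M, \phi_t\chi_j^0)$ in terms of $g$, $a_N$, and $a_S(w^M, \cdot)$. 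The a priori bounds in Corollary \ref{cor:cor1} then show that these time derivatives are bounded in $L^2(0,T)$ uniformly in $M$, so that the family of scalar functions above is uniformly bounded and equicontinuous (in fact uniformly bounded in $H^1(0,T)\hookrightarrow C^{0,1/2}([0,T])$).

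Next, by Arzel\`a--Ascoli I would extract, along the subsequence already chosen for the weak-$*$ limit, pointwise convergence for each fixed $j$, and by a diagonal argument obtain convergence for all $j$ simultaneously on the full interval $[0,T]$. The weak-$*$ convergence $u^M\overset{\ast}{\rightharpoonup}u$ in $L^\infty_{H^1}$ identifies the limit: for each $j$ and a.e. $t$,
\begin{align}
\big(u^M(t), \phi_t\chi_j^0\big)_{L^2(\Gamma(t))} \to \big(u(t), \phi_t\chi_j^0\big)_{L^2(\Gamma(t))}.
\end{align}
Since $\{\phi_t\chi_j^0 : j\in\N\}$ is a basis for $H^1(\Gamma(t))$, which is dense in $L^2(\Gamma(t))$, and since $\|u^M(t)\|_{L^2(\Gamma(t))}$ is uniformly bounded in $t$ and $M$ by the energy estimate, a standard density argument gives $u^M(t)\rightharpoonup u(t)$ in $L^2(\Gamma(t))$ for a.e. $t$.

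The main obstacle is obtaining the uniform-in-$M$ control of the time derivatives $\tfrac{d}{dt}(u^M(t),\phi_t\chi_j^0)_{L^2(\Gamma(t))}$ needed for equicontinuity, since no uniform bound on $\md u^M$ in $L^2_{H^{-1}}$ is assumed at this stage (which is precisely why Theorem \ref{lem:temam_general} is used in place of Aubin--Lions). The resolution is that one only needs the weaker, tested estimate against the \emph{fixed} function $\phi_t\chi_j^0$: the term $a_S(w^M,\phi_t\chi_j^0)$ is bounded by $\|\tgrad w^M\|_{L^2}\,\|\tgrad\chi_j^0\|$, which is in $L^2(0,T)$ uniformly in $M$ by the energy estimate, while the zeroth- and first-order terms are controlled directly by the $L^\infty_{H^1}$ bound on $u^M$. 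Care must be taken to ensure the Arzel\`a--Ascoli extraction is compatible across all $j$, which the diagonal procedure handles.
\end{proof}
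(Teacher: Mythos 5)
Your proof is correct and takes essentially the same route as the paper: both test the first Galerkin equation against the transported basis functions $\chi_j^t$ (using $\md\chi_j^t\equiv 0$ so the $g$-terms from the transport formula cancel against the equation), derive a uniform H\"older-$1/2$ equicontinuity bound for $f^M(t)=(u^M(t),\chi_j^t)_{L^2(\Gamma(t))}$ from the a priori estimates on $u^M$ in $L^\infty_{H^1}$ and $w^M$ in $L^2_{H^1}$, identify the pointwise limit with $(u(t),\chi_j^t)_{L^2(\Gamma(t))}$ via the already-established weak convergence, and finish by linearity and density of $\bigcup_N V_N(t)$ in $L^2(\Gamma(t))$ together with the uniform $L^2$ bound. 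The only cosmetic difference is that you invoke Arzel\`a--Ascoli plus a diagonal extraction where the paper cites \cite[Problem 19.14c]{Zei90}, which packages the same equicontinuity-plus-limit-identification step.
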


\begin{proof}
Define $$f^M(t) = (u^M(t), \chi_j^t)_{L^2(\Gamma(t))} \quad \text{ and } \quad f(t) = (u(t), \chi_j^t)_{L^2(\Gamma(t))}.$$ Using the equations for $u^M$ we have, for $M$ sufficiently large and $t\in [0,T]$,
\begin{align}\label{eq:eq_1_new}
f^M(t) - f^M(0)= \int_0^t a_S(w^M, \chi_j^\tau) - a_N(u^M, \chi_j^\tau) \, \d\tau.
\end{align}
so that, for $t,s\in [0,T]$ with $s<t$, we have
\begin{align*}
f^M(t) - f^M(s) = \int_s^t a_S(w^M, \chi_j^\tau) - a_N(u^M, \chi_j^\tau) \, \d\tau
\end{align*}
and therefore
\begin{align*}
|f^M(t) - f^M(s)|  
\leq C_1 \|\chi_j^0\|_{H^1(\Gamma_0)} |t-s|^{1/2} 
\end{align*}
where the constant $C$ depends only on the uniform bounds for $u^M$, $w^M$ in $L^2_{H^1}$ and compatibility of the pair $(H^1(\Gamma(t)), \phi_t)_t$. The estimate above implies equicontinuity of the sequence $(f^M)_M$. A diagonal argument also shows that $f^M(t) \to f(t)$ for $t$ in a countable, dense subset of $[0,T]$ and hence \eqref{eq:eq_1_new} follows from \cite[Problem 19.14c]{Zei90}. This now implies by linearity that for any $N\in\N$ and $v\in V_N(t)$ we have
\begin{align*}
(u^M(t) - u(t), v)_{L^2 (\Gamma(t))} \to 0.
\end{align*}
Now for general $v\in L^2(\Gamma(t))$ we can take $v^N\in V_N(t)$ such that $v^N\to v$ in $L^2(\Gamma(t))$ and we then have, using the above and the continuity of the inner product,
\begin{align*}
|(u^M(t) - u, v)_{L^2(\Gamma(t))}| &\leq |(u^M(t)-u, v-v^N)_{L^2(\Gamma(t))}| + |(u^M(t)-u, v^N)_{L^2(\Gamma(t))}| \\
&\leq C \|v - v^N\|_{L^2(\Gamma(t))} + |(u^M(t) - u, v^N)_{L^2(\Gamma(t))}|
\end{align*}
Letting $M\to\infty$ and then $N\to\infty$ yields the conclusion, finishing the proof.
\end{proof}

By Theorem \ref{lem:temam_general}, it follows that $u^M\to u$ in $L^2_{L^2}$. As a consequence, also up to a subsequence we have
\begin{align}\label{eq:conv_new}
u^M(t) \to u(t) \, \text{ in } L^2(\Gamma(t)) \quad \text{ and } \quad u^M(t) \to u(t) \, \text{ pointwise a.e. in } \Gamma(t).
\end{align}

The Sobolev embedding in Lemma \ref{lem:assumptionsgive}d) additionally implies $u\in L^\infty_{L^p}$, for all $p\in [1,+\infty)$. Due to continuity of $F'$, from \eqref{eq:conv_new} it follows that $F'(u^M(t))\to F'(u(t))$ pointwise a.e. in $\Gamma(t)$. We also observe that, using the equation and the transport formula in \eqref{thm:transportforms}, we have
\begin{align}
\dfrac{1}{2}\dfrac{d}{dt} \|u^M\|_{L^2(\Gamma(t))}^2 &= \, m(\md u^M, u^M) + \dfrac{1}{2} g(u^M, u^M) \\
&= -a_N(u^M, u^M)-a_S(w^M, u^M) - \dfrac{1}{2} g(u^M, u^M).
\end{align}
Integrating over $[0,T]$ and using the a priori bounds leads to
\begin{align}
\|u^M(T)\|_{L^2(\Gamma(T))}^2 \leq C_1 \|P_M^0 u_0\|_{L^2}^2 + C_2 \leq C_3,
\end{align}
from where we additionally obtain $z\in L^2(\Gamma(T))$ such that $u^M(T)\rightharpoonup z$ in $L^2(\Gamma(T))$.

\begin{remark}
It is important to note that we have not yet obtained the existence of a weak time derivative for the limit $u$. In the classical setting, $\md u$ is obtained as a limit of the sequence $(\md u^M)_M$, but in the current time-dependent framework we do not  obtain a uniform estimate for $(\md u^M)_M$ in $L^2_{H^{-1}}$ by the usual duality arguments. In particular, we do not have a uniform estimate for $(u^M)_M$ in $H^1_{H^{-1}}$, which precludes us from applying the Aubin-Lions compactness lemma in Proposition \ref{prop:inclusions}(c) and obtaining stronger convergence results for $(u^M)_M$. 
\end{remark}

In the next result we show the existence of $\md u$ by integrating the first equation by parts, in order to transfer the time derivative to the test functions, and passing the obtained equations to the limit.

\begin{proposition}\label{prop:existence}
Let $(u,w,z)$ be the limit functions above.
\begin{itemize}
\item[(i)] There exists $\md u\in L^2_{H^{-1}}$, and $u\in C^0_{L^2}$.
\item[(ii)] We have $u(0)=u_0$ and $u(T)=z$.
\end{itemize}
\end{proposition}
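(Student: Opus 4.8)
The preceding remark isolates the obstacle: there is no uniform bound on $\md u^M$ in $L^2_{H^{-1}}$, so $\md u$ cannot be extracted as a weak limit of $(\md u^M)_M$. The plan is instead to construct $\md u$ directly, moving the time derivative onto a smooth scalar factor by integration by parts in time. Define, for a.e.\ $t$, the functional $G(t)\in H^{-1}(\Gamma(t))$ by
\begin{align}
\langle G(t), \eta\rangle_{H^{-1}\times H^1} := -g(u, \eta) - a_N(u, \eta) - a_S(w, \eta), \qquad \eta\in H^1(\Gamma(t)).
\end{align}
Continuity of the three bilinear forms in the $H^1$-norm gives $\|G(t)\|_{H^{-1}}\leq C\big(\|u(t)\|_{H^1} + \|w(t)\|_{H^1}\big)$, so $G\in L^2_{H^{-1}}$ since $u\in L^\infty_{H^1}$ and $w\in L^2_{H^1}$. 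This $G$ is the candidate for $\md u$.

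To identify it, fix $j\in\N$ and $\psi\in C_c^\infty(0,T)$. For $M\geq j$ the function $\psi\chi_j^t$ lies in $L^2_{V_M}$ and is admissible in \eqref{eq:prob1ga}; moreover $\md\chi_j^t\equiv 0$ by construction of the basis, so $\md(\psi\chi_j^t)=\psi'\chi_j^t$. Integrating the product rule of Proposition \ref{prop:derivinnerprod} applied to $u^M$ and $\psi\chi_j^t$ over $[0,T]$ (the boundary terms vanish since $\psi$ is compactly supported) and using the first equation of \eqref{eq:prob1ga} to eliminate $m_\ast(\md u^M, \psi\chi_j^t)$, the two $g$-terms cancel and one is left with
\begin{align}
\int_0^T \psi'(t)\,(u^M, \chi_j^t)\L\,\d t = \int_0^T \psi(t)\big[a_N(u^M, \chi_j^t) + a_S(w^M, \chi_j^t)\big]\,\d t.
\end{align}
Letting $M\to\infty$, the left-hand side and the $a_N$-term converge by the strong convergence $u^M\to u$ in $L^2_{L^2}$, while the $a_S$-term converges because $w^M\rightharpoonup w$ in $L^2_{H^1}$ is tested against the fixed field $\nabla_\Gamma(\psi\chi_j^t)$. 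The resulting identity is precisely the distributional form of the criterion in Lemma \ref{lem:criterionweakderiv} for $g=G$ and $\eta=\chi_j^0$; by linearity and continuity in $\eta$, together with density of $\mathrm{span}\{\chi_j^0\}$ in $H^1(\Gamma_0)$, it holds for every $\eta\in H^1(\Gamma_0)$. Hence $\md u = G\in L^2_{H^{-1}}$, so $u\in H^1_{H^{-1}}$, and Proposition \ref{prop:inclusions}(b) gives $u\in C^0_{L^2}$, proving (i).

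For (ii) I repeat the computation but retain the boundary contributions. Choosing $\psi\in C^\infty([0,T])$ with $\psi(0)=1$ and $\psi(T)=0$, the same manipulation applied to the Galerkin solutions yields
\begin{align}
-m\big(P_M^0 u_0, \chi_j^0\big) = \int_0^T \big[m(u^M, \psi'\chi_j^t) - a_N(u^M, \psi\chi_j^t) - a_S(w^M, \psi\chi_j^t)\big],
\end{align}
and passing $M\to\infty$, using $P_M^0 u_0\to u_0$ by Assumption $\mathbf{(A_P)}$(i), gives the same identity with $u_0, u, w$ in place of $P_M^0 u_0, u^M, w^M$. On the other hand, since now $u\in H^1_{H^{-1}}\cap C^0_{L^2}$, integrating the product rule for $u$ and $\psi\chi_j^t$ and inserting $\md u=G$ shows that $-m(u(0), \chi_j^0)$ equals exactly this same right-hand side. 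Comparing, $m(u(0), \chi_j^0)=m(u_0, \chi_j^0)$ for all $j$, so $u(0)=u_0$. Taking instead $\psi(0)=0$, $\psi(T)=1$ and using $u^M(T)\rightharpoonup z$ in $L^2(\Gamma(T))$ gives $m(z, \chi_j^T)=m(u(T), \chi_j^T)$ for all $j$, whence $z=u(T)$.

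The one genuine difficulty is precisely the absent $L^2_{H^{-1}}$ bound on $\md u^M$: it is what forces the integration-by-parts route rather than a direct weak limit of derivatives, and it makes the relation $\md\chi_j^t\equiv 0$ indispensable, since that is what lets all time differentiation fall on the scalar factor $\psi$. The remaining steps are routine bookkeeping: confirming that $\psi\chi_j^t$ is an admissible test function for $M\geq j$, matching each factor in the bilinear forms with its available mode of convergence, and invoking density to pass from the basis functions to all of $H^1(\Gamma_0)$.
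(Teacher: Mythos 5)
Your proof is correct and follows essentially the same route as the paper: both arguments move the time derivative onto the scalar factor $\psi$ using the product rule and $\md \chi_j^t \equiv 0$, cancel the $g$-terms via the first Galerkin equation, pass to the limit with the available convergences, identify $\md u$ through Lemma \ref{lem:criterionweakderiv} and density of the basis in $H^1(\Gamma_0)$, and recover $u(0)=u_0$, $u(T)=z$ by comparing the boundary-term identities for suitable $\psi$. The only cosmetic difference is ordering -- you treat compactly supported $\psi$ first and retain boundary terms afterwards, while the paper derives the identity with boundary terms once and then specializes to $\psi\in\mathcal{D}(0,T)$ -- so the substance is identical.
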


\begin{proof}
\underline{(i).} For any $\eta\in L^2_{V_M}$ with $\md \eta\in L^2_{V_M}$, integrating over $[0,T]$ we can write the first equation of the system as
\begin{align}
m(u^M(T), \eta(T)) - m(P_M u_0, \eta(0)) + \int_0^T a_N(u^M, \eta) + a_S(w^M, \eta) - m(u, \md \eta) = 0
\end{align}
For $j \leq M$, take $\eta(t) = \psi(t)\chi_j^t$ with $\psi \in C^1([0,T])$ to get
\begin{align}
m(u^M(T), \psi(T)\chi_j^T) - m(P_M u_0, \psi(0)\chi_j^0) &+ \int_0^T \psi(t) a_N(u^M, \chi_j^t) + \psi(t) a_S(w^M, \chi_j^t) \\
&\hskip 1cm = \int_0^T \psi'(t) m(u^M, \chi_j^t) = 0.
\end{align}
Passing to the limit $M \to \infty$, we obtain 
\begin{align}
\begin{split}\label{eq:initial_cond_12}
m(z, \psi(T)\chi_j^T) - m(u_0, \psi(0)\chi_j^0) &+ \int_0^T \psi(t) a_N(u, \chi_j^t) + \psi(t) a_S(w, \chi_j^t) \\
&\hskip 1cm = \int_0^T \psi'(t) m(u, \chi_j^t).
\end{split}
\end{align}

Given $\eta \in H^1(\Gamma_0)$, there exist coefficients $a_j \in \mathbb{R}$ and a sequence $\eta^M = \sum_{j=1}^M a_j \chi_j^0$ (note $\eta^M\in V_M(0)$ for all $M$) such that $\eta^M \to \eta$ in $H^1(\Gamma_0)$. Hence $\phi_t \eta^M = \sum_{j=1}^M a_j \chi_j^t$ converges to $\phi_t \eta$ in $H^1(\Gamma(t)).$ Multiplying \eqref{eq:initial_cond_12} by $a_j$ and summing over $j=1, ..., M$ gives
\begin{align}
    m(z, \psi(T) \phi_T \eta^M) - m(u_0, \psi(0) \eta^M) &+ \int_0^T \psi(t) \, a_N(u, \phi_t \eta^M) + \psi(t) \, a_S (w, \phi_t \eta^M) \\
&\hskip 1cm= \int_0^T \psi'(t) \, m(u, \phi_t \eta^M)
\end{align}
and if we furthermore take $\psi \in \mathcal{D}(0,T)$ this simplifies to
\begin{align}
\int_0^T \psi(t) \, a_N(u, \phi_t \eta^M) + \int_0^T \psi(t) \, a_S (w, \phi_t \eta^M) = \int_0^T \psi'(t) \, m(u, \phi_t \eta^M)
\end{align}
Letting $M \to \infty$ yields
\begin{align}
\int_0^T \psi(t) \, a_N(u, \phi_t \eta) + \int_0^T \psi(t) \, a_S(w, \phi_t\eta) = \int_0^T \psi'(t) \, m(u, \phi_t \eta)
\end{align}
which means that $t\mapsto m(u(t), \phi_t \eta)$ is weakly differentiable with
\begin{align}
\dfrac{d}{dt} m(u(t), \phi_t \eta) = - a_N(u, \phi_t \eta) - a_S(w, \phi_t \eta)
\end{align}
It then follows from Lemma \ref{lem:assumptionsgive} that $u\in H^1_{H^{-1}}$ and that $\md u\in L^2_{H^{-1}}$ satisfies
\begin{align}\label{eq:firsteq}
    \int_0^T m_* (\md u, \eta) + g(u, \eta) + a_N(u, \eta) + a_S(w, \eta) = 0,
\end{align}
as desired. The extra regularity $u\in C^0_{L^2}$ is a consequence of the continuous embedding $H^1_{H^{-1}}\hookrightarrow C^0_{L^2}$, see Proposition \ref{prop:inclusions}.

\underline{(ii).} Let $\eta\in H^1_{H^{-1}}$. Using the transport formula in Theorem \ref{thm:transportforms} and the equation for $u$, we have
\begin{align}
m(u(T), \eta(T)) - m(u(0), \eta(0)) &= \int_0^T m_*(\md u, \eta) + g(u, \eta) + m_*(\md \eta, u) \\
&= \int_0^T m_*(\md \eta, u) - a_N(u, \eta) - a_S(w, \eta)
\end{align}
Taking $\eta(t) = \psi(t) \chi_j^t$ for any $j \in \N$ and $\psi \in C^1([0,T])$, we obtain 
\begin{align}
\begin{split}
 m(u(T), \psi(T) \chi_j^T) - m(u(0), \psi(0) \chi_j^0) = \int_0^T \psi'(t) m(u, \chi_j^t) &- \int_0^T \psi(t) a_N(u, \chi_j^t) \\
 &\hskip 6mm - \int_0^T \psi(t) a_S(u, \chi_j^t)
    \label{eq:initial_cond_1}
\end{split}
\end{align}
Comparing this with \eqref{eq:initial_cond_12}, we see that
\begin{align}
m(u(T),\psi(T)\chi_j^T) - m(u(0), \psi(0)\chi_j^0)  = m(z, \psi(T)\chi_j^T) - m(u_0, \psi(0)\chi_j^0).
\end{align}
Picking $\psi$ with $\psi(0)=1$, $\psi(T)=0$ (resp. $\psi(0)=0$, $\psi(T)=1$) we obtain $u(0)=u_0$ (resp. $u(T)=z$).
\end{proof}

We can now show that indeed the limit pair $(u,w)$ solves \eqref{eq:prob1}.

\begin{proposition}\label{prop:smoothexistence}
The limit pair $(u,w)$ is a solution to \eqref{eq:prob1}. 
\end{proposition}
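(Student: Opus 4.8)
The plan is to observe that, by Proposition \ref{prop:existence} and Corollary \ref{cor:cor1}, almost all of the required structure is already available: we have $u\in H^1_{H^{-1}}\cap L^\infty_{H^1}$ and $w\in L^2_{H^1}$, the initial condition $u(0)=u_0$, and the first equation of \eqref{eq:prob1} in the integrated form \eqref{eq:firsteq}. Consequently the only genuinely new work is to pass to the limit in the \emph{second} equation of \eqref{eq:prob1ga} and then to recover its pointwise-in-time form. First I would fix $N\in\N$ and a test function $\eta\in L^2_{V_N}$, use $\eta$ in \eqref{eq:prob1ga} for every $M\ge N$, and integrate over $[0,T]$ to obtain
$$\int_0^T a_S(u^M,\eta) + m(F'(u^M),\eta) - m(w^M,\eta)\,\d t = 0.$$

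The two linear terms pose no difficulty. The weak-$*$ convergence $u^M\overset{\ast}{\rightharpoonup}u$ in $L^\infty_{H^1}$ yields $\tgrad u^M\rightharpoonup\tgrad u$ in $L^2_{L^2}$, so $\int_0^T a_S(u^M,\eta)\to\int_0^T a_S(u,\eta)$; similarly $w^M\rightharpoonup w$ in $L^2_{H^1}\hookrightarrow L^2_{L^2}$ gives the convergence of the $m(w^M,\eta)$ term.

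The main obstacle will be the nonlinear term $m(F'(u^M),\eta)$, and this is where I expect to spend most of the effort. The idea is to combine the pointwise convergence $F'(u^M(t))\to F'(u(t))$ a.e.\ on $\Gamma(t)$ — which follows from the a.e.\ convergence of $u^M$ recorded in \eqref{eq:conv_new} together with continuity of $F'$ — with the uniform bound $\|F'(u^M)\|_{L^\infty_{L^2}}\le C$ from Corollary \ref{cor:cor1}. Pulling back to $\Gamma_0$ via $\phi_{-t}$, the sequence $\phi_{-(\cdot)}F'(u^M)$ is bounded in $L^2(0,T;L^2(\Gamma_0))$ and converges a.e.\ on $(0,T)\times\Gamma_0$ (since diffeomorphisms preserve null sets) to $\phi_{-(\cdot)}F'(u)$; because a bounded sequence in $L^2(0,T;L^2(\Gamma_0))$ that converges a.e.\ converges weakly to its a.e.\ limit, compatibility of the pair $(L^2(\Gamma(t)),\phi_t)_t$ then transfers this to the weak convergence $F'(u^M)\rightharpoonup F'(u)$ in $L^2_{L^2}$. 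This yields $\int_0^T m(F'(u^M),\eta)\to\int_0^T m(F'(u),\eta)$, so the integrated second equation holds for every $\eta\in L^2_{V_N}$. I anticipate the delicate part to be justifying this weak convergence cleanly, since — unlike on a fixed domain — one must route the argument through the pullback and invoke compatibility rather than argue directly on the moving surfaces.

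Finally I would use density of $\bigcup_N L^2_{V_N}$ in $L^2_{H^1}$, together with continuity of each term in $\eta$ with respect to the $L^2_{H^1}$-norm, to extend the integrated identity to all $\eta\in L^2_{H^1}$. As the second equation contains no time derivative, testing with $\eta=\mathbf 1_{(a,b)}(t)\,\zeta(t)$ over arbitrary subintervals and applying the fundamental lemma of the calculus of variations would then produce the pointwise statement, valid for a.e.\ $t\in[0,T]$ and all $\eta\in H^1(\Gamma(t))$. Combined with the first equation and the initial condition from Proposition \ref{prop:existence}, this shows that $(u,w)$ solves \eqref{eq:prob1}.
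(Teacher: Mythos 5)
Your proposal is correct, and its overall scheme coincides with the paper's: the first equation, initial condition and regularity are inherited from Proposition \ref{prop:existence} and Corollary \ref{cor:cor1}; the linear terms of the second equation pass to the limit by weak convergence; the nonlinear term is handled by combining the a.e.\ convergence \eqref{eq:conv_new} with the uniform $L^2_{L^2}$-bound on $F'(u^M)$ to get $F'(u^M)\rightharpoonup F'(u)$ in $L^2_{L^2}$; and the pointwise-in-time form is recovered by the fundamental lemma of the calculus of variations (the paper factors test functions as $\psi(t)\eta(t,x)$ with $\psi\in C_c^\infty(0,T)$, you use $\mathbf{1}_{(a,b)}\zeta$ -- immaterial). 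The one genuine divergence is how the key weak-convergence step is justified. The paper invokes its appendix result, Theorem \ref{lem:weak_dct}, a generalised dominated convergence theorem proved \emph{directly on the evolving spaces} by adapting \cite[Lemma 8.3]{Rob01}; you instead reduce to the classical fixed-domain statement by pulling back with $\phi_{-t}$, observing that the diffeomorphisms $\Phi_t^0$ preserve null sets and that the sequence is bounded in $L^2(0,T;L^2(\Gamma_0))$, and then transferring the weak limit back via compatibility. Your transfer step is legitimate but deserves one explicit sentence: the pairing on $\Gamma(t)$ pulls back with the Jacobian factor $J_t^0$, which by \eqref{eq:formulaJ} is bounded above and below, so $\phi_t$ is a Banach-space isomorphism between $L^2(0,T;L^2(\Gamma_0))$ and $L^2_{L^2}$ and bounded isomorphisms preserve weak convergence. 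A second, cosmetic difference: you test with a fixed $\eta\in L^2_{V_N}$ for all $M\geq N$ and invoke density of $\bigcup_N L^2_{V_N}$ at the end, whereas the paper approximates a general $\eta\in L^2_{H^1}$ by $\eta^M\in L^2_{V_M}$ and passes to the limit simultaneously (weak times strong); both work, and yours is marginally simpler since you pair the weak limit against a fixed test function. What each route buys: yours is more economical, reusing the classical literature and the compatibility framework of Section \ref{sec:prelim} rather than reproving a covering-sets argument on moving surfaces; the paper's self-contained Theorem \ref{lem:weak_dct} yields a reusable intrinsic tool that is invoked again for the singular potentials, which your ad hoc pullback would have to repeat there.
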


\begin{proof}
We have already proved that $u,w$ lie in the desired spaces, that $u(0)=u_0$, and it also follows from the previous proof that
\begin{align}\label{eq:new1}
\int_0^T m_*(\md u, \eta) + g(u, \eta) + a_N(u, \eta) + a_S(w, \eta) = 0, \quad \forall \eta\in L^2_{H^1}.
\end{align}

To obtain the second equation, let $\eta\in L^2_{H^1}$ and take $\eta^M\in L^2_{V_M}$ such that $\eta^M\to \eta$ in $L^2_{H^1}$. We immediately obtain for the linear terms that
\begin{align}\label{eq:limit_linear}
\int_0^T a_S(u^M, \eta^M)  &\longrightarrow \int_0^T  a_S( u, \eta)  , \quad \int_0^T m(w^M,\eta^M)  \longrightarrow \int_0^T  m(w,\eta) , 
\end{align}
For the nonlinear term, we note that $F'(u^M)\in L^2_{L^2}$ converges pointwise a.e. to $F'(u)\in L^2_{L^2}$ and satisfies $\|F'(u^M)\|_{L^2_{L^2}}\leq C$ due to the a priori bounds, so that the generalised Dominated Convergence Theorem \ref{lem:weak_dct} implies
\begin{align*}
F'(u^M) \rightharpoonup  F'(u) \quad \text{ in } L^2_{L^2},
\end{align*}
from where we also obtain
\begin{align}\label{eq:limit_nonlinear}
\int_0^T m\left(F'(u^M), \eta^M\right) \to \int_0^T m\left(F'(u), \eta\right).
\end{align}

Combining \eqref{eq:limit_linear} and \eqref{eq:limit_nonlinear} then gives
\begin{align}\label{eq:new2}
\int_0^T a_S(u,\eta) + m(F'(u), \eta) - m(w,\eta) \, dt &= 0.
\end{align}

Considering now test functions of the form $\psi(t) \eta(t,x)$ with $\psi\in C_c^\infty(0,T)$ and $\eta\in L^2_{H^1}$, equations \eqref{eq:new1} and \eqref{eq:new2} read as
\begin{align}
\int_0^T \psi \, \big(m_\ast(\partial^\bullet u, \eta)+ g(u,\eta) + a_N(u,\eta) + a_S(w,\eta) \big) \, dt &= 0 \\
\int_0^T \psi \, \big(a_S(u, \eta) + m(F'(u), \eta) - m(w,\eta)\big) \, dt &= 0,
\end{align}
and hence, for almost all $t\in [0,T]$, by the fundamental theorem of calculus of variations we obtain
\begin{align}
m_\ast(\partial^\bullet u, \eta) + g(u,\eta) + a_N(u,\eta) + a_S(w,\eta) &= 0, \\
a_S(u,\eta) + m(F'(u), \eta) - m(w,\eta) &= 0.
\end{align}
In other words, $(u,w)$ is a solution to \eqref{eq:prob1}, as desired.
\end{proof}

We finally establish stability of solutions with respect to initial data under an additional assumption on the non-convex part $F_2$ of the potential. See Appendix \ref{app:invlaplacian} for the definition of $\|\cdot\|_{-1}$.

\begin{proposition}[Stability]\label{prop:smoothuniqueness}
Suppose $F_2'$ is Lipschitz. For any $u_{1, 0}, u_{2, 0}\in H^1(\Gamma_0)$ with $(u_{1,0})_{\Gamma_0}=(u_{2,0})_{\Gamma_0}$, if $u_1, u_2$ denote the solutions of \eqref{eq:prob1} with $u_1(0)=u_{1,0}$ and $u_2(0)=u_{2,0}$, then there exist $C>0$, independent of $t$, such that, for almost all $t\in [0,T]$,
\begin{align}\label{eq:stability}
\|u_1(t)-u_2(t)\|_{-1}^2 \leq e^{Ct} \|u_{1,0}-u_{2,0}\|_{-1}^2.
\end{align}
In particular, if $F_2'$ is Lipschitz, there exists at most one weak solution to the Cahn-Hilliard system \eqref{eq:prob1}.
\end{proposition}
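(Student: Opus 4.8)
The plan is to test the system satisfied by the difference of the two solutions against the inverse Laplacian of that difference, turning the estimate into a differential inequality for the $\|\cdot\|_{-1}$-norm that can be closed by Gronwall. The first thing I would do is set $u = u_1 - u_2$, $w = w_1 - w_2$, and record the decisive structural fact: since each $u_i$ solves the first equation, its integral is preserved in time exactly as in \eqref{eq:integral_pres}, so $\int_{\Gamma(t)} u_i(t) = \int_{\Gamma_0} u_{i,0}$; the hypothesis $(u_{1,0})_{\Gamma_0} = (u_{2,0})_{\Gamma_0}$ then forces $\int_{\Gamma(t)} u(t) = 0$ for \emph{every} $t$. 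This preservation of zero mean is what keeps the norm $\|\cdot\|_{-1}$ meaningful along the whole evolution: for each $t$ I let $\mathcal{G}u(t)$ be the unique zero-mean weak solution of $-\Delta_\Gamma(\mathcal{G}u) = u$, so that $a_S(\mathcal{G}u, \varphi) = m(u, \varphi)$ for all $\varphi \in H^1(\Gamma(t))$ (no zero-mean restriction on $\varphi$ is needed, as $\Gamma(t)$ is closed) and $\|u\|_{-1}^2 = a_S(\mathcal{G}u, \mathcal{G}u) = m(u, \mathcal{G}u)$.

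The technical heart of the proof is a transport formula for $t \mapsto \|u(t)\|_{-1}^2$ on the moving surface. Applying the product rule of Proposition \ref{prop:derivinnerprod} to $m(u, \mathcal{G}u)$, rewriting the term $m_\ast(\md(\mathcal{G}u), u) = a_S(\mathcal{G}u, \md(\mathcal{G}u))$ via the defining identity for $\mathcal{G}$, and using the second formula of Proposition \ref{prop:derivinnerprod} applied to $a_S(\mathcal{G}u, \mathcal{G}u)$, the contributions of $\md(\mathcal{G}u)$ recombine into $\tfrac12\tfrac{d}{dt}\|u\|_{-1}^2$ and I expect to arrive at
\[
\tfrac{1}{2}\tfrac{d}{dt}\|u\|_{-1}^2 = m_\ast(\md u, \mathcal{G}u) + g(u, \mathcal{G}u) - \tfrac{1}{2} b(\mathcal{G}u, \mathcal{G}u).
\]
I expect this step to be the main obstacle: justifying that $\mathcal{G}u$ has enough regularity to feed into Proposition \ref{prop:derivinnerprod} (in particular that $\tgrad\md(\mathcal{G}u) \in L^2_{L^2}$) is exactly where the evolving geometry intrudes, through the extra terms $g(u,\mathcal{G}u)$ and $b(\mathcal{G}u,\mathcal{G}u)$ that have no fixed-domain analogue. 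This is where I would rely on the mapping and transport properties of the inverse Laplacian collected in Appendix \ref{app:invlaplacian}, or, failing that, derive the inequality first at the Galerkin level and pass to the limit.

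With the transport formula available, I would test the first difference equation with $\eta = \mathcal{G}u$ and the second with $\eta = u$. The first gives $m_\ast(\md u, \mathcal{G}u) = -g(u,\mathcal{G}u) - a_N(u,\mathcal{G}u) - a_S(w,\mathcal{G}u)$, where $a_S(w, \mathcal{G}u) = m(w, u)$ by the defining property of $\mathcal{G}$; the second gives $m(w,u) = \|\tgrad u\|_{L^2}^2 + m(F'(u_1)-F'(u_2), u)$. Substituting these, the two copies of $g(u,\mathcal{G}u)$ cancel and I obtain the energy identity
\[
\tfrac{1}{2}\tfrac{d}{dt}\|u\|_{-1}^2 + \|\tgrad u\|_{L^2}^2 + m\big(F_1'(u_1)-F_1'(u_2),\,u\big) = -a_N(u,\mathcal{G}u) - m\big(F_2'(u_1)-F_2'(u_2),\,u\big) - \tfrac{1}{2} b(\mathcal{G}u, \mathcal{G}u).
\]

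To close the estimate I would use that $F_1$ is convex, hence $F_1'$ is monotone and $m(F_1'(u_1)-F_1'(u_2), u) \geq 0$, so this term is discarded to the favourable side. The remaining terms are controlled as follows: $|b(\mathcal{G}u,\mathcal{G}u)| \leq C_\mathbf{V}\|\tgrad\mathcal{G}u\|_{L^2}^2 = C_\mathbf{V}\|u\|_{-1}^2$; the Lipschitz hypothesis on $F_2'$ gives $|m(F_2'(u_1)-F_2'(u_2),u)| \leq C\|u\|_{L^2}^2$; and the bound on $\mathbf{V}_a^\tau$ gives $|a_N(u,\mathcal{G}u)| \leq C\|u\|_{L^2}\|\tgrad\mathcal{G}u\|_{L^2} = C\|u\|_{L^2}\|u\|_{-1}$. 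The crucial interpolation inequality $\|u\|_{L^2}^2 = a_S(u,\mathcal{G}u) \leq \|\tgrad u\|_{L^2}\|u\|_{-1}$, combined with Young's inequality, lets me recast each right-hand term either as $\varepsilon\|\tgrad u\|_{L^2}^2$, absorbed into the left-hand side for $\varepsilon$ small, or as a multiple of $\|u\|_{-1}^2$. This yields $\tfrac{d}{dt}\|u\|_{-1}^2 \leq C\|u\|_{-1}^2$ with $C$ independent of $t$, and Gronwall's inequality gives \eqref{eq:stability}. Finally, uniqueness follows at once: if $u_{1,0} = u_{2,0}$ then $\|u_1(t)-u_2(t)\|_{-1} \equiv 0$, so $u_1 = u_2$ since $\|\cdot\|_{-1}$ is a norm on zero-mean functions, and the second equation then forces $w_1 = w_2$.
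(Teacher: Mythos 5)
Your proposal is correct and follows essentially the same route as the paper's proof: test the difference equations with $\mathcal{G}\xi^u$ and $\xi^u$, split the nonlinearity via monotonicity of $F_1'$ and Lipschitz continuity of $F_2'$, absorb $\|\xi^u\|_{L^2}^2$ through the interpolation $a_S(\xi^u,\mathcal{G}\xi^u)\leq \|\tgrad \xi^u\|_{L^2}\,\|\xi^u\|_{-1}$, and close with Gronwall --- the only cosmetic difference being that you assemble the full transport identity for $\|\cdot\|_{-1}^2$ up front, whereas the paper carries the term $a_S(\mathcal{G}\xi^u,\partial^\bullet\mathcal{G}\xi^u)$ along and treats it at the end with the same second transport formula. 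The regularity point you flag as the main obstacle (that $\partial^\bullet\mathcal{G}\xi^u\in L^2_{H^1}$, so that Proposition \ref{prop:derivinnerprod} applies) is exactly what Lemma \ref{lem:invlaplace} in Appendix \ref{app:invlaplacian} supplies, so no Galerkin-level fallback is needed.
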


\begin{proof}
Suppose that we have two solution pairs $(u_1, w_1)$ and $(u_2, w_2)$, and let us denote $\xi^u=u_1-u_2$ and $\xi^w = w_1-w_2$. Subtracting the corresponding equations yields, for any $\eta\in L^2_{H^1}$,
\begin{align}
m_\ast(\md\xi^u, \eta) + g(\xi^u, \eta) + a_N(\xi^u, \eta) + a_S(\xi^w, \eta) &= 0, \label{eq:eq1} \\
a_S(\xi^u, \eta) + m(F'(u_1)-F'(u_2), \eta) - m(\xi^w, \eta) &= 0. \label{eq:eq2}
\end{align}
Recall that the mean value of both $u_1$ and $u_2$ must be preserved, and thus, for any $t\in [0, T]$, $\xi^u(t)$ has zero mean value over $\Gamma(t)$. Thus the inverse Laplacian  $\mathcal{G}\xi^u$ of $\xi^u$ is well defined and an element of $H^1_{H^{-1}}$ (see Appendix \ref{app:invlaplacian}), and we can test \eqref{eq:eq1} with $\mathcal{G}\xi^u$ to get
\begin{align}
m_\ast(\md\xi^u, \mathcal{G}\xi^u) + g(\xi^u, \mathcal{G}\xi^u) + a_N(\xi^u, \mathcal{G}\xi^u) + a_S(\xi^w, \mathcal{G}\xi^u) &= 0
\end{align}
which is equivalent to
\begin{align}
\begin{split}
\label{eq:eq3}
\dfrac{d}{dt} \|\xi^u\|^2_{-1} + a_N(\xi^u, \mathcal{G}\xi^u) + m(\xi^w, \xi^u) = m(\xi^u, \partial^\bullet \mathcal{G}\xi^u) = a_S (\mathcal{G}\xi^u, \partial^\bullet \mathcal{G}\xi^u).
\end{split}
\end{align}
Testing now \eqref{eq:eq2} with $\xi^u$ yields
\begin{align}\label{eq:truetest}
 \|\tgrad \xi^u\|\L^2 + m(F_1'(u_1)-F_1'(u_2), \xi^u) + m(F_2'(u_1)-F_2'(u_2), \xi^u)= m(\xi^w, \xi^u).
\end{align}
We estimate the nonlinear terms as follows. For the first term, observe that convexity of $F_1$ implies that $F_1'$ is monotone, and thus $$m(F'_1(u_1)-F'_1(u_2), \xi^u)\geq 0.$$ As for the second term, we use Lipschitz continuity of $F_2'$ to obtain $$|m(F_2'(u_1)-F_2'(u_2), \xi^u)\L| \leq L\|\xi^u\|^2\L$$ where $L>0$ is some positive constant. Hence, from \eqref{eq:truetest} we obtain
\begin{align}\label{eq:eq4}
\|\tgrad\xi^u\|\L^2 \leq m(\xi^w, \xi^u) +  C \|\xi^u\|\L^2.
\end{align}
Adding \eqref{eq:eq3} and \eqref{eq:eq4} we get
\begin{align}
\dfrac{d}{dt}\|\xi^u\|_{-1}^2 +  \|\tgrad \xi^u\|\L^2 &\leq C \|\xi^u\|\L^2 + a_S(\mathcal{G}\xi^u, \partial^\bullet \mathcal{G}\xi^u) - a_N (\xi^u, \mathcal{G}\xi^u) \\ 
&\leq C_1 \|\xi^u\|^2_{L^2} + a_S(\mathcal{G}\xi^u, \md \mathcal{G}\xi^u) + C_2 \|\xi^u\|^2_{-1}
\end{align}

We now estimate the terms on the right hand side. For the first one, we can use Young's inequality to get
\begin{align}
C_1 \|\xi^u\|\L^2 =C_1 a_S(\xi^u, \mathcal{G}\xi^u) \leq \dfrac{1}{2} \|\tgrad \xi^u\|\L^2 + C_2\|\xi^u\|^2_{-1},
\end{align}
and for the second one we have
\begin{align}
a_S (\mathcal{G}\xi^u, \partial^\bullet \mathcal{G}\xi^u)\L 
&=  \dfrac{1}{2} \dfrac{d}{dt} a_S (\mathcal{G}\xi^u, \mathcal{G}\xi^u) - \dfrac{1}{2}b(\mathcal{G}\xi^u, \mathcal{G}\xi^u) \leq \dfrac{1}{2}\dfrac{d}{dt}\|\xi^u\|_{-1}^2 + C\|\xi^u\|_{-1}^2.
\end{align}
In conclusion,
\begin{align}\label{eq:uniqend}
\dfrac{d}{dt}\|\xi^u\|_{-1}^2 +  \|\tgrad \xi^u\|\L^2 \leq C \|\xi^u\|_{-1}^2,
\end{align}
and an application of Gronwall's inequality implies \eqref{eq:stability}.

If $u_1(0)=u_2(0)$, then it follows from \eqref{eq:uniqend} that $\xi^u$ is constant, and since it has zero mean value it must be $u_1=u_2$. From \eqref{eq:eq2} we obtain $w_1=w_2$, giving uniqueness.
\end{proof}

We summarize our findings of this section in the following result.

\begin{theorem}\label{thm:wellposedsmooth}
Let $u_0\in H^1(\Gamma_0)$ and $F\colon\R\to\R$ be a potential satisfying assumptions (A1)-(A2). Then, there exists a pair $(u, w)$ with
\begin{align}
u\in H^1_{H^{-1}}\cap L^\infty_{H^1} \quad \text{ and } \quad w\in L^2_{H^1}
\end{align}
satisfying, for all $\eta\in L^2_{H^1}$ and a.a. $t\in [0, T]$,
\begin{align}\label{eq:prob1}\tag{$\text{CH}_{\text{s}}$}
\begin{split}
m_\ast(\md u, \eta) + g(u, \eta) + a_N(u, \eta) + a_S(w,\eta) &= 0, \\
a_S(u, \eta) + m(F'(u), \eta) - m(w,\eta) &= 0,
\end{split}
\end{align}
and $u(0)=u_0$ almost everywhere in $\Gamma_0$. The solution $u$ satisfies the additional regularity 
\begin{align}
u\in C^0_{L^2} \cap L^\infty_{L^p}, \quad \text{ for all } p\in [1,+\infty).
\end{align}

Furthermore, provided $F'_2$ is Lipschitz, then $u_{1, 0}, u_{2, 0}\in H^1(\Gamma_0)$ with $(u_{1,0})_{\Gamma_0}=(u_{2,0})_{\Gamma_0}$, if $u_1, u_2$ denote the solutions of \eqref{eq:prob1} with $u_1(0)=u_{1,0}$ and $u_2(0)=u_{2,0}$, then there exist $C>0$, independent of $t$, such that, for almost all $t\in [0,T]$,
\begin{align}\label{eq:stability2}
\|u_1(t)-u_2(t)\|_{-1}^2 \leq e^{Ct} \|u_{1,0}-u_{2,0}\|_{-1}^2.
\end{align}

In particular, if $F'_2$ is Lipschitz, the pair $(u,w)$ is unique.
\end{theorem}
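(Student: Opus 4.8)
The plan is to assemble the results established throughout this section, since the theorem is a summary of the preceding propositions. First I would recall that the Galerkin problem \eqref{eq:prob1ga} admits a unique local solution pair $(u^M, w^M)$, and that the energy estimate of Proposition \ref{prop:propenergy} promotes these to global solutions on $[0,T]$ for $M \geq \tilde M$, simultaneously yielding the a priori bounds of Corollary \ref{cor:cor1}: namely $u^M$ bounded in $L^\infty_{H^1}$ and $w^M$ bounded in $L^2_{H^1}$, uniformly in $M$. From these one extracts, along a subsequence, weak-$\ast$ and weak limits $u^M \overset{\ast}{\rightharpoonup} u$ in $L^\infty_{H^1}$ and $w^M \rightharpoonup w$ in $L^2_{H^1}$.

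The key compactness step — and the main obstacle in the evolving-surface setting — is that no uniform bound on $\md u^M$ in $L^2_{H^{-1}}$ is available, so the Aubin--Lions lemma of Proposition \ref{prop:inclusions}(c) cannot be applied directly. This is circumvented by the generalised criterion of Theorem \ref{lem:temam_general}: hypothesis (ii) follows from the uniform $L^\infty_{H^1}$ bound, while hypothesis (i), the pointwise-in-$t$ weak convergence $u^M(t) \rightharpoonup u(t)$ in $L^2(\Gamma(t))$, is obtained from the equicontinuity of the scalar maps $t\mapsto m(u^M(t), \chi_j^t)$. Together these give strong convergence $u^M \to u$ in $L^2_{L^2}$, hence (along a further subsequence) pointwise a.e. convergence and, by continuity of $F'$, pointwise convergence of $F'(u^M)$ to $F'(u)$.

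With these convergences secured, I would invoke Proposition \ref{prop:existence} to produce the weak time derivative $\md u \in L^2_{H^{-1}}$ — obtained by integrating the first equation by parts to transfer the derivative onto the test functions and passing to the limit — together with the embedding $H^1_{H^{-1}} \hookrightarrow C^0_{L^2}$ of Proposition \ref{prop:inclusions}(b), which gives $u \in C^0_{L^2}$ and identifies the traces $u(0)=u_0$, $u(T)=z$. Proposition \ref{prop:smoothexistence} then passes both equations of \eqref{eq:prob1ga} to the limit, the nonlinear term being controlled by a generalised dominated convergence argument, confirming that $(u,w)$ solves \eqref{eq:prob1}. The extra integrability $u \in L^\infty_{L^p}$ for every $p \in [1,+\infty)$ is immediate from the Sobolev embedding of Lemma \ref{lem:assumptionsgive}(d) applied to the $L^\infty_{H^1}$ bound.

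Finally, for the stability estimate \eqref{eq:stability2} and uniqueness I would simply cite Proposition \ref{prop:smoothuniqueness}: under the Lipschitz hypothesis on $F_2'$, testing the difference of the first equations with the inverse Laplacian $\mathcal{G}\xi^u$ (well-defined because the conserved means of $u_1$ and $u_2$ coincide, so $\xi^u$ has zero mean) and the difference of the second equations with $\xi^u$, then using monotonicity of $F_1'$ and the Lipschitz bound on $F_2'$, produces the differential inequality to which Gronwall's lemma applies. Uniqueness of $u$ follows by taking equal initial data, and uniqueness of $w$ from the second equation of \eqref{eq:prob1}.
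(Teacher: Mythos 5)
Your proposal is correct and follows exactly the paper's own route: the theorem is stated as a summary of the section, and its proof is precisely the assembly you describe — Galerkin existence, the energy estimate of Proposition \ref{prop:propenergy} and bounds of Corollary \ref{cor:cor1}, strong $L^2_{L^2}$ convergence via Theorem \ref{lem:temam_general} in place of Aubin--Lions (correctly identifying why the latter is unavailable), the limit passage of Propositions \ref{prop:existence} and \ref{prop:smoothexistence}, and stability and uniqueness from Proposition \ref{prop:smoothuniqueness}. No gaps; nothing further is needed.
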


The stability estimate \eqref{eq:stability2} above follows immediately from \eqref{eq:uniqend}. 

\subsection{Extra regularity}

To conclude, we study additional regularity properties of the solutions.

\begin{theorem}\label{thm:regularity}
Denote by $(u,w)$ a solution pair of the Cahn-Hilliard system with a smooth potential given by Theorem \ref{thm:wellposedsmooth}. 
\begin{itemize}
\item[(i)] We have the regularity $u\in L^2_{H^2}$; \vskip 1mm
\item[(ii)] If $\Gamma_0$ is a $C^3$-surface, the diffeomorphisms $\Phi_t^0$ are $C^3$ and $F''(u(t))\in L^2(\Gamma(t))$ for a.a. $t\in [0,T]$, then $u\in L^2_{H^3}$; \vskip 1mm
\item[(iii)] If $u_0\in H^2(\Gamma_0)$ and \[|F''(r)| \leq C_1 |r|^{q-2} + C_2, \] then we have $u\in L^\infty_{H^2}$, $\md u\in L^2_{L^2}$ and $w\in L^\infty_{L^2}\cap L^2_{H^2}$. \vskip 1mm
\item[(iv)] If $\Gamma_0$ is a $C^4$-surface, the diffeomorphisms $\Phi_t^0$ are $C^4$, $\Delta_\Gamma F'(u(t))\in L^2(\Gamma(t))$ for a.a. $t\in [0,T]$, and $u_0\in H^2(\Gamma_0)$, then $u\in L^2_{H^4}$, and $u$ is a \textit{strong solution} of the system, i.e. it satisfies
\begin{align}\label{eq:strong_eq}
\partial^\bullet u + u\tgrad\cdot\mathbf{V} - \tgrad\cdot \big(u \left(\mathbf{V}_\mathbf{\tau}-\mathbf{V}_\mathbf{a}\right)\big) = \Delta_\Gamma\left(- \Delta_\Gamma u + F'(u) \right) \quad \text{ in } L^2(\Gamma(t)).
\end{align}
 for almost all $t\in [0,T]$.
\end{itemize}
\end{theorem}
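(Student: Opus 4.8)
The plan is to prove the four statements as a bootstrap, using elliptic regularity for the second equation $-\Delta_\Gamma u = w - F'(u)$ as the workhorse and one genuinely new higher-order energy estimate (derived at the Galerkin level) for part (iii). Throughout, the essential structural point is that every elliptic and embedding constant can be chosen uniformly in $t$: this follows either by pulling back to $\Gamma_0$ via $\phi_{-t}$, whose norms are $t$-independent by Lemma \ref{lem:assumptionsgive}, or by invoking a uniform elliptic estimate on the uniformly $C^2$ (resp.\ $C^3$, $C^4$) family of surfaces. For (i), Corollary \ref{cor:cor1} already gives $u\in L^\infty_{H^1}$, hence $F'(u)\in L^\infty_{L^2}$, and $w\in L^2_{H^1}$, so $w-F'(u)\in L^2_{L^2}$ and elliptic regularity yields $\|u(t)\|_{H^2(\Gamma(t))}\le C(\|w(t)\|_{L^2}+\|F'(u(t))\|_{L^2}+\|u(t)\|_{L^2})$; squaring and integrating gives $u\in L^2_{H^2}$. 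For (ii) we bootstrap once: since $\dim=2$ gives $H^2(\Gamma(t))\hookrightarrow C^0(\Gamma(t))$ by Lemma \ref{lem:assumptionsgive}(d), $F''(u)$ is bounded and $\tgrad F'(u)=F''(u)\tgrad u\in L^2$, so $w-F'(u)\in L^2_{H^1}$; the $C^3$-regularity then promotes the estimate to $u\in L^2_{H^3}$, with time-integrability supplied by $w\in L^2_{H^1}$ and the bound just obtained for $F'(u)$ in $L^2_{H^1}$.

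The heart of the argument is (iii). I would return to the Galerkin scheme \eqref{eq:prob1ga} and test the first equation with $\md u^M$ (admissible since $\md u^M\in L^2_{V_M}$), giving
\begin{align}
\|\md u^M\|_{L^2}^2 + g(u^M,\md u^M) + a_N(u^M,\md u^M) + a_S(w^M,\md u^M) = 0.
\end{align}
The term $a_S(w^M,\md u^M)$ cannot be controlled directly, so I would rewrite it using the second equation: differentiating $t\mapsto a_S(u^M,w^M)$ with the gradient transport formula of Proposition \ref{prop:derivinnerprod}, and testing the second Galerkin equation with $w^M$ and with $\md w^M$ (which exists at the discrete level), a computation in which the contributions $m(F'(u^M),\md w^M)$ cancel produces an identity of the form
\begin{align}
a_S(w^M,\md u^M)=\tfrac12\dfrac{d}{dt}\|w^M\|_{L^2}^2+\tfrac12 g(w^M,w^M)&-m\big(F''(u^M)\md u^M,w^M\big)\\
&-g(F'(u^M),w^M)-b(u^M,w^M).
\end{align}
Substituting this back, and integrating the advection term by parts to avoid a gradient on $\md u^M$, namely $a_N(u^M,\md u^M)=-\int_{\Gamma(t)}\tgrad\cdot(u^M\mathbf{V}^\tau_a)\,\md u^M$, leads to a differential inequality for $\|w^M\|_{L^2}^2$ carrying $\|\md u^M\|_{L^2}^2$ on the left-hand side.

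The main obstacle is closing this inequality globally in time and uniformly in $M$, and this is exactly where $\dim=2$ is indispensable. The dangerous term is $m(F''(u^M)\md u^M,w^M)$; I would split it as $\|F''(u^M)\|_{L^3}\|\md u^M\|_{L^2}\|w^M\|_{L^6}$ and use $H^1(\Gamma(t))\hookrightarrow L^6(\Gamma(t))$ together with $u^M\in L^\infty_{L^p}$ for all $p$ (Corollary \ref{cor:cor1}) and the growth bound $|F''(r)|\le C_1|r|^{q-2}+C_2$ to control $\|F''(u^M)\|_{L^3}$ uniformly, absorbing a small multiple of $\|\md u^M\|_{L^2}^2$ on the left and leaving a multiple of $\|w^M\|_{H^1}^2$ on the right. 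The geometric terms $g$, $b$ and $g(F'(u^M),w^M)$ are bounded by $\|u^M\|_{H^1}^2$, $\|w^M\|_{L^2}^2$, $\|\tgrad w^M\|_{L^2}^2$ and constants. Since $u^M\in L^\infty_{H^1}$ and $\int_0^T\|\tgrad w^M\|_{L^2}^2\le C$ by the energy estimate of Proposition \ref{prop:propenergy}, the quantities $\|\tgrad w^M\|_{L^2}^2$ act as an $L^1(0,T)$ forcing, and Gronwall's inequality then yields $\sup_{[0,T]}\|w^M\|_{L^2}^2+\int_0^T\|\md u^M\|_{L^2}^2\le C$ uniformly in $M$, provided $\|w^M(0)\|_{L^2}$ is bounded, which follows from $u_0\in H^2(\Gamma_0)$ (the projections being stable in $H^2(\Gamma_0)$ for the chosen basis). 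Passing to the limit gives $\md u\in L^2_{L^2}$ and $w\in L^\infty_{L^2}$; then $u\in L^\infty_{H^2}$ follows from elliptic regularity for $-\Delta_\Gamma u=w-F'(u)$ with right-hand side in $L^\infty_{L^2}$, and $w\in L^2_{H^2}$ from the first equation, which now reads $\Delta_\Gamma w=\md u+u\tgrad\cdot\mathbf V-\tgrad\cdot(u\mathbf V^\tau_a)\in L^2_{L^2}$, combined with $w\in L^2_{H^1}$.

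Finally, for (iv) I would bootstrap once more: (iii) gives $w\in L^2_{H^2}$, and the hypothesis $\Delta_\Gamma F'(u)\in L^2_{L^2}$ gives $F'(u)\in L^2_{H^2}$, so $w-F'(u)\in L^2_{H^2}$; the $C^4$-regularity of the surfaces then yields $u\in L^2_{H^4}$. With this regularity every term of the first equation lies in $L^2(\Gamma(t))$ for a.a.\ $t$, so undoing the integration by parts in the weak formulation recovers the strong equation \eqref{eq:strong_eq}. I expect the decisive difficulty to be entirely contained in the higher energy estimate of (iii): carrying the computation out at the Galerkin level where $\Delta_\Gamma u^M$ is not directly available (hence the detour through the second equation and Proposition \ref{prop:derivinnerprod}, and the integration by parts in $a_N$), and choosing the Hölder--Sobolev splitting of the nonlinear term so that $\dim=2$ lets it be absorbed into the $L^1$-in-time energy, thereby giving a global rather than merely local bound.
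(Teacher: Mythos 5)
Your proposal is correct and follows essentially the same route as the paper: elliptic bootstrap from $-\Delta_\Gamma u = w - F'(u)$ for (i), (ii) and (iv), and for (iii) the identical Galerkin-level higher energy estimate --- testing the first equation with $\md u^M$, differentiating the second in time to express $a_S(w^M,\md u^M)$ (your cancellation of the $m(F'(u^M),\md w^M)$ terms reproduces the paper's identity \eqref{eq:seconddiff} exactly), absorbing the nonlinear term via two-dimensional Sobolev embeddings and the $L^\infty_{L^p}$ bounds, using $\int_0^T\|w^M\|^2_{H^1}$ as $L^1$-in-time forcing, and bounding $\|w^M(0)\|_{L^2}$ from $u_0\in H^2(\Gamma_0)$ before passing to the limit and recovering $u\in L^\infty_{H^2}$, $w\in L^2_{H^2}$ by elliptic regularity. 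The only deviations are cosmetic: your $L^3$--$L^2$--$L^6$ H\"older splitting of $m(F''(u^M)\md u^M,w^M)$ where the paper uses an $L^4$-type Cauchy--Schwarz bound, and your appeal to $H^2$-stability of $P_M^0$ where the paper instead uses the identity $a_S(P_M^0 u_0, w^M(0)) = a_S(u_0, w^M(0))$ --- both amounting to comparable implicit assumptions on the Galerkin basis.
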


\begin{proof}
The second equation gives, for almost all $t\in [0,T]$, 
\begin{align}
-\Delta_{\Gamma(t)} u(t) = w(t) - F'(u(t)) \in L^2(\Gamma(t)),
\end{align}
with the estimate 
\begin{align}
\|\Delta_{\Gamma(t)} u(t) \|_{L^2(\Gamma(t))} \leq \| w(t)\|_{L^2(\Gamma(t))} + \|F'(u(t))\|_{L^2(\Gamma(t))} \leq 2 \|w(t)\|_{L^2(\Gamma(t))},
\end{align}
and elliptic regularity (see e.g. \cite[Lemma 3.2]{DziEll13-a}) then implies $u(t)\in H^2(\Gamma(t))$, satisfying
\begin{align}\label{eq:elliptic}
\|u(t)\|_{H^2(\Gamma(t))} &\leq \|\Delta_{\Gamma(t)} u(t)\|_{L^2(\Gamma(t))} + C \|u(t)\|_{H^1(\Gamma(t))} \\
&\leq 2 \|w(t)\|_{L^2(\Gamma(t))} +  C \|u(t)\|_{H^1(\Gamma(t))},
\end{align}
where $C>0$ can be taken to be independent of $t$. Integrating the above over $[0,T]$ gives
$u\in L^2_{H^2}$, as desired. 

In the second case, we have instead $-\Delta_{\Gamma(t)} u(t)\in H^1(\Gamma(t))$, and we can use the extra regularity of the surfaces and the nonlinear term to show that $u\in L^2_{H^3}$.

Now suppose $u_0\in H^2(\Gamma_0)$. Integrating by parts we have
\begin{align}
a_S(P_M u_0, w^M(0)) = a_S (u_0, w^M(0)) = - m(\Delta_{\Gamma} u_0, w^M(0)),
\end{align}
and thus from the second equation of \eqref{eq:prob1ga} at $t=0$ we obtain 
\begin{align}
\begin{split}\label{eq:w0}
\|w^M(0)\|^2_{L^2} &= a_S (u^M(0), w^M(0)) + m(F'(u^M(0)), w^M(0)) \\ 
&= - m(\Delta_{\Gamma} u_0, w^M(0)) + m(F'(u^M(0)), w^M(0)) \\
&\leq \left( \|\Delta_\Gamma u_0\|_{L^2} + C_1 \, \|u^M(0)\|^q_{H^1} \right)  \, \|w^M(0)\|_{L^2} \\
&\leq \left( \|\Delta_\Gamma u_0\|_{L^2} + C_2 \, \|u_0\|^q_{H^1} \right)  \, \|w^M(0)\|_{L^2},
\end{split}
\end{align}
which implies that $\|w^M(0)\|^2_{L^2} $ is uniformly bounded. To obtain $\md u\in L^2_{L^2}$, we differentiate the second equation to obtain, for all $\eta\in L^2_{V_M}$ with $\md\eta\in L^2_{V_M}$,
\begin{align}
a_S(\md u^M, \eta) + b(u^M, \eta) + m(F''(u^M)\,\md u^M, \eta) + g(F'(u^M), \eta) = m(\md w^M, \eta) + g(w^M, \eta).
\end{align}
The terms involving $\md \eta$ vanish because $\md \eta$ is still an admissible test function and $(u^M, w^M)$ is the solution pair to \eqref{eq:prob1ga}. Testing the above with $\eta=w^M$ gives
\begin{align}
\begin{split}\label{eq:seconddiff}
a_S(\md u^M, w^M) + b(u^M, w^M) + m(F''(u^M)\,\md u^M, w^M) &+ g(F'(u^M), w^M) \\
&= \dfrac{1}{2}\dfrac{d}{dt}\|w^M\|^2_{L^2} + \dfrac{1}{2}g(w^M, w^M).
\end{split}
\end{align}
Now taking $\eta=\md u^M$ in the first equation of \eqref{eq:prob1ga} gives
\begin{align}\label{eq:firsteq}
\|\md u^M\|^2_{L^2} + g(u^M, \md u^M) + a_N (u^M, \md u^M) + a_S(w^M, \md u^M) = 0,
\end{align}
and combining \eqref{eq:seconddiff} with \eqref{eq:firsteq} we obtain
\begin{align}
\dfrac{1}{2} \dfrac{d}{dt} \|w^M\|^2_{L^2} + &\|\md u^M\|^2_{L^2} = - g(u^M, \md u^M) - a_N(u^M, \md u^M) \\
&- \dfrac{1}{2} g(w^M, w^M) + b(u^M, w^M) + m(F''(u^M)\,\md u^M, w^M)  + g(F'(u^M), w^M).
\end{align} 
Using the uniform bounds for $u^M$, the first four terms on the right are estimated as
\begin{align}
- g(u^M, \md u^M) - a_N(u^M, \md u^M) - \dfrac{1}{2} g(w^M, w^M) + &b(u^M, w^M) \\
&\leq \dfrac{1}{4} \|\md u^M\|^2_{L^2} + C_1 + C_2 \, \|w^M\|^2_{H^1},
\end{align}
and the last two terms we estimate using the uniform bounds for $u^M$ and the Sobolev inequality:
\begin{align}
m(F''(u^M)\,\md u^M, w^M) + g(F'(u^M), w^M) \leq \dfrac{1}{4}\|\md u^M\|^2_{L^2} &+ C_1 \,\|F''(u^M)^2\|_{L^2} \, \|(w^M)^2\|_{L^2} \\
&\hskip 4mm + C_2 \|F'(u^M)\|_{L^2} \|w^M\|_{L^2}\\
&\hskip -2cm \leq \dfrac{1}{4}\|\md u^M\|^2_{L^2} + C_3 \|w^M\|_{H^1}^{2}.
\end{align}
Putting these two estimates together we obtain
\begin{align}
\|\md u^M\|^2_{L^2} + \dfrac{d}{dt} \|w^M\|^2_{L^2} \leq C_1 + C_2 \, \|w^M\|^2_{H^1},
\end{align}
and integrating yields
\begin{align}
\sup_{[0,T]} \|w^M\|_{L^2}^2 + \int_0^T \|\md u^M\|^2_{L^2} \leq \|w^M(0)\|^2_{L^2} + C_1 \leq C_2,
\end{align}
using \eqref{eq:w0}. Letting $M\to\infty$ yields $w\in L^\infty_{L^2}$ and $\md u\in L^2_{L^2}$, as desired. But then we note that the first equation actually gives $-\Delta_\Gamma w(t) \in L^2(\Gamma(t))$, and using elliptic regularity as in the first part of this proof implies $w\in L^2_{H^2}$. The inequality in \eqref{eq:elliptic} gives $u\in L^\infty_{H^2}$. 

Finally, in the conditions of (iv) we can deduce $u\in L^2_{H^4}$ from the fact that $w\in L^2_{H^2}$, and it follows that $u$ satisfies, for almost all $t\in [0,T]$, the equation
\begin{align}
\partial^\bullet u + u\tgrad\cdot\mathbf{V} - \tgrad\cdot \big(u \left(\mathbf{V}_\mathbf{\tau}-\mathbf{V}_\mathbf{a}\right)\big) = \Delta_\Gamma\left(- \Delta_\Gamma u + F'(u) \right) \quad \text{ in } L^2(\Gamma(t)),
\end{align}
finishing the proof.
\end{proof}

\begin{remark}
Combining the results above with the Sobolev embedding theorems in Lemma \ref{lem:assumptionsgive}d) and classical Schauder theory, one should obtain, for a solution pair $(u,w)$ a.a. $t\in [0,T]$ and every $\alpha\in (0,1)$,
\begin{itemize}
\item[(i)]  $u(t)\in C^\alpha(\Gamma(t))$;
\item[(ii)] in the conditions of Theorem \ref{thm:regularity}(ii), $u(t)\in C^{1+\alpha}(\Gamma(t))$;
\item[(iii)] in the conditions of Theorem \ref{thm:regularity}(iii), $w(t)\in C^{\alpha}(\Gamma(t))$. If additionally $F'(u(t))\in C^\alpha(\Gamma(t))$ then $u(t)\in C^{2+\alpha}(\Gamma(t))$;
\item[(iv)] in the conditions of Theorem \ref{thm:regularity}(iv), $u(t)\in C^{2+\alpha}(\Gamma(t))$. If additionally $F'(u(t))\in C^{2+\alpha}(\Gamma(t))$ then $u(t)\in C^{4+\alpha}(\Gamma(t))$ and $u$ is a classical solution, i.e. \eqref{eq:strong_eq} holds for all $t\in [0,T]$.
\end{itemize}
We leave the study of classical solutions and extra regularity for future work.
\end{remark}

\section{Non smooth potentials}\label{sec:nonsmooth}

In this section, we study the same problem with a logarithmic and a double obstacle potentials. These are non smooth potentials, and in both cases an appropriate weak formulation needs to be considered. For the former, the derivative is singular at $\pm 1$, and so we need to impose the constraint $|u|<1$ for a solution. In the latter case, the energy is not differentiable, and the second equation must be reinterpreted as a variational inequality. We shall see that, in both cases, a general statement obtained as above is not possible to obtain, and the choice of admissible initial conditions is related to the evolution of the surfaces. 

\subsection{Logarithmic potential}\label{sec:log}

In this section we define, for $r\in [-1,1]$,
\begin{align}\label{eq:logpotential}
F_\theta(r) &= \dfrac{\theta}{2\theta_c} \left((1+r)\log(1+r) + (1-r)\log(1-r)\right) + \dfrac{1-r^2}{2} \\
&=: \dfrac{\theta}{2\theta_c}F^{\mathrm{log}}(r) + \dfrac{1-r^2}{2},
\end{align}
where $\theta$ represents the temperature and $\theta_c$ is the critical temperature. For $\theta>\theta_c$, the potential is strictly convex with a global minimum at $u=0$. For $\theta<\theta_c$, $F_{\theta}$ is a double well potential with two global minima. Let us take for simplicity $\theta_c=1$ and assume $\theta<1$. Since in this section $\theta$ will be a fixed constant, we will henceforth omit the dependence on $\theta$ of both the potential and the solution. 

To use the logarithmic nonlinearity it is clear that we are only interested in solutions taking values in the interval $(-1,1)$. As in the previous section, we expect to have a solution pair $(u,w)$ solving the equations 
\begin{align}\label{eq:prob2before}
\begin{split}
m_\ast(\partial^\bullet u, \eta) + g(u, \eta) + a_N(u, \eta) + a_S(w,\eta) &= 0 \\
a_S(u,\eta) + m(F'(u), \eta) - m(w, \eta)&=0,
\end{split}
\end{align} 
satisfying $|u|<1$ almost everywhere and an initial condition $u(0)=u_0$, for some $u_0\in H^1(\Gamma_0)$ with $|u_0|\leq 1$. These conditions on $u_0$ are enough to guarantee that the energy at the initial time makes sense. We claim that this is not possible without extra conditions on the initial data and the evolution of the surface. Indeed, define, for $t\in [0,T]$ and $\eta\in H^1(\Gamma_0)$,
\begin{align}\label{eq:condition}
m_{\eta}(t) := \dfrac{1}{|\Gamma(t)|} \left|\int_{\Gamma_0} \eta \right|.
\end{align}
We start by proving a simple result which shows that well-posedness of the problem is related to the size of this function $m_\eta$.

\begin{proposition}\label{prop:illposed1}
Let $\{\Gamma(t)\}_{t\in [0,T]}$ be an evolving surface in $\R^3$ and $u_0\in H^1(\Gamma_0)$ satisfy $|u_0|\leq 1$. Suppose that there exists a subset of $[0,T]$ with positive measure on which $m_{u_0}(t)\geq 1$. Then there cannot exist a pair $(u,w)$ satisfying \eqref{eq:prob2before} and $|u|<1$ almost everywhere.
\end{proposition}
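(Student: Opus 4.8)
The plan is to argue by contradiction, using that the spatial integral of $u$ is conserved in time — a property of the first equation alone, and hence independent of the particular potential. Suppose, for contradiction, that a pair $(u,w)$ solving \eqref{eq:prob2before}, with the initial datum $u(0)=u_0$ and with $|u|<1$ almost everywhere, exists. The first equation here coincides with the first equation of \eqref{eq:firsteq_setup}, so testing it with the admissible constant function $\eta\equiv 1$ and invoking the transport formula of Theorem \ref{thm:transportforms} reproduces the computation \eqref{eq:integral_pres}: since $a_N(u,1)=0$ and $a_S(w,1)=0$ (both contain $\tgrad 1 = 0$), one finds
\[
\dfrac{d}{dt}\int_{\Gamma(t)} u(t) = m_\ast(\md u, 1) + g(u, 1) = 0.
\]
As the weak formulation forces $u\in H^1_{H^{-1}}\hookrightarrow C^0_{L^2}$ (Proposition \ref{prop:inclusions}(b)), the map $t\mapsto \int_{\Gamma(t)} u(t)$ is continuous, so together with $u(0)=u_0$ this yields $\int_{\Gamma(t)} u(t) = \int_{\Gamma_0} u_0$ for every $t\in [0,T]$.

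Next I would convert the pointwise bound $|u|<1$ into a strict bound on the mean value. Since $\Gamma(t)$ has positive measure and $1-|u(t)|>0$ almost everywhere on $\Gamma(t)$, the integral $\int_{\Gamma(t)}(1-|u(t)|)$ is strictly positive, whence $\int_{\Gamma(t)}|u(t)| < |\Gamma(t)|$. Combining this with the conservation identity gives, for almost every $t\in[0,T]$,
\[
m_{u_0}(t) = \dfrac{1}{|\Gamma(t)|}\left|\int_{\Gamma_0} u_0\right| = \dfrac{1}{|\Gamma(t)|}\left|\int_{\Gamma(t)} u(t)\right| \leq \dfrac{1}{|\Gamma(t)|}\int_{\Gamma(t)}|u(t)| < 1.
\]
This contradicts the hypothesis that $m_{u_0}(t)\geq 1$ on a subset of $[0,T]$ of positive measure, completing the argument.

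I expect the only genuinely delicate point to be the strictness of the inequality $\int_{\Gamma(t)}|u(t)| < |\Gamma(t)|$; a merely non-strict estimate would leave open the borderline case $m_{u_0}\equiv 1$ and would fail to produce a contradiction. This strictness rests on the elementary fact that a nonnegative measurable function which is positive almost everywhere on a set of positive measure has strictly positive integral, applied to $1-|u(t)|$. Everything else is bookkeeping: the conservation law is precisely \eqref{eq:integral_pres}, and the continuity of $t\mapsto\int_{\Gamma(t)}u(t)$ needed to propagate the initial value to all times follows from the embedding $u\in C^0_{L^2}$.
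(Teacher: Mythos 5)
Your proof is correct and follows essentially the same route as the paper's: both derive the conservation law $\int_{\Gamma(t)} u(t)=\int_{\Gamma_0} u_0$ by testing the first equation with $\eta\equiv 1$ (the computation of \eqref{eq:integral_pres}), and both reach a contradiction on the positive-measure set where $m_{u_0}(t)\geq 1$ via the strict bound $\bigl|\int_{\Gamma(t)} u(t)\bigr|\leq\int_{\Gamma(t)}|u(t)|<|\Gamma(t)|$ coming from $|u|<1$ almost everywhere. Your extra remarks on the continuity of $t\mapsto\int_{\Gamma(t)}u(t)$ and on why the inequality is strict merely make explicit steps the paper leaves implicit.
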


\begin{proof}
If such a pair exists, let $\tilde{I}\subset [0,T]$ be a subset of full measure in which equations \eqref{eq:prob3} hold, $|u|<1$ and $m_{u_0}(t)\geq 1$. We observe that, as before, the integral of the solution is preserved, and as a consequence, for $t\in \tilde{I}$,
\begin{align}
\left|\int_{\Gamma(t)} u(t)\right| = \left|\int_{\Gamma_0} u_0\right| \geq |\Gamma(t)|,
\end{align}
but since $|u|<1$ we also have
\begin{align}
\left|\int_{\Gamma(t)} u(t)\right| \leq \int_{\Gamma(t)} |u(t)| < |\Gamma(t)|,
\end{align}
which is a contradiction.
\end{proof}

An immediate consequence of the above and the fact that $m_{u_0}$ is continuous is the following:

\begin{corollary}\label{cor:corillposed1}
If there exists $t\in [0,T]$ such that $m_{u_0}(t)>1$, then there cannot exist a pair $(u,w)$ satisfying \eqref{eq:prob2before} and $|u|<1$ almost everywhere.
\end{corollary}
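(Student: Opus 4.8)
The plan is to reduce the statement to Proposition \ref{prop:illposed1} by exploiting continuity of the function $t\mapsto m_{u_0}(t)$. Since the numerator $\left|\int_{\Gamma_0} u_0\right|$ in the definition \eqref{eq:condition} is a fixed finite constant (finite because $u_0\in H^1(\Gamma_0)\subset L^1(\Gamma_0)$ on the compact surface $\Gamma_0$), all of the $t$-dependence sits in the factor $1/|\Gamma(t)|$. So the first thing I would do is record that $t\mapsto |\Gamma(t)|$ is continuous and bounded away from zero.

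For the continuity of $t\mapsto |\Gamma(t)|$, I would write $|\Gamma(t)| = \int_{\Gamma_0} J_t^0$ using the change of variables \eqref{eq:changeofvariables}, and invoke the explicit formula \eqref{eq:formulaJ} together with the regularity $\mathbf{V}\in C^0([0,T];C^2(\R^3,\R^3))$: the integrand $J_t^0(p)$ depends continuously on $t$, uniformly in $p\in\Gamma_0$, so continuity of the integral follows. That $|\Gamma(t)|$ is bounded strictly away from zero is exactly the lower bound in \eqref{eq:areas}. Consequently $t\mapsto m_{u_0}(t)$ is continuous on $[0,T]$.

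With continuity in hand, the corollary is immediate. Given a point $t_0\in[0,T]$ at which $m_{u_0}(t_0)>1$, continuity supplies a relatively open subinterval $I\subset [0,T]$ containing $t_0$ on which $m_{u_0}(t)>1$, and in particular $m_{u_0}(t)\geq 1$. Such an interval has strictly positive Lebesgue measure, so the hypothesis of Proposition \ref{prop:illposed1} is met on $I$; applying that proposition yields the non-existence of a pair $(u,w)$ satisfying \eqref{eq:prob2before} with $|u|<1$ almost everywhere.

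I do not anticipate a genuine obstacle here, as the argument is a short continuity-plus-reduction step. The only point deserving care is the justification that $|\Gamma(t)|$ varies continuously; everything else is formal once the superlevel set $\{m_{u_0}>1\}$ is seen to be relatively open in $[0,T]$ and hence of positive measure.
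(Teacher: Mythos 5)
Your proposal is correct and follows essentially the same route as the paper, which derives the corollary as an immediate consequence of Proposition \ref{prop:illposed1} together with the continuity of $t\mapsto m_{u_0}(t)$; your only addition is to spell out the continuity of $t\mapsto|\Gamma(t)|$ via \eqref{eq:changeofvariables}, \eqref{eq:formulaJ} and \eqref{eq:areas}, which the paper leaves implicit.
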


The results above show that, if we are to expect existence of solutions, then it must be $m_{u_0}(t)<1$ for almost every $t\in [0,T]$. In this article, we will focus in the case where $m_{u_0}(t)<1$ for \textit{every} $t\in [0,T]$. In particular, note that
\begin{align}
\dfrac{1}{|\Gamma_0|} \left| \int_{\Gamma_0} u_0 \right| = m_{u_0}(0) < 1.
\end{align}
%

For future use, let us define the set $\mathcal{I}_0$ of \textit{admissible initial conditions} by
\begin{align}
\mathcal{I}_0 := \left\{ \eta\in H^1(\Gamma_0) \colon |\eta|\leq 1 \text{ a.e. on } \Gamma_0, \,\, \mathrm{E}^\mathrm{CH}[\eta]<\infty, \,\, \text{ and } \,\, m_{\eta}<1 \right\}.
\end{align}
The first conditions ensure that the energy is defined at the initial time, and we motivated the second condition on the previous paragraph. Note that, since $m_{u_0}$ is continuous, there exists $\alpha\in [0, 1)$ such that $0\leq m_{u_0}(t) \leq \alpha <1$, for all $t\in [0,T]$. 

We can now state the problem we aim to solve in this section. Denote also $$\varphi(r) :=\left(F^\mathrm{log}\right)'(r), \quad r\in (-1,1).$$

\begin{problem}
Given an initial condition $u_0\in \mathcal{I}_0$, the \textit{Cahn-Hilliard system with a logarithmic potential} is the following problem: find a pair $(u,w)$ satisfying:
\begin{itemize}
\item[(a)] $u\in L^\infty_{H^1}\cap H^1_{H^{-1}}$ and $w\in L^2_{H^1}$; \vskip 1mm
\item[(b)] for almost all $t\in [0, T]$, $|u(t)|<1$ almost everywhere in $\Gamma(t)$; \vskip 1mm
\item[(c)] for every $\eta\in L^2_{H^1}$, the equations
\begin{align}\label{eq:prob3}\tag{$\text{CH}_{\text{log}}$}
\begin{split}
m_\ast(\partial^\bullet u, \eta)+ g(u, \eta) + a_N(u, \eta) + a_S(w,\eta) &= 0, \\
a_S(u,\eta) + \dfrac{\theta}{2} m(\varphi(u), \eta) - m(u,\eta) - m(w, \eta)&=0,
\end{split}
\end{align} 
for almost every $t\in [0, T]$; \vskip 1mm
\item[(d)] $u(0)=u_0$ almost everywhere in $\Gamma_0$.
\end{itemize}
The pair $(u,w)$ is called a \textit{weak solution} of \eqref{eq:prob3}.
\end{problem}

\subsubsection{Approximating problem}

To prove well-posedness for the problem above we will proceed by approximation. Define, for $r\in \R$ and $\delta\in (0,1)$,
\begin{align}
F^{\mathrm{log}}_{\delta}(r) = 
\dfrac{\theta}{2}
\begin{cases}
(1-r)\log(\delta) + (1+r)\log(2-\delta) + \frac{(1-r)^2}{2\delta} + \frac{(1+r)^2}{2(2-\delta)} - 1, & r\geq 1-\delta \\
(1+r)\log (1+r) + (1-r)\log(1-r), &|r| \leq 1-\delta \\
(1+r)\log(\delta) + (1-r)\log(2-\delta) + \frac{(1+r)^2}{2\delta} + \frac{(1-r)^2}{2(2-\delta)} - 1, &  r\leq -1+\delta
\end{cases}.
\end{align}
Then $F_\delta^\mathrm{log}\in C^2(\R)$. For simplicity of notation let us also denote 
\begin{align}
\varphi_\delta(r) &:= \left(F^\mathrm{log}_\delta\right)'(r), \quad r\in\R.
\end{align}
Consider now the following problem.

\begin{problem}
Given $\delta\in (0,1)$ and an initial condition $u_0\in \mathcal{I}_0$, find $(u_\delta,w_\delta)\in L^\infty_{H^1}\times L^2_{H^1}$, with $\partial^\bullet u_\delta\in L^2_{H^{-1}}$, satisfying, for all $\eta\in L^2_{H^1}$ and a.e. $t\in [0,T]$,
\begin{align}\label{eq:prob3eps}\tag{$\text{CH}^\delta_{\text{log}}$}
\begin{split}
m_\ast(\partial^\bullet u_\delta, \eta) + g( u_\delta, \eta) + a_N(u_\delta,\eta) + a_S(w_\delta,\eta) &= 0, \\
a_S(u_\delta, \eta) + \dfrac{\theta}{2} m(\varphi_\delta(u_\delta), \eta)- m(u_\delta, \eta) - m(w_\delta, \eta) &=0,
\end{split}
\end{align} 
and $u_\delta(0)=u_0$ almost everywhere in $\Gamma_0$.
\end{problem}

Observe that $\varphi_\delta$ is Lipschitz, so that $\varphi_\delta(u_\delta(t))\in H^1(\Gamma(t))$ for almost all $t\in [0,T]$. It is also simple to check that the potential $F_\delta^\mathrm{log}$ satisfies all the assumptions (A1)-(A2) in the previous section, and hence it follows from Theorem \ref{thm:wellposedsmooth} that we have existence and uniqueness of a solution pair $(u_\delta, w_\delta)$ for \eqref{eq:prob3eps} (observe that $F_\delta^\mathrm{log}$ has quadratic growth at infinity, and the observations for the Galerkin approximation problem in Remark \ref{rem:extraassump} hold). Note that, as we have previously mentioned, the uniform bounds we obtained in the previous section depend on the form of the potential, and consequently on $\delta$; this means that in order to let $\delta\to 0$ in \eqref{eq:prob3eps} we need new estimates. Before doing so, we need an additional lemma. Consider the Galerkin approximation for \eqref{eq:prob3eps}:

\begin{problem}
Given $\delta\in (0,1)$ and an initial condition $u_0\in \mathcal{I}_0$ with $\mathcal{E}(u_0)<\infty$, find $(u^M_\delta,w_\delta^M)\in L^2_{V_M}$ with $\partial^\bullet u_\delta^M\in L^2_{V_M}$ satisfying, for all $\eta\in L^2_{V_M}$ and $t\in [0, T]$, 
\begin{align}\label{eq:gaprob3eps}\tag{$\text{CH}^{M,\delta}_{\text{log}}$}
\begin{split}
m_\ast(\md u_\delta^M, \eta) + g(u_\delta^M, \eta) + a_N(u_\delta^M, \eta) + a_S(w_\delta^M, \eta) &= 0, \\
a_S(u_\delta^M, \eta) + \dfrac{\theta}{2} m(\varphi_\delta(u_\delta^M), \eta) - m(u_\delta^M, \eta) - m(w_\delta^M, \eta) &= 0,
\end{split}
\end{align} 
and $u_\delta^M(0)=P_M u_0$ almost everywhere in $\Gamma_0$.
\end{problem}

Let us also introduce the following approximate Cahn-Hilliard energy:
\begin{align}
\mathrm{E}^\mathrm{CH}_\delta [u] = \int_{\Gamma(t)} \dfrac{1}{2} |\tgrad u|^2 + F_\delta(u) .
\end{align}
We have the following uniform bound for the initial energy.

\begin{lemma}\label{lem:apriori2}
Let $(u^M_\delta, w_\delta^M)$ be the unique solution of \eqref{eq:gaprob3eps}. There exists a constant $C>0$, independent of both $M$ and $\delta$, such that $\mathrm{E}^\mathrm{CH}_\delta[0; u_\delta^M(0)] \leq C$.
\end{lemma}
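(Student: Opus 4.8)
The plan is to split the initial energy into its Dirichlet and bulk parts and to bound each separately, keeping careful track of the dependence on $M$ and $\delta$. Writing $u_\delta^M(0)=P_M^0 u_0$, we have
\[
\mathrm{E}^\mathrm{CH}_\delta[0; u_\delta^M(0)] = \tfrac{1}{2}\|\tgrad P_M^0 u_0\|_{L^2(\Gamma_0)}^2 + \int_{\Gamma_0} F_\delta(P_M^0 u_0).
\]
The gradient term is immediately controlled by Assumption $\mathbf{(A_P)}$(i), which gives $\|P_M^0 u_0\|_{H^1(\Gamma_0)} \le \|u_0\|_{H^1(\Gamma_0)}$ and hence a bound depending only on $u_0$; note $\delta$ does not even enter here. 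So everything reduces to the bulk term.

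For the bulk term I would use the splitting $F_\delta = F_\delta^\mathrm{log} + (1-r^2)/2$ inherited from the definition of the potential. The concave quadratic part is harmless: since $(1-r^2)/2 \le 1/2$ pointwise, $\int_{\Gamma_0}(1-(P_M^0 u_0)^2)/2 \le |\Gamma_0|/2$. It remains to bound the convex contribution $\int_{\Gamma_0} F_\delta^\mathrm{log}(P_M^0 u_0)$ uniformly in $M$ and $\delta$. Here I would first record two elementary facts about the regularisation: $F_\delta^\mathrm{log}$ is convex and $C^2$, and, because it is built from $F^\mathrm{log}$ by a second-order Taylor extension at $\pm(1-\delta)$ while the true second derivative $2/(1-r^2)$ is increasing on $[0,1)$, one has the comparison $F_\delta^\mathrm{log} \le F^\mathrm{log}$ on $[-1,1]$. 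Consequently, using $|u_0| \le 1$ a.e. and $u_0 \in \mathcal{I}_0$,
\[
\int_{\Gamma_0} F_\delta^\mathrm{log}(u_0) \le \int_{\Gamma_0} F^\mathrm{log}(u_0) \le C\,\mathrm{E}^\mathrm{CH}[u_0] < \infty,
\]
a bound independent of $M$ and $\delta$.

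To pass from $u_0$ to its projection I would invoke convexity of $F_\delta^\mathrm{log}$ in the form $F_\delta^\mathrm{log}(P_M^0 u_0) \le F_\delta^\mathrm{log}(u_0) + \varphi_\delta(P_M^0 u_0)(P_M^0 u_0 - u_0)$ and integrate over $\Gamma_0$. Since constants and all of $V_M(0)$ are $L^2$-orthogonal to $P_M^0 u_0 - u_0$, the cross term can be rewritten as $\int_{\Gamma_0}\big(\varphi_\delta(P_M^0 u_0) - P_M^0\varphi_\delta(P_M^0 u_0)\big)(P_M^0 u_0 - u_0)$, which I would then estimate by Cauchy--Schwarz together with the approximation property in Assumption $\mathbf{(A_P)}$(ii) and the $L^2$-contractivity of $P_M^0$. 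Combining this with the two previous bounds would then deliver the desired constant.

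The delicate point --- and what I expect to be the main obstacle --- is precisely this cross term. The derivative $\varphi_\delta$ of the regularised logarithm has Lipschitz constant of order $1/\delta$, so any crude pointwise estimate loses uniformity in $\delta$ exactly on the set where $P_M^0 u_0$ overshoots the physical interval $[-1,1]$, which the $L^2$-projection does not prevent. Making the cross-term bound genuinely independent of $\delta$ is therefore the heart of the matter: one must exploit the $L^2$-orthogonality of $P_M^0$ and the finiteness of $\mathrm{E}^\mathrm{CH}[u_0]$ jointly, rather than trying to control $\varphi_\delta(P_M^0 u_0)$ on its own. Once this term is handled, collecting the Dirichlet bound, the quadratic bound, and the comparison $\int_{\Gamma_0}F_\delta^\mathrm{log}(u_0) \le C\,\mathrm{E}^\mathrm{CH}[u_0]$ yields a constant $C$ depending only on $u_0$ and $\Gamma_0$, uniform in $M$ and $\delta$, as claimed.
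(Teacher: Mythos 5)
Your setup coincides with the paper's proof up to and including the decisive cross term: same splitting of the energy, same use of Assumption $\mathbf{(A_P)}$(i) for the Dirichlet part, the trivial bound on the concave quadratic part, the comparison $F^\mathrm{log}_\delta \leq F^\mathrm{log}$ on $[-1,1]$, and the convexity inequality $F^\mathrm{log}_\delta(P_M^0 u_0) \leq F^\mathrm{log}_\delta(u_0) + \varphi_\delta(P_M^0 u_0)\,(P_M^0 u_0 - u_0)$, which reduces everything to the term $m\big(\varphi_\delta(P_M^0 u_0),\, P_M^0 u_0 - u_0\big)$. But at exactly this point your proposal has a genuine gap: you correctly diagnose that any estimate routed through the regularity of $\varphi_\delta$ carries the Lipschitz constant $\|\varphi_\delta'\|_\infty \sim 1/\delta$, and then you defer the resolution (``once this term is handled''), so the proof is not closed. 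Your suggested repair does not work either: after rewriting the cross term via $L^2$-orthogonality as $m\big(\varphi_\delta(P_M^0 u_0) - P_M^0\varphi_\delta(P_M^0 u_0),\, P_M^0 u_0 - u_0\big)$, invoking the approximation estimate \eqref{eq:projestimate} requires control of $\|\varphi_\delta(P_M^0 u_0)\|_{H^1(\Gamma_0)}$, whose gradient contribution $\varphi_\delta'(P_M^0 u_0)\tgrad P_M^0 u_0$ reintroduces the factor $1/\delta$ on the set where $P_M^0 u_0$ overshoots $[-1,1]$ --- which the $L^2$-projection does not prevent. Indeed, a bound on the cross term uniform in $M$ \emph{and} $\delta$ simultaneously is likely not available at all, which is why the direct route you are attempting stalls.

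The paper's key idea, which your proposal is missing, is to \emph{abandon} uniformity in $\delta$ for the cross term. Fix $\delta$: then $\varphi_\delta' = F_{1,\delta}''$ is bounded, so $\varphi_\delta$ is Lipschitz (with $\delta$-dependent constant), hence $\varphi_\delta(P_M^0 u_0) \to \varphi_\delta(u_0)$ in $L^2(\Gamma_0)$ and, since also $P_M^0 u_0 \to u_0$ in $L^2(\Gamma_0)$, the cross term tends to $0$ as $M\to\infty$. Because every other term is bounded by a constant $C_3$ independent of both parameters, this yields $\limsup_{M\to\infty} \mathrm{E}^\mathrm{CH}_\delta[P_M^0 u_0] \leq C_3$ with $C_3$ independent of $\delta$ --- and this limsup bound (equivalently: for each $\delta$ there is $M(\delta)$ beyond which the bound holds with a $\delta$-independent constant) is all that the subsequent argument in Proposition \ref{prop:apriori22} uses, since there one first lets $M\to\infty$ at fixed $\delta$ and only afterwards sends $\delta\to 0$. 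So the fix is not a sharper estimate of the cross term but a change of quantifiers, exploiting the order in which the two limits are taken.
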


\begin{proof}
For each $M\in\N$, we have
\begin{align}
\mathrm{E}^{\mathrm{CH}}_\delta[P_M u_0] & \leq \|\tgrad P_M u_0\|\L^2 + m(F_\delta^\mathrm{log}(P_M u_0), 1) + \dfrac{1}{2} m(1-(P_M u_0)^2, 1)  \\
&\leq C_1 + C_2 \|u_0\|^2_{H^1} + m(F_\delta^\mathrm{log}(P_M u_0), 1).
\end{align}
Now convexity of $F_\delta^\mathrm{log}$ implies that, for any $r, s\in \R$,
\begin{align}
F^\mathrm{log}_\delta(r)\geq F_\delta^\mathrm{log}(s) + \varphi_\delta(s)(r-s),
\end{align}
and combining this with $F^\mathrm{log}_\delta(r)\leq F^\mathrm{log}(r)$, for $r\leq 1$, leads to
\begin{align}
\mathrm{E}^\mathrm{CH}_\delta[P_M u_0] &\leq C_1 + C_2 \|u_0\|_{H^1}^2 + m(\varphi_\delta(P_M u_0), P_M u_0-u_0) + m(F_\delta^\mathrm{log}(u_0), 1) \\ 
&\leq C_1\|u_0\|_{H^1}^2 + C_2 + m(\varphi_\delta(P_M u_0), P_M u_0-u_0) + \mathrm{E}^\mathrm{CH}[u_0] \\
&\leq C_3 + m(\varphi_\delta(P_Mu_0), P_M u_0-u_0),
\end{align}
with $C_3$ independent of both $M$ and $\delta$. Now note that $\varphi_\delta'=F_{1,\delta}''$ is bounded (for each fixed $\delta$), so that $\varphi_\delta$ is Lipschitz continuous and thus $\varphi_\delta(P_Mu_0) \to \varphi_\delta (u_0) \text{ in } L^2(\Gamma_0).$ Consequently $$m(\varphi_\delta(P_M u_0), P_M u_0-u_0) \to 0 \quad \text{ as } M\to\infty,$$ and so 
\begin{align}
\limsup_{M\to\infty} \mathrm{E}^\mathrm{CH}_\delta [P_M u_0] \leq C_3;
\end{align}
in particular, $\mathrm{E}^\mathrm{CH}_\delta [P_M u_0]$ is uniformly bounded. 
\end{proof}

In the next result, we obtain uniform estimates for the solution of \eqref{eq:prob3eps}.

\begin{proposition}[A priori estimates for \eqref{eq:prob3eps}]\label{prop:apriori22}
Let $(u_\delta, w_\delta)$ be the unique solution of \eqref{eq:prob3eps}.
\begin{itemize}
\item[(a)] There exists a constant $C>0$, independent of $\delta$, such that 
\begin{align}
\sup_{t\in [0, T]} \mathrm{E}^\mathrm{CH}_\delta[t; u_\delta(t)] + \int_0^T \|\nabla_{\Gamma(t)} w_\delta(t)\|^2_{L^2(\Gamma(t))} \, dt &\leq C;
\end{align}
\item[(b)] $u_\delta$ and $w_\delta$ are uniformly bounded in $L^\infty_{H^1}$ and $L^2_{H^1}$, respectively; \vskip 1mm 
\item[(c)] $\partial^\bullet u_\delta$ is uniformly bounded in $L^2_{H^{-1}}$; \vskip 1mm
\item[(d)] $\varphi_\delta(u_\delta)$ is uniformly bounded in $L^2_{L^2}$.
\end{itemize}
\end{proposition}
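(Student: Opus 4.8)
The plan is to mimic the energy method of Section~\ref{sec:smooth}, the essential new requirement being that every constant must be independent of $\delta$; the structural input that makes this possible is the convexity of $F^{\mathrm{log}}_\delta$. I would work directly with the solution $(u_\delta,w_\delta)$ (whose regularity from Theorem~\ref{thm:wellposedsmooth} legitimises testing the second equation against compositions $\varphi_\delta(u_\delta)$, products $u_\delta\,\tgrad\!\cdot\!\mathbf{V}$, and shifts $u_\delta-(u_\delta)_{\Gamma(t)}$, all of which lie in $L^2_{H^1}$), deriving the inequalities below and, where a time derivative is involved, justifying the chain rule on the regularised/Galerkin level before passing to the limit.

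For part (a) I would differentiate $\mathrm{E}^{\mathrm{CH}}_\delta[u_\delta]$ along the flow as in Proposition~\ref{prop:propenergy}, testing the first equation with $w_\delta$ and the second (via $w_\delta=-\Delta_\Gamma u_\delta+F'_\delta(u_\delta)$) to reach
\[
\frac{d}{dt}\,\mathrm{E}^{\mathrm{CH}}_\delta[u_\delta] + \|\tgrad w_\delta\|_{L^2}^2 = -g(u_\delta,w_\delta) - a_N(u_\delta,w_\delta) + b(u_\delta,u_\delta) + g(F_\delta(u_\delta),1).
\]
The velocity terms $b(u_\delta,u_\delta)$ and $g(F_\delta(u_\delta),1)$ are bounded by $C_{\mathbf V}\,\mathrm{E}^{\mathrm{CH}}_\delta[u_\delta]+C$ once one notes that $F_\delta$ is bounded below uniformly in $\delta$; $a_N(u_\delta,w_\delta)$ is handled by Young's inequality, absorbing $\tfrac12\|\tgrad w_\delta\|^2$ and using that the integral $\int_{\Gamma(t)}u_\delta=\int_{\Gamma_0}u_0$ is preserved, so that $\|u_\delta\|_{L^2}^2\le C\|\tgrad u_\delta\|^2+C$. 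The decisive point — and the reason the $\eps$-projection argument of Proposition~\ref{prop:propenergy} can be bypassed (cf. Remark~\ref{rem:extraassump}) — is that $g(u_\delta,w_\delta)$, rewritten through the second equation, produces the contribution $\int F'_\delta(u_\delta)\,u_\delta\,\tgrad\!\cdot\!\mathbf{V}$, which I would bound uniformly in $\delta$ with the convexity estimate $|r\varphi_\delta(r)|\le C\big(F^{\mathrm{log}}_\delta(r)+1\big)$ ($C$ independent of $\delta$). This gives $\frac{d}{dt}\mathrm{E}^{\mathrm{CH}}_\delta[u_\delta]+\tfrac12\|\tgrad w_\delta\|^2\le C(1+\mathrm{E}^{\mathrm{CH}}_\delta[u_\delta])$, and Gronwall together with the uniform initial bound of Lemma~\ref{lem:apriori2} yields (a); the $L^\infty_{H^1}$ bound on $u_\delta$ in (b) then follows from Poincar\'e and integral preservation.

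The heart of the argument is the uniform control of the mean of $\varphi_\delta(u_\delta)$, which simultaneously completes (b) for $w_\delta$ and yields (d). Testing the second equation with the zero-mean function $\varphi_\delta(u_\delta)-(\varphi_\delta(u_\delta))_{\Gamma(t)}$ and using $a_S(u_\delta,\varphi_\delta(u_\delta))=\int\varphi_\delta'(u_\delta)|\tgrad u_\delta|^2\ge0$ controls $\|\varphi_\delta(u_\delta)-(\varphi_\delta(u_\delta))_\Gamma\|_{L^2}$ by $\|\tgrad w_\delta\|_{L^2}$ and the bounds already in hand. For the mean itself I would test with $u_\delta-(u_\delta)_{\Gamma(t)}$ to get $\int_{\Gamma(t)}\varphi_\delta(u_\delta)(u_\delta-(u_\delta)_\Gamma)\le C(1+\|\tgrad w_\delta\|_{L^2})$, and then invoke the key structural lemma: since the preserved integral forces $|(u_\delta)_{\Gamma(t)}|=m_{u_0}(t)\le\alpha<1$ (this is exactly where $u_0\in\mathcal{I}_0$ enters),
\[
\int_{\Gamma(t)}|\varphi_\delta(u_\delta)| \le C\int_{\Gamma(t)}\varphi_\delta(u_\delta)\big(u_\delta-(u_\delta)_\Gamma\big) + C,
\]
with $C$ depending on $\alpha$ but not on $\delta$. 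Testing the second equation with $\eta\equiv1$ gives $(w_\delta)_{\Gamma(t)}=\tfrac{1}{|\Gamma(t)|}\big(\tfrac\theta2\int\varphi_\delta(u_\delta)-\int u_\delta\big)$, so these displays yield $|(w_\delta)_{\Gamma(t)}|\le C(1+\|\tgrad w_\delta\|_{L^2})$ and hence $\|w_\delta\|_{H^1}^2\le C(1+\|\tgrad w_\delta\|^2)$; integrating in time and using (a) gives (b) for $w_\delta$. With $w_\delta$ bounded in $L^2_{H^1}$, testing the second equation with $\varphi_\delta(u_\delta)$ and absorbing $m(w_\delta,\varphi_\delta(u_\delta))\le\tfrac\theta4\|\varphi_\delta(u_\delta)\|^2_{L^2}+C\|w_\delta\|_{L^2}^2$ proves (d). Finally (c) is a routine duality estimate: the first equation gives $\|\md u_\delta\|_{H^{-1}}\le C(\|u_\delta\|_{H^1}+\|\tgrad w_\delta\|_{L^2})$, which lies in $L^2(0,T)$ by (a)--(b).

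I expect the main obstacle to be the $\delta$-uniform structural lemma bounding $\|\varphi_\delta(u_\delta)\|_{L^1}$ by $\int\varphi_\delta(u_\delta)(u_\delta-(u_\delta)_\Gamma)$: its proof rests on splitting $\Gamma(t)$ according to whether $u_\delta$ is near $\pm1$ and exploiting that, where $\varphi_\delta(u_\delta)$ is large, $u_\delta-(u_\delta)_\Gamma$ is bounded below in modulus by $1-\alpha>0$; this is why the condition $m_{u_0}<1$ is indispensable and explains the restriction to $\mathcal{I}_0$. A secondary technical point is the rigorous justification of the formal tests (the energy chain rule and the use of $\varphi_\delta(u_\delta)$ as test function), which I would carry out by regularisation, checking throughout that the constants produced are independent of $\delta$.
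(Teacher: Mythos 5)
There is a genuine gap in part (a). The ``convexity estimate'' $|r\varphi_\delta(r)|\le C\,\big(F^{\mathrm{log}}_\delta(r)+1\big)$ with $C$ independent of $\delta$ is false: the logarithmic potential is \emph{bounded} on $[-1,1]$ while its derivative is not, and the regularised family inherits this non-uniformity. Concretely, at $r=1-\delta$ one has $\varphi_\delta(1-\delta)=\varphi(1-\delta)=\tfrac{\theta}{2}\log\tfrac{2-\delta}{\delta}\sim\tfrac{\theta}{2}|\log\delta|\to\infty$, while $F^{\mathrm{log}}_\delta(1-\delta)=F^{\mathrm{log}}(1-\delta)$ remains bounded by a constant of order $\theta\log 2$; so the left-hand side blows up as $\delta\to0$ while the right-hand side stays bounded, and no $\delta$-independent constant exists. (Convexity yields the \emph{lower} bound $r\varphi_\delta(r)\ge F^{\mathrm{log}}_\delta(r)-F^{\mathrm{log}}_\delta(0)$, not the upper bound you invoke; an estimate of type (A2.3) is precisely what the singular limit destroys, which is why new estimates are needed at all.) Consequently your differential inequality $\tfrac{d}{dt}\mathrm{E}^{\mathrm{CH}}_\delta[u_\delta]+\tfrac12\|\tgrad w_\delta\|^2_{L^2}\le C\big(1+\mathrm{E}^{\mathrm{CH}}_\delta[u_\delta]\big)$ does not follow, and your proof of (a) does not close. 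Moreover, the argument as organised is circular: the mean-value machinery you deploy in (b) and (d) (testing with $u_\delta-(u_\delta)_\Gamma$, controlling $(w_\delta)_{\Gamma(t)}$) presupposes the $L^\infty_{H^1}$ bound on $u_\delta$, i.e.\ exactly the conclusion of (a).

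The correct mechanism — which is the paper's — is the one you relegate to (b)/(d), and it must enter already in the energy estimate. The paper bounds $m(\varphi_\delta(u_\delta),u_\delta\,\tgrad\!\cdot\!\mathbf V)\le C\,m(\varphi_\delta(u_\delta),u_\delta)$ using $r\varphi_\delta(r)\ge0$, then tests the second equation of \eqref{eq:prob3eps} with $u_\delta$ to convert $m(\varphi_\delta(u_\delta),u_\delta)$ into $-\|\tgrad u_\delta\|^2_{L^2}+\|u_\delta\|^2_{L^2}+m(w_\delta,u_\delta)$, splits
\begin{align}
m(w_\delta,u_\delta)=a_S\big(w_\delta,\mathcal G(u_\delta-(u_\delta)_\Gamma)\big)+(u_\delta)_\Gamma\, m(w_\delta,1), \qquad |(u_\delta)_\Gamma|=m_{u_0}(t)\le\alpha<1,
\end{align}
bounds $m(w_\delta,1)$ by taking $\eta\equiv1$ in the second equation together with $|\varphi_\delta(r)|\le r\varphi_\delta(r)+1$, and then closes the coupled pair of inequalities by dividing through by $1-\alpha$, absorbing a multiple $\tfrac{1-\alpha}{4}\|\tgrad w_\delta\|^2_{L^2}$ into the left-hand side before applying Gronwall. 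This is where $u_0\in\mathcal I_0$ is indispensable \emph{for (a) itself}, not merely for the mean of $w_\delta$ afterwards. Your structural lemma $\int_{\Gamma(t)}|\varphi_\delta(u_\delta)|\le C\int_{\Gamma(t)}\varphi_\delta(u_\delta)\big(u_\delta-(u_\delta)_\Gamma\big)+C$ is in itself a correct, $\delta$-uniform ingredient (the standard monotonicity trick, valid since $\varphi_\delta=\varphi$ on $|r|\le\tfrac{1+\alpha}{2}$ for small $\delta$), and wiring it — or the paper's equivalent — into the differential inequality would repair (a); your parts (b), (c) and (d) then go through essentially as in the paper (duality for (c); testing with $\varphi_\delta(u_\delta)$ and $\varphi_\delta'\ge0$ for (d)).
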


\begin{proof}
a) As in the proof of Proposition \ref{prop:propenergy}, we have for the solution $(u_\delta^M, w_\delta^M)$ of \eqref{eq:gaprob3eps}
\begin{align}
\dfrac{d}{dt}\mathrm{E}^\mathrm{CH}[u_\delta^M] + \dfrac{1}{2}\|\tgrad w_\delta^M\|\L^2 \leq - g(u_\delta^M, w_\delta^M) + C_0 + C_1 \mathrm{E}^\mathrm{CH}_\delta[u_\delta^M],
\end{align}
with constants $C_0, C_1$ independent of the assumptions on the potential. Integrating between $0$ and $t$ we obtain
\begin{align}
\mathrm{E}^\mathrm{CH}_\delta[u_\delta^M] + \int_0^t \|\tgrad w_\delta^M\|\L^2 
&\leq C_1 + C_2\int_0^t \mathrm{E}^\mathrm{CH}_\delta [u_\delta^M] - \int_0^t g(u_\delta^M, w_\delta^M),
\end{align}
where $C_1>0$ is the constant (independent of $\delta$) given by the previous result. Letting $M\to\infty$ and using the convergence results for the Galerkin approximation this yields 
\begin{align}\label{eq:newenergy}
\mathrm{E}^\mathrm{CH}_\delta[u_\delta] + \int_0^t\|\tgrad w_\delta\|_{L^2}^2  \leq C_1 + C_2\int_0^t \mathrm{E}^\mathrm{CH}_\delta[u_\delta] + \int_0^t |g(u_\delta, w_\delta)|.
\end{align}
Both $C_1$ and $C_2$ are independent of $M$ and $\delta$.

Our aim now is to obtain an estimate on the last term on the right hand side above. Testing the second equation of \eqref{eq:prob3eps} with $u_\delta \tgrad\cdot\mathbf{V}$, we get
\begin{align}
g(u_\delta, w_\delta) &= m(\tgrad u_\delta, \tgrad (u_\delta \tgrad\cdot \mathbf{V})) + m(\varphi_\delta(u_\delta),u_\delta \tgrad\cdot \mathbf{V}) - m(u_\delta, u_\delta \tgrad\cdot\mathbf{V}) \\
&\leq C_1 \|\tgrad u_\delta\|^2\L + C_2\, m(\varphi_\delta(u_\delta), u_\delta) + C_3 \\
&\leq C_1 \|\tgrad u_\delta\|^2\L + C_4\|\tgrad u_\delta\|^2\L + |m(w_\delta, u_\delta)| + C_3,
\end{align}
where $C_1$, $C_3$, $C_4$ are independent of $\delta$. Taking $\eta\equiv 1$ in the second equation of \eqref{eq:prob3eps} yields
\begin{align}
m(w_\delta, 1) = - m(u_\delta, 1) + m(\varphi_\delta(u_\delta), 1),
\end{align}
and using the fact that $|\varphi_\delta(r)|\leq r\varphi_\delta(r) + 1$ and the second equation again we obtain
\begin{align}
\begin{split}\label{eq:est1}
|m(w_\delta, 1)| &\leq |m(u_\delta, 1)| + m(\varphi_\delta(u_\delta), u_\delta) + C_1 \\ 
&\leq C_2 |(u_0)_{\Gamma_0}| + \|u_\delta\|^2\L - \|\tgrad u_\delta\|^2\L + m(w_\delta, u_\delta) + C_1 \\
&\leq C_2|(u_0)_{\Gamma_0}| + C_3\|\tgrad u_\delta\|\L^2 + |m(w_\delta, u_\delta)| + C_1.
\end{split}
\end{align}
Recalling the definition of the inverse Laplacian (see Appendix \ref{app:invlaplacian}), we now have
\begin{align}
\begin{split}\label{eq:est3}
|m(w_\delta, u_\delta)| &\leq |m(w_\delta,u_\delta - (u_\delta)_\Gamma)| + |(u_\delta)_\Gamma| \, |m(w_\delta, 1)| \\
&= |a_S(w_\delta, \mathcal{G}(u_\delta-(u_\delta)_\Gamma))| + m_{u_0}(t) |m(w_\delta, 1)| \\
&\leq |a_S(w_\delta, \mathcal{G}(u_\delta-(u_\delta)_\Gamma))| + \alpha |m(w_\delta, 1)|
\end{split}
\end{align}
and hence, using Young and Poincar\'{e} inequalities we obtain
\begin{align}
\begin{split}\label{eq:est2}
|m(w_\delta, u_\delta)| &\leq \dfrac{1-\alpha}{4}\|\tgrad w_\delta\|\L^2 + C_1 \|\tgrad \mathcal{G}(u_\delta-(u_\delta)_\Gamma) \|^2\L + \alpha \, |m(w_\delta, 1)| \\
&\leq \dfrac{1-\alpha}{4}\|\tgrad w_\delta\|\L^2 + C_1 \|u_\delta-(u_\delta)_\Gamma\|^2\L + \alpha \, |m(w_\delta, 1)| \\
&\leq  \dfrac{1-\alpha}{4}\|\tgrad w_\delta\|\L^2 + C_2 \|\tgrad u_\delta\|^2\L + \alpha \, |m(w_\delta, 1)|.
\end{split}
\end{align}
Combining \eqref{eq:est1} and \eqref{eq:est2} yields
\begin{align}
|m(w_\delta, u_\delta)| \leq \dfrac{1}{4} \|\tgrad w_\delta\|\L^2 + C_1 \|\tgrad u_\delta\|^2\L + C_2,
\end{align}
and thus
\begin{align}
|g(u_\delta, w_\delta)| \leq \dfrac{1}{4} \|\tgrad w_\delta\|^2\L + C_1 \|\tgrad u_\delta\|\L^2 + C_2.
\end{align}
Hence from \eqref{eq:newenergy} we get, for $C_1$, $C_2$ independent of $M$ and $\delta$,
\begin{align}\label{eq:new3}
\mathrm{E}^\mathrm{CH}_\delta[u_\delta] + \dfrac{1}{2}\int_0^t\|\tgrad w_\delta\|_{L^2}^2  \leq C_1 + C_2\int_0^t \mathrm{E}^\mathrm{CH}_\delta[u_\delta],
\end{align}
The conclusion now follows from Gronwall's inequality. \vskip 1mm

b) From the energy estimate in \eqref{eq:new3}, we obtain a uniform bound for $\tgrad u_\delta$ in $L^\infty_{L^2}$, and since the mean value of $u_\delta$ is preserved it follows that $u_\delta$ is uniformly bounded in $L^\infty_{H^1}$. We also have a bound for $\tgrad w_\delta$ in $L^2_{L^2}$. To obtain a bound on the $L^2_{L^2}$-norm of $w_\delta$, we start by noticing that, using the uniform estimates for $u_\delta$, we obtain from \eqref{eq:est1} 
\begin{align}\label{eq:6}
|m(w_\delta, 1)| \leq C + |m(w_\delta, u_\delta)|,
\end{align}
and now we estimate \eqref{eq:est3} without using Young's inequality on the first term to get
\begin{align}
\begin{split}\label{eq:7}
|m(w_\delta, u_\delta)| &\leq  \|\tgrad w_\delta\|\L \, \|\tgrad \mathcal{G}(u_\delta-(u_\delta)_\Gamma)\|\L + \alpha\,|m(w_\delta, 1)|\\ 
&\leq \|\tgrad w_\delta\|\L \, \|u_\delta-(u_\delta)_\Gamma\|\L + \alpha \,|m(w_\delta, 1)| \\
&\leq C \|\tgrad w_\delta\|\L + \alpha\, |m(w_\delta, 1)|.
\end{split}
\end{align}
Combining \eqref{eq:6} and \eqref{eq:7} yields
\begin{align}
|m(w_\delta, 1)| \leq C_1 + C_2 \|\tgrad w_\delta\|\L + \alpha\, |m(w_\delta, 1)|,
\end{align}
and dividing through by $1-\alpha$ we get $|m(w_\delta, 1)| \leq C_1 + C_2 \|\tgrad w_\delta\|\L.$ So the desired bound follows once again by applying Poincar\'{e}'s inequality. \vskip 1mm

c) This follows by duality, since, for any $\eta\in L^2_{H^1}$, the previous estimates yield
\begin{align}
\langle \partial^\bullet u_\delta, \eta \rangle_{L^2_{H^{-1}}\times L^2_{H^1}} &= \int_0^T - g(u_\delta, \eta) - a_N(u_\delta, \eta)- a_S(w_\delta, \eta)  \leq C \|\eta\|_{L^2_{H^1}},
\end{align}
giving the conclusion. \vskip 1mm

d) Test the second equation of \eqref{eq:prob3eps} with $\varphi_\delta(u_\delta)$ to obtain 
\begin{align}
\|\varphi_\delta(u_\delta)\|\L^2 &= m(w_\delta, \varphi_\delta(u_\delta)) - a_S(u_\delta, \varphi_\delta(u_\delta))  - m(u_\delta, \varphi_\delta(u_\delta)) \\
&= m(w_\delta, \varphi_\delta(u_\delta)) - m(\varphi_\delta'(u_\delta)\nabla_\Gamma u_\delta, \nabla_\Gamma u_\delta)  - m(u_\delta, \varphi_\delta(u_\delta)) \\
&\leq m(w_\delta, \varphi_\delta(u_\delta)) - m(u_\delta, \varphi_\delta(u_\delta)),
\end{align}
since $\varphi_\delta'\geq 0$, and a standard application of Young's inequality gives
\begin{align}
\|\varphi_\delta(u_\delta)\|_{L^2}^2 &\leq C_1 \|w_\delta\|^2_{L^2} + C_2.
\end{align}
The conclusion follows from the uniform bound for $w_\delta$ in $L^2_{L^2}$.
\end{proof}

One extra challenge that we did not face in the previous section was the fact that, for the logarithmic case, the potential is only defined on the interval $[-1,1]$ and its derivative is now singular at $\pm 1$. Hence, for the second equation in \eqref{eq:prob3} to make sense, we must prove that the solution $u$ takes values only on $(-1,1)$ (up to a set of measure zero). This requires the next a priori estimate for the approximation.

\begin{lemma}\label{lem:oninterval}
There exist constants $C_1, C_2>0$, independent of $\delta$, such that
\begin{align}\label{eq:oninterval}
\int_{\Gamma(t)} [-1-u_\delta(t)]_+  + \int_{\Gamma(t)} [u_\delta(t)-1]_+ \leq \dfrac{C_1}{|\log(\delta)|}+C_2\delta,
\end{align}
where we denote, for a real-valued function $f$, $[f]_+ = \max(0, f)$. 
\end{lemma}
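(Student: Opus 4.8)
The plan is to test the second equation of \eqref{eq:prob3eps} with the truncations $[u_\delta-1]_+$ and $-[-1-u_\delta]_+$. Both lie in $H^1(\Gamma(t))$ for a.e. $t$, since $u_\delta(t)\in H^1(\Gamma(t))$ and composition with a Lipschitz function preserves $H^1$. These test functions isolate the overshoot regions $\{u_\delta>1\}$ and $\{u_\delta<-1\}$, and the whole point is that $\varphi_\delta$ is enormous there. Before testing, I would record two structural inequalities obtained by differentiating the piecewise definition of $F^\mathrm{log}_\delta$: for $r\ge 1$ one has the logarithmic blow-up $\varphi_\delta(r)\ge|\log\delta|$ and the strong convexity $\varphi_\delta(r)(r-1)\ge (r-1)^2/\delta$ coming from the quadratic regularisation, with the symmetric statements for $r\le -1$.

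Testing with $\eta=[u_\delta-1]_+\ge 0$, the gradient term is nonnegative, $a_S(u_\delta,[u_\delta-1]_+)=\int_{\{u_\delta>1\}}|\tgrad u_\delta|^2\ge 0$, so it may be dropped, leaving
\[
\frac{\theta}{2}\,m\big(\varphi_\delta(u_\delta),[u_\delta-1]_+\big)\le m\big(u_\delta,[u_\delta-1]_+\big)+m\big(w_\delta,[u_\delta-1]_+\big).
\]
Applying the two lower bounds on the left produces simultaneously $\tfrac{\theta}{2}|\log\delta|\int_{\Gamma(t)}[u_\delta-1]_+$ and $\tfrac{\theta}{2\delta}\int_{\Gamma(t)}[u_\delta-1]_+^2$. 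For the first term on the right I would use $u_\delta=(u_\delta-1)+1$ to write $m(u_\delta,[u_\delta-1]_+)=\int[u_\delta-1]_+^2+\int[u_\delta-1]_+$, both of which are controlled by the uniform $L^\infty_{H^1}\hookrightarrow L^\infty_{L^p}$ bound on $u_\delta$ from Proposition \ref{prop:apriori22}.

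The delicate term is $m(w_\delta,[u_\delta-1]_+)$. Here the idea is to spend the coercive quadratic quantity $\tfrac{1}{\delta}\int[u_\delta-1]_+^2$: by Young's inequality one absorbs $m(w_\delta,[u_\delta-1]_+)$ into this term at the cost of a residual of the form $\delta\,\|w_\delta\|_{L^2}^2$. Since $1/\delta\gg\max(1,|\log\delta|)$ for $\delta$ small, all the quadratic contributions can be moved to the left and discarded, and repeating the argument verbatim with $-[-1-u_\delta]_+$ on $\{u_\delta<-1\}$ yields, for a.e. $t$,
\[
|\log\delta|\int_{\Gamma(t)}\big([-1-u_\delta]_++[u_\delta-1]_+\big)\le C_1+C_2\,\delta\,\|w_\delta(t)\|_{L^2}^2.
\]
Dividing by $|\log\delta|$ and controlling $w_\delta$ through the uniform a priori bound $\int_0^T\|w_\delta\|_{L^2}^2\le C$ of Proposition \ref{prop:apriori22} then gives the asserted rate: the $C_1/|\log\delta|$ records the logarithmic singularity, while the additive $C_2\delta$ records the quadratic regularisation paired with the $L^2$ control of the chemical potential.

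I expect the main obstacle to be exactly the handling of $m(w_\delta,[u_\delta-1]_+)$. Because $w_\delta$ is only controlled in $L^2_{H^1}$ and not uniformly in $t$, one cannot close the estimate by a crude Cauchy--Schwarz bound; it is essential to trade the strongly convex part of $\varphi_\delta$ against $w_\delta$ so that only the factor $\delta\,\|w_\delta\|_{L^2}^2$ survives, and then to invoke the time-integrated bound on the chemical potential to obtain a $\delta$-independent constant. Keeping track of the exact constants arising from the three branches of $\varphi_\delta$ (in particular the $\log((2-\delta)/\delta)$ and the $2/(2-\delta)$ factors) is what produces the precise competition between $|\log\delta|^{-1}$ and $\delta$ in the final bound.
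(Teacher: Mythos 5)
Your testing strategy is sound up to the last step, but the conclusion has a genuine gap: the lemma asserts a bound valid at (almost) every fixed time $t$, with constants independent of both $\delta$ and $t$, whereas your argument produces, at each fixed $t$, the inequality
\begin{align}
|\log\delta|\int_{\Gamma(t)}\big([-1-u_\delta]_+ +[u_\delta-1]_+\big)\leq C_1 + C_2\,\delta\,\|w_\delta(t)\|^2_{L^2(\Gamma(t))},
\end{align}
and the residual $\delta\,\|w_\delta(t)\|^2_{L^2}$ cannot be discarded pointwise in time. The a priori estimates of Proposition \ref{prop:apriori22} control $w_\delta$ only in $L^2_{H^1}$, i.e.\ $\int_0^T\|w_\delta\|^2_{L^2}\leq C$; there is no uniform-in-$t$ bound on $\|w_\delta(t)\|_{L^2}$, and on an exceptional set of times this quantity can be of size $\delta^{-1}$ or worse. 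Invoking the time-integrated bound, as you do, only yields the integrated statement $\int_0^T\int_{\Gamma(t)}([-1-u_\delta]_+ +[u_\delta-1]_+)\leq C/|\log\delta|$, which is strictly weaker than \eqref{eq:oninterval}. (That weaker version would in fact still suffice for the subsequent corollary that $|u(t)|\leq 1$ a.e., via Fatou and the strong $L^2_{L^2}$ convergence, but it does not prove the lemma as stated.) Rebalancing the Young inequality does not help: any splitting leaves a term involving $\|w_\delta(t)\|_{L^2}$ or $\|\tgrad w_\delta(t)\|_{L^2}$, which is likewise only square-integrable in time.

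The paper's proof avoids the chemical potential entirely and is the key idea you are missing: the uniform-in-time energy estimate of Proposition \ref{prop:apriori22}(a) already gives $\sup_{t\in[0,T]}\int_{\Gamma(t)}F^{\mathrm{log}}_\delta(u_\delta(t))\leq C$ with $C$ independent of $\delta$. One then splits $\Gamma(t)$ into the sets $\{u_\delta>1-\delta\}$, $\{|u_\delta|\leq 1-\delta\}$, $\{u_\delta<-1+\delta\}$; on the middle set $F^{\mathrm{log}}_\delta=F^{\mathrm{log}}$ is bounded below by a universal constant, while on the overshoot sets the explicit regularised formula gives the pointwise lower bound $F^{\mathrm{log}}_\delta(r)\geq |\log\delta|\,[\,|r|-1\,]_+ - C\delta|\log\delta| - C$ (the linear-in-$\log\delta$ branch; the quadratic $(1\mp r)^2/(2\delta)$ terms are simply dropped as nonnegative). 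Plugging these into the energy bound and dividing by $|\log\delta|$ yields \eqref{eq:oninterval} for every $t$, which is exactly the uniformity your route cannot reach. So the structural inequalities you derived for $\varphi_\delta$ are correct, but the efficient move is to exploit the lower bound on $F^{\mathrm{log}}_\delta$ itself inside the already-established $\sup_t$ energy estimate, rather than to test the second equation and re-confront $w_\delta$.
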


\begin{proof}
We start by observing that the uniform bounds imply that there exists a constant $C>0$, independent of $\delta$, such that, for any $\delta>0$ and $t\in [0, T]$,
\begin{align}\label{eq:8}
\int_{\Gamma(t)} F^\mathrm{log}_\delta(u_\delta)  \leq C.
\end{align}

Now let $0<\delta<1$ and denote, for $t\in [0, T]$,
\begin{align}
A_\delta^1(t) = \{u_\delta(t) > 1-\delta\}, \quad A_\delta^2(t) = \{|u_\delta(t)| \le 1-\delta\}, \quad A_\delta^3(t) = \{u_\delta(t) < -1+\delta\}.
\end{align}
We estimate the integrals over each of these sets as follows. For the integral over $A_\delta^2$, observe that, for $|r|<1$, we have $(1+r)\log(1+r)\geq 0$ and $(1-r)\log(1-r)\geq -e^{-1}$, so that
\begin{align}
 \int_{A_\delta^2(t)} F^\mathrm{log}_\delta(u_\delta)  \geq  \int_{A_\delta^2(t)} (1-u_\delta(t))\log(1-u_\delta(t))  \geq -\dfrac{1}{e}|\Gamma(t)| \geq -\dfrac{C}{e},
\end{align}
where $C$ is a constant independent of both $\delta$ and $t$. For the other integral terms, we note that $\log(\delta)<0$, and thus
\begin{align}
\int_{A^1_\delta(t)} F_{1,\delta}(u_\delta(t)) &\geq \int_{A^1_\delta(t)} (1-u_\delta(t))\log(\delta) + (1+u_\delta(t))\log(2-\delta) \, \d\Gamma - C \\
&\geq \log(\delta)\int_{A^1_\delta(t)} 1-u^\delta(t) \, \d\Gamma - C \\
&\geq \log(\delta)\int_{\{u_\delta(t) > 1\}} 1-u_\delta(t) \, \d\Gamma + \delta\log(\delta)|\Gamma(t)| - C \\
&\geq -\log(\delta)\int_{\Gamma(t)} [u_\delta(t)-1]_+ \, \d\Gamma + C\delta\log(\delta) - C,
\end{align}
Similarly,
\begin{align}
\int_{A^3_\delta(t)} F_{1,\delta}(u_\delta(t)) \, \d\Gamma &\geq \int_{A^3_\delta(t)} (1+u^\delta(t))\log(\delta) + (1-u_\delta(t))\log(2-\delta)\, \d\Gamma - C \\
&\geq \log(\delta)\int_{A^3_\delta(t)} 1+u^\delta(t) \, \d\Gamma - C \\
&\geq \log(\delta)\int_{\{u_\delta(t) < -1\}} 1+u_\delta(t) \, \d\Gamma + \delta\log(\delta)|\Gamma(t)| - C \\
&\geq -\log(\delta)\int_{\Gamma(t)} [-1-u_\delta(t)]_+ \, \d\Gamma + C\delta\log(\delta) - C.
\end{align}
Therefore
\begin{align}
|\log(\delta)|\int_{\Gamma(t)} [-1-u_\delta(t)]_+ \, \d\Gamma + |\log(\delta)| \int_{\Gamma(t)} [u_\delta(t)-1]_+ \, \d\Gamma \leq C_1+C_2\delta|\log(\delta)| 
\end{align}
and dividing through by $|\log(\delta)|$ we obtain
\begin{align}
\int_{\Gamma(t)} [-1-u_\delta(t)]_+ \, \d\Gamma + \int_{\Gamma(t)} [u_\delta(t)-1]_+ \, \d\Gamma \leq \dfrac{C_1}{|\log(\delta)|}+C_2\delta.
\end{align}
\end{proof}

\subsubsection{Passage to the limit}

As a consequence of the bounds established above, there exist $u\in L^2_{H^1}$, with $\partial^\bullet u\in L^2_{H^{-1}}$, and $w\in L^2_{H^1}$, such that, as $\delta\to 0$ (up to a subsequence)
\begin{align}\label{eq:convergence}
u_\delta \rightharpoonup u \, \text{ (weakly) in } L^2_{H^1}, \quad \partial^\bullet u_\delta \rightharpoonup \partial^\bullet u \, \text{ (weakly) in } L^2_{H^{-1}}, \quad w_\delta \rightharpoonup w \, \text{ (weakly) in } L^2_{H^1}.
\end{align}
By the Aubin-Lions compactness result in Proposition \ref{prop:inclusions}(c) we additionally obtain $u_\delta\to u$ strongly in $L^2_{L^2}$, and so also up to a subsequence (non-relabeled) we have, for almost every $t\in [0, T]$, $u_\delta(t)\to u(t)$ almost everywhere in $\Gamma(t)$. 
Letting $\delta\to 0$ in \ref{eq:oninterval} immediately yields

\begin{corollary}
For almost every $t\in [0, T]$, $|u(t)| \leq 1$ almost everywhere on $\Gamma(t)$.
\end{corollary}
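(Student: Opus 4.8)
The plan is to pass to the limit $\delta\to 0$ in the estimate \eqref{eq:oninterval} of Lemma \ref{lem:oninterval}, using the pointwise convergence of $u_\delta$ recorded above together with Fatou's lemma. First I would fix a time $t$ lying in the full-measure subset of $[0,T]$ on which (along the non-relabeled subsequence) $u_\delta(t)\to u(t)$ almost everywhere in $\Gamma(t)$. Since the positive-part map $r\mapsto [r]_+$ is continuous, the two integrands converge pointwise, i.e. $[u_\delta(t)-1]_+\to [u(t)-1]_+$ and $[-1-u_\delta(t)]_+\to [-1-u(t)]_+$ almost everywhere on $\Gamma(t)$.

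Next, because both integrands are nonnegative, I would apply Fatou's lemma to their sum and invoke the bound \eqref{eq:oninterval}, whose right-hand side tends to zero:
\begin{align}
\int_{\Gamma(t)} [-1-u(t)]_+ + \int_{\Gamma(t)} [u(t)-1]_+
&\leq \liminf_{\delta\to 0}\left(\int_{\Gamma(t)} [-1-u_\delta(t)]_+ + \int_{\Gamma(t)} [u_\delta(t)-1]_+\right) \\
&\leq \lim_{\delta\to 0}\left(\dfrac{C_1}{|\log(\delta)|}+C_2\delta\right) = 0.
\end{align}
As each term on the left is nonnegative, both must vanish, so that $[u(t)-1]_+=0$ and $[-1-u(t)]_+=0$ almost everywhere on $\Gamma(t)$; equivalently $-1\leq u(t)\leq 1$ almost everywhere. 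Since this holds for every $t$ in a set of full measure, we conclude $|u(t)|\leq 1$ almost everywhere on $\Gamma(t)$ for almost all $t\in[0,T]$.

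The only point requiring care — rather than a genuine difficulty — is the order of quantifiers: the estimate \eqref{eq:oninterval} is valid for every $t$, but the almost-everywhere convergence of $u_\delta$ holds only for almost every $t$ and only along a subsequence, so Fatou's lemma must be applied time-slice by time-slice on the good set of times. Everything else is routine, which is exactly why the passage to the limit is \emph{immediate}.
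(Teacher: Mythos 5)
Your proof is correct and is exactly the argument the paper intends: the paper simply states that letting $\delta\to 0$ in \eqref{eq:oninterval} ``immediately yields'' the corollary, and your time-slice application of Fatou's lemma on the full-measure set of times where $u_\delta(t)\to u(t)$ a.e.\ in $\Gamma(t)$ is the standard way to fill that in. Your attention to the order of quantifiers (the estimate holding for every $t$, the pointwise convergence only for a.e.\ $t$ along the extracted subsequence) is the right point of care, and nothing further is needed.
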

%

We are not yet done since we still need to prove that the measure of the set where the solution is either $1$ or $-1$ is zero. First, we show the following auxiliary result.

\begin{lemma}\label{lem:aeconv}
Up to a subsequence, we have, for almost every $t\in [0, T]$,
\begin{align}
\lim_{\delta\searrow 0} \varphi_\delta(u_\delta(t)) = 
\begin{cases}
\varphi(u(t)), & \text{if } |u(t)|<1 \text{ a.e. in } \Gamma(t) \\
\infty, & \text{otherwise }
\end{cases}
\end{align}
almost everywhere in $\Gamma(t)$.
\end{lemma}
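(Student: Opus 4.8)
The plan is to prove the dichotomy pointwise in $x$, for a fixed time $t$ taken in the full-measure set where $u_\delta(t,x)\to u(t,x)$ for a.e.\ $x\in\Gamma(t)$ (this a.e.\ convergence, along the subsequence furnished by the Aubin--Lions result, was recorded after \eqref{eq:convergence}). The only two structural facts about the regularised derivative that I would use are: $\varphi_\delta$ is continuous and \emph{nondecreasing} (because $F^{\mathrm{log}}_\delta$ is convex, so $\varphi_\delta'\ge 0$, as already exploited in Proposition \ref{prop:apriori22}(d)), and it agrees with the genuine singular derivative on the central interval, $\varphi_\delta(r)=\varphi(r)$ for $|r|\le 1-\delta$, where $\varphi(r)=\log\frac{1+r}{1-r}$. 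Since we already know $|u(t)|\le 1$ a.e.\ on $\Gamma(t)$, at a.e.\ $x$ exactly one of $|u(t,x)|<1$ or $|u(t,x)|=1$ holds, so the two regimes below are exhaustive.

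First I would treat the regular points, those $x$ with $|u(t,x)|<1$. Writing $a=u(t,x)$ and $\rho=(1-|a|)/2>0$, the convergence $u_\delta(t,x)\to a$ yields $\delta_1=\delta_1(x)$ with $|u_\delta(t,x)|\le 1-\rho$ for $\delta<\delta_1$; hence for $\delta<\min(\delta_1,\rho)$ we have $|u_\delta(t,x)|\le 1-\delta$, so $\varphi_\delta(u_\delta(t,x))=\varphi(u_\delta(t,x))$. As $\varphi$ is continuous on $(-1,1)$ and $u_\delta(t,x)\to a$, this gives $\varphi_\delta(u_\delta(t,x))\to\varphi(a)=\varphi(u(t,x))$, which is the first branch.

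Next I would handle the singular points, say $x$ with $u(t,x)=1$ (the case $u(t,x)=-1$ is symmetric and gives $-\infty$). The quantitative input is the lower bound $\varphi_\delta(1-\delta)=\varphi(1-\delta)=\log\frac{2-\delta}{\delta}\to+\infty$, together with the monotonicity of $\varphi_\delta$ and of $\varphi$. Given $K>0$, choose $\eta\in(0,1)$ with $\log\frac{2-\eta}{\eta}>K$; since $u_\delta(t,x)\to 1$, there is $\delta_0\le\eta$ such that $u_\delta(t,x)>1-\eta$ for $\delta<\delta_0$. For such $\delta$, either $u_\delta(t,x)\le 1-\delta$, in which case $\varphi_\delta(u_\delta(t,x))=\varphi(u_\delta(t,x))>\varphi(1-\eta)=\log\frac{2-\eta}{\eta}>K$, or $u_\delta(t,x)>1-\delta$, in which case monotonicity of $\varphi_\delta$ gives $\varphi_\delta(u_\delta(t,x))\ge\varphi_\delta(1-\delta)=\log\frac{2-\delta}{\delta}>\log\frac{2-\eta}{\eta}>K$ (using that $s\mapsto\log\frac{2-s}{s}$ is decreasing and $\delta<\eta$). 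In either case $\varphi_\delta(u_\delta(t,x))>K$, so $\varphi_\delta(u_\delta(t,x))\to+\infty$, which is the second branch.

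The main obstacle, and the only genuinely delicate point, is precisely this singular case: one has no control on the \emph{rate} at which $u_\delta(t,x)$ approaches $\pm1$, and in particular $u_\delta(t,x)$ may cross into the extrapolated quadratic region $|r|>1-\delta$ on which $\varphi_\delta\ne\varphi$. The two-case splitting above, resting on the monotonicity of $\varphi_\delta$ and the single uniform threshold $\varphi_\delta(1-\delta)=\log\frac{2-\delta}{\delta}$, is exactly what reconciles both possibilities without any rate information. Running this argument over the full-measure set of such $x$, and over a.e.\ $t$ along the chosen subsequence, yields the claimed pointwise limit.
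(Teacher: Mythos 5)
Your proof is correct and follows essentially the same route as the paper: fix $t$ in the full-measure set of pointwise a.e.\ convergence, use that $\varphi_\delta$ agrees with $\varphi$ on $|r|\le 1-\delta$ together with continuity of $\varphi$ at regular points, and use monotonicity at singular points. Your two-case split at points where $u(t,x)=\pm 1$ is precisely the explicit form of the paper's lower bound $\varphi_\delta(u_\delta(t,x))\ge \min\{\varphi(u_\delta(t,x)),\varphi(1-\delta)\}\to+\infty$, if anything stated more carefully, since it avoids evaluating $\varphi$ where it may be undefined.
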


\begin{proof}
Let us fix $t\in [0,T]$ for which $u_\delta(t)\to u(t)$ a.e. in $\Gamma(t)$. Such a set has full measure in $[0,T]$. 

Suppose first that $|u(t)|<1$ a.e. in $\Gamma(t)$. Passing to a subsequence, we can further assume that $|u_\delta(t)|<1$ a.e. in $\Gamma(t)$. Let $x\in\Gamma(t)$ belong to the full measure subset of $\Gamma(t)$ for which $u_\delta(t,x)\to u(t,x)$, and let $\tau>0$ be arbitrary. Now choose:
\begin{itemize}
\item[(i)] $\delta_1>0$ such that, for $\delta<\delta_1$, $\varphi_\delta(u_\delta(t,x))=\varphi(u_\delta(t,x))$; \vskip 1mm
\item[(ii)] using continuity of $\varphi$ at $u(x,t)$, $\delta_2>0$ such that, if $y\in\Gamma(t)$ is such that $|y-u(x,t)|<\delta_2$, then $|\varphi(y)-\varphi(u(x,t))|<\tau$; \vskip 1mm
\item[(iii)] $\delta_3>0$ such that, for $\delta<\delta_3$, $|u_\delta(x,t)-u(x,t)|<\delta_2$;
\end{itemize}
Therefore, for $\delta<\min\{\delta_1, \delta_2, \delta_3\}$, we have
\begin{align}
|\varphi_\delta(u_\delta(t,x)) - \varphi(u(t,x))| &= |\varphi_\delta(u_\delta(t,x)) - \varphi_\delta(u(t,x))| < \tau,
\end{align}
proving the result for those points where $|u|<1$.

Now let $t\in [0,T]$ and $x\in\Gamma(t)$ be such that $|u(t,x)|=1$. If $u(t,x)=1$, then
\begin{align}
\varphi_\delta(u_\delta(t,x)) \geq \min\{\varphi(u_\delta(t,x)), \varphi(1-\delta)\} \to +\infty, \quad \text{ as } \delta\to 0,
\end{align}
and if $u(t,x)=-1$, then 
\begin{align}
\varphi_\delta(u_\delta(t,x)) \leq \max\{\varphi(u_\delta(t,x)),\varphi(-1+\delta)\} \to -\infty, \quad \text{ as } \delta\to 0,
\end{align}
concluding the proof.
\end{proof}

Combining this with the a priori bounds allows us to conclude that indeed the solution cannot take the values $\pm 1$ on a set of positive measure.

\begin{lemma}\label{lem:zero}
For almost every $t\in [0,T]$, the set $\{x\in\Gamma(t)\colon |u(t,x)| = 1\}$ has measure zero in $\Gamma(t)$.
\end{lemma}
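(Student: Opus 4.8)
The plan is to combine the uniform $L^2_{L^2}$ bound on the approximating nonlinearities $\varphi_\delta(u_\delta)$ established in Proposition \ref{prop:apriori22}(d) with the pointwise blow-up described in Lemma \ref{lem:aeconv}, and to reach a contradiction via Fatou's lemma should the bad set carry positive measure. First I would pass to the (non-relabelled) subsequence along which the conclusions of Lemma \ref{lem:aeconv} hold; along this subsequence the estimate
\[
\int_0^T \int_{\Gamma(t)} |\varphi_\delta(u_\delta(t))|^2 \le C
\]
of Proposition \ref{prop:apriori22}(d) remains valid, with $C$ independent of $\delta$, and we still have $u_\delta(t)\to u(t)$ a.e. in $\Gamma(t)$ for a.e. $t$.

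Next I would transfer everything to the fixed reference surface $\Gamma_0$. Using the change of variables \eqref{eq:changeofvariables}, the integral above equals $\int_0^T \int_{\Gamma_0} |\phi_{-t}\varphi_\delta(u_\delta(t))|^2 \, J_t^0$, and $\big([0,T]\times\Gamma_0,\, J_t^0\,\d\Gamma_0\,\d t\big)$ is a fixed finite measure space since $J_t^0$ is bounded by \eqref{eq:formulaJ} and \eqref{eq:velbound}. On this space Fatou's lemma gives
\[
\int_0^T\int_{\Gamma_0} \liminf_{\delta\searrow 0} |\phi_{-t}\varphi_\delta(u_\delta(t))|^2 \, J_t^0 \le \liminf_{\delta\searrow 0} \int_0^T \int_{\Gamma_0}|\phi_{-t}\varphi_\delta(u_\delta(t))|^2 \, J_t^0 \le C,
\]
so the pointwise liminf is integrable, in particular finite almost everywhere on $[0,T]\times\Gamma_0$.

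I would then identify the bad set. Let $E := \{(t,x)\colon |u(t,x)|=1\}$. Because $J_t^0>0$ the pullback $\phi_{-t}$ preserves null sets, so it suffices to argue on $\Gamma_0$. By Lemma \ref{lem:aeconv}, on $E$ one has $\varphi_\delta(u_\delta)\to\pm\infty$, hence $\liminf_{\delta\searrow 0}|\varphi_\delta(u_\delta)|^2 = +\infty$ on (the pullback of) $E$. Were $E$ of positive measure in $[0,T]\times\Gamma_0$, the left-hand side of the displayed inequality would be $+\infty$, contradicting the bound $C<\infty$. Thus $E$ is a null set, and by Fubini--Tonelli the slice $\{x\in\Gamma(t)\colon |u(t,x)|=1\}$ is null in $\Gamma(t)$ for almost every $t\in[0,T]$, which is the assertion.

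The main obstacle is bookkeeping rather than conceptual: one must ensure that the a.e.-convergence subsequence of Lemma \ref{lem:aeconv}, the uniform bound of Proposition \ref{prop:apriori22}(d), and the strong $L^2_{L^2}$-convergence $u_\delta\to u$ are all extracted along a single common subsequence, and that Fatou's lemma is genuinely applied on the fixed measure space obtained through the pullback $\phi_{-t}$, so that the $t$-dependence of the surfaces introduces no difficulty.
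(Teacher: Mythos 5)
Your proof is correct and follows essentially the same route as the paper's: Fatou's lemma applied to a nonnegative quantity that is uniformly integrable in $\delta$ but blows up pointwise on the set $\{|u|=1\}$ by Lemma \ref{lem:aeconv}. The only (inessential) difference is that the paper works with $\varphi_\delta(u_\delta)u_\delta \geq 0$, bounded by testing the second equation with $u_\delta$, whereas you work with $|\varphi_\delta(u_\delta)|^2$ using the bound already recorded in Proposition \ref{prop:apriori22}(d); your explicit pullback to the fixed measure space $[0,T]\times\Gamma_0$ with bounded Jacobian simply makes precise the Fatou application that the paper performs directly on the evolving integrals.
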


\begin{proof}
We have, up to a subsequence,
\begin{align}\label{eq:limit}
\lim_{\delta\searrow 0} \varphi_\delta(u_\delta(t))u_\delta(t) = 
\begin{cases}
\varphi(u(t))u(t), & \text{if } |u(t)|<1 \text{ a.e. in } \Gamma(t) \\
+\infty, & \text{otherwise }
\end{cases}.
\end{align}
Now observe that, testing the second equation with $u_\delta$, we obtain
\begin{align}
\int_{\Gamma(t)} \varphi_\delta(u_\delta) u_\delta  \leq C_1 \|\tgrad u_\delta\|^2\L + C_2 \| w_\delta\|^2\L \leq C_3 + C_2 \|w_\delta\|^2_{L^2},
\end{align}
and therefore 
\begin{align}
\int_0^T \int_{\Gamma(t)} \varphi_\delta(u_\delta) u_\delta  \leq C_1 T + C_2 \int_0^T \|w_\delta\|^2\L \leq C_3.
\end{align}
Since $\varphi_\delta(r)r\geq 0$, Fatou's lemma implies that
\begin{align}
\int_0^T \int_{\Gamma(t)} \liminf_{\delta\to 0} \varphi_\delta(u_\delta)u_\delta \leq \liminf_{\delta\to 0} \int_0^T \int_{\Gamma(t)} \varphi_\delta(u_\delta(t))u_\delta \leq C.
\end{align}
But the limit of $\varphi_\delta(u_\delta)u_\delta$ is given by \eqref{eq:limit}, and thus it follows that, for almost every $t\in [0,T]$, it must be $|u(t)|<1$ almost everywhere in $\Gamma(t)$, proving the result.
\end{proof}

\begin{remark}
The result above then implies that  $u$ remains bounded in the interval $(-1,1)$ provided $u_0$ is given also in that interval. This is not the case for the smooth potentials considered in Section \ref{sec:smooth}, see \cite[Remark 4.10]{Mir19}, which means that the problem \eqref{eq:prob1} does not produce physically meaningful solutions. It is nonetheless important to analyse it, as the quartic potentials have been seen to be good approximations for \eqref{eq:logpotential} when a shallow quench is considered (i.e. when the temperature $\theta$ is close to the critical temperature for the system).
\end{remark}

We now have everything we need to conclude the proof of well-posedness. As in the previous section, taking the limit on the linear terms of the system is a simple consequence of the convergence results \eqref{eq:convergence}. As for the nonlinear term, the uniform bound for $\varphi_\delta(u_\delta)$ in $L^2_{L^2}$ combined with $\varphi_\delta(u_\delta)\to \varphi(u)$ a.e. gives, by Theorem \ref{thm:evolv_dct}, $\varphi_\delta(u_\delta) \rightharpoonup \varphi(u)$ in $L^2_{L^2}$. All in all:


\begin{theorem}
The pair $(u,w)$ given by \eqref{eq:convergence} is the unique weak solution of the Cahn-Hilliard system with a logarithmic potential. 
\end{theorem}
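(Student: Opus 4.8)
The plan is to establish existence by passing to the limit $\delta \searrow 0$ in the approximating system \eqref{eq:prob3eps}, and then to obtain uniqueness by adapting the stability argument of Proposition \ref{prop:smoothuniqueness}. First I would collect the weak and weak-$*$ limits: the uniform bounds in Proposition \ref{prop:apriori22}(b)--(c) yield, up to a subsequence, the convergences recorded in \eqref{eq:convergence}, and since $u_\delta$ is bounded in $L^\infty_{H^1}$ one also extracts a weak-$*$ limit there which must coincide with $u$, giving $u \in L^\infty_{H^1} \cap H^1_{H^{-1}}$ and $w \in L^2_{H^1}$. Testing \eqref{eq:prob3eps} against $\psi(t)\eta$ with $\psi \in C_c^\infty(0,T)$ and $\eta \in L^2_{H^1}$, all the linear forms $g$, $a_N$, $a_S$, $m$ and the pairing $m_\ast(\partial^\bullet u_\delta, \cdot)$ pass to the limit directly from \eqref{eq:convergence}, since they are bounded (bi)linear and hence continuous under the relevant weak convergences.

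The only genuinely nonlinear term is $\tfrac{\theta}{2} m(\varphi_\delta(u_\delta), \eta)$. Here I would use the chain already built in the preceding lemmas: the $L^2_{L^2}$ bound of Proposition \ref{prop:apriori22}(d) together with the pointwise identification $\varphi_\delta(u_\delta) \to \varphi(u)$ a.e.\ from Lemma \ref{lem:aeconv}---valid precisely because Lemma \ref{lem:zero} guarantees $|u| < 1$ a.e., ruling out the blow-up branch in Lemma \ref{lem:aeconv}---gives, via Theorem \ref{thm:evolv_dct}, the weak convergence $\varphi_\delta(u_\delta) \rightharpoonup \varphi(u)$ in $L^2_{L^2}$, whence $\int_0^T m(\varphi_\delta(u_\delta), \eta) \to \int_0^T m(\varphi(u), \eta)$. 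Passing to the limit and then invoking the fundamental lemma of the calculus of variations (exactly as in Proposition \ref{prop:smoothexistence}) yields the equations \eqref{eq:prob3} for a.e.\ $t$. The constraint $|u(t)| < 1$ a.e.\ is the content of the Corollary together with Lemma \ref{lem:zero}, so that $\varphi(u)$ is well defined and the second equation makes sense. The initial condition is obtained from the embedding $H^1_{H^{-1}} \hookrightarrow C^0_{L^2}$ of Proposition \ref{prop:inclusions}(b): since the trace at $t=0$ is continuous, hence weakly continuous, and $u_\delta(0) = u_0$ for every $\delta$, the weak limit satisfies $u(0) = u_0$.

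For uniqueness I would reduce to the computation already carried out in Proposition \ref{prop:smoothuniqueness}. Writing the nonlinearity as $F' = \tfrac{\theta}{2}\varphi - \mathrm{id}$, the convex part $\tfrac{\theta}{2}\varphi$ is monotone (as $F^{\mathrm{log}}$ is convex), while the remaining part $-\mathrm{id}$ is globally Lipschitz, so this is exactly the structural hypothesis ``$F_2'$ Lipschitz'' of Proposition \ref{prop:smoothuniqueness}. Given two solutions with the common initial mean $(u_0)_{\Gamma_0}$, the difference $\xi^u = u_1 - u_2$ has zero mean for all $t$ by the conservation property \eqref{eq:integral_pres}, so the inverse Laplacian $\mathcal{G}\xi^u$ is an admissible test function; testing the first equation with $\mathcal{G}\xi^u$ and the second with $\xi^u$, discarding the nonnegative monotone contribution and controlling $-\mathrm{id}$ by its Lipschitz constant, produces the same differential inequality for $\|\xi^u\|_{-1}^2$ and hence the stability estimate, from which uniqueness of $u$ (and then of $w$ via the second equation) follows.

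The step I expect to require the most care is the identification of the weak $L^2_{L^2}$-limit of $\varphi_\delta(u_\delta)$ with $\varphi(u)$: a uniform $L^2$ bound only guarantees weak convergence to \emph{some} limit, and without the measure-zero statement of Lemma \ref{lem:zero} that limit could fail to be $\varphi(u)$ on the contact set $\{|u| = 1\}$. The logic is therefore delicate in its ordering---the $L^2$ bound (Proposition \ref{prop:apriori22}(d)) and the overshoot estimate (Lemma \ref{lem:oninterval}) must first force $|u| \le 1$, then $|u| < 1$ a.e.\ (Lemma \ref{lem:zero}), before the a.e.\ convergence of Lemma \ref{lem:aeconv} can legitimately be combined with the bound to yield weak convergence to $\varphi(u)$ rather than to a defect concentrated on the contact set. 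Everything else is the routine assembly of the pieces established above.
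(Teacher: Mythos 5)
Your proposal is correct and follows essentially the same route as the paper's proof: extract the weak and weak-$*$ limits from the uniform bounds of Proposition \ref{prop:apriori22}, pass the linear terms to the limit directly, identify the nonlinear limit $\varphi_\delta(u_\delta)\rightharpoonup\varphi(u)$ in $L^2_{L^2}$ from the uniform $L^2_{L^2}$ bound combined with the a.e.\ convergence of Lemma \ref{lem:aeconv} (legitimised, in exactly the order you stress, by Lemma \ref{lem:oninterval} and Lemma \ref{lem:zero} forcing $|u|<1$ a.e.), and then run the uniqueness argument of Proposition \ref{prop:smoothuniqueness} verbatim since $\varphi$ is monotone and the non-convex part $-\mathrm{id}$ is Lipschitz. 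Your only deviation is at the initial condition, where you use weak continuity of the trace map through $H^1_{H^{-1}}\hookrightarrow C^0_{L^2}$ rather than repeating the test-function argument of Proposition \ref{prop:existence} as the paper does; this shortcut is valid here precisely because, unlike at the Galerkin level, $\partial^\bullet u_\delta$ is uniformly bounded in $L^2_{H^{-1}}$, so $u_\delta\rightharpoonup u$ in $H^1_{H^{-1}}$ and the bounded linear trace carries this to $u_\delta(0)\rightharpoonup u(0)$ in $L^2(\Gamma_0)$.
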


\begin{proof}
The fact that $(u,w)$ satisfies the equations and fulfils the initial condition follows exactly as in the proof of Lemmas \ref{prop:existence}, together with what we observed in the previous paragraph. The proof of uniqueness is also the same as in Proposition \ref{prop:smoothuniqueness}, by noting that also in this case $\varphi$ is monotone and $F_2'$ is Lipschitz.
\end{proof}

In summary, in this section we have proved:

\begin{theorem}\label{thm:wellposedlog}
Let $u_0\in \mathcal{I}_0$ and $F\colon [-1,1]\to\R$ be the logarithmic potential \eqref{eq:logpotential}. Then, there exists a unique pair $(u, w)$ with
\begin{align}
u\in L^\infty_{H^1}\cap H^1_{H^{-1}} \quad \text{ and } \quad w\in L^2_{H^1}
\end{align}
such that, for a.a. $t\in [0, T]$, $|u(t)|<1$ a.e. in $\Gamma(t)$, and satisfying, for all $\eta\in L^2_{H^1}$ and a.a. $t\in [0, T]$, 
\begin{align}
\begin{split}
m_\ast(\partial^\bullet u, \eta) + g(u, \eta) + a_N(u,\eta) + a_S(w, \eta) &= 0,\\
a_S(u,\eta) + \dfrac{\theta}{2} m(\varphi(u), \eta) - m(u,\eta) - m(w, \eta)&=0,
\end{split}
\end{align} 
and $u(0)=u_0$ almost everywhere in $\Gamma_0$. The solution $u$ satisfies the additional regularity
\begin{align}
u\in C^0_{L^2} \cap L^\infty_{L^p}, \quad \text{ for all } p\in [1,+\infty).
\end{align}

Furthermore, if $u_0, v_0\in \mathcal{I}_0$ satisfy $(u_0)_{\Gamma_0}=(v_0)_{\Gamma_0}$, and $u, v$ are the solutions of the system with $u(0)=u_0$, $v(0)=v_0$, then there exists a constant $C>0$ independent of $t$ such that, for almost all $t\in [0,T]$,
\begin{align}
\|u(t)-v(t)\|_{-1} \leq e^{Ct} \|u_0-v_0\|_{{-1}}.
\end{align}
\end{theorem}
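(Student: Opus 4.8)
The plan is to assemble this summary theorem from the approximation scheme built in Lemmas \ref{lem:apriori2}--\ref{lem:zero}, so that the work reduces to collecting the $\delta$-uniform bounds, passing to the limit $\delta\to 0$, and checking that the limit pair retains the constraint, regularity and uniqueness properties. Since the regularised potential $F_\delta^{\mathrm{log}}$ satisfies (A1)--(A2) with quadratic growth, I would start by invoking Theorem \ref{thm:wellposedsmooth} together with Remark \ref{rem:extraassump} to obtain, for each $\delta\in(0,1)$, a unique solution pair $(u_\delta, w_\delta)$ of \eqref{eq:prob3eps}.

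Next I would feed in the a priori estimates of Proposition \ref{prop:apriori22}, which bound $u_\delta$ in $L^\infty_{H^1}$, $w_\delta$ in $L^2_{H^1}$, $\md u_\delta$ in $L^2_{H^{-1}}$ and $\varphi_\delta(u_\delta)$ in $L^2_{L^2}$, all independently of $\delta$. Extracting subsequences gives the weak limits recorded in \eqref{eq:convergence}, and the Aubin--Lions embedding of Proposition \ref{prop:inclusions}(c) upgrades these to strong $L^2_{L^2}$ convergence and, along a further subsequence, a.e. convergence $u_\delta(t)\to u(t)$. The memberships $u\in L^\infty_{H^1}\cap H^1_{H^{-1}}$ and $w\in L^2_{H^1}$ follow from weak lower semicontinuity of the norms, while $u\in C^0_{L^2}$ and the initial condition $u(0)=u_0$ come from the embedding $H^1_{H^{-1}}\hookrightarrow C^0_{L^2}$ and the integration-by-parts identity established in Proposition \ref{prop:existence}; the Sobolev embedding of Lemma \ref{lem:assumptionsgive}(d) then yields $u\in L^\infty_{L^p}$ for every finite $p$.

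The heart of the argument is the strict constraint $|u|<1$ together with the passage to the limit in the singular term. Letting $\delta\to 0$ in Lemma \ref{lem:oninterval} gives $|u(t)|\le 1$ a.e., and the finer Lemma \ref{lem:zero} --- using Fatou's lemma on the nonnegative quantity $\varphi_\delta(u_\delta)u_\delta$ together with the pointwise blow-up of Lemma \ref{lem:aeconv} --- forces $\{|u(t)|=1\}$ to be null, so that $\varphi(u(t))$ is well defined. The a.e. convergence $\varphi_\delta(u_\delta)\to\varphi(u)$ from Lemma \ref{lem:aeconv}, combined with the $L^2_{L^2}$ bound of Proposition \ref{prop:apriori22}(d) and the generalised dominated convergence theorem (Theorem \ref{thm:evolv_dct}), gives $\varphi_\delta(u_\delta)\rightharpoonup\varphi(u)$ in $L^2_{L^2}$, which suffices to pass to the limit in the nonlinear term; the linear terms pass by the weak convergences in \eqref{eq:convergence}.

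Finally, for uniqueness and stability I would repeat the inverse-Laplacian argument of Proposition \ref{prop:smoothuniqueness} verbatim: for two solutions with equal mean, the preserved mean makes $\xi^u=u-v$ zero-average, so $\mathcal{G}\xi^u$ is admissible; testing the differenced equations with $\mathcal{G}\xi^u$ and with $\xi^u$, and using monotonicity of $\varphi$ (in place of $F_1'$) together with the Lipschitz bound on $F_2'(r)=-r$, leads to $\tfrac{d}{dt}\|\xi^u\|_{-1}^2+\|\tgrad\xi^u\|\L^2\le C\|\xi^u\|_{-1}^2$, whence Gronwall delivers the exponential estimate. The main obstacle is precisely the strict bound: Lemma \ref{lem:oninterval} only controls the escape from $[-1,1]$ at the slow rate $1/|\log\delta|$, so it is the interplay of this estimate with the Fatou argument of Lemma \ref{lem:zero} --- not any single uniform bound --- that excludes a limit pinned at $\pm1$ on a positive-measure set, the degenerate scenario that would render the logarithmic nonlinearity meaningless.
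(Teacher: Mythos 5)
Your proposal follows essentially the same route as the paper: regularisation via $F_\delta^{\mathrm{log}}$ with well-posedness from Theorem \ref{thm:wellposedsmooth} and Remark \ref{rem:extraassump}, the $\delta$-uniform bounds of Proposition \ref{prop:apriori22}, Aubin--Lions compactness, the constraint $|u|<1$ via Lemmas \ref{lem:oninterval}--\ref{lem:zero} with Fatou, weak convergence of $\varphi_\delta(u_\delta)$ by the generalised dominated convergence theorem, and uniqueness by the inverse-Laplacian argument of Proposition \ref{prop:smoothuniqueness} using monotonicity of $\varphi$ and the Lipschitz concave part. This matches the paper's proof step for step, including the correct identification of the Fatou argument as the crux excluding a limit pinned at $\pm 1$, so no gaps to report.
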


\subsubsection{Extra regularity}

Exactly as in the previous section, we can immediately obtain an extra degree of regularity for $u$:

\begin{theorem}
For the solution $(u,w)$ given by the previous theorem, we have $u\in L^2_{H^2}$.
\end{theorem}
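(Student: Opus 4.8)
The plan is to mirror the argument for the smooth potential in Theorem \ref{thm:regularity}(i): read the second equation as an elliptic equation for $u(t)$ with right-hand side in $L^2(\Gamma(t))$, apply elliptic regularity pointwise in time, and integrate the resulting estimate over $[0,T]$. First I would rewrite the second equation of the system in strong form. Since $(u,w)$ is the weak solution produced by the previous theorem and $|u(t)|<1$ a.e., testing
\[
a_S(u,\eta) + \dfrac{\theta}{2} m(\varphi(u), \eta) - m(u,\eta) - m(w, \eta) = 0, \qquad \eta\in L^2_{H^1},
\]
against arbitrary $\eta$ shows that, for almost all $t\in [0,T]$,
\[
-\Delta_{\Gamma(t)} u(t) = w(t) + u(t) - \dfrac{\theta}{2}\varphi(u(t)) \quad \text{in } L^2(\Gamma(t)),
\]
provided the right-hand side indeed belongs to $L^2(\Gamma(t))$.

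The crucial step is therefore to establish $\varphi(u)\in L^2_{L^2}$; the terms $w\in L^2_{H^1}\hookrightarrow L^2_{L^2}$ and $u\in L^\infty_{H^1}\hookrightarrow L^2_{L^2}$ (indeed $|u|\le 1$) present no difficulty. This is exactly where the singular nonlinearity must be controlled, and it is the only genuinely new point compared with the smooth case. By Proposition \ref{prop:apriori22}(d) the approximations $\varphi_\delta(u_\delta)$ are uniformly bounded in $L^2_{L^2}$, and by the passage to the limit they satisfy $\varphi_\delta(u_\delta)\rightharpoonup \varphi(u)$ in $L^2_{L^2}$; weak lower semicontinuity of the norm then transfers the uniform bound to the limit, so that $\varphi(u)\in L^2_{L^2}$ with $\|\varphi(u)\|_{L^2_{L^2}}\le C$.

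With the right-hand side now in $L^2(\Gamma(t))$ for a.e. $t$, elliptic regularity on the closed surface $\Gamma(t)$ (see \cite[Lemma 3.2]{DziEll13-a}) gives $u(t)\in H^2(\Gamma(t))$ together with the estimate, obtained from the triangle inequality,
\[
\|u(t)\|_{H^2(\Gamma(t))} \leq \|w(t)\|_{L^2(\Gamma(t))} + \|u(t)\|_{L^2(\Gamma(t))} + \dfrac{\theta}{2}\|\varphi(u(t))\|_{L^2(\Gamma(t))} + C\|u(t)\|_{H^1(\Gamma(t))},
\]
where $C$ may be chosen independently of $t$ by the uniformity of the constants in Lemma \ref{lem:assumptionsgive}. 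Squaring, integrating over $[0,T]$, and invoking the bounds $u\in L^\infty_{H^1}$, $w\in L^2_{H^1}$ and $\varphi(u)\in L^2_{L^2}$ established above would yield
\[
\int_0^T \|u(t)\|^2_{H^2(\Gamma(t))} \, \d t \leq C\left(\|w\|^2_{L^2_{L^2}} + \|u\|^2_{L^2_{H^1}} + \|\varphi(u)\|^2_{L^2_{L^2}}\right) < \infty,
\]
which is precisely $u\in L^2_{H^2}$. The only obstacle, namely the $L^2$-integrability of the singular term $\varphi(u)$, is already resolved by the a priori estimate in Proposition \ref{prop:apriori22}(d), so that apart from this point the argument is identical to the smooth case.
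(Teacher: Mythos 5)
Your proof is correct and takes essentially the same route as the paper: the paper's proof likewise reads the second equation as $-\Delta_\Gamma u = w - F'(u)\in L^2_{L^2}$ and applies elliptic regularity pointwise in time exactly as in Theorem \ref{thm:regularity}(i). Your only addition is to make explicit that the singular term $\varphi(u)$ lies in $L^2_{L^2}$, which the paper leaves implicit since it is already part of the construction of the solution (the uniform bound of Proposition \ref{prop:apriori22}(d) together with the weak convergence $\varphi_\delta(u_\delta)\rightharpoonup\varphi(u)$ in $L^2_{L^2}$).
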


\begin{proof}
This follows as in Theorem \ref{thm:regularity} by noting that $u$ solves the problem
\begin{align*}
-\Delta_\Gamma u = w - F'(u) \in L^2_{L^2},
\end{align*}
and thus elliptic regularity theory applies. 
\end{proof}

As in the previous section, Lemma \ref{lem:assumptionsgive}d) implies that, for almost all $t\in [0,T]$, $u(t)\in C^\alpha(\Gamma(t))$ for every $\alpha\in (0,1)$. 

In this text we will not focus in obtaining higher regularity for the solution of the problem with a logarithmic potential, but it is interesting to analyse what challenges would arise in trying to do so. To obtain $L^2_{H^3}$-regularity (or higher) as we did in Theorem \ref{thm:regularity}, we have to prove that $F'(u)\in L^2_{H^1}$, which requires some integrability for $F''(u)$. Calculating
\begin{align*}
F''(r) = C \left( \dfrac{1}{1-r} - \dfrac{1}{1+r} + 1\right), \quad r\in (-1,1)
\end{align*}
shows that $F''(u)$ is integrable only if we can establish a uniform estimate
\begin{align}\label{eq:separation_pure_phases}
\|u\|_{L^\infty(\Gamma(t))} \leq 1-\xi,
\end{align}
for some $\xi>0$. The bound in \eqref{eq:separation_pure_phases} can be interpreted as a phenomenon of \textit{separation from the pure phases}; not only are the pure phases $\pm 1$ never reached, but there always remains at least a fixed amount of the other component. For the classical Cahn-Hilliard equation on a bounded domain $\Omega\subset \R^n$, it is well known that the solution separates from the pure phases in dimensions $n=1,2$, and the problem is still open for $n\ge 3$ (see \cite[Chapter 4]{Mir19} and references therein). Since we are working on $2$-dimensional manifolds, we expect \eqref{eq:separation_pure_phases} to be true, and consequently for $u$ to be more regular. This is left for future work, and we now turn to the last case of a double obstacle potential.

\subsection{Double obstacle potential}\label{sec:obs}
In this section, the homogeneous free energy is defined, for $r\in \R$, as
\begin{align}
F_o(r) = 
\begin{cases}
\frac{1}{2}(1-r^2), &\text{ if } |r|\leq 1 \\
+\infty, &\text{ if } |r|>1
\end{cases} 
\,\, =: \,\,
I(r) + \dfrac{1-r^2}{2}, 
\end{align}
where
\begin{align}
I(r) = 
\begin{cases}
0 &\text{ if } r\in [-1,1] \\
+\infty &\text{ otherwise}
\end{cases}.
\end{align}
This suggests that we will be interested in solutions lying in the set 
\begin{align}
K := \{\eta\in L^2_{H^1} \colon |\eta(t)| \leq 1 \text{ a.e. on } \Gamma(t) \text{ for a.e. } t\in [0, T]\}.
\end{align}
The energy functional for this problem is the same as in the previous sections, but since it is now non-differentiable the chemical potential $w$ is defined by $w+\Delta_\Gamma u + u \in \partial I(u)$, where $\partial I(\cdot)$ denotes the subdifferential of $I$ defined above. This means that, in the double obstacle case, we will be looking for a pair $(u,w)$, with $u\in K$, satisfying the system
\begin{align}\label{eq:prob3before}
\begin{split}
m_\ast(\partial^\bullet u, \eta) + g(u, \eta) + a_N(u,\eta) + a_S(w, \eta)= 0, \quad &\text{ for all } \eta\in L^2_{H^1}, \\
a_S(u, \eta- u) - m(u, \eta-u) \geq m(w, \eta-u), \quad &\text{ for all }\eta\in K,
\end{split}
\end{align}
and an initial condition $u(0)=u_0$ for some $u_0\in H^1(\Gamma_0)$ with $|u_0|\leq 1$ (this ensures the energy is defined at the initial time). Again, we show that some more assumptions on the initial condition and the surfaces need to be made. Recall the definition in \eqref{eq:condition}.

\begin{proposition}\label{prop:illposed2}
Let $\{\Gamma(t)\}_{t\in [0,T]}$ be an evolving surface in $\R^3$ and $u_0\in H^1(\Gamma_0)$ be such that $|u_0|\leq 1$.
\begin{itemize}
\item[(a)] If $m_{u_0}(t)>1$ for $t$ in a subset of positive measure of $[0,T]$, then there is no pair $(u,w)$ satisfying $u\in K$ and the system \eqref{eq:prob3before};
\item[(b)] If there exists $t\in [0,T]$ such that $m_{u_0}(t)>1$, then there is no pair $(u,w)$ satisfying $u\in K$ and the system \eqref{eq:prob3before};
\item[(c)] If $m_{u_0}(t)=1$ on a subinterval $I\subset (0,T]$ and $m_{u_0}(0)<1$, then there is no pair $(u,w)$ satisfying $u\in K$ and the system \eqref{eq:prob3before}.
\end{itemize}
\end{proposition}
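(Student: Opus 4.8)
The unifying tool is the conservation of the integral of $u$: testing the first equation of \eqref{eq:prob3before} with $\eta\equiv 1$ and invoking Theorem \ref{thm:transportforms} gives, exactly as in \eqref{eq:integral_pres}, that $\int_{\Gamma(t)}u(t)=\int_{\Gamma_0}u_0$ for a.e. $t$. For part (a) I would argue verbatim as in Proposition \ref{prop:illposed1}: on a full-measure subset of $\{t:m_{u_0}(t)>1\}$ the equations hold and $|u(t)|\le 1$, so $|\int_{\Gamma_0}u_0|>|\Gamma(t)|$ forces $|\int_{\Gamma(t)}u(t)|>|\Gamma(t)|$, while $|\int_{\Gamma(t)}u(t)|\le\int_{\Gamma(t)}|u(t)|\le|\Gamma(t)|$ — a contradiction. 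The one difference from the logarithmic case is that $u\in K$ only gives the non-strict bound $|u|\le 1$, so the contradiction requires the \emph{strict} inequality $m_{u_0}>1$ rather than $m_{u_0}\ge 1$. Part (b) then follows from (a) exactly as Corollary \ref{cor:corillposed1} follows from Proposition \ref{prop:illposed1}: since $t\mapsto|\Gamma(t)|$ is continuous and positive, $m_{u_0}$ is continuous, and so $m_{u_0}(t_0)>1$ at a single $t_0$ propagates to an open neighbourhood of positive measure on which $m_{u_0}>1$.

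For part (c) the first step is to extract what the hypothesis forces on $I$. Writing $c_0:=\int_{\Gamma_0}u_0$, the condition $m_{u_0}\equiv 1$ on $I$ reads $|\Gamma(t)|=|c_0|$ for $t\in I$; note $c_0\neq 0$ (otherwise $m_{u_0}\equiv 0$), and since the double obstacle problem is invariant under $(u,w)\mapsto(-u,-w)$ I may assume $c_0>0$. Conservation then gives $\int_{\Gamma(t)}(1-u(t))=|\Gamma(t)|-c_0=0$ for $t\in I$, and since $1-u(t)\ge 0$ this forces $u(t)\equiv 1$ a.e. on $\Gamma(t)$ for a.e. $t\in I$. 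Because $u\in H^1_{H^{-1}}\hookrightarrow C^0_{L^2}$ (Proposition \ref{prop:inclusions}(b)) and the flow sends the constant $1$ on $\Gamma_0$ to the constant $1$ on $\Gamma(t)$, continuity of $t\mapsto\phi_{-t}u(t)$ in $L^2(\Gamma_0)$ upgrades this to $u(t)\equiv 1$ for \emph{every} $t\in\overline I$; in fact conservation shows that $\{t:u(t)\equiv 1\}$ coincides with the closed set $\{t:m_{u_0}(t)=1\}$.

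With this in hand the easy sub-case is $\inf I=0$: then $0\in\overline I$, so $u_0=u(0)\equiv 1$ and hence $m_{u_0}(0)=1$, contradicting $m_{u_0}(0)<1$. The substantive case — and the step I expect to be the main obstacle — is $a:=\inf I>0$. By maximality of $I$ the solution is a genuine mixture for $t<a$ near $a$ (there $|\Gamma(t)|>c_0$, so the conserved deficit $\int_{\Gamma(t)}(1-u(t))=|\Gamma(t)|-c_0$ is strictly positive), while $u(a)\equiv 1$; thus a solution would have to reach the pure phase from a mixture at the finite time $a$ while conserving $\int u=c_0$. Crucially, the deficit is pinned to the geometric quantity $|\Gamma(t)|-c_0$ and so cannot be adjusted by the dynamics, which means continuity of $u$ alone is insufficient and the contradiction has to come from the equations themselves. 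My plan is to test the variational inequality in \eqref{eq:prob3before} with the admissible competitor $\eta\equiv 1\in K$, which yields, for a.e. $t$, the inequality $\|\tgrad u\|_{L^2}^2+\int_{\Gamma(t)}(w+u)(1-u)\le 0$; I would combine this with the first equation and with the energy dissipation identity for $\mathrm{E}^{\mathrm{CH}}[u]$ (which is continuous, vanishes at $a$, and is strictly positive just before) in order to control $w$ near $a$ and derive an inconsistency as $t\to a^-$.

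The hard part is exactly this limiting analysis at the entry time: one must turn the compatibility of the conserved deficit, the obstacle constraint $u\le 1$, and the dissipation into a genuine contradiction, rather than a merely topological one. Because the flow is fourth order and enjoys no backward uniqueness, I do not expect a soft continuity argument to close the gap, and the decisive estimate — quantifying why the dynamics cannot drive a strictly positive, geometrically prescribed deficit to zero at a finite time consistently with the chemical potential $w\in L^2_{H^1}$ — is where the real work of the proof will lie.
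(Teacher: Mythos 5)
Parts (a) and (b) of your proposal are correct and coincide with the paper's proof: conservation of $\int_{\Gamma(t)}u$ plus the constraint $|u|\le 1$ gives the contradiction $|\Gamma(t)| < \bigl|\int_{\Gamma_0}u_0\bigr| = \bigl|\int_{\Gamma(t)}u(t)\bigr| \le |\Gamma(t)|$ on a set of positive measure, and (b) follows from (a) by continuity of $m_{u_0}$, exactly as you argue. The genuine gap is in part (c). Your first step — using $|\Gamma(t)|=|c_0|$ on $I$ and $\int_{\Gamma(t)}(1-u(t))=|\Gamma(t)|-c_0=0$ to conclude $u(t)\equiv 1$ a.e.\ on $\Gamma(t)$ for a.e.\ $t\in I$ — is also the paper's first step and is fine. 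But from there your text is a programme, not a proof: "my plan is to test the variational inequality\dots I would combine this\dots in order to\dots derive an inconsistency as $t\to a^-$," followed by an explicit admission that the decisive estimate at the entry time $a=\inf I$ is missing. Locating where the real work would lie does not discharge it; as submitted, (c) is unproven.

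For comparison, the paper closes (c) with a short substitution argument carried out on $I$ itself, with no limiting analysis at $t\to a^-$ and no use of the energy or of bounds on $w$: since $u\equiv 1$ on $I$, one has $\md u = 0$ there, so the first equation of \eqref{eq:prob3before} tested with $\eta\equiv 1$ reduces to
\begin{align}
0 = m_\ast(\md u, 1) + g(1,1) = \int_{\Gamma(t)} \tgrad\cdot\mathbf{V}
\end{align}
for a.e.\ $t\in I$, hence everywhere in $I$ by continuity; thus $\frac{d}{dt}|\Gamma(t)| = 0$ on $I$, and the paper concludes $|\Gamma(t)|=|\Gamma_0|$ on $I$, contradicting $|\Gamma(t)| = \int_{\Gamma(t)}u(t) = \int_{\Gamma_0}u_0 < |\Gamma_0|$ (the last inequality being $m_{u_0}(0)<1$). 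Two remarks on this comparison. First, the intended argument is thus "soft" and local in $I$, contrary to your expectation that only a hard estimate at the entry time could work. Second, your observation that the conserved deficit pins $|\Gamma(t)|$ to $|c_0|$ on $I$ — so that area-constancy on $I$ is already contained in the hypothesis — is in fact a pertinent criticism of the paper's final step: from $\frac{d}{dt}|\Gamma(t)|=0$ on an interval $I$ with $0\notin I$ one gets only that $|\Gamma(\cdot)|$ is constant on $I$, not that it equals $|\Gamma_0|$, so the paper's jump to $|\Gamma(t)|=|\Gamma_0|$ is stated without justification as written. Your diagnosis that the contradiction cannot be purely topological therefore identifies a real delicacy in this case; nonetheless, your proposal neither completes your own route nor reproduces the paper's, so it does not constitute a proof of (c).
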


\begin{proof}
For case (a), choosing $t\in [0,T]$ for which $m_{u_0}(t)>1$ and $u(t)\leq 1$, we would have, for almost all $t$,
\begin{align}
|\Gamma(t)| < \left| \int_{\Gamma_0} u_0 \right| = \left|\int_{\Gamma(t)} u(t)\right| \leq |\Gamma(t)|,
\end{align}
which is a contradiction. Part (b) follows immediately from (a) and the fact that $m_{u_0}$ is continuous.

Under the conditions in case (c), there would be a subset of $[0,T]$ with positive measure in which $m(t)=1$ and the equations \eqref{eq:prob2} are satisfied. Note that, for such $t$, $m(t)=1$ implies that $u(t)=1$, and the first equation reads as
\begin{align}
\int_{\Gamma(t)} (\tgrad\cdot\mathbf{V}) \eta + \int_{\Gamma(t)} \tgrad w\cdot\tgrad \eta = 0,
\end{align} 
for any $\eta\in L^2_{H^1}$. Testing with $\eta\equiv 1$, we obtain 
\begin{align}
\int_{\Gamma(t)} \tgrad\cdot \mathbf{V} = 0
\end{align}
for almost every $t\in I$. Since the function on the left hand side is continuous, actually equality must hold everywhere in $I$, and then in $I$ we have
\begin{align}
\dfrac{d}{dt} |\Gamma(t)| = \int_{\Gamma(t)} \tgrad\cdot \mathbf{V} = 0,
\end{align}
and hence $|\Gamma(t)| = |\Gamma_0|$ for $t\in I$. But this is a contradiction, since for almost all $t\in I$
\begin{align}
|\Gamma(t)| = \int_{\Gamma(t)} 1 = \int_{\Gamma(t)} u(t) = \int_{\Gamma_0} u_0 < |\Gamma_0|.
\end{align} 
\end{proof}

For the same reasons as before, we will restrict our analysis in this section to initial conditions given in the same set 
\begin{align}
\mathcal{I}_0 := \left\{ \varphi\in H^1(\Gamma_0) \colon |\varphi|\leq 1 \text{ a.e. on } \Gamma_0,\,\, \text{ and } \,\, m_{u_0}<1 \right\}.
\end{align} 
Note that functions in $\mathcal{I}_0$ automatically have finite energy. Our method of proof follows the same lines as the logarithmic problem.

\begin{problem}
Given $u_0\in \mathcal{I}_0$, the \textit{double obstacle Cahn-Hilliard system} is the following problem: find a pair $(u, w)$ satisfying:
\begin{itemize}
\item[(a)] $u\in H^1_{H^{-1}}\cap L^\infty_{H^1}\cap K$ and $w\in L^2_{H^1}$;
\item[(b)] for almost all $t\in [0, T]$, the system
\begin{align}\label{eq:prob2}\tag{$\text{CH}_{\text{o}}$}
\begin{split}
m_\ast( \partial^\bullet u, \eta)+ g(u, \eta) + a_N(u,\eta) + a_S(w, \eta)= 0, \quad &\text{ for all } \eta\in L^2_{H^1}, \\
a_S(u, \eta- u) - m(u, \eta-u) \geq m(w, \eta-u), \quad &\text{ for all }\eta\in K,
\end{split}
\end{align}
\item[(c)] the initial condition $u(0)=u_0$ almost everywhere in $\Gamma_0$.
\end{itemize}
The pair $(u,w)$ is called a \textit{weak solution} of \eqref{eq:prob2}. 
\end{problem}

\subsubsection{Approximating problem}

For $r\in \R$ and $\delta\in (0,1)$, define $$F_\delta(r) = F^\mathrm{obs}_\delta(r) + \dfrac{1-r^2}{2},$$ where
\begin{align}
F^\mathrm{obs}_\delta(r) = 
\begin{cases}
\frac{1}{2\delta}\left(r- (1+\frac{\delta}{2})\right)^2 + \frac{\delta}{24}, & \text{for } r\geq 1+\delta\\
\frac{1}{6\delta^2}(r-1)^3 , & \text{for } 1<r<1+\delta\\
0, & \text{for } |r|\leq 1 \\
-\frac{1}{6\delta^2}(r+1)^3, & \text{for } -1-\delta < r < -1\\
\frac{1}{2\delta}\left(r+ (1+\frac{\delta}{2})\right)^2 + \frac{\delta}{24}, & \text{for } r\leq -1-\delta
\end{cases}.
\end{align}
We will use this function to approximate the double obstacle potential defined above. We focus first on the following problem. Let us reuse the notation of the previous section and write $$\varphi_\delta(r):=\left(F^\mathrm{obs}_\delta\right)'(r), \quad r\in \R.$$

\begin{problem}
Given $u_0\in \mathcal{I}_0$, find a pair $(u_\delta, w_\delta)\in L^\infty_{H^1}\times L^2_{H^1}$, with $\partial^\bullet u_\delta\in L^2_{H^{-1}}$, satisfying, for all $\eta\in L^2_{H^1}$ and almost every $t\in [0,T]$,
\begin{align}\label{eq:prob2eps}\tag{$\text{CH}^\delta_{\text{o}}$}
\begin{split}
m_\ast(\partial^\bullet u_\delta, \eta) + g(u_\delta, \eta) + a_N(u,\eta) + a_S(w_\delta, \eta) &= 0,\\
a_S(u_\delta, \eta) + m(\varphi_\delta(u_\delta), \eta) - m(u_\delta, \eta) - m(w_\delta, \eta) &= 0, 
\end{split}
\end{align}
and the initial condition $u_\delta(0)=u_0$ almost everywhere in $\Gamma_0$. 
\end{problem}

\begin{remark}
Note that in this case we automatically obtain that the energy of the initial condition $u_0\in H^1(\Gamma_0)$ is bounded.
\end{remark}

This problem fits the framework of the general smooth potential we studied in the first section. In fact, it is easy to check that, for sufficiently small $\delta>0$, $F_\delta^\mathrm{obs}$ satisfies the assumptions (A1)-(A2) in Section 4, and thus Theorem \ref{thm:wellposedsmooth} implies that the problem \eqref{eq:prob2eps} is well-posed. We again observe that Remark \ref{rem:extraassump} holds for the Galerkin approximation of the problem above. As before, the a priori bounds are not independent of $\delta$, and thus we must obtain obtain new estimates before passing to the limit.

The Galerkin approximation for \eqref{eq:prob2eps} is the following problem.

\begin{problem}
Given $\delta>0$ and $u_0\in \mathcal{I}_0$, find $(u^M_\delta,w^M_\delta)\in L^2_{V_M}$ with $\partial^\bullet u_\delta^M\in L^2_{V_M}$ satisfying, for all $\eta\in L^2_{V_M}$ and $t\in [0, T]$, 
\begin{align}\label{eq:gaprob2eps}\tag{$\text{CH}^{M,\delta}_{\text{o}}$}
\begin{split}
m_\ast(\partial^\bullet u^M_\delta, \eta)+g(u^M_\delta, \eta) + a_N(u_\delta^M,\delta) + a_S(w^M_\delta, \eta) &= 0,\\
a_S(u^M_\delta, \eta) + m(\varphi_\delta(u^M_\delta), \eta) - m(u^M_\delta, \eta) - m(w^M_\delta, \eta)&=0,
\end{split}
\end{align} 
and $u_\delta^M(0)=P_M u_0$ almost everywhere in $\Gamma_0$.
\end{problem}

The a priori estimates for the double obstacle potential are analogous to those in the previous section. As in Lemma \ref{lem:apriori2}, we start by obtaining a uniform bound for the initial energy. We reuse the notation
\begin{align}
\mathrm{E}^\mathrm{CH}_\delta[u] = \int_{\Gamma(t)} \dfrac{1}{2} |\tgrad u|^2 + F_\delta(u) .
\end{align}

\begin{lemma}[A priori estimates for \eqref{eq:gaprob3eps}]
Let $(u^M_\delta, w_\delta^M)$ be the unique solution of \eqref{eq:gaprob2eps}. There exists a constant $C>0$, independent of both $M$ and $\delta$, such that $\mathrm{E}^\mathrm{CH}_\delta[0; u_\delta^M(0)] \leq C$.
\end{lemma}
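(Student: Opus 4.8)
The plan is to transcribe the argument of Lemma \ref{lem:apriori2} almost verbatim, taking advantage of a simplification that is special to the obstacle case. Since any admissible datum $u_0\in\mathcal{I}_0$ satisfies $|u_0|\le 1$ a.e.\ on $\Gamma_0$, and $F^\mathrm{obs}_\delta$ vanishes identically on $[-1,1]$, we have the pointwise identity $F^\mathrm{obs}_\delta(u_0)\equiv 0$, which removes the step in the logarithmic case where one had to compare $F^\mathrm{log}_\delta(u_0)$ with $F^\mathrm{log}(u_0)$.

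First I would expand the approximate energy at the initial time as
\[
\mathrm{E}^\mathrm{CH}_\delta[P_M u_0] = \tfrac{1}{2}\|\tgrad P_M u_0\|^2\L + m(F^\mathrm{obs}_\delta(P_M u_0), 1) + \tfrac{1}{2}\, m(1 - (P_M u_0)^2, 1),
\]
and dispose of the first and third terms using Assumption $\mathbf{(A_P)}$(i). The bound $\|P_M u_0\|_{H^1(\Gamma_0)}\le\|u_0\|_{H^1(\Gamma_0)}$ controls the Dirichlet term by $\tfrac12\|u_0\|_{H^1}^2$, while the quadratic term is bounded in absolute value by $\tfrac12(|\Gamma_0| + \|u_0\|_{L^2}^2)$; both bounds are independent of $M$ and $\delta$.

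The only delicate term is $m(F^\mathrm{obs}_\delta(P_M u_0), 1)$. The key structural fact to verify is that $F^\mathrm{obs}_\delta\in C^2(\R)$ is convex: its second derivative is nonnegative on each of the five pieces of its definition and matches across the junctions $r=\pm1$ and $r=\pm(1+\delta)$. Granting this, the subgradient inequality applied with base point $P_M u_0$ and increment towards $u_0$ gives, pointwise,
\[
F^\mathrm{obs}_\delta(P_M u_0) \le F^\mathrm{obs}_\delta(u_0) + \varphi_\delta(P_M u_0)\,(P_M u_0 - u_0) = \varphi_\delta(P_M u_0)\,(P_M u_0 - u_0),
\]
where the last equality uses $F^\mathrm{obs}_\delta(u_0)=0$. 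Integrating over $\Gamma_0$ and combining with the two estimates above yields
\[
\mathrm{E}^\mathrm{CH}_\delta[P_M u_0] \le C_3 + m\big(\varphi_\delta(P_M u_0),\, P_M u_0 - u_0\big),
\]
with $C_3$ independent of both $M$ and $\delta$.

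Finally I would close exactly as in Lemma \ref{lem:apriori2}. For each fixed $\delta$ the map $\varphi_\delta=(F^\mathrm{obs}_\delta)'$ is globally Lipschitz, so $P_M u_0\to u_0$ in $H^1(\Gamma_0)\hookrightarrow L^2(\Gamma_0)$ forces $\varphi_\delta(P_M u_0)\to\varphi_\delta(u_0)$ in $L^2(\Gamma_0)$, while $P_M u_0 - u_0\to 0$ in $L^2(\Gamma_0)$; hence the correction term tends to $0$ as $M\to\infty$, and therefore $\limsup_{M\to\infty}\mathrm{E}^\mathrm{CH}_\delta[P_M u_0]\le C_3$, giving the claimed uniform bound. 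The only genuine obstacle is the convexity of $F^\mathrm{obs}_\delta$ together with $F^\mathrm{obs}_\delta|_{[-1,1]}\equiv 0$; everything else is a direct transcription of the logarithmic argument, and in fact the obstacle potential is more favourable here, since the datum lands precisely on the flat region where the approximating potential vanishes.
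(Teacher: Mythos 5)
Your proposal is correct and is essentially identical to the paper's proof: the paper disposes of this lemma by stating that the argument of Lemma \ref{lem:apriori2} carries over verbatim, observing precisely your key simplification that $F^\mathrm{obs}_\delta(u_0)=F^\mathrm{obs}(u_0)=0$ since $|u_0|\le 1$ a.e. Your explicit verification that $F^\mathrm{obs}_\delta\in C^2(\R)$ is convex (so the subgradient inequality applies) and that $\varphi_\delta$ is Lipschitz for fixed $\delta$ (so the correction term $m(\varphi_\delta(P_M u_0), P_M u_0-u_0)$ vanishes as $M\to\infty$) merely fills in details the paper leaves implicit.
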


\begin{proof}
The proof follows as in Lemma \ref{lem:apriori2}. Observe that in this case $F_\delta^\mathrm{obs}(u_0) = F^\mathrm{obs}(u_0) = 0$.
\end{proof}

We now obtain the a priori estimates for \eqref{eq:prob2eps}.

\begin{proposition}[A priori estimates for \eqref{eq:prob2eps}]\label{prop:apriori3}
Let $(u_\delta, w_\delta)$ be the unique solution of \eqref{eq:prob2eps}.
\begin{itemize}
\item[(a)] There exists a constant $C>0$, independent of $\delta$, such that 
\begin{align}
\sup_{t\in [0, T]} \mathrm{E}^\mathrm{CH}_\delta[t; u_\delta(t)] + \int_0^T \|\nabla_{\Gamma(t)} w_\delta(t)\|^2_{L^2(\Gamma(t))}  &\leq C;
\end{align}
\item[(b)] $u_\delta$ and $w_\delta$ are uniformly bounded in $L^\infty_{H^1}$ and $L^2_{H^1}$, respectively; \vskip 1mm
\item[(c)] $\partial^\bullet u_\delta$ is uniformly bounded in $L^2_{H^{-1}}$.
\end{itemize}
\end{proposition}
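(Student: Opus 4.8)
The plan is to mirror the proof of Proposition \ref{prop:apriori22} for the logarithmic case, since the structure of \eqref{eq:prob2eps} is identical save for the choice of regularised nonlinearity $\varphi_\delta = (F^\mathrm{obs}_\delta)'$. First, working at the Galerkin level with the solution $(u_\delta^M, w_\delta^M)$ of \eqref{eq:gaprob2eps}, I would reproduce the computation in Proposition \ref{prop:propenergy} to obtain the energy inequality
\[
\dfrac{d}{dt}\mathrm{E}^\mathrm{CH}_\delta[u_\delta^M] + \dfrac{1}{2}\|\tgrad w_\delta^M\|\L^2 \leq -g(u_\delta^M, w_\delta^M) + C_0 + C_1 \mathrm{E}^\mathrm{CH}_\delta[u_\delta^M],
\]
where $C_0, C_1$ depend only on the velocity bounds and are therefore independent of $\delta$ (the terms $b(u,u)$, $g(F_\delta(u),1)$ and $a_N(u,w)$ contribute constants involving only $C_\mathbf{V}$ and $C_\mathbf{a}$).

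The heart of the argument is controlling $g(u_\delta, w_\delta)$ uniformly in $\delta$. As in the logarithmic case, I would test the second equation of \eqref{eq:prob2eps} with $u_\delta\tgrad\cdot\mathbf{V}$ to rewrite $g(u_\delta, w_\delta)$ in terms of $\|\tgrad u_\delta\|\L^2$ and $m(\varphi_\delta(u_\delta), u_\delta)$, and then test with $\eta\equiv 1$ to express $m(w_\delta, 1)$ via $m(\varphi_\delta(u_\delta), 1)$. The crucial structural fact, which must be verified directly from the explicit form of $F^\mathrm{obs}_\delta$, is the inequality $|\varphi_\delta(r)| \leq r\varphi_\delta(r)$ for all $r\in\R$; this holds because $\varphi_\delta$ vanishes on $[-1,1]$ and, outside this interval, $\varphi_\delta(r)$ has the same sign as $r$ while $|r|\geq 1$. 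This is in fact cleaner than the logarithmic bound $|\varphi_\delta(r)|\leq r\varphi_\delta(r)+1$, and it allows one to control $m(\varphi_\delta(u_\delta), 1)$ by $m(\varphi_\delta(u_\delta), u_\delta)$, which the second equation in turn relates to $\|\tgrad u_\delta\|\L^2$ and $m(w_\delta, u_\delta)$.

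The decisive step, and the place where the hypothesis $m_{u_0}(t)\leq \alpha < 1$ enters, is the decomposition of $m(w_\delta, u_\delta)$ using the inverse Laplacian $\mathcal{G}$: splitting off the mean gives
\[
m(w_\delta, u_\delta) = a_S(w_\delta, \mathcal{G}(u_\delta - (u_\delta)_\Gamma)) + (u_\delta)_\Gamma\, m(w_\delta, 1),
\]
and, since mass is preserved, $|(u_\delta)_\Gamma| = m_{u_0}(t) \leq \alpha$. The factor $\alpha < 1$ is precisely what lets me absorb the resulting $\alpha\,|m(w_\delta,1)|$ term after dividing through by $1-\alpha$ (using Young's and Poincar\'{e}'s inequalities), yielding $|g(u_\delta, w_\delta)| \leq \frac14\|\tgrad w_\delta\|\L^2 + C_1\|\tgrad u_\delta\|\L^2 + C_2$ with $\delta$-independent constants. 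Combined with the energy inequality above and Gronwall's lemma, this proves (a); note also that the initial energy is uniformly bounded since $F^\mathrm{obs}_\delta(u_0)=0$ for $|u_0|\leq 1$.

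For (b), the bound on $\tgrad u_\delta$ in $L^\infty_{L^2}$ follows from (a), and mass preservation upgrades it to a uniform $L^\infty_{H^1}$ bound; the $w_\delta$ bound in $L^2_{H^1}$ follows by re-running the $m(w_\delta,1)$ estimate (this time retaining $\|\tgrad w_\delta\|\L$ rather than applying Young's inequality) together with Poincar\'{e}'s inequality, exactly as in Proposition \ref{prop:apriori22}(b). Part (c) is the usual duality argument: for $\eta\in L^2_{H^1}$ the first equation gives $\langle \partial^\bullet u_\delta, \eta\rangle = -g(u_\delta,\eta) - a_N(u_\delta, \eta) - a_S(w_\delta, \eta)$, which is bounded by $C\|\eta\|_{L^2_{H^1}}$ by the estimates from (b). I expect the only genuine obstacle to be the verification of $|\varphi_\delta(r)|\leq r\varphi_\delta(r)$ uniformly in $\delta$ from the piecewise definition of $F^\mathrm{obs}_\delta$; everything else transfers essentially verbatim from the logarithmic argument.
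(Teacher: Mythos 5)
Your proposal is correct and follows essentially the same route as the paper, which proves this proposition by observing that the argument of Proposition \ref{prop:apriori22} carries over unchanged once one checks the structural facts $F_\delta^{\mathrm{obs}}(r)\geq 0$, $r\varphi_\delta(r)\geq 0$ and $|\varphi_\delta(r)|\leq r\varphi_\delta(r)+1$ for the obstacle regularisation. Your sharper verification $|\varphi_\delta(r)|\leq r\varphi_\delta(r)$ (which holds since $\varphi_\delta$ vanishes on $[-1,1]$ and has the sign of $r$ where $|r|\geq 1$) implies the paper's inequality, so the transfer of the logarithmic argument, including the uniform initial-energy bound via $F_\delta^{\mathrm{obs}}(u_0)=0$, is exactly as you describe.
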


\begin{proof}
The proof of this result is exactly the same as that of Proposition \ref{prop:apriori22}. Observe that, also for the approximation of the double obstacle potential, we have for any $r\in\R$
\begin{align}
F_{\delta}^\mathrm{obs}(r)\geq 0, \quad r\varphi_\delta(r)\geq 0, \quad |\varphi'_\delta(r)|\leq r\varphi_\delta'(r)+1,
\end{align}
and hence the whole proof carries over unchanged to this case.
\end{proof}

\subsubsection{Passage to the limit}

As before, we have $u\in L^2_{H^1}$ with $\partial^\bullet u\in L^2_{H^{-1}}$ and $w\in L^2_{H^1}$, such that, as $\delta\to 0$ (up to a subsequence)
\begin{align}\label{eq:conv3}
u_\delta \rightharpoonup u \, \text{ weakly in } L^2_{H^1}, \quad \partial^\bullet u_\delta \rightharpoonup \partial^\bullet u \, \text{ weakly in } L^2_{H^{-1}}, \quad w_\delta \rightharpoonup w \, \text{ weakly in } L^2_{H^1}.
\end{align}
We can use the Aubin-Lions Lemma in Proposition \ref{prop:inclusions}(c) to further obtain $u_\delta\to u$ strongly in $L^2_{L^2}$, and so also up to a subsequence (non-relabeled) we have, for almost every $t\in [0, T]$, $u_\delta(t)\to u(t)$ almost everywhere in $\Gamma(t)$. 

Before passing to the limit in the problem we need to ensure $u\in K$. First, let us introduce some notation: for $\delta>0$, define
\begin{align}\label{eq:betadefinition}
\beta_\delta(r) := \delta \varphi_\delta(r), \,\, r\in \R,
\quad
\text{and}
\quad
\beta(r) := \lim_{\delta\to 0} \beta_\delta(r) = 
\begin{cases}
r-1, &\text{ for } r\geq 1 \\
0, &\text{ for } |r|\leq 1 \\
r+1, &\text{ for } r\leq -1 
\end{cases}
\end{align}
Observe that $\beta$ is Lipschitz continuous, and that we have, for any $r\in\R$,
\begin{align}\label{eq:beta}
|\beta(r)-\beta_\delta(r)| \leq \dfrac{\delta}{2}, \quad\quad 0\leq \beta'_\delta(r) \leq 1.
\end{align}

\begin{lemma}
With the notation above, $u\in K$. 
\end{lemma}

\begin{proof}
Test the second equation of \eqref{eq:prob2eps} with $\eta(t) = \beta_\delta(u_\delta(t))\in H^1(\Gamma(t))$ to obtain, for a.e. $t\in [0, T]$,
\begin{align}
a_S\left(u_\delta, \beta_\delta(u_\delta)\right) + \dfrac{1}{\delta}\|\beta_\delta(u_\delta)\|\L^2 \leq C \left(\|u_\delta\|\L^2 + \|w_\delta\|\L^2\right) + \dfrac{1}{2\delta} \|\beta_\delta(u_\delta)\|\L^2,
\end{align}
and hence from the uniform bounds we obtain $C_1, C_2>0$ such that
\begin{align}
a_S\left(u_\delta, \beta_\delta(u_\delta)\right)&+ \dfrac{1}{2\delta}\|\beta_\delta(u_\delta)\|\L^2 \leq C_1 + C_2\|w_\delta\|\L^2.
\end{align}
But note that, since $\beta_\delta'\geq 0$, $a_S\left(u_\delta, \beta_\delta(u_\delta)\right) \geq 0,$
and therefore we obtain 
\begin{align}
\dfrac{1}{2\delta}\|\beta_\delta(u_\delta)\|^2\L \leq C_1 + C_2\|w_\delta\|\L^2,
\end{align}
from where
\begin{align}
\int_0^T \|\beta_\delta(u_\delta)\|\L^2  \leq C_1\delta + C_2\delta\int_0^T\|w_\delta\|\L^2 \, dt \leq C_3\delta,
\end{align}
for some constant $C_3>0$. Thus $\beta_\delta(u_\delta)\to 0$ in $L^2_{L^2}$ as $\delta\to 0$. 

Now let $\eta\in L^2_{L^2}$. We have
\begin{align}
\int_0^T m(\beta(u), \eta) \leq \int_0^T \left(\|\beta(u)-\beta(u_\delta)\|\L+ \|\beta(u_\delta) - \beta_\delta(u_\delta)\|\L + \|\beta_\delta(u_\delta)\|\L\right)  \|\eta\|\L 
\end{align}
and using \eqref{eq:beta} it follows that the right hand side converges to $0$ as $\delta\to 0$. Thus $\beta(u(t)) = 0$ almost everywhere in $\Gamma(t)$ for almost all $t\in [0,T]$, which means that $u\in K$, as desired. 
\end{proof}

\begin{remark}
In contrast with the logarithmic potential problem, in which the solutions do not touch the pure phases $\pm 1$ (see Lemma \ref{lem:oninterval}), for the double obstacle case solutions are expected to attain the values $\pm 1$ on sets of positive measure.
\end{remark}

We finally turn to the well-posedness of \eqref{eq:prob2}.

\begin{lemma}
The pair $(u,w)$ satisfies, for all $\eta\in L^2_{H^1}$ and a.e. $t\in [0, T]$,
\begin{align}
m_\ast (\partial^\bullet u, \eta) + g(u, \eta) + a_N(u,\eta) + a_S(w, \eta) = 0,
\end{align}
and $u(0)=u_0$ almost everywhere in $\Gamma_0$.
\end{lemma}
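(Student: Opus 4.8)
The plan is to pass to the limit $\delta \to 0$ in the first equation of \eqref{eq:prob2eps}, which is linear in $(u_\delta, w_\delta, \partial^\bullet u_\delta)$, so that only the weak convergences collected in \eqref{eq:conv3} are required. Fixing $\eta \in L^2_{H^1}$, I would integrate the first equation over $[0,T]$ and treat its four terms separately. For the duality term $\int_0^T m_\ast(\partial^\bullet u_\delta, \eta)$ I would use $\partial^\bullet u_\delta \rightharpoonup \partial^\bullet u$ in $L^2_{H^{-1}}$ tested against the fixed element $\eta \in L^2_{H^1}$. For $\int_0^T a_S(w_\delta, \eta) = \int_0^T (\tgrad w_\delta, \tgrad \eta)_{L^2}$ I would use that $\tgrad w_\delta \rightharpoonup \tgrad w$ in $L^2_{L^2}$ (a consequence of $w_\delta \rightharpoonup w$ in $L^2_{H^1}$) paired with the fixed $\tgrad \eta$. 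For the zeroth- and first-order terms $g(u_\delta, \eta)$ and $a_N(u_\delta, \eta)$, I would observe that $\eta\,\tgrad\cdot\mathbf{V}$ and $\mathbf{V}_a^\tau\cdot\tgrad\eta$ are fixed elements of $L^2_{L^2}$ (using \eqref{eq:velbound} together with the uniform bound $\|\mathbf{V}_a\|_{L^\infty}\le C_\mathbf{a}$), so the weak convergence $u_\delta \rightharpoonup u$ in $L^2_{L^2}$ (equally, the strong convergence already furnished by Aubin--Lions) passes both terms to the limit.

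This yields
\begin{align}
\int_0^T m_\ast(\partial^\bullet u, \eta) + g(u, \eta) + a_N(u, \eta) + a_S(w, \eta) = 0
\end{align}
for every $\eta \in L^2_{H^1}$. To localise in time, I would restrict to test functions of the form $\psi(t)\eta$ with $\psi \in C_c^\infty(0,T)$ and $\eta \in L^2_{H^1}$ and invoke the fundamental lemma of the calculus of variations, exactly as at the end of Proposition \ref{prop:smoothexistence}, to conclude that the identity holds for almost every $t \in [0,T]$.

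For the initial condition, I would use that the uniform bounds of Proposition \ref{prop:apriori3} give $u_\delta \rightharpoonup u$ in $H^1_{H^{-1}}$ (combining the weak convergences in $L^2_{H^1}$ and $L^2_{H^{-1}}$). Since the evaluation-at-$t=0$ map $v \mapsto v(0)$ is a bounded linear operator $H^1_{H^{-1}} \to L^2(\Gamma_0)$, by the continuous embedding $H^1_{H^{-1}} \hookrightarrow C^0_{L^2}$ of Proposition \ref{prop:inclusions}(b), it is weak-to-weak continuous, so $u_\delta(0) \rightharpoonup u(0)$ in $L^2(\Gamma_0)$; as $u_\delta(0) = u_0$ for every $\delta$, the weak limit forces $u(0) = u_0$. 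In keeping with the phrasing used for the logarithmic case, the same conclusion could instead be reached through the integration-by-parts-in-time argument of Proposition \ref{prop:existence}(ii).

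Because the first equation carries no nonlinear contribution, I do not expect a genuine obstacle here: the only points requiring minor care are checking that the velocity-dependent coefficients belong to $L^2_{L^2}$ (so that weak convergence of $u_\delta$ alone suffices) and justifying the weak continuity of the trace used for the initial condition. The real difficulty of the double obstacle problem is postponed to the passage to the limit in the variational inequality, i.e. the second line of \eqref{eq:prob2eps}, which lies outside the present statement.
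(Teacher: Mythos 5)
Your proposal is correct, but it takes a genuinely more direct route than the paper. The paper proves this lemma by reference: it states that the argument is identical to the first part of Proposition \ref{prop:existence} with $M\to\infty$ replaced by $\delta\to 0$, i.e.\ it re-runs the Galerkin-style argument of Section 4, integrating by parts in time against test functions $\psi(t)\chi_j^t$ so as to transfer the time derivative onto the test functions, and then recovers both the weak derivative and the identification $u(0)=u_0$ (and $u(T)=z$) by comparing boundary terms. That detour was forced in Section 4 because the Galerkin scheme provides no uniform bound on $\md u^M$ in $L^2_{H^{-1}}$; in the present $\delta$-approximation it is unnecessary, since Proposition \ref{prop:apriori3}(c) gives exactly such a uniform bound on $\partial^\bullet u_\delta$, and \eqref{eq:conv3} already records $\partial^\bullet u_\delta \rightharpoonup \partial^\bullet u$ in $L^2_{H^{-1}}$. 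Your term-by-term limit passage in the (linear) first equation, followed by localisation with $\psi(t)\eta$, $\psi\in C_c^\infty(0,T)$, is precisely what these weak convergences license, and your treatment of the initial condition via weak-to-weak continuity of the evaluation map $v\mapsto v(0)$ from $H^1_{H^{-1}}$ to $L^2(\Gamma_0)$ (through the embedding in Proposition \ref{prop:inclusions}(b)) is a clean alternative to the paper's boundary-term comparison; what each approach buys is that the paper's is uniform across both limit procedures (Galerkin and $\delta$), while yours is shorter where the stronger a priori information is available. Two small points you should make explicit: first, the convergence $u_\delta \rightharpoonup u$ in $H^1_{H^{-1}}$ does not follow formally from the two componentwise weak convergences alone — justify it by the uniform bound in the graph space $H^1_{H^{-1}}$ together with uniqueness of the weak limit in $L^2_{H^1}$, along the subsequence fixed in \eqref{eq:conv3}; second, in the localisation step, to obtain a null set of times independent of $\eta$ one should test against a countable dense family in $L^2_{H^1}$, a point equally implicit in the paper's own appeal to the fundamental lemma in Proposition \ref{prop:smoothexistence}. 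Neither caveat affects the validity of your argument.
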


\begin{proof}
The proof of this result is identical to that of the first part of Proposition \ref{prop:existence}, by replacing $M\to\infty$ with $\delta\to 0$, so we omit it. The proof for the initial condition uses only the first equation of the system, and hence it follows also as in Proposition \ref{prop:existence}.
\end{proof}

\begin{lemma}
The pair $(u, w)$ satisfies the variational inequality.
\end{lemma}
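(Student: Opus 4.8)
The plan is to derive an approximate variational inequality for the solution $(u_\delta, w_\delta)$ of \eqref{eq:prob2eps} and then pass to the limit $\delta\to 0$, exploiting the convexity of $F^\mathrm{obs}_\delta$ together with weak lower semicontinuity of the Dirichlet energy. Fix $\eta\in K$ and, for a.e. $t\in[0,T]$, test the second equation of \eqref{eq:prob2eps} with $\eta-u_\delta\in H^1(\Gamma(t))$. Since $F^\mathrm{obs}_\delta$ is convex and $C^1$ with $(F^\mathrm{obs}_\delta)'=\varphi_\delta$, the subgradient inequality gives, pointwise on $\Gamma(t)$, $\varphi_\delta(u_\delta)(\eta-u_\delta)\le F^\mathrm{obs}_\delta(\eta)-F^\mathrm{obs}_\delta(u_\delta)$. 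Integrating over $\Gamma(t)$ and using that $F^\mathrm{obs}_\delta(\eta)=0$ (because $|\eta|\le 1$) while $F^\mathrm{obs}_\delta(u_\delta)\ge 0$, I obtain $m(\varphi_\delta(u_\delta),\eta-u_\delta)\le 0$. Substituting this into the tested equation yields the approximate variational inequality
\[
a_S(u_\delta,\eta-u_\delta)-m(u_\delta,\eta-u_\delta)-m(w_\delta,\eta-u_\delta)=-m(\varphi_\delta(u_\delta),\eta-u_\delta)\ge 0,
\]
valid for a.e. $t$ and every $\eta\in K$.

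Next I would multiply by an arbitrary $\psi\in C^\infty_c(0,T)$ with $\psi\ge 0$, integrate over $[0,T]$, and let $\delta\to 0$ using the convergences \eqref{eq:conv3}. All terms linear in $u_\delta$ or $w_\delta$ converge by the weak convergences $u_\delta\rightharpoonup u$ and $w_\delta\rightharpoonup w$ in $L^2_{H^1}$; the terms $\int_0^T \psi\, m(u_\delta,u_\delta)$ and $\int_0^T \psi\, m(w_\delta,u_\delta)$ converge because $u_\delta\to u$ strongly in $L^2_{L^2}$ (Aubin--Lions, Proposition \ref{prop:inclusions}(c)), the latter paired against the weakly converging $w_\delta$. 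The only term that does not pass to the limit by continuity is $-\int_0^T \psi\, a_S(u_\delta,u_\delta)=-\int_0^T \psi\,\|\tgrad u_\delta\|_{L^2}^2$; here, since $\psi\ge 0$, weak lower semicontinuity gives $\liminf_\delta \int_0^T \psi\,\|\tgrad u_\delta\|_{L^2}^2\ge \int_0^T \psi\,\|\tgrad u\|_{L^2}^2$, which is exactly the sign needed to preserve the inequality in the limit. Combining the limits yields
\[
\int_0^T \psi\,\big[a_S(u,\eta-u)-m(u,\eta-u)-m(w,\eta-u)\big]\,\d t\ge 0 .
\]

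Finally, since $\psi\ge 0$ is arbitrary, for each fixed $\eta\in K$ the integrand is nonnegative for a.e. $t$. To upgrade this to the pointwise-in-$t$ inequality for all admissible test functions simultaneously, I would use separability: taking a countable dense subset $\{v_k\}$ of $\{v\in H^1(\Gamma_0):|v|\le 1\}$ and setting $\eta_k(t):=\phi_t v_k$, each $\eta_k$ lies in $K$ (the pointwise bound $|v_k|\le 1$ is preserved under composition with the diffeomorphism $\Phi_0^t$, and $t\mapsto\|\phi_t v_k\|_{H^1(\Gamma(t))}$ is continuous by compatibility). On the common full-measure set where the inequality holds for every $\eta_k$, continuity of the map $v\mapsto a_S(u,v-u)-m(u,v-u)-m(w,v-u)$ on $H^1(\Gamma(t))$ together with the density of $\{\phi_t v_k\}$ in $\{v\in H^1(\Gamma(t)):|v|\le 1\}$ gives the inequality for all such $v$, which is precisely the variational inequality in \eqref{eq:prob2}. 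The main obstacle is the passage to the limit in the quadratic term $a_S(u_\delta,u_\delta)$, for which only weak $H^1$ convergence is available; it is exactly the favourable sign of this term, combined with weak lower semicontinuity, that makes the limiting inequality come out in the correct direction.
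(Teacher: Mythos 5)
Your proposal is correct and follows essentially the same route as the paper: there, one tests the second equation of \eqref{eq:prob2eps} with $\eta-u_\delta$ and drops the penalty term via monotonicity of $\beta_\delta=\delta\varphi_\delta$ together with $\beta_\delta(\eta)=0$ --- equivalent to your convexity/subgradient inequality for $F^{\mathrm{obs}}_\delta$ --- and then passes $\delta\to 0$ with exactly your ingredients (strong $L^2_{L^2}$ convergence of $u_\delta$, weak convergence of $w_\delta$, and weak lower semicontinuity of the Dirichlet term against a nonnegative time cutoff). Your final countable-density argument with $\{\phi_t v_k\}$ merely makes explicit the localisation in $t$ that the paper compresses into ``since $\xi\geq 0$ is arbitrary''.
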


\begin{proof}
Let $\eta\in K$ and $\xi\in C^0([0,T])$ satisfy $\xi\geq 0$. Testing the second equation with $\eta-u_\delta$, integrating over $[0,T]$ and noting that $\beta_\delta(\eta)=0$ we have, for almost every $t$,
\begin{align}
\int_0^T \xi \, a_S(u_\delta, \eta - u_\delta) - \int_0^T \xi \, m(w_\delta + u_\delta, \eta - u_\delta) &= -\dfrac{1}{\delta} \int_0^T \xi \, m(\beta_\delta(u_\delta), \eta-u_\delta) \\ 
&=\dfrac{1}{\delta} \int_0^T \xi \, m(\beta_\delta(\eta)-\beta_\delta(u_\delta), \eta-u_\delta) \geq 0,
\end{align}
by monotonicity of $\beta_\delta$. To let $\delta\to 0$, we simply observe that, by the convergences in \eqref{eq:conv3},
\begin{align}
\int_0^T \xi \, m(w_\delta+u_\delta, \eta-u_\delta) \to \int_0^T \xi \, m(w+u, \eta-u).
\end{align}
Also, the weak convergence $\nabla_\Gamma u_\delta(t)\rightharpoonup \nabla_\Gamma u(t)$ gives $$\|\nabla_\Gamma u(t)\|_{L^2(\Gamma(t))} \leq \liminf_{\delta\to 0} \|\nabla_\Gamma u_\delta(t)\|_{L^2(\Gamma(t))},$$ for almost every $t\in [0, T]$, and hence 
\begin{align}
\limsup_{\delta\to 0} \int_0^T \xi \, a_S(u_\delta, \eta - u_\delta) &= \int_0^T \xi a_S(u, \eta) -\liminf_{\delta\to 0} \int_0^T \xi \|\tgrad u_\delta\|^2\L \leq \int_0^T a_S(u, \eta-u).
\end{align}
Therefore,
\begin{align}
\int_0^T \xi a_S(u, \eta-u) &- \int_0^T \xi m(w + u, \eta - u) \\
&\hskip 4mm \geq \limsup_{\delta\to 0} \, \int_0^T \xi \, \left(a_S(u_\delta, \eta - u_\delta) - m(w_\delta + u_\delta, \eta - u_\delta) \right) \geq 0,
\end{align}
concluding the proof. Since $\xi\geq 0$ is arbitrary, it follows that
\begin{align}
a_S(u, \eta-u) - m( u, \eta - u) \geq m (w, \eta - u),
\end{align}
as desired.
\end{proof}

Before proving uniqueness, we require the following auxiliary result. For the meaning of inequalities in the $H^1$ sense, see for instance \cite[Section II.5]{KinSta80}.

\begin{lemma}\label{lem:nonempty}
Let $(u,w)$ be a solution pair for \eqref{eq:prob2} and consider the open set
\begin{align}
U(t) = \{x\in\Gamma(t) \colon |u(t,x)| < 1 \text{ in the sense of } H^1\}.
\end{align}
Then $U(t)$ is non-empty for almost all $t\in [0,T]$.
\end{lemma}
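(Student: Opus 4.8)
The plan is to argue by contradiction, exploiting the conservation of the integral of the solution together with the standing assumption $m_{u_0}(t)<1$. First I would fix $t\in[0,T]$ in the full-measure set for which $u(t)\in H^1(\Gamma(t))$ and the integral-preservation identity \eqref{eq:integral_pres} holds, and suppose that $U(t)$ is empty. Since $u\in K$ forces $|u(t)|\le 1$ a.e., and since the quasi-continuous representative defining $U(t)$ agrees with $u(t)$ a.e., emptiness of $U(t)$ means precisely that $|u(t)|=1$ almost everywhere on $\Gamma(t)$.

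The next step is to upgrade this a.e. pointwise constraint to a rigidity statement: an $H^1$-function equal to $\pm 1$ a.e. on the \emph{connected} surface $\Gamma(t)$ must be constant. Since $u(t)\in H^1(\Gamma(t))\cap L^\infty(\Gamma(t))$, the chain rule gives $\tgrad\big(u(t)^2\big)=2\,u(t)\,\tgrad u(t)$; but $u(t)^2\equiv 1$, so the left-hand side vanishes, and as $|u(t)|=1\neq 0$ a.e. we conclude $\tgrad u(t)=0$ a.e. Connectedness of $\Gamma(t)$ then forces $u(t)\equiv 1$ or $u(t)\equiv -1$ on $\Gamma(t)$.

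Finally I would derive the contradiction from the preserved mass. Testing the first equation with $\eta\equiv 1$ yields, exactly as in \eqref{eq:integral_pres},
\begin{align}
\int_{\Gamma(t)} u(t) = \int_{\Gamma_0} u_0 .
\end{align}
If $u(t)\equiv 1$ (resp. $u(t)\equiv -1$) then $\int_{\Gamma(t)} u(t)=\pm|\Gamma(t)|$, so that
\begin{align}
m_{u_0}(t)=\dfrac{1}{|\Gamma(t)|}\left|\int_{\Gamma_0} u_0\right| = 1,
\end{align}
contradicting the fact that $u_0\in\mathcal{I}_0$ gives $m_{u_0}(t)\le\alpha<1$ for all $t\in[0,T]$. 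Hence $U(t)$ cannot be empty, and since the two requirements on $t$ used above hold for a.e. $t$, we conclude that $U(t)$ is non-empty for almost all $t\in[0,T]$.

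I expect the only delicate point to be the interface between the $H^1$-capacitary definition of $U(t)$ (in the sense of \cite[Section II.5]{KinSta80}) and the measure-theoretic reasoning: one must be sure that $U(t)=\emptyset$ really does entail $|u(t)|=1$ a.e., rather than merely on a quasi-closed set. This is handled by passing to the quasi-continuous representative, which coincides with $u(t)$ outside a set of measure zero, so the measure-theoretic constraint $|u(t)|=1$ a.e. is legitimate and the rigidity-plus-conservation argument then closes the proof.
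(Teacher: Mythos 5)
Your proof is correct, and it diverges from the paper's argument in the middle ``rigidity'' step in a way worth noting. Both proofs start identically (emptiness of $U(t)$ plus $u\in K$ gives $|u(t)|=1$ a.e.) and end identically (integral preservation forces $m_{u_0}(t)=1$, contradicting $u_0\in\mathcal{I}_0$). The difference is how constancy of $u(t)$ is obtained. You derive $\tgrad u(t)=0$ a.e. by the chain rule applied to $u(t)^2\equiv 1$ and then invoke connectedness of $\Gamma(t)$ (which is part of the standing assumptions of Section 2) to conclude $u(t)\equiv 1$ or $u(t)\equiv -1$ outright. The paper instead obtains $\tgrad u=0$ a.e. from the fact that Sobolev gradients vanish a.e. on level sets (citing \cite[Section 7.4]{GilTru98}) and then, rather than appealing to connectedness directly, splits into three cases: $u\equiv 1$, $u\equiv -1$, or both level sets $\{u=1\}$ and $\{u=-1\}$ of positive measure, disposing of the mixed case via a generalized Poincar\'e inequality relative to each of these sets. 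Your route is the more streamlined one: the mixed case simply cannot arise once constancy is established, so the case analysis and the Poincar\'e citation are unnecessary; note, though, that the paper's generalized Poincar\'e inequality itself relies on connectedness of the underlying surface, so the two arguments ultimately rest on the same geometric hypothesis -- yours just makes it explicit. Your closing remark about the capacitary definition of $U(t)$ versus the measure-theoretic conclusion $|u(t)|=1$ a.e.\ is handled correctly and matches the (implicit) step in the paper, where ``$|u(t,x)|\geq 1$ in the sense of $H^1$ for all $x$'' is converted to an a.e.\ statement before any measure-theoretic reasoning.
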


\begin{proof}
Since $u\in L^\infty_{H^1}$, we have $u(t)\in H^1(\Gamma(t))$ for almost all $t\in [0,T]$, and so the set $U(t)$ is well defined for almost all $t\in [0, T]$. Fix one such $t$, and suppose for the sake of contradiction that $U(t)=\emptyset$. Then $|u(t,x)|\geq 1$ in the sense of $H^1$ for all $x\in\Gamma(t)$, which implies that $|u(t,x)|\geq 1$ almost everywhere in $\Gamma(t)$. Since $u\in K$, also $|u(t,x)|\leq 1$ almost everywhere in $\Gamma(t)$, and hence $|u(t)|=1$ almost everywhere. As a consequence, $\nabla_\Gamma u=0$ almost everywhere (see \cite[Section 7.4]{GilTru98}).

We now have three cases to consider. In the cases where $u=1$ a.e. or $u=-1$ a.e., then we have, respectively,
\begin{align}
m_{u_0}(t) = \dfrac{1}{|\Gamma(t)|} \left|\int_{\Gamma_0} u_0 \right| = \dfrac{1}{|\Gamma(t)|} \left| \int_{\Gamma(t)} u(t) \right| = 1 
\end{align}
which contradicts $u_0\in \mathcal{I}_0$. In the case when the sets
\begin{align}
U_1(t) = \{x\in\Gamma(t) \colon u(t,x) = 1 \}, \quad U_2(t) = \{x\in \Gamma(t) \colon u(t,x)=-1\}
\end{align}
both have positive measure, then we have, using the generalized Poincar\'{e} inequality with both sets $U_1$ and $U_2$ (see e.g. \cite[Theorem 12.23]{Leo09}) and the fact that $\nabla_\Gamma u = 0$ a.e.,
\begin{align}
\int_{\Gamma(t)} |u-(u)_{U_1}|^2 \leq 0 \quad \text{ and } \quad \int_{\Gamma(t)} |u-(u)_{U_2}|^2 \leq 0, 
\end{align}
from where $u(t) \equiv (u)_{U_1}> 0$ and $u(t) \equiv (u)_{U_2} < 0$, which is also a contradiction. Hence $U(t)$ cannot be empty, proving the result.
\end{proof}

\begin{lemma}
The solution pair $(u,w)$ is unique.
\end{lemma}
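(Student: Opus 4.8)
The plan is to mirror the stability argument of Proposition \ref{prop:smoothuniqueness}, the only genuinely new ingredients being how one replaces the monotonicity of the smooth nonlinearity by the variational inequality, and an extra step to recover uniqueness of the chemical potential. Suppose $(u_1,w_1)$ and $(u_2,w_2)$ both solve \eqref{eq:prob2} with $u_1(0)=u_2(0)=u_0$, and set $\xi^u=u_1-u_2$, $\xi^w=w_1-w_2$. Since the integral of each solution is preserved and the initial data agree, $\xi^u(t)$ has zero mean over $\Gamma(t)$ for every $t$, so its inverse Laplacian $\mathcal{G}\xi^u$ is well defined; subtracting the first (genuine) equations and testing with $\mathcal{G}\xi^u$ reproduces verbatim the identity \eqref{eq:eq3}.

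To obtain the analogue of \eqref{eq:eq4} I would use the standard cross-testing device for obstacle problems. For almost every $t$ both $u_1(t)$ and $u_2(t)$ satisfy the constraint, so one may insert $\eta=u_2$ into the variational inequality for $(u_1,w_1)$ and $\eta=u_1$ into that for $(u_2,w_2)$. Adding the two inequalities, the cross terms collapse and give
\begin{align*}
-\|\tgrad\xi^u\|_{L^2}^2 + \|\xi^u\|_{L^2}^2 \geq -m(\xi^w,\xi^u),
\end{align*}
that is, $\|\tgrad\xi^u\|_{L^2}^2 \leq \|\xi^u\|_{L^2}^2 + m(\xi^w,\xi^u)$, which is precisely the estimate \eqref{eq:eq4} (here the constant $1$ plays the role of the Lipschitz constant of $F_2'$, while the inclusion $\partial I$ supplies the monotonicity that convexity of $F_1$ provided in the smooth case). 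Adding this to \eqref{eq:eq3} cancels $m(\xi^w,\xi^u)$, and estimating the remaining terms by Young's and Poincar\'{e}'s inequalities exactly as before yields \eqref{eq:uniqend}. Gronwall's inequality together with $\xi^u(0)=0$ then forces $\xi^u\equiv 0$, i.e. $u_1=u_2=:u$.

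It remains to show $w_1=w_2$, and this is where the double obstacle case differs substantively from the smooth one: because the second relation is now a set-valued inclusion, the chemical potential is a priori determined only up to a spatial constant. Subtracting the first equations gives $a_S(\xi^w,\eta)=0$ for all $\eta\in L^2_{H^1}$ and a.e. $t$; testing with $\eta=\xi^w$ shows $\tgrad\xi^w=0$, so $\xi^w(t)$ equals a constant $c(t)$ on the connected surface $\Gamma(t)$. To pin down this constant I would exploit the inactive set $U(t)=\{|u(t)|<1\}$: there the obstacle is not felt, so admissible two-sided variations $\eta=u\pm\eps\phi$ with $\phi$ supported in $U(t)$ turn the variational inequality for each $w_i$ into the equality $-\Delta_\Gamma u - u = w_i$, whence $\xi^w=0$ on $U(t)$. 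By Lemma \ref{lem:nonempty} the set $U(t)$ is a nonempty open subset of $\Gamma(t)$, hence of positive measure, so the spatial constant $c(t)$ must vanish for a.e. $t$, giving $w_1=w_2$.

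The routine part is the $\|\cdot\|_{-1}$ energy estimate, which transfers directly from Proposition \ref{prop:smoothuniqueness} once \eqref{eq:eq4} is in hand. The main obstacle is the last paragraph: justifying that the constraint is genuinely inactive on $U(t)$ and that the variations $u\pm\eps\phi$ are admissible requires care with the $H^1$-sense definition of $U(t)$ and with the nonuniform separation of $u$ from $\pm 1$, and it is precisely here that the nonemptiness guaranteed by Lemma \ref{lem:nonempty} is indispensable.
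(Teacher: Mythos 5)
Your proposal is correct and follows essentially the same route as the paper: the $\|\cdot\|_{-1}$ Gronwall argument with the cross-tested variational inequalities ($\eta=u_2$ in the inequality for $(u_1,w_1)$ and vice versa) reproduces the paper's estimates \eqref{eq:eqC}--\eqref{eq:eqD}, and your treatment of the chemical potential — $\tgrad\xi^w=0$ from the subtracted first equations, then two-sided variations $u\pm\tau\varphi$ with $\varphi$ compactly supported in the inactive set $U(t)$, whose nonemptiness is supplied by Lemma \ref{lem:nonempty} — is exactly the paper's argument. The admissibility point you flag at the end is handled in the paper precisely as you suggest, by taking $\tau>0$ small enough (using that $|u|<1$ in the $H^1$ sense on the compact support of $\varphi$) so that $u\pm\tau\varphi\in K$.
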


\begin{proof}
Suppose $(u^1, w^1)$ and $(u^2, w^2)$ are two solution pairs and let us denote $\xi^u=u^1-u^2$, $\xi^w=w^1-w^2$. Exactly as in the proof of Proposition \ref{prop:smoothuniqueness}, we obtain
\begin{align}
m_\ast(\md\xi^u, \eta) + g(\xi^u, \eta) + a_N(\xi^u, \eta) + a_S(\xi^w, \eta) = 0,\quad \forall \eta\in L^2_{H^1}, \label{eq:eqA}
\end{align}
from where we deduce
\begin{align}
\begin{split}
\label{eq:eqC}
\dfrac{d}{dt} \|\xi^u\|^2_{-1} + a_N(\xi^u, \mathcal{G}\xi^u) + m(\xi^w, \xi^u) = m(\xi^u, \partial^\bullet\mathcal{G}\xi^u).
\end{split}
\end{align}

Now, using the variational inequalities for both pairs we directly get
\begin{align}\label{eq:eqD}
\|\nabla_\Gamma \xi^u \|\L^2\leq m(\xi^w, \xi^u)+ \|\xi^u\|\L^2,
\end{align}
and combining \eqref{eq:eqC} and \eqref{eq:eqD} yields, as in Proposition \ref{prop:smoothuniqueness}, 
\begin{align}
\dfrac{d}{dt}\|\xi^u\|_{-1}^2 + \dfrac{1}{C_1} \| \tgrad \xi^u\|^2\L \leq C_2 \|\xi^u\|_{-1}^2.
\end{align}
An application of Gronwall's inequality gives uniqueness for $u$.

We now prove uniqueness for $w$. Note that from \eqref{eq:eqA} we conclude that that $w$ is unique up to a constant. Since $u\in L^\infty_{H^1}$, for almost every $t\in [0, T]$ we have $u(t)\in H^1(\Gamma(t))$. Fix one such $t$ and define
\begin{align}
U(t) = \{ x\in \Gamma(t) \colon |u(t,x)|<1 \text{ in the sense of } H^1 \}.
\end{align}
We proved in Lemma \ref{lem:nonempty} that $U(t)$ is a non-empty open set. Choose $\varphi\in C_0^\infty(U(t))$ and $\tau>0$ sufficiently small so that $\eta_{\pm}:=u\pm\tau\varphi\in K$. Testing the second equation with both $\eta_+$ and $\eta_-$ gives, for almost all $t$, the equalities
\begin{align}
a_S(u,\varphi) &= m(u+w^1, \varphi) \quad \text{and} \quad a_S(u,\varphi) = m(u+w^2, \varphi),
\end{align} 
 and subtracting these equations gives $m(\xi^w, \varphi) = 0$. But $\xi^w$ is constant and $\varphi$ is arbitrary, so it must be $\xi^w = 0$, i.e. $w^1=w^2$.
\end{proof}

We collect the results of this section in the next theorem. 

\begin{theorem}\label{thm:wellposeddoubleobs}
Let $u_0\in \mathcal{I}_0$. There exists a unique pair $(u, w)$ such that 
\begin{align}
u\in H^1_{H^{-1}}\cap L^\infty_{H^1}\cap K \quad \text{ and } \quad w\in L^2_{H^1}
\end{align}
satisfying, for almost every $t\in [0,T]$,
\begin{align}
\begin{split}
m_\ast (\md u, \eta) + g(u, \eta) + a_N(u, \eta) + a_S(w,\eta) = 0, \quad &\forall \eta\in L^2_{H^1}, \\
a_S(u, \eta - u) - m(u, \eta-u) \geq m(w,\eta-u), \quad &\forall \eta\in K,
\end{split}
\end{align}
and $u(0)=u_0$ almost everywhere in $\Gamma_0$. The solution $u$ satisfies the additional regularity 
\begin{align}
u\in C^0_{L^2} \cap L^\infty_{L^p}, \quad \text{ for all } p\in [1,+\infty).
\end{align}

Furthermore, if $u_0, v_0\in \mathcal{I}_0$ satisfy $(u_0)_{\Gamma_0}=(v_0)_{\Gamma_0}$, and $u, v$ are the solutions of the system with $u(0)=u_0$, $v(0)=v_0$, then there exist constants $C, K>0$ independent of $t$ such that, for almost all $t\in [0,T]$,
\begin{align}
\|u(t)-v(t)\|^2_{-1} \leq Ce^{Kt} \|u_0-v_0\|^2_{{-1}}.
\end{align}
\end{theorem}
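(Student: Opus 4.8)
The plan is to assemble the regularization-and-limit scheme of this section into the single statement. For existence, I would start from the approximating problem \eqref{eq:prob2eps}: for each $\delta\in(0,1)$ the regularized potential $F_\delta^{\mathrm{obs}}$ satisfies (A1)--(A2) and has quadratic growth, so Theorem \ref{thm:wellposedsmooth} (with Remark \ref{rem:extraassump}) yields a unique pair $(u_\delta,w_\delta)$. The crucial feature is that the a priori bounds of Proposition \ref{prop:apriori3} are uniform in $\delta$, depending on the data only through the constant $\alpha<1$ coming from the admissibility condition $m_{u_0}<1$; this is precisely where the restriction $u_0\in\mathcal{I}_0$ enters and why these estimates, unlike those of Section \ref{sec:smooth}, survive the passage $\delta\to 0$.

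Next I would extract the weak and weak-$*$ limits in \eqref{eq:conv3} along a subsequence and invoke the Aubin--Lions embedding of Proposition \ref{prop:inclusions}(c) to upgrade to strong $L^2_{L^2}$ convergence and pointwise a.e.\ convergence of $u_\delta$. The constraint $u\in K$ follows by testing the second equation of \eqref{eq:prob2eps} with $\beta_\delta(u_\delta)$ and using $\beta_\delta'\geq 0$ to force $\beta_\delta(u_\delta)\to 0$ in $L^2_{L^2}$, hence $\beta(u)=0$ a.e. Passing to the limit in the first (equality) equation is routine by linearity and a.e.\ convergence, and simultaneously delivers the initial condition $u(0)=u_0$ exactly as in Proposition \ref{prop:existence}. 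The additional regularity $u\in C^0_{L^2}\cap L^\infty_{L^p}$ is then immediate from the embedding $H^1_{H^{-1}}\hookrightarrow C^0_{L^2}$ and the Sobolev embeddings of Lemma \ref{lem:assumptionsgive}(d).

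For uniqueness and stability I would subtract the equations for two solutions, set $\xi^u=u^1-u^2$ and $\xi^w=w^1-w^2$, and test the difference of the first equations with the inverse Laplacian $\mathcal{G}\xi^u$, which is well defined since the conserved mean of $\xi^u$ vanishes. Combining the resulting identity for $\tfrac{d}{dt}\|\xi^u\|_{-1}^2$ with the estimate $\|\tgrad\xi^u\|\L^2\leq m(\xi^w,\xi^u)+\|\xi^u\|\L^2$, obtained by adding the two variational inequalities tested against each other, absorbs the coupling term $m(\xi^w,\xi^u)$ and produces a differential inequality of the form $\tfrac{d}{dt}\|\xi^u\|_{-1}^2+c\,\|\tgrad\xi^u\|\L^2\leq C\|\xi^u\|_{-1}^2$; Gronwall then gives the stated bound and, when $u_0=v_0$, uniqueness of $u$. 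Uniqueness of $w$ is more delicate, since the first equation determines $w$ only up to an additive constant: to fix it I would use that the noncoincidence set $U(t)$ is nonempty (Lemma \ref{lem:nonempty}), test the inequality with $\eta_\pm=u\pm\tau\varphi$ for $\varphi\in C_0^\infty(U(t))$ to recover the equality $a_S(u,\varphi)=m(u+w,\varphi)$ there, and conclude $\xi^w\equiv 0$.

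I expect the main obstacle to be the passage to the limit in the variational inequality. Unlike the equality, the inequality survives only because $a_S$ is convex in its diagonal argument: one preserves the sign by writing $a_S(u_\delta,\eta-u_\delta)=a_S(u_\delta,\eta)-\|\tgrad u_\delta\|\L^2$ and applying weak lower semicontinuity, $\|\tgrad u\|\L\leq\liminf_\delta\|\tgrad u_\delta\|\L$, to control the $\limsup$ correctly, while the monotonicity of $\beta_\delta$ supplies the nonnegativity of the penalty term that is discarded in the limit.
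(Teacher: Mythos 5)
Your proposal is correct and follows essentially the same route as the paper: regularization via \eqref{eq:prob2eps}, the $\delta$-uniform estimates of Proposition \ref{prop:apriori3} (where $m_{u_0}\leq\alpha<1$ enters), Aubin--Lions compactness, the constraint $u\in K$ via $\beta_\delta(u_\delta)$, passage to the limit in the variational inequality by monotonicity of $\beta_\delta$ and weak lower semicontinuity of $\|\tgrad\cdot\|_{L^2}^2$, and uniqueness through the inverse Laplacian together with the noncoincidence-set argument of Lemma \ref{lem:nonempty} to pin down the additive constant in $w$. All the key steps, including the subtle ones you flag (sign preservation in the $\limsup$ and the $w$-normalization), match the paper's own proof.
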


\subsubsection{Extra regularity}

As in the previous sections, we conclude by establishing extra regularity for the solution $u$. 

\begin{theorem}\label{thm:reg_obs}
Let $(u,w)$ be the solution pair given by Theorem \ref{thm:wellposeddoubleobs}. Then we have the regularity $u\in L^2_{H^2}$.
\end{theorem}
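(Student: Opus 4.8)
The plan is to mirror the argument of Theorem~\ref{thm:regularity}(i), but the main obstacle is that the second equation in \eqref{eq:prob2} is only a variational inequality --- with $F'$ replaced by the subdifferential $\partial I$ --- so there is no pointwise identity of the form $-\Delta_\Gamma u = w - F'(u)$ in $L^2_{L^2}$ to which elliptic regularity could be applied directly. The way around this is to establish a uniform-in-$\delta$ bound in $L^2_{H^2}$ for the solutions $(u_\delta, w_\delta)$ of the penalised problem \eqref{eq:prob2eps}, which does admit such an identity, and then pass to the limit by weak compactness.

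First I would record the one a priori estimate for the approximation that is missing from Proposition~\ref{prop:apriori3}, namely that $\varphi_\delta(u_\delta)$ is uniformly bounded in $L^2_{L^2}$, exactly as in Proposition~\ref{prop:apriori22}(d). Testing the second equation of \eqref{eq:prob2eps} with $\eta = \varphi_\delta(u_\delta)$ gives
$$\|\varphi_\delta(u_\delta)\|_{L^2}^2 = m(w_\delta, \varphi_\delta(u_\delta)) - m(\varphi_\delta'(u_\delta)\tgrad u_\delta, \tgrad u_\delta) - m(u_\delta, \varphi_\delta(u_\delta)),$$
and since $\varphi_\delta' \geq 0$ (by convexity of $F_\delta^\mathrm{obs}$) and $r\varphi_\delta(r) \geq 0$, both noted in the proof of Proposition~\ref{prop:apriori3}, the last two terms are nonpositive, so a single application of Young's inequality yields $\|\varphi_\delta(u_\delta)\|_{L^2}^2 \leq C_1 \|w_\delta\|_{L^2}^2 + C_2$. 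Integrating in time and invoking the uniform bound for $w_\delta$ in $L^2_{L^2}$ from Proposition~\ref{prop:apriori3}(b) gives the claim.

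Next, the weak form of the second equation of \eqref{eq:prob2eps} says precisely that, for a.e.\ $t$, $u_\delta(t)$ is a weak solution of
$$-\Delta_{\Gamma(t)} u_\delta(t) = w_\delta(t) + u_\delta(t) - \varphi_\delta(u_\delta(t)) \in L^2(\Gamma(t)),$$
whose right-hand side is, by the previous step together with Proposition~\ref{prop:apriori3}(b), uniformly bounded in $L^2_{L^2}$. Elliptic regularity (see \cite[Lemma 3.2]{DziEll13-a}), exactly as in \eqref{eq:elliptic}, then gives $u_\delta(t) \in H^2(\Gamma(t))$ with
$$\|u_\delta(t)\|_{H^2(\Gamma(t))} \leq \|\Delta_{\Gamma(t)} u_\delta(t)\|_{L^2(\Gamma(t))} + C\|u_\delta(t)\|_{H^1(\Gamma(t))},$$
where $C$ may be chosen independent of both $t$ and $\delta$. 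Integrating over $[0,T]$ and using the uniform $L^\infty_{H^1}$ bound for $u_\delta$ produces a uniform bound for $u_\delta$ in $L^2_{H^2}$. Finally I would pass to the limit: this bound yields, up to a subsequence, $u_\delta \rightharpoonup \bar u$ in $L^2_{H^2}$, while \eqref{eq:conv3} already gives $u_\delta \to u$ in $L^2_{L^2}$, so uniqueness of limits forces $\bar u = u$ and hence $u \in L^2_{H^2}$. The only delicate point beyond bookkeeping is ensuring the elliptic constant is genuinely uniform in $t$, and therefore in $\delta$, which follows from the $C^2$-regularity of the surfaces just as in the smooth case.
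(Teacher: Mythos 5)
Your proof is correct in substance, but it takes a genuinely different route from the paper's. The paper works one level deeper, at the Galerkin approximation \eqref{eq:gaprob2eps}: it rewrites the penalty as $\frac{1}{\delta}\beta_\delta(u^M_\delta)$, tests the second equation with $\eta=-\Delta_\Gamma u_\delta^M$, and uses $\beta_\delta'\geq 0$ to discard the singular term $\frac{1}{\delta}\,m\big(\beta_\delta'(u_\delta^M)\tgrad u_\delta^M,\tgrad u_\delta^M\big)\geq 0$, which yields $\|\Delta_\Gamma u_\delta^M\|^2_{L^2}\leq C_1\|w_\delta^M\|^2_{H^1}+C_2\|u_\delta^M\|^2_{H^1}$ and hence a uniform $L^2_{H^2}$ bound directly, without ever controlling $\varphi_\delta(u_\delta)$. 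You instead work at the penalised level \eqref{eq:prob2eps}, first transporting Proposition \ref{prop:apriori22}(d) to the obstacle approximation to obtain a uniform $L^2_{L^2}$ bound on $\varphi_\delta(u_\delta)$, and then reading the second equation as a pointwise elliptic equation $-\Delta_{\Gamma(t)}u_\delta(t)=w_\delta(t)+u_\delta(t)-\varphi_\delta(u_\delta(t))$ with uniformly $L^2$-bounded right-hand side. Both arguments are sound; yours buys the extra uniform estimate on the penalty term (the approximate Lagrange multiplier for the obstacle constraint) and sidesteps the question of whether $-\Delta_\Gamma u_\delta^M$ is an admissible test function in the finite-dimensional spaces $V_M(t)$ --- a point the paper's Galerkin-level computation glosses over --- while the paper's version avoids needing any bound on $\varphi_\delta(u_\delta)$ at all.

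One detail needs fixing: the sign in your tested identity. Testing the second equation of \eqref{eq:prob2eps} with $\eta=\varphi_\delta(u_\delta)$ gives
\begin{align}
\|\varphi_\delta(u_\delta)\|_{L^2}^2 = m(w_\delta,\varphi_\delta(u_\delta)) + m(u_\delta,\varphi_\delta(u_\delta)) - m\big(\varphi_\delta'(u_\delta)\tgrad u_\delta,\tgrad u_\delta\big),
\end{align}
so $m(u_\delta,\varphi_\delta(u_\delta))$ enters with a \emph{plus} sign; since $r\varphi_\delta(r)\geq 0$ this term is nonnegative, not nonpositive, and cannot simply be dropped as you propose. (The paper's displayed identity in the proof of Proposition \ref{prop:apriori22}(d) carries the same sign slip, which you have likely inherited.) The repair is immediate: estimate $m(u_\delta,\varphi_\delta(u_\delta)) \leq \frac{1}{4}\|\varphi_\delta(u_\delta)\|^2_{L^2} + \|u_\delta\|^2_{L^2}$ by Young's inequality and invoke the uniform $L^\infty_{H^1}$ bound of Proposition \ref{prop:apriori3}(b), after which $\|\varphi_\delta(u_\delta)\|^2_{L^2}\leq C_1\|w_\delta\|^2_{L^2}+C_2$ still follows and the remainder of your argument --- elliptic regularity with a constant independent of $t$ and $\delta$, the uniform $L^2_{H^2}$ bound, and identification of the weak $L^2_{H^2}$ limit with $u$ via the strong convergence $u_\delta\to u$ in $L^2_{L^2}$ --- goes through unchanged.
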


\begin{proof}
Let us consider the solution $(u_\delta^M, w_\delta^M)$ of the Galerkin approximation \eqref{eq:gaprob2eps}. The second equation can be rewritten as 
\begin{align}
a_S(u^M_\delta, \eta) + \dfrac{1}{\delta} \, m(\beta_\delta(u^M_\delta), \eta) - m(u^M_\delta, \eta) - m(w^M_\delta, \eta)&=0, \quad \forall \eta\in L^2_{V_M},
\end{align} 
where $\beta_\delta$ is as in \eqref{eq:betadefinition}. Testing the above with $\eta=-\Delta_\Gamma u_\delta^M$ leads to 
\begin{align}
\|\Delta_\Gamma u^M_\delta\|_{L^2}^2 + \dfrac{1}{\delta}\, m\big(\beta_\delta'(u^M_\delta) \tgrad u_\delta^M, \tgrad u_\delta^M)\big) &= a_S (w^M_\delta, u^M_\delta) + \|\tgrad u_\delta^M\|^2_{L^2} \\
&\leq C_1 \|w_\delta^M\|^2_{H^1} + C_2 \|u_\delta^M\|^2_{H^1}
\end{align}
and since $\beta'_\delta\geq 0$ this implies that
\begin{align}
\|\Delta_\Gamma u_\delta^M\|^2_{L^2} \leq C_1 \|w_\delta^M\|^2_{H^1} + C_2 \|u_\delta^M\|^2_{H^1}.
\end{align}
The uniform bounds for $u_\delta^M$ in $L^\infty_{H^1}$ and for $w_\delta^M$ in $L^2_{H^1}$ yield 
\begin{align}
\|\Delta_\Gamma u_\delta^M\|_{L^2_{L^2}} \leq C, \quad \text{ for some } C>0,
\end{align}
and elliptic regularity theory then imply that $u_\delta^M$ is uniformly bounded in $L^2_{H^2}$. It follows that $u\in L^2_{H^2}$, as desired.
\end{proof}

Lemma \ref{lem:assumptionsgive}d) also implies $u(t)\in C^\alpha(\Gamma(t))$, for a.a. $t\in [0,T]$ and all $\alpha\in (0,1)$.

To deduce higher regularity properties of $u$ from the second equation, we need regularity results for the solution of the obstacle problem 
\begin{align*}
-\Delta_\Gamma u - u \geq w, \quad -1\leq u\leq 1.
\end{align*}
Even in the classical obstacle problem with a smooth right hand side, one obtains at most $H^2$-regularity for the solution (which corresponds to our result in Theorem \ref{thm:reg_obs}), and extra smoothness on the set where the solution does not coincide with the obstacle. As for the logarithmic potential, we leave such analysis for future work. 

\subsection{Relating the two singular models}

Recalling the definitions for the logarithmic free energy
\begin{align}\label{eq:logpotential2}
F_\theta(r) = \dfrac{\theta}{2\theta_c} \left((1+r)\log(1+r) + (1-r)\log(1-r)\right) + \dfrac{1-r^2}{2},
\end{align}
and the double obstacle potential
\begin{align}\label{eq:obspotential2}
F_o(r) = 
\begin{cases}
\frac{1}{2}(1-r^2), &\text{ if } |r|\leq 1 \\
+\infty, &\text{ if } |r|>1
\end{cases}, 
\end{align}
we can formally see \eqref{eq:logpotential2} as the limit of \eqref{eq:obspotential2} as the temperature $\theta\to 0$, and we refer to the double obstacle problem \eqref{eq:prob2} as the \textit{deep quench limit} of the logarithmic problem \eqref{eq:prob3}. It is then natural to ask whether solutions $(u^\theta, w^\theta)$, where we now explicitly write the dependence on $\theta$, to \eqref{eq:prob3} converge, as $\theta\to 0$, to the unique solution of \eqref{eq:prob2}. Our aim in this short section is to prove that this is indeed the case.

We start by noticing that, from the estimates in Section \ref{sec:log}, it follows that $u^\theta$, $w^\theta$ and $\md u^\theta$ are uniformly bounded in $L^\infty_{H^1}$, $L^2_{H^1}$ and $L^2_{H^{-1}}$, respectively, from where we obtain functions $u, w\in L^2_{H^1}$ with $\md u\in L^2_{H^{-1}}$ and we have convergences, as $\theta\to 0$,  
\begin{align}
u^\theta\overset{*}{\rightharpoonup}u \, \text{ in } \, L^\infty_{H^1}, \quad w^\theta\rightharpoonup w \, \text{ in } \, L^2_{H^1}, \quad \md u^\theta\rightharpoonup \md u \, \text{ in } \, L^2_{H^{-1}}.
\end{align}
By the Aubin-Lions Lemma in Proposition \ref{prop:inclusions}(c) we also obtain the strong convergence $u^\theta \to u$ in $L^2_{L^2}$. 

\begin{theorem}
The limit pair $(u,w)$ is the unique solution of the double obstacle problem \eqref{eq:prob2}.
\end{theorem}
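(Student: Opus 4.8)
The plan is to show that the limit pair $(u,w)$ fulfils every requirement in the definition of a weak solution of \eqref{eq:prob2}, and then to invoke the uniqueness statement of Theorem \ref{thm:wellposeddoubleobs} to identify it as \emph{the} solution. The uniform bounds recorded above already give $u\in H^1_{H^{-1}}\cap L^\infty_{H^1}$ and $w\in L^2_{H^1}$, so what remains is to check that $u\in K$, that the linear equation and the initial condition survive the limit, and — the genuinely new point — that the variational inequality is preserved as $\theta\to 0$.

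First I would dispose of the routine parts. Since every $u^\theta$ satisfies $|u^\theta|<1$ a.e. and, along the subsequence, $u^\theta(t)\to u(t)$ pointwise a.e. in $\Gamma(t)$ for a.e.\ $t$ (a consequence of the strong $L^2_{L^2}$ convergence), we immediately obtain $|u|\le 1$ a.e., that is $u\in K$. The linear equation
\[
m_\ast(\md u,\eta)+g(u,\eta)+a_N(u,\eta)+a_S(w,\eta)=0,\qquad \eta\in L^2_{H^1},
\]
together with the initial condition $u(0)=u_0$, then follows exactly as in Proposition \ref{prop:existence} (with $\theta\to 0$ replacing $M\to\infty$), using $\md u^\theta\rightharpoonup\md u$ in $L^2_{H^{-1}}$ and $w^\theta\rightharpoonup w$ in $L^2_{H^1}$, and noting that all the $u^\theta$ share the common datum $u_0$.

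The hard part will be the variational inequality, and the crux is to control the singular nonlinearity uniformly in $\theta$. Fix $\eta\in K$ and $\xi\in C^0([0,T])$ with $\xi\ge 0$, and test the second equation of \eqref{eq:prob3} with $\eta-u^\theta$ (note $\varphi(u^\theta)\in L^2_{L^2}$, so the pairing is legitimate). Because $F^\mathrm{log}$ is convex with $\varphi=(F^\mathrm{log})'$, the subgradient inequality $\varphi(u^\theta)(\eta-u^\theta)\le F^\mathrm{log}(\eta)-F^\mathrm{log}(u^\theta)$ holds pointwise; integrating and using $F^\mathrm{log}\ge 0$ on $[-1,1]$ together with the uniform bound $m(F^\mathrm{log}(\eta),1)\le C|\Gamma(t)|\le C$ (valid since $|\eta|\le 1$) yields
\[
\frac{\theta}{2}\,m(\varphi(u^\theta),\eta-u^\theta)\le \frac{\theta}{2}\,\big(m(F^\mathrm{log}(\eta),1)-m(F^\mathrm{log}(u^\theta),1)\big)\le C\theta,
\]
so that, after testing and multiplying by $\xi$,
\[
\int_0^T \xi\,\big(a_S(u^\theta,\eta-u^\theta)-m(u^\theta,\eta-u^\theta)-m(w^\theta,\eta-u^\theta)\big)=-\frac{\theta}{2}\int_0^T \xi\,m(\varphi(u^\theta),\eta-u^\theta)\ge -C\theta\int_0^T\xi.
\]

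To conclude I would pass to the limit term by term, as in the double obstacle passage to the limit. The products $\int_0^T\xi\,m(u^\theta,\eta-u^\theta)$ and $\int_0^T\xi\,m(w^\theta,\eta-u^\theta)$ converge to their natural limits by the strong $L^2_{L^2}$ convergence of $u^\theta$ and the weak $L^2_{L^2}$ convergence of $w^\theta$ paired against the strongly convergent $\xi(\eta-u^\theta)$. For the quadratic contribution I would write $a_S(u^\theta,\eta-u^\theta)=a_S(u^\theta,\eta)-\|\tgrad u^\theta\|_{L^2}^2$ and exploit the weak lower semicontinuity of $v\mapsto\int_0^T\xi\,\|\tgrad v\|_{L^2}^2$ to get $\limsup_{\theta\to 0}\int_0^T\xi\,a_S(u^\theta,\eta-u^\theta)\le\int_0^T\xi\,a_S(u,\eta-u)$. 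Taking $\limsup_{\theta\to 0}$ in the displayed inequality, whose right-hand side tends to $0$, gives
\[
\int_0^T \xi\,\big(a_S(u,\eta-u)-m(u,\eta-u)-m(w,\eta-u)\big)\ge 0,
\]
and since $\xi\ge 0$ is arbitrary the pointwise inequality $a_S(u,\eta-u)-m(u,\eta-u)\ge m(w,\eta-u)$ holds for a.e.\ $t$ and all $\eta\in K$. Thus $(u,w)$ is a weak solution of \eqref{eq:prob2}; by the uniqueness in Theorem \ref{thm:wellposeddoubleobs} it is the unique solution, and in particular the convergence holds for the whole family and not merely a subsequence.
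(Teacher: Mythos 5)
Your proof is correct, but your treatment of the singular term takes a genuinely different route from the paper's. The paper handles the nonlinearity by \emph{monotonicity}: it introduces the interior truncation $\eta_\alpha=(1-\alpha)\eta$, writes $m(\varphi_\theta(u^\theta),u^\theta-\eta_\alpha)\geq m(\varphi_\theta(\eta_\alpha),u^\theta-\eta_\alpha)$, and uses the pointwise bound $|\varphi_\theta(\eta_\alpha)|\leq C_\alpha\theta$ --- available only because $|\eta_\alpha|\leq 1-\alpha$ keeps $\eta_\alpha$ away from the singularities of $\varphi$ --- then passes $\theta\to 0$ first and sends $\alpha\to 0$ afterwards to recover an arbitrary $\eta\in K$. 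You instead use \emph{convexity}: the subgradient inequality $\varphi(u^\theta)(\eta-u^\theta)\leq F^{\mathrm{log}}(\eta)-F^{\mathrm{log}}(u^\theta)$ applied directly with $\eta\in K$, which is legitimate because $F^{\mathrm{log}}$, unlike its derivative $\varphi$, is continuous, nonnegative and bounded on the closed interval $[-1,1]$ (with $F^{\mathrm{log}}(\pm 1)=2\log 2$ under the convention $0\log 0=0$), so that $\frac{\theta}{2}m(\varphi(u^\theta),\eta-u^\theta)\leq C\theta$ holds with no truncation. This buys you a one-parameter limit --- the auxiliary $\alpha$ and the final $\alpha\to 0$ step disappear --- at the price of exploiting a feature specific to the logarithmic potential, namely finiteness of the primitive at the pure phases; the paper's monotonicity argument is slightly more robust, since it would survive a singular potential whose primitive blows up at $\pm 1$, where testing the subgradient inequality with $\eta$ itself would be meaningless. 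The remaining ingredients are identical in both proofs: $u\in K$ from $|u^\theta|<1$ plus pointwise a.e.\ convergence, the linear equation and initial condition as in Proposition \ref{prop:existence}, strong $L^2_{L^2}$ convergence for the $m(u^\theta,\cdot)$ pairing, weak-strong pairing for $m(w^\theta,\cdot)$, weak lower semicontinuity of $v\mapsto\int_0^T\xi\,\|\tgrad v\|^2_{L^2}$ for the Dirichlet term, and arbitrariness of $\xi\geq 0$ to localise in time. Your closing remark that uniqueness (Theorem \ref{thm:wellposeddoubleobs}) upgrades subsequential convergence to convergence of the whole family $(u^\theta,w^\theta)$ is a correct observation that the paper's proof leaves implicit.
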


\begin{proof}
It follows immediately that $(u,w)$ satisfies, for all $\eta\in L^2_{H^1}$, 
\begin{align}
m_*(\md u, \eta) + g(u, \eta) + a_N(u, \eta) + a_S(w,\eta) = 0,
\end{align}
and combining $u^\theta\to u$ pointwise a.e. with $|u^\theta|<1$ we also obtain $u\in K$. 

To show that $(u,w)$ also satisfies the variational inequality, we consider $\eta\in K$, a small parameter $\alpha\in (0,1)$ and define $\eta_\alpha=(1-\alpha)\eta$, so that $|\eta|\leq 1-\alpha<1$ almost everywhere. Fix also $\xi\in C^0([0,T])$ such that $\xi\geq 0$. Testing the second equation with $\eta=\xi\left(\eta_\alpha-u^\theta\right)$ and integrating over $[0,T]$ leads to
\begin{align}
\int_0^T\xi \int_{\Gamma(t)} & \tgrad u^\theta\cdot\tgrad  (\eta_\alpha-u^\theta) -\int_0^T \xi \int_{\Gamma(t)} u^\theta (\eta_\alpha-u^\theta) - \int_0^T \xi\int_{\Gamma(t)} w^\theta (\eta_\alpha-u^\theta) \\
&= \int_0^T \xi \int_{\Gamma(t)} \varphi_\theta(u^\theta)(u^\theta - \eta_\alpha) \\
&= \int_0^T \xi \int_{\Gamma(t)} \left(\varphi_\theta(u^\theta)-\varphi_\theta(\eta_\alpha)\right) \left(u^\theta-\eta_\alpha\right) + \int_0^T \xi \int_{\Gamma(t)} \varphi_\theta(\eta_\alpha) \left(u^\theta-\eta_\alpha\right) \\
&\geq \int_0^T \xi \int_{\Gamma(t)} \varphi_\theta(\eta_\alpha) (u^\theta-\eta_\alpha),
\end{align}
due to monotonicity of $\varphi_\theta$. 

We now pass to the limit $\theta\to 0$. For the integral on the right hand side, we use $|\varphi_\theta (\eta_\alpha)|\leq C_\alpha \theta$, so that 
\begin{align}
\left|\int_0^T \xi \int_{\Gamma(t)} \varphi_\theta(\eta_\alpha) (u^\theta-\eta_\alpha) \right| \leq  C_{\alpha, \xi} \, \theta \int_0^T \int_{\Gamma(t)} \left| u^\theta-\eta_\alpha\right| \to 0 \quad \text{ as } \theta\to 0,
\end{align}
since $u^\theta\to u$ strongly in $L^2_{L^2}$. As for the terms on the left hand side, we have directly from the convergence results for $u^\theta$ and $w^\theta$ that 
\begin{align}
\int_0^T \xi \int_{\Gamma(t)} u^\theta (\eta_\alpha - u^\theta) \to \int_0^T \xi \int_{\Gamma(t)} u (\eta_\alpha - u)
\end{align}
and
\begin{align}
\int_0^T \xi \int_{\Gamma(t)} w^\theta (\eta_\alpha - u^\theta) \to \int_0^T \xi \int_{\Gamma(t)} w (\eta_\alpha - u),
\end{align}
and from weak lower semicontinuity of the norm we also have 
\begin{align}
\int_0^T \xi \int_{\Gamma(t)} \tgrad u\cdot \tgrad (\eta_\alpha - u) \geq \liminf_{\theta\to 0} \int_0^T \xi \int_{\Gamma(t)} \tgrad u^\theta\cdot\tgrad (\eta_\alpha-u^\theta).
\end{align}
Combining the three displayed identities above leads to 
\begin{align}
\int_0^T \xi \int_{\Gamma(t)} \tgrad u\cdot \tgrad (\eta_\alpha - u)  - \int_0^T \xi \int_{\Gamma(t)} u (\eta_\alpha - u) \geq \int_0^T \xi \int_{\Gamma(t)} w (\eta_\alpha - u),
\end{align}
and $\alpha\to 0$ yields the variational inequality 
\begin{align}
\int_0^T \xi \int_{\Gamma(t)} \tgrad u\cdot \tgrad (\eta - u)  - \int_0^T \xi \int_{\Gamma(t)} u (\eta - u) \geq \int_0^T \xi \int_{\Gamma(t)} w (\eta - u),
\end{align}
Since $\xi\geq 0$ is arbitrary we obtain, for almost all $t\in [0,T]$, the desired variational inequality 
\begin{align}
\int_{\Gamma(t)} \tgrad u\cdot \tgrad (\eta - u)  -\int_{\Gamma(t)} u (\eta - u) \geq \int_{\Gamma(t)} w (\eta - u),
\end{align}
finishing the proof. 
\end{proof}

\section{Global well-posedness for a related model}\label{sec:alternative}

The aim of this section is to analyse an alternative derivation of the Cahn-Hilliard equation on an evolving surface presented in the recent article \cite{ZimTosLan19} (we refer the reader to Sections 4.1, 4.2, 5.2, and Appendix A therein). For this alternative problem, we establish, using the same techniques as above, global well-posedness results which we can now prove, even for the singular potentials, without any additional assumptions on the evolution of the surfaces or the initial data. For simplicity, and since this does not change any of the results, we assume that there are no tangential velocities in the model (i.e. we take $\mathbf{V}_\tau = \mathbf{V}_a = 0$). 

\subsection{Well-posedness}

As explained in the Introduction, we are interested in finding a pair $(c,w)$ satisfying, for all $\eta\in L^2_{H^1}$, 
\begin{align}\label{eq:newsystem}\tag{CH$_\rho$}
\begin{split}
\left \langle \rho \md c, \eta\right\rangle_{H^{-1}, \, H^1} + \int_{\Gamma(t)} \rho \tgrad w \cdot \tgrad \eta  &= 0, \\
 \int_{\Gamma(t)} \tgrad c \cdot \tgrad \eta  +  \int_{\Gamma(t)} \rho F'(c) \eta   &= \int_{\Gamma(t)} \rho w \eta ,
 \end{split}
\end{align}
where the weight function $\rho$ is determined by the differential equation
\begin{align}\label{eq:rho}
\md \rho + \rho \tgrad\cdot\mathbf{V} = 0.
\end{align}
A detailed derivation of the model above can be found in \cite{ZimTosLan19}, but the main idea is similar to that of Section \ref{sec:model}. The weight term $\rho$ represents the total density of the system, and \eqref{eq:rho} is simply conservation of total mass. The system of equations above is obtained by considering a conservation law now for the quantity 
\begin{align}
\int_{\Gamma(t)} \rho c ,
\end{align}
which is, as a consequence, preserved along the system (simply test the first equation with $\eta=1$). The chemical potential can be seen as the functional derivative of the following altered Cahn-Hilliard free energy functional
\begin{align}\label{eq:newenergy}
\mathrm{E}_{\mathrm{CH}}^\rho(c) := \int_{\Gamma(t)} \dfrac{|\tgrad c|^2}{2} + \rho  F(c) .
\end{align}

Before we establish the a priori bounds for \eqref{eq:newsystem}, we note that \eqref{eq:rho} gives an explicit formula for the weight function $\rho$. Indeed, by evaluating the equation along the flow, we obtain, for all $p\in \Gamma_0$ and $t\in [0,T]$, 
\begin{align}
\dfrac{d}{dt} \rho\left(t, \Phi(t,p)\right) + \rho\left(t, \Phi(t,p)\right) \tgrad \cdot \mathbf{V}\left(t, \Phi(t,p)\right) = 0,
\end{align}
from where
\begin{align}\label{eq:formularho}
\rho\left(t, \Phi(t,p)\right) = \exp \left\{- \int_0^t\tgrad \cdot \mathbf{V}\left(s, \Phi(s,p)\right) \right\}.
\end{align}
In terms of elements $x\in \Gamma(t)$ at time $t\in [0,T]$, the above reads as
\begin{align}
\rho\left(t, x\right) = \exp \left\{- \int_0^t\tgrad \cdot \mathbf{V}\left(s, x\right) \right\}.
\end{align}
In particular, there exists $C_\rho>1$ such that
\begin{align}
0 < \dfrac{1}{C_\rho} \leq \rho \leq C_\rho \quad \text{ and } \quad |\tgrad \rho|\leq C_\rho.
\end{align}
It is also worthwhile comparing \eqref{eq:formularho} with \eqref{eq:formulaJ}, which shows that
\begin{align}\label{eq:rhoisJ}
\rho\left(t, \Phi(t,p)\right) = (J_t^0(p))^{-1}, \, \forall p\in\Gamma_0
\end{align}
and therefore the change of variables formulae in \eqref{eq:changeofvariables} imply that
\begin{align}
\int_{\Gamma(t)} c(x) = \int_{\Gamma_0} \dfrac{\tilde c(p)}{\rho\left(t, \Phi(t,p)\right)} , \quad
\int_{\Gamma_0} \tilde c(p)  = \int_{\Gamma(t)} c(x) \rho(t, x).
\end{align}

This makes the model particularly interesting for the cases of a logarithmic and a double obstacle potential. Recall that we started both Sections \ref{sec:log}, \ref{sec:obs} by showing that a condition relating the initial data and the areas of the surfaces was necessary for well-posedness of the systems, and this was a consequence of the fact that the integral of the solution was preserved along the evolution. This is intuitively clear: preservation of the integral together with a decrease in the areas of the surfaces would force an increase on the magnitude of the solutions, which is precluded by the condition $|u|\leq 1$ in either case. This simple argument no longer holds true for the problem \eqref{eq:newsystem}. Due to \eqref{eq:rhoisJ}, preservation of the integral of $\rho c$ can be compatible with restrictions on $|c|$, since the term $\rho$ can now account for local stretching or compressing of the surfaces. 

We now show that indeed the presence of the weight term allows for global well-posedness results for the system \eqref{eq:newsystem} for the three different types of nonlinearities we considered in the previous sections.

\subsection{A priori estimates}

In this section we obtain some a priori estimates for solutions of the problem above. The calculations below are just formal, but can be made rigorous by arguing as in the previous sections (e.g reasoning by approximation). So let us denote by $(c,w)$ a solution pair to \eqref{eq:newsystem}. By differentiating the energy \eqref{eq:newenergy} along this solution pair, we have
\begin{align}
\dfrac{d}{dt} \mathrm E_{\mathrm{CH}}^\rho(c) &= \int_{\Gamma(t)} \tgrad c \cdot \tgrad \md c + \rho F'(c) \md c \, d\Gamma  + \int_{\Gamma(t)} B(\mathbf{V}) |\tgrad c|^2 \\
&= \int_{\Gamma(t)} \rho w \md c  + \int_{\Gamma(t)}  B(\mathbf{V}) |\tgrad c|^2 \,  \\
&= -\int_{\Gamma(t)} \rho |\tgrad w|^2  +  \int_{\Gamma(t)} B(\mathbf{V}) |\tgrad c|^2,
\end{align}
and thus
\begin{align}
\dfrac{d}{dt} \mathrm{E}_{\mathrm{CH}}^\rho (c) + \int_{\Gamma(t)} \rho|\tgrad w|^2  =  \int_{\Gamma(t)} B(\mathbf{V}) |\tgrad c|^2 \leq C_{\mathbf{V}} \int_{\Gamma(t)} |\tgrad c|^2  \leq C\, \mathrm{E}_\mathrm{CH}^\rho(c).
\end{align}
Recalling that $\rho$ is strictly positive, an application of Gronwall's inequality immediately gives the energy estimate
\begin{align}\label{eq:newenergyestimate}
\esssup_{t\in [0,T]} \mathrm{E}_{\mathrm{CH}}^\rho (c) + \int_0^T \int_{\Gamma(t)} |\tgrad w|^2  \leq \mathrm{E}_{\mathrm{CH}}^\rho (c_0) + C,
\end{align}
Asssuming $\mathrm{E}_{\mathrm{CH}}^\rho (c_0)$ to be finite, this implies in particular
\begin{align}
\tgrad c \in L^\infty_{L^2} \quad \text{ and } \quad \tgrad w \in L^2_{L^2}
\end{align}

We make use of \eqref{eq:newenergyestimate} to estimate the remaining quantities. Observe that, using the first equation with $\eta=\rho^{-1}$,
\begin{align}
\dfrac{d}{dt}\int_{\Gamma(t)} c  &= \int_{\Gamma(t)} \md c  + \int_{\Gamma(t)} c \tgrad\cdot\mathbf{V}  \\
&= -\int_{\Gamma(t)} \rho \tgrad w \cdot \tgrad \left(\dfrac{1}{\rho}\right)  - \int_{\Gamma(t)} \tgrad c \cdot \mathbf{V},
\end{align}
so that
\begin{align}
\int_{\Gamma(t)} c = \int_{\Gamma_0} c_0 - \int_0^t \int_{\Gamma(s)} \rho \tgrad w \cdot \tgrad \left(\dfrac{1}{\rho}\right) - \int_0^t \int_{\Gamma(t)} \tgrad c \cdot \mathbf{V} 
\end{align}
and thus 
\begin{align}
\left| \int_{\Gamma(t)} c\right| \leq \left| \int_{\Gamma_0} c_0 \right | + CT \leq C'.
\end{align}
Combining the above, \eqref{eq:newenergyestimate}, and Poincar\'{e}'s inequality then implies 
\begin{align}\label{eq:newu}
c\in L^\infty_{H^1}.
\end{align}
Using the Sobolev embedding $H^1(\Gamma(t))\hookrightarrow L^p(\Gamma(t))$, for all $t\in [0,T]$ and $p\in [1,+\infty)$, this in particular implies
\begin{align}
c\in L^\infty_{L^p}, \quad \text{ for all } p\in [1, +\infty).
\end{align}
We can also easily estimate the material derivative of $c$ by noticing that, for any $\eta\in L^2_{H^1}$, 
\begin{align}
\langle \md c, \eta\rangle_{L^2_{H^{-1}}, \, L^2_{H^1}} &= \int_0^T \int_{\Gamma(t)} \md c(t) \eta(t) \, \\
&= -\int_0^T \int_{\Gamma(t)} \rho \nabla_{\Gamma(t)} w(t) \cdot \nabla_{\Gamma(t)} \left(\dfrac{\eta(t)}{\rho(t)}\right)  \\
&= -\int_0^T \int_{\Gamma(t)} \tgrad w \cdot \tgrad \eta + \int_0^T \int_{\Gamma(t)} \dfrac{\eta}{\rho} \tgrad w \cdot \tgrad \rho \\
&\leq C_1 \|\tgrad \eta\|_{L^2_{L^2}} + C_2 \|\eta\|_{L^2_{L^2}} \\
&\leq C_3 \|\eta\|_{L^2_{H^1}},
\end{align}
which gives
\begin{align}\label{eq:newmd}
\md c\in L^2_{H^{-1}}.
\end{align}
Combining \eqref{eq:newu} and \eqref{eq:newmd} also gives the extra regularity
\begin{align}
c\in C^0_{L^2}.
\end{align}

The estimate for $w$ in $L^2_{L^2}$ is slightly more involved. 

In the case that the nonlinearity $F$ satisfies Assumptions (A1)-(A2) as in Section \ref{sec:smooth}, we obtain from the second equation with $\eta=1/\rho$  
\begin{align}
\left| \int_{\Gamma(t)} w \right| &= \left| \int_{\Gamma(t)} \tgrad c \cdot \tgrad \left(\dfrac{1}{\rho}\right) + \int_{\Gamma(t)} F'(c) \right| \\
&\leq C_1 + C_2 \|c\|^q_{H^1} \\
&\leq C_3,
\end{align}
which combined with Poincar\'e's inequality and the energy estimate implies a bound for $w$ in $L^2_{H^1}$. 

Let us now focus in the case of a singular potential, where we assume $(c_0)_{\Gamma_0}\in (-1,1)$. As in Sections \ref{sec:log}, \ref{sec:obs} we obtain the a priori bounds for a regularised problem, where the potential $F$ is approximated by a regular $F_\delta$ with polynomial growth. Denoting $\varphi_\delta = F'_\delta$, the approximations are chosen so that, for all $r\in \mathbb R$,
\begin{align}\label{eq:asspotential}
|\varphi_\delta(r)| \leq r\varphi_\delta(r) + 1.
\end{align}

Our aim is to estimate the integral of $w$. We have
\begin{align}
 \left| \int_{\Gamma(t)} w \right| &\leq \left| \int_{\Gamma(t)} \tgrad c \cdot \tgrad \left(\dfrac{1}{\rho}\right) \right|+ \left| \int_{\Gamma(t)} \varphi_\delta(c) \right| \leq C + \int_{\Gamma(t)} |\varphi_\delta(c)|.
\end{align}
Using \eqref{eq:asspotential} it follows that
\begin{align}
\int_{\Gamma(t)} |\varphi_\delta(c)| 
&\leq \int_{\Gamma(t)} c\varphi_\delta(c)+1  \\
&\leq C_\rho \int_{\Gamma(t)} \rho (c \varphi_\delta(c)+1)  \\
&\leq  C_\rho \int_{\Gamma(t)} \rho c \varphi_\delta(c)  + C_\rho \int_{\Gamma(t)} \rho \\
&\leq C_1\int_{\Gamma(t)} \rho c \varphi_\delta(c) + C_2
\end{align}
and from the second equation tested with $\eta=c$ we also obtain 
\begin{align}\label{eq:third_est}
\left|  \int_{\Gamma(t)} \rho \varphi_\delta(c)c  \right| &= \left|\int_{\Gamma(t)} \rho c w - \int_{\Gamma(t)}  |\tgrad c|^2  \right| \nonumber \\
&\leq \left|\int_{\Gamma(t)} \rho c w \right| + \left|\int_{\Gamma(t)}  |\tgrad c|^2  \right| \nonumber \\
&\leq C + \left|\int_{\Gamma(t)} \rho c w \right|.
\end{align}
Combining the three previous estimates leads to 
\begin{align}\label{eq:w_integral}
 \left| \int_{\Gamma(t)} w \right| \leq C_1 + C_2 \left|\int_{\Gamma(t)} \rho c w \right|.
\end{align}

We now focus on estimating the integral of $\rho c w$. Our strategy will be to pullback the integrals to the reference domain $\Gamma_0$ and estimate the involved quantities there. This is helpful since we now have no control on the size of 
\begin{align}
(c(t))_{\Gamma(t)} = \dfrac{1}{|\Gamma(t)|} \int_{\Gamma(t)} c(t) 
\end{align}
but rather on its pullback
\begin{align}
(\tilde c(t))_{\Gamma_0} = \dfrac{1}{|\Gamma_0|} \int_{\Gamma_0} \tilde{c}(t) =  \dfrac{1}{|\Gamma_0|} \int_{\Gamma(t)} c(t) \rho(t)  = \dfrac{1}{|\Gamma_0|} \int_{\Gamma_0} c_0  = (c_0)_{\Gamma_0} \in (-1, 1).
\end{align}
We then write 
\begin{align}
 \int_{\Gamma(t)} \rho c w  &= \int_{\Gamma_0} \tilde c \tilde w \\
&= \int_{\Gamma_0} \tilde w \big(\tilde c - (\tilde c)_{\Gamma_0}\big) +  (\tilde c)_{\Gamma_0} \int_{\Gamma_0} \tilde w  \\
&= \int_{\Gamma_0} \big(\tilde w- (\tilde w)_{\Gamma_0}\big) \big(\tilde c - (\tilde c)_{\Gamma_0}\big) + (c_0)_{\Gamma_0}  \int_{\Gamma(t)} \rho w
\end{align}
so that
\begin{align}
\left| \int_{\Gamma(t)} \rho c w \right| &\leq C_1 \|\tgrad \tilde w\|_{L^2(\Gamma_0)} \|\tgrad \tilde c\|_{L^2(\Gamma_0))}+ | (c_0)_{\Gamma_0} | \, \left| \int_{\Gamma(t)} \rho w \right| \\
&\leq C_2\|\tgrad \tilde w\|_{L^2(\Gamma_0)} + |(c_0)_{\Gamma_0}| \, \left| \int_{\Gamma(t)} \rho w \right| \\
&\leq C_3 \|\tgrad w\|_{L^2(\Gamma(t))} + |(c_0)_{\Gamma_0}| \, \left| \int_{\Gamma(t)} \rho w \right|
 \end{align}
But we also have, testing the second equation with $\eta = 1$ and again by \eqref{eq:asspotential},
\begin{align}
\left|\int_{\Gamma(t)} \rho w \right| &= \left| \int_{\Gamma(t)} \rho \varphi_\delta(c)\right| \leq \int_{\Gamma(t)} \rho |\varphi_\delta(c)| \leq C_1 + \int_{\Gamma(t)} \rho \varphi_\delta(c) c  \leq C_2 + \left| \int_{\Gamma(t)} \rho c w \right|
\end{align}
where the last inequality is \eqref{eq:third_est}. Combining the two estimates above leads to 
\begin{align}
\left| \int_{\Gamma(t)} \rho c w\right| \leq C_1 + C_2\|\tgrad w\|_{L^2(\Gamma(t))} + |(c_0)_{\Gamma_0}| \, \left|  \int_{\Gamma(t)} \rho c w\right|,
\end{align}
from where
\begin{align}
\left|  \int_{\Gamma(t)} \rho c w\right| \leq \dfrac{C_1}{1-|(c_0)_{\Gamma_0}|} + \dfrac{C_2}{1-|(c_0)_{\Gamma_0}|} \|\tgrad w\|_{L^2(\Gamma(t))}.
\end{align}
This then gives, from \eqref{eq:w_integral},
\begin{align}
 \left| \int_{\Gamma(t)} w \right| \leq C_1 + C_2 \|\tgrad w\|_{L^2(\Gamma(t))}.
\end{align}
By Poincar\' e's inequality, 
\begin{align*}
\int_0^T \|w\|_{H^1(\Gamma(t))}^2 &\leq C_1 \int_0^T \|\tgrad w\|_{L^2(\Gamma(t))}^2 + C_2 \int_0^T \left| \int_{\Gamma(t)} w \right|^2 \\
&\leq C_3 + C_4 \int_0^T \|\tgrad w\|^2_{L^2(\Gamma(t))}\\ 
&\leq C_5
\end{align*}
which implies the desired uniform bound for $w$ in $L^2_{H^1}$.

The a priori bounds being established, we can then argue as in Sections \ref{sec:smooth}, \ref{sec:log}, \ref{sec:obs} to obtain well-posedness results for \eqref{eq:newsystem}. We denote
\begin{align}
(\cdot)_{\rho, \Gamma_0} := (\rho \, \cdot)_{\Gamma_0} \quad \text{ and } \quad \|\cdot\|^2_{\rho, -1} := \|\rho \, \cdot\|^2_{-1}.
\end{align}

We then have the analogue of Theorem \ref{thm:wellposedsmooth}.

\begin{theorem}
Let $c_0\in H^1(\Gamma_0)$, $F\colon\R\to\R$ be a smooth potential satisfying (A1)-(A2), and let the density function $\rho$ satisfy
\begin{align}
\md \rho + \rho \tgrad\cdot \mathbf{V} = 0.
\end{align}
Then, there exists a unique pair $(c, w)$ with $$c\in H^1_{H^{-1}}\cap L^\infty_{H^1} \quad \text{ and } \quad w\in L^2_{H^1}$$ satisfying the system \eqref{eq:newsystem}, and the initial condition $c(0)=c_0$ almost everywhere in $\Gamma_0$. The solution $c$ satisfies the additional regularity 
\begin{align}
c\in C^0_{L^2} \cap L^2_{H^2} \cap L^\infty_{L^p}, \quad \text{ for all } p\in [1,+\infty).
\end{align}

Furthermore, if $c_{0,1}, c_{0,2}\in H^1(\Gamma_0)$ satisfy $(c_{0,1})_{\rho, \Gamma_0}=(c_{0,2})_{\rho ,\Gamma_0}$, and $c_1, c_2$ are the solutions of the system with $c_1(0)=c_{1,0}$, $c_2(0)=c_{0,2}$, then there exists a constant $C>0$ independent of $t$ such that, for almost all $t\in [0,T]$,
\begin{align}
\|c_1(t)-c_2(t)\|_{\rho, -1} \leq e^{Ct} \|c_{0,1}-c_{0,2}\|_{{\rho, -1}}.
\end{align}
\end{theorem}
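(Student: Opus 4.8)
The plan is to follow the structure of Section~\ref{sec:smooth}, since the a priori bounds derived above already supply everything needed to run the evolving-space Galerkin method; the genuinely new ingredient is the stability estimate in the weighted norm. First I would set up the Galerkin scheme in the spaces $L^2_{V_M}$ exactly as in Subsection~\ref{subsec:galerkin}, replacing the bilinear forms by their $\rho$-weighted analogues dictated by \eqref{eq:newsystem}. Because $\rho$ is smooth and satisfies $0<C_\rho^{-1}\le\rho\le C_\rho$, the weighted mass matrix $\big(m(\rho\chi_i^t,\chi_j^t)\big)_{ij}$ is symmetric positive definite, so solving the first equation for $\underline w^M$ and substituting into the second yields a semilinear first-order ODE system, whose local solvability follows from the theory of ODEs as in Subsection~\ref{subsec:galerkin}. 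The a priori estimates derived above — the energy bound \eqref{eq:newenergyestimate}, the bounds \eqref{eq:newu}, \eqref{eq:newmd}, and $w\in L^2_{H^1}$ — are reproduced at the Galerkin level with constants independent of $M$, by the same manipulations (projecting the test functions $1$, $\rho^{-1}$, $c$ as in Section~\ref{sec:smooth}), which upgrades the local solutions to global ones.

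For the passage to the limit I would extract $c^M\overset{*}{\rightharpoonup}c$ in $L^\infty_{H^1}$, $\md c^M\rightharpoonup\md c$ in $L^2_{H^{-1}}$, and $w^M\rightharpoonup w$ in $L^2_{H^1}$. In contrast to Section~\ref{sec:smooth}, the uniform bound on $\md c^M$ is available directly here, so the Aubin--Lions lemma (Proposition~\ref{prop:inclusions}(c)) applies immediately and delivers $c^M\to c$ strongly in $L^2_{L^2}$ together with pointwise a.e.\ convergence — no appeal to Theorem~\ref{lem:temam_general} is needed. The linear terms pass to the limit by weak convergence and the boundedness of $\rho$ and $\tgrad\rho$; for the nonlinearity, the growth conditions (A1)--(A2) and the $L^\infty_{L^p}$ bound give $\|F'(c^M)\|_{L^2_{L^2}}\le C$, so pointwise convergence and the generalised dominated convergence theorem \ref{lem:weak_dct} yield $\rho F'(c^M)\rightharpoonup\rho F'(c)$ in $L^2_{L^2}$. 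The initial condition $c(0)=c_0$ and the regularity $c\in C^0_{L^2}$ follow as in Proposition~\ref{prop:existence} via $H^1_{H^{-1}}\hookrightarrow C^0_{L^2}$, while rewriting the second equation as $-\Delta_\Gamma c=\rho w-\rho F'(c)\in L^2_{L^2}$ and invoking elliptic regularity gives $c\in L^2_{H^2}$; Lemma~\ref{lem:assumptionsgive}(d) then yields $c\in L^\infty_{L^p}$.

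The main obstacle is the stability/uniqueness estimate, because of the weight in the $-1$-norm. Writing $\xi^c=c_1-c_2$ and $\xi^w=w_1-w_2$, the conserved quantity is now $\int_{\Gamma(t)}\rho c=\int_{\Gamma_0}\tilde c$, so the hypothesis $(c_{0,1})_{\rho,\Gamma_0}=(c_{0,2})_{\rho,\Gamma_0}$ forces $\int_{\Gamma(t)}\rho\,\xi^c=0$ for every $t$; hence $\rho\xi^c$ has zero mean and $\|\xi^c\|_{\rho,-1}^2=\|\rho\xi^c\|_{-1}^2$ is well defined (see Appendix~\ref{app:invlaplacian}). The key algebraic identity, coming from \eqref{eq:rho}, is $\rho\,\md\xi^c=\md(\rho\xi^c)+(\tgrad\cdot\mathbf{V})\,\rho\xi^c$. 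I would test the first difference equation with the weighted inverse Laplacian $\eta$ solving $-\tgrad\cdot(\rho\tgrad\eta)=\rho\xi^c$ (solvable precisely because $\int\rho\xi^c=0$), so that the flux term becomes $\int\rho\tgrad\xi^w\cdot\tgrad\eta=\int\rho\,\xi^w\,\xi^c$, matching exactly the coupling produced by testing the second difference equation with $\xi^c$. Monotonicity of $F_1'$ and Lipschitz continuity of $F_2'$ control the nonlinear contribution, while the time-derivative and transport terms — which now carry factors of $\md\rho$ and $\tgrad\rho$ — are absorbed using the uniform bounds $\rho\le C_\rho$ and $|\tgrad\rho|\le C_\rho$.

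Combining the two tested equations produces a differential inequality of the form
\begin{align}
\dfrac{d}{dt}\|\xi^c\|_{\rho,-1}^2 + \|\tgrad\xi^c\|\L^2 \le C\,\|\xi^c\|_{\rho,-1}^2,
\end{align}
and Gronwall's inequality yields the claimed estimate; the equivalence (uniform in $t$, since $C_\rho^{-1}\le\rho\le C_\rho$) between the weighted-inverse-Laplacian norm and $\|\rho\,\cdot\|_{-1}$ lets me state the conclusion in the advertised form, and taking $c_{0,1}=c_{0,2}$ gives uniqueness of $c$, from which the second equation forces $w_1=w_2$. I expect the careful bookkeeping of the $\md\rho$ and $\tgrad\rho$ commutator terms — and verifying that the weighted inverse Laplacian lies in $H^1_{H^{-1}}$ with the expected transport formula for $\tfrac{d}{dt}\|\rho\xi^c\|_{-1}^2$ — to be the most delicate part of the argument.
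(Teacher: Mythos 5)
Your overall architecture is the one the paper intends (its own proof is only a pointer back to Theorem \ref{thm:wellposedsmooth} plus the weighted inverse Laplacian), but there is one concrete gap: your claim that, in contrast to Section \ref{sec:smooth}, ``the uniform bound on $\md c^M$ is available directly here'' so that Aubin--Lions applies immediately at the Galerkin level. The estimate \eqref{eq:newmd} is derived for the \emph{continuous} problem, where one may test the first equation with arbitrary $\eta\in L^2_{H^1}$; the Galerkin solution only satisfies the equation against $\eta\in L^2_{V_M}$, so the duality pairing $\langle \md c^M,\eta\rangle$ for general $\eta$ cannot be evaluated from the equation. Replacing $\eta$ by $P_M^t\eta$ requires an $H^1$-stability bound for the projections that is uniform in $t$ and $M$, which Assumption $(\mathbf{A_P})$ does not provide: part (i) holds only at $t=0$, and part (ii) is an $L^2$-approximation estimate \eqref{eq:projestimate} whose threshold $\tilde M$ depends on $t$ and $\eta$. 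This is exactly the obstruction the paper records in the Remark preceding Proposition \ref{prop:existence}, and the weight $\rho$ --- bounded above and below --- does nothing to remove it. The repair is the paper's Section \ref{sec:smooth} machinery: prove equicontinuity of $t\mapsto m(c^M(t),\chi_j^t)$ from the Galerkin equation to get $c^M(t)\rightharpoonup c(t)$ in $L^2(\Gamma(t))$ for a.e.\ $t$, invoke Theorem \ref{lem:temam_general} for strong convergence in $L^2_{L^2}$, and only \emph{afterwards} construct $\md c\in L^2_{H^{-1}}$ by transferring the time derivative onto test functions as in Proposition \ref{prop:existence}; the bound \eqref{eq:newmd} is then recovered legitimately for the limit pair.

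The remainder of your argument is sound and matches the paper's prescription. In particular the uniqueness proof is carried out correctly and in more detail than the paper gives: the conservation of $\int_{\Gamma(t)}\rho\,\xi^c$, the commutator identity $\rho\,\md\xi^c=\md(\rho\xi^c)+(\tgrad\cdot\mathbf{V})\,\rho\xi^c$ coming from \eqref{eq:rho}, and the Gronwall closure are precisely what is needed. One small divergence worth noting: the paper defines its weighted inverse Laplacian $(\Delta_{\Gamma,\rho})^{-1}$ through the \emph{unweighted} Dirichlet form, $a_S(c,\eta)=m_\ast^\rho(f,\eta)$ with $m(c,1)=0$, whereas you solve $-\tgrad\cdot(\rho\tgrad\eta)=\rho\xi^c$. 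Both choices work: yours makes the flux term collapse exactly, $\int_{\Gamma(t)}\rho\tgrad\xi^w\cdot\tgrad\eta=\int_{\Gamma(t)}\rho\,\xi^w\xi^c$, matching the second equation tested with $\xi^c$, at the price of $\tgrad\rho$-commutators in the transport formula for the Green operator; the paper's choice shifts those commutators into the flux term instead. Since $C_\rho^{-1}\leq\rho\leq C_\rho$, the associated $-1$-norms are equivalent uniformly in $t$, so either route yields the stated estimate in the norm $\|\cdot\|_{\rho,-1}$.
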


The proof is essentially the same as that of Theorem \ref{thm:wellposedsmooth} apart from the uniqueness result, where one should now work with the weighted inverse Laplacian defined, for any $f\in H^{-1}(\Gamma(t))$ such that $m_\ast^\rho(f, 1)=0$, as the unique solution $c=(\Delta_{\Gamma, \rho})^{-1} f\in H^1(\Gamma(t))$ to the problem
\begin{align}
a_S (c, \eta) &= m_\ast^\rho(f, \eta) \\ 
m(c, 1) &= 0.
\end{align}
Under further assumptions on both $F$ and the evolution of the surfaces, we can also obtain extra regularity for $c$ as in Theorem \ref{thm:regularity}.

Similarly, we have a well-posedness result for the logarithmic potential model as in Theorem \ref{thm:wellposedlog}.

\begin{theorem}
Let $c_0\in H^1(\Gamma_0)$ satisfy $|c_0|\leq 1$, $(c_0)_{\Gamma_0}\in (-1,1)$, and $\mathrm{E}_{\mathrm{CH}}^\rho(c_0)<\infty$, $F\colon[-1,1]\to\R$ be the logarithmic potential \eqref{eq:logpotential}, and let the density function $\rho$ satisfy
\begin{align}
\md \rho + \rho \tgrad\cdot \mathbf{V} = 0.
\end{align}
Then, there exists a unique pair $(c, w)$ with $$u\in H^1_{H^{-1}}\cap L^\infty_{H^1} \quad \text{ and } \quad w\in L^2_{H^1}$$ satisfying, for almost all $t\in [0,T]$, $|c(t)|<1$ a.e. in $\Gamma(t)$, the system \eqref{eq:newsystem}, and the initial condition $c(0)=c_0$ almost everywhere in $\Gamma_0$. The solution $c$ satisfies the additional regularity 
\begin{align}
c\in C^0_{L^2} \cap L^2_{H^2} \cap L^\infty_{L^p}, \quad \text{ for all } p\in [1,+\infty).
\end{align}

Furthermore, if $c_{0,1}, c_{0,2}\in H^1(\Gamma_0)$ satisfy $(c_{0,1})_{\rho, \Gamma_0}=(c_{0,2})_{\rho ,\Gamma_0}$, and $c_1, c_2$ are the solutions of the system with $c_1(0)=c_{1,0}$, $c_2(0)=c_{0,2}$, then there exists a constant $C>0$ independent of $t$ such that, for almost all $t\in [0,T]$,
\begin{align}
\|c_1(t)-c_2(t)\|_{\rho, -1} \leq e^{Ct} \|c_{0,1}-c_{0,2}\|_{{\rho, -1}}.
\end{align}
\end{theorem}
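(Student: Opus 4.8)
The plan is to follow the regularisation scheme of Section~\ref{sec:log} essentially verbatim, replacing the unweighted estimates by their weighted counterparts already derived in Section~\ref{sec:alternative}. Concretely, I first approximate the logarithmic potential by the smooth potentials $F_\delta^{\mathrm{log}}$ of Section~\ref{sec:log}; since each $F_\delta^{\mathrm{log}}$ satisfies (A1)--(A2) with quadratic growth, the weighted version of Theorem~\ref{thm:wellposedsmooth} gives a unique solution pair $(c_\delta, w_\delta)$ of the regularised system with $\md c_\delta \in L^2_{H^{-1}}$. The crucial point is that the a priori bounds obtained for the singular case in this section are uniform in $\delta$ and depend only on $(c_0)_{\Gamma_0}\in(-1,1)$ and on $\mathrm{E}_{\mathrm{CH}}^\rho(c_0)<\infty$, with no condition relating $c_0$ to the evolution of the surfaces. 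In particular I obtain uniform bounds for $c_\delta$ in $L^\infty_{H^1}$, for $w_\delta$ in $L^2_{H^1}$, for $\md c_\delta$ in $L^2_{H^{-1}}$, and for $\varphi_\delta(c_\delta)$ in $L^2_{L^2}$, as in Proposition~\ref{prop:apriori22}.

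Second, I extract weak limits $c \in L^\infty_{H^1}\cap H^1_{H^{-1}}$ and $w\in L^2_{H^1}$ as $\delta\to 0$, and by the Aubin--Lions lemma in Proposition~\ref{prop:inclusions}(c) upgrade to strong convergence $c_\delta \to c$ in $L^2_{L^2}$ together with pointwise a.e.\ convergence along a subsequence. To recover the constraint $|c|<1$ a.e.\ I reprove the separation estimate of Lemma~\ref{lem:oninterval}: the energy bound yields $\int_{\Gamma(t)} \rho\, F_\delta^{\mathrm{log}}(c_\delta) \le C$, and since $\rho \ge 1/C_\rho > 0$ this gives $\int_{\Gamma(t)} F_\delta^{\mathrm{log}}(c_\delta)\le C_\rho C$, after which the computation of Lemma~\ref{lem:oninterval} applies unchanged and forces $|c|\le 1$ in the limit. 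The strict inequality then follows as in Lemmas~\ref{lem:aeconv} and~\ref{lem:zero}: testing the second equation with $c_\delta$ and using $\rho\ge 1/C_\rho$ bounds $\int_0^T\!\int_{\Gamma(t)}\varphi_\delta(c_\delta)c_\delta$ uniformly, and Fatou's lemma combined with the pointwise blow-up of $\varphi_\delta(c_\delta)c_\delta$ on $\{|c|=1\}$ shows this set has measure zero for a.e.\ $t$.

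Third, I pass to the limit in the equations. The linear terms converge by the weak convergences above together with boundedness of $\rho$ and $\tgrad\rho$; for the nonlinear term the uniform $L^2_{L^2}$ bound on $\varphi_\delta(c_\delta)$ and the a.e.\ convergence $\varphi_\delta(c_\delta)\to\varphi(c)$ give, via the generalised dominated convergence Theorem~\ref{thm:evolv_dct}, the weak convergence $\varphi_\delta(c_\delta)\rightharpoonup\varphi(c)$ in $L^2_{L^2}$, so that $(c,w)$ solves \eqref{eq:newsystem} with $c(0)=c_0$ recovered as in Proposition~\ref{prop:existence}. The extra regularity $c\in C^0_{L^2}\cap L^2_{H^2}\cap L^\infty_{L^p}$ follows from the embedding $H^1_{H^{-1}}\hookrightarrow C^0_{L^2}$, from elliptic regularity applied to $-\Delta_\Gamma c = \rho w - \rho F'(c)\in L^2_{L^2}$, and from the Sobolev embeddings of Lemma~\ref{lem:assumptionsgive}(d).

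The main obstacle is the uniqueness and stability estimate, where the weight genuinely changes the argument. Here I test the difference of the first equations with the weighted inverse Laplacian $(\Delta_{\Gamma,\rho})^{-1}\xi^c$ of $\xi^c = c_1 - c_2$, which is well defined because testing the first equation with $\eta\equiv 1$ shows that $\int_{\Gamma(t)}\rho\, c$ is preserved, so $m_\ast^\rho(\xi^c,1)=0$ whenever $(c_{0,1})_{\rho,\Gamma_0}=(c_{0,2})_{\rho,\Gamma_0}$. Differentiating $\|\xi^c\|_{\rho,-1}^2$ requires the weighted analogue of Proposition~\ref{prop:derivinnerprod}, in which the weight contributes an extra lower-order term controlled by the bound on $\tgrad\rho$; testing the second difference equation with $\xi^c$, using monotonicity of the convex part $\varphi$ and Lipschitz continuity of the quadratic part, then yields a differential inequality of the form $\tfrac{d}{dt}\|\xi^c\|_{\rho,-1}^2 + \|\tgrad\xi^c\|_{L^2}^2 \le C\|\xi^c\|_{\rho,-1}^2$ exactly as in Proposition~\ref{prop:smoothuniqueness}. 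Gronwall's inequality gives the stated stability estimate, and uniqueness follows by taking $c_{0,1}=c_{0,2}$. The delicate points are verifying that $(\Delta_{\Gamma,\rho})^{-1}$ enjoys the same duality and transport identities as the unweighted $\mathcal{G}$, and checking that all weight terms arising from $\md\rho = -\rho\tgrad\cdot\mathbf{V}$ can be absorbed into lower-order contributions.
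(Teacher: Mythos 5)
Your proposal is correct and follows essentially the same route as the paper: regularisation via $F_\delta^{\mathrm{log}}$, the weighted a priori estimates of Section~\ref{sec:alternative} (whose key point is exactly the one you identify, namely that the conserved quantity $(\tilde c)_{\Gamma_0}=(c_0)_{\Gamma_0}\in(-1,1)$ lets the $\int_{\Gamma(t)}\rho c w$ term be absorbed without any condition on the surface evolution), passage to the limit as in Section~\ref{sec:log} with the $\rho\geq 1/C_\rho$ observation making Lemmas~\ref{lem:oninterval}--\ref{lem:zero} carry over, and uniqueness via the weighted inverse Laplacian $(\Delta_{\Gamma,\rho})^{-1}$, which is precisely the one modification the paper singles out. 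The paper leaves the weighted transport and duality identities you flag as ``minor adaptations,'' so your more explicit accounting of the $\tgrad\rho$ and $\md\rho=-\rho\tgrad\cdot\mathbf{V}$ terms is, if anything, slightly more careful than the published argument.
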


Also in this case the proof carries over unchanged from that of Theorem \ref{thm:wellposedlog}, with the exception of the proof of uniqueness which should use, as explained before, the weighted inverse Laplacian $(\Delta_{\Gamma, \rho})^{-1}$. 

Finally, we state the result for the double obstacle potential which is analogous to Theorem \ref{thm:wellposeddoubleobs}. Reusing the notation
\begin{align}
K := \{\eta\in L^2_{H^1} \colon |\eta(t)| \leq 1 \text{ a.e. on } \Gamma(t) \text{ for a.e. } t\in [0, T]\},
\end{align}
the second equation becomes a variational inequality due to non-differentiability of the potential, and we consider the problem
\begin{align}\label{eq:newdoubleobs}
\begin{split}
\left \langle \rho \md c, \eta\right\rangle_{H^{-1}, \, H^1} + \int_{\Gamma(t)} \rho \tgrad w \cdot \tgrad \eta  = 0, \quad &\forall \eta \in L^2_{H^1},\\
 \int_{\Gamma(t)} \tgrad c \cdot \tgrad (\eta-c)  - \int_{\Gamma(t)} c(\eta - c) \geq \int_{\Gamma(t)} \rho w (\eta-c) , \quad &\forall \eta\in K.
 \end{split}
\end{align}

\begin{theorem}
Let $c_0\in H^1(\Gamma_0)$ satisfy $|c_0|\leq 1$ and $(c_0)_{\Gamma_0}\in (-1,1)$, and let the density function $\rho$ satisfy
\begin{align}
\md \rho + \rho \tgrad\cdot \mathbf{V} = 0.
\end{align} 
There exists a unique pair $(c, w)$ with $$c\in H^1_{H^{-1}}\cap K\cap L^\infty_{H^1} \quad \text{ and } \quad w\in L^2_{H^1}$$ satisfying \eqref{eq:newdoubleobs} and $c(0)=c_0$ almost everywhere in $\Gamma_0$. The solution $c$ satisfies the additional regularity 
\begin{align}
c\in C^0_{L^2} \cap L^2_{H^2} \cap L^\infty_{L^p}, \quad \text{ for all } p\in [1,+\infty).
\end{align}

Furthermore, if $c_{0,1}, c_{0,2}\in H^1(\Gamma_0)$ satisfy $(c_{0,1})_{\rho, \Gamma_0}=(c_{0,2})_{\rho ,\Gamma_0}$, and $c_1, c_2$ are the solutions of the system with $c_1(0)=c_{1,0}$, $c_2(0)=c_{0,2}$, then there exists a constant $C>0$ independent of $t$ such that, for almost all $t\in [0,T]$,
\begin{align}
\|c_1(t)-c_2(t)\|_{\rho, -1} \leq e^{Ct} \|c_{0,1}-c_{0,2}\|_{{\rho, -1}}.
\end{align}
\end{theorem}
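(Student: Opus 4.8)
The plan is to mirror the regularisation-and-limit strategy of Section \ref{sec:obs}, now carrying the weight $\rho$ through every estimate. First I would replace the double obstacle nonlinearity by the smooth approximation $F_\delta = F_\delta^{\mathrm{obs}} + (1-r^2)/2$ built from \eqref{eq:betadefinition}, which has quadratic growth and satisfies (A1)--(A2) together with the structural bound \eqref{eq:asspotential}. By the smooth-potential theorem for the weighted model (the first theorem of this section) there is a unique solution pair $(c_\delta, w_\delta)$ to the regularised version of \eqref{eq:newsystem}. The a priori estimates carried out above in this section — which were deliberately written so as to depend on the potential only through \eqref{eq:asspotential} and on $\rho$ only through the two-sided bound $C_\rho^{-1}\leq \rho \leq C_\rho$ — then furnish bounds for $c_\delta$ in $L^\infty_{H^1}$, for $w_\delta$ in $L^2_{H^1}$, and for $\md c_\delta$ in $L^2_{H^{-1}}$ that are uniform in $\delta$; these should be made rigorous on the Galerkin level exactly as before.

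From these uniform bounds I would extract $c\in L^\infty_{H^1}\cap H^1_{H^{-1}}$ and $w\in L^2_{H^1}$ with $c_\delta\rightharpoonup c$, $w_\delta\rightharpoonup w$, and $\md c_\delta\rightharpoonup\md c$ along a subsequence, and use the Aubin--Lions embedding of Proposition \ref{prop:inclusions}(c) to upgrade to strong convergence $c_\delta\to c$ in $L^2_{L^2}$ and pointwise a.e. convergence. The constraint $c\in K$ is obtained by the $\beta_\delta$-argument of Section \ref{sec:obs}: testing the second equation with $\beta_\delta(c_\delta)$ produces the nonnegative term $a_S(c_\delta,\beta_\delta(c_\delta))\geq 0$ together with $\delta^{-1}\!\int_{\Gamma(t)}\rho\,\beta_\delta(c_\delta)^2$, and since $\rho\geq C_\rho^{-1}$ the uniform bounds give $\int_0^T\|\beta_\delta(c_\delta)\|\L^2\leq C\delta$, whence $\beta(c)=0$ a.e. and $|c|\leq 1$. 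Passing to the limit in the linear first equation is immediate, and for the variational inequality I would test the second equation against $\eta-c_\delta$ for $\eta\in K$ (using $\beta_\delta(\eta)=0$), invoke monotonicity of $\varphi_\delta$, and let $\delta\to 0$ using weak lower semicontinuity of $\|\tgrad c_\delta\|\L$ exactly as in the deep quench argument of that section.

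For uniqueness of $c$ I would subtract two solutions, writing $\xi^c = c_1-c_2$, and test the difference of the first equations with the weighted inverse Laplacian $(\Delta_{\Gamma,\rho})^{-1}(\rho\,\xi^c)$ defined above, which is admissible because the preserved weighted mean forces $\int_{\Gamma(t)}\rho\,\xi^c = 0$; adding the two variational inequalities (each tested with the other solution) and applying Gronwall in the weighted norm $\|\cdot\|_{\rho,-1}$ yields the stability estimate, and uniqueness of $c$ follows. The extra regularity $c\in L^2_{H^2}$ comes from rewriting the second equation so that $-\Delta_\Gamma c\in L^2(\Gamma(t))$ and applying elliptic regularity as in Theorem \ref{thm:reg_obs}, while $c\in C^0_{L^2}$ and $c\in L^\infty_{L^p}$ follow from $H^1_{H^{-1}}\hookrightarrow C^0_{L^2}$ and the Sobolev embeddings of Lemma \ref{lem:assumptionsgive}(d).

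The main obstacle I anticipate is the recovery of uniqueness of the chemical potential $w$, which for the double obstacle is determined by the first equation only up to an additive constant. As in Lemma \ref{lem:nonempty} one must show the coincidence-free set $U(t)=\{x\in\Gamma(t)\colon|c(t,x)|<1\}$ is non-empty for a.e. $t$; the decisive point is that in the weighted model the quantity preserved in time is the pullback mean $(\tilde c)_{\Gamma_0}=(c_0)_{\Gamma_0}\in(-1,1)$ rather than $(c)_{\Gamma(t)}$, so a hypothetical $|c(t)|\equiv 1$ would force $(\tilde c)_{\Gamma_0}=\pm 1$ (or, if both pure phases occupy positive-measure sets, the relation $\tgrad c=0$ a.e. would yield a contradiction via the generalised Poincaré inequality), contradicting $(c_0)_{\Gamma_0}\in(-1,1)$. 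Once $U(t)\neq\emptyset$ is established, testing against $c\pm\tau\varphi$ with $\varphi\in C_0^\infty(U(t))$ pins the constant and gives $w_1=w_2$; this is precisely the step where the hypothesis $(c_0)_{\Gamma_0}\in(-1,1)$, and not merely $|c_0|\leq 1$, is indispensable.
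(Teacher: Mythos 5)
Your proposal is correct and follows precisely the route the paper intends: the paper's own ``proof'' of this theorem is the single remark that most arguments of Sections \ref{sec:smooth}--\ref{sec:obs} carry over with minor adaptations, using the a priori estimates of Section \ref{sec:alternative} (established under \eqref{eq:asspotential} and the two-sided bound on $\rho$) and the weighted inverse Laplacian $(\Delta_{\Gamma,\rho})^{-1}$ for uniqueness, all of which you reproduce. You in fact supply more detail than the paper does --- notably the weighted adaptation of Lemma \ref{lem:nonempty}, where you correctly identify that preservation of the pullback mean $(\tilde c)_{\Gamma_0}=(c_0)_{\Gamma_0}\in(-1,1)$ (rather than $(c)_{\Gamma(t)}$) is what rules out $|c(t)|\equiv 1$ and pins down the constant in $w$.
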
 

Again, most proofs carry over with minor adaptations.

\section{Examples}

\numberwithin{equation}{section}

In this section, we collect five simple examples to illustrate our main results.

\begin{example}\label{eg:example1}
The simplest example to consider is the case $\mathbf{V}\equiv 0$, which implies that $\Gamma(t)\equiv \Gamma_0$, for all $t\in [0,T]$, and our work provides a proof for well-posedness of the Cahn-Hilliard system on the stationary surface $\Gamma_0$ for initial data $u_0\in H^1(\Gamma_0)$ and satisfying, for the singular potentials, the condition $|(u_0)_{\Gamma_0}| = m_{u_0}(0) < 1$.
\end{example}

\begin{example}\label{eg:example2}
Consider a velocity field $\mathbf V$ satisfying $\tgrad \cdot \mathbf V = 0$, implying $|\Gamma(t)|=|\Gamma_0|$ for all $t\in [0,T]$. This assumption arises in models for inextensible membranes. In this case, our work proves well-posedness for the Cahn-Hilliard system an evolving surface $\{\Gamma(t)\}_{t\in [0,T]}$ for initial data $u_0\in H^1(\Gamma_0)$ and satisfying, for the singular potentials, the condition $|(u_0)_{\Gamma_0}| = m_{u_0}(0) < 1$. As for the model in Section \ref{sec:alternative}, in this case we obtain $\md \rho = 0$, and therefore (up to multiplication by a constant) the two models we presented are the same. This is also the setting considered in \cite{YusQuaOls20}, and our work completes their results by providing a proof of well-posedness for the continuous model.
\end{example}

\begin{example}\label{eg:example3}
Let the nonlinearity $F$ be the usual quartic potential
\begin{align}\label{quartic}
F(r) = \dfrac{(r^2-1)^2}{4} = \dfrac{r^4}{4} + \dfrac{1-2r^2}{4} =: F_1(r) + F_2(r), \quad r\in \R.
\end{align}
Then it is easy to check that assumptions (A1)-(A2) are satisfied with (e.g.) the parameters
\begin{align}
\alpha_1 = \alpha_3 = 1/4, \,\, \alpha_2=3, \,\, \alpha_4 = 1, \\
\beta_0 = \beta_1 = \beta_3 = \beta_4 = 0, \,\, \beta_2 = 1/4, \\
q=4,
\end{align}
and therefore Theorem \ref{thm:wellposedsmooth} implies that, given any initial data $u_0\in H^1(\Gamma_0)$, there exists a unique weak solution pair to the Cahn-Hilliard system with nonlinearity given by \eqref{quartic}. This recovers in particular the result in \cite{EllRan15} regarding well-posedness of the continuous problem. 
\end{example}

\begin{example}\label{eg:example4}
Let $T>0$ and suppose that the evolving surface $\{\Gamma(t)\}_{t\in [0,T]}$ is such that $\Gamma_0 = S^2(1)$ and, at some $t\in [0,T]$, $\Gamma(t) = S^2(1/2)$, where we denote by $S^2(r)$ the sphere of radius $r>0$ in $\R^3$. If we take $u_0\equiv 1/2\in H^1(\Gamma_0)$, then
\begin{align}
m_{u_0}(t) = \dfrac{1}{|\Gamma(t)|} \left|\int_{\Gamma_0} u_0\right| = \dfrac{1}{\pi} \dfrac{4\pi}{2} = 2 > 1,
\end{align}
and so Proposition \ref{prop:illposed1} (or Corollary \ref{cor:corillposed1}) implies that there is no (global in time) solution to the Cahn-Hilliard system with a logarithmic potential.
\end{example}

\begin{example}\label{eg:example5}
For a given $T>0$, suppose $\{\Gamma(t)\}_{t\in [0,T]}$ evolves in such a way that $|\Gamma(t)| \geq |\Gamma_0|$ for all $t\in [0, T]$. Then for any $u_0\in H^1(\Gamma_0)$ with finite initial energy and $|(u_0)_{\Gamma_0}|<1$, we have
\begin{align*}
m_{u_0}(t) = \dfrac{1}{|\Gamma(t)|} \left|\int_{\Gamma_0} u_0 \right| \leq \dfrac{1}{|\Gamma_0|} \left|\int_{\Gamma_0} u_0 \right| < 1,
\end{align*}
so that the Cahn-Hilliard systems with both the logarithmic and the double obstacle potentials with initial condition $u_0$ are well-posed. 
\end{example}

For some more concrete examples and illustrations we refer to \cite[Section 4]{YusQuaOls20}, where the authors present some numerical simulations and plots of the evolution of solutions to the Cahn-Hilliard system.

\section{Concluding remarks}

Our aim in this work has been to present a rigorous derivation of Cahn-Hilliard equations on an evolving surface, to establish well-posedness for the typical smooth, logarithmic and double obstacle potentials and to analyse the effect of the evolving nature of the domains in the solutions. This evolution for the surfaces is assumed to be given \textit{a priori}. 

We found that the system with regular potentials is well-posed for any choice of initial data $u_0\in H^1(\Gamma_0)$, and this has been proved by an evolving space Galerkin method. The class of nonlinearities considered includes the usual quartic potentials considered in the literature. As we have mentioned already, even though these do not produce physically meaningful solutions (as $u$ might leave the interval $[-1,1]$ even if $|u_0|\leq 1$), the quartic potentials are considered good approximations for the model describing a shallow quench of the system. 

The case of the singular potentials is more interesting, as it turns out that well-posedness of the systems relies on an interplay between the evolution of the surfaces, the initial data and the Cahn-Hilliard dynamics which force preservation of the integral of the solution. Here we identified a set of admissible initial conditions for which the system is well-posed, which essentially forces the mean value of the solution to remain in the interval $(-1,1)$. The proof for the logarithmic potential is achieved by regularisation of the nonlinearity close to the singularities, and for the double obstacle potential by use of a penalty method. We also showed that the double obstacle system can be obtained as the deep quench limit of the logarithmic problem. 

In Section \ref{sec:alternative}, we considered an alternative derivation for the model similar to what is proposed in recent work \cite{OlsXuYus21, YusQuaOls20, ZimTosLan19} which allows for general well-posedness statements in all cases without restricting the set of initial conditions. The reason for this is that while our first model preserves the integral of the order parameter $u$, which in the singular cases lies in $(-1,1)$, this alternative system conserves the integral of $\rho c$, where now $c\in (-1,1)$ and the weight function $\rho$ counterbalances the changes in the domains.

We finish with some comments on our results and some questions that remain to be addressed.

All of the work in this article was done for surfaces, i.e. in dimension $2$, as this is the most relevant dimension for applications. No changes are needed for curves in $\R^2$ and in higher dimensions only minor adaptations need to be made. For the regular potentials, some conditions are needed on the polynomial growth order; $q$ needs to be such that $|u|^q$ is integrable. For the singular potentials, since the regularisations are quadratic, no extra conditions are needed. Of course, the regularity results for all cases would also change, and this would also restrict the exponents $p\in [1,+\infty)$ for which $u\in L^\infty_{L^p}$.

Throughout Sections \ref{sec:log} and \ref{sec:obs} we have always assumed that the initial condition $u_0$ satisfies $(u_0)_{\Gamma_0}\in (-1,1)$, which excludes the cases $u_0\equiv \pm 1$. In a stationary domain, since preservation of integral implies $u\equiv 1$ for all times, there is no solution for the logarithmic problem, and the solution remains constant for the double obstacle problem. This is consistent with the physical interpretation of the model, and it would be interesting to establish the same result in our framework. This is another manifestation of the fact that the surfaces evolve with time -- preservation of the integral is not equivalent to preservation of the mean value.

Our result does not cover either the case in which $m_{u_0}$ never hits the value $1$ on a set of zero measure. As a concrete example, suppose there exists $t_*\in (0,T)$ such that $m_{u_0}(t_*)=1$ but $m_{u_0} < 1$ on $[0,t_*)\cup (t_*, T]$. We have well-posedness on $[0,t_*]$, and it is natural to ask whether the solution can be extended to $[0,T]$, effectively resolving a 'singularity' at $t_*$. This is an interesting question, which we aim to address in the future.

We conclude by mentioning some possible future research directions. 

It would be interesting to consider more general initial data, say $u_0\in L^2(\Gamma_0)$; for this case we expect well-posedness under the same type of assumptions and to observe instantaneous smoothing of the solutions. We have also to address higher regularity results for the logarithmic potential, which as we explained requires proving separation of the solutions from the pure phases, and for the double obstacle case which leads to the study of regularity for variational inequalities. Another natural question is that of long-term dynamics of the system. For instance, if $\Gamma(t)$ are defined for all $t\in [0,+\infty)$ and converge to some surface $\Gamma_\infty$ as $t\to\infty$, it is natural to try to identify the possible limits of the solution $u(t)$. In this article we have also only considered a constant mobility for the system, and it is an interesting and challenging problem to formulate and analyse the system with the phase-dependent mobility $M(u)=1-u^2$, which leads to the degenerate Cahn-Hilliard equation. Even in the classical setting, there are still many open questions on this problem, as well as for general degenerate fourth order parabolic PDEs. Another possible avenue is to drop the assumption that the evolution of the domains is given \textit{a priori}, and to couple the surface motion with the Cahn-Hilliard system on the surfaces, for instance by considering $(L^2, H^{-1})$-gradient flow for the Cahn-Hilliard energy $\mathrm E^{\mathrm{CH}} = \mathrm E^{\mathrm{CH}}(\Gamma, u)$. We leave all of these for future work.

\subsection*{Acknowledgements}

The work of CME was partially supported by the Royal Society via a Wolfson Research Merit Award. 

\appendix

\section{A generalised Gronwall Lemma}

In this appendix, we present a generalised Gronwall lemma.

\begin{lemma}[Generalised Gronwall Lemma]\label{lem:gengronwall}
Fix $T\in (0, \infty)$ and $t_\ast\in (0,T)$. For $M\in\N$, let $\alpha_M\colon [0,t_\ast] \to [0,+\infty)$ be a sequence of nonnegative differentiable functions. Suppose that there exists $\tilde{M}\in \N$ such that, for any $t\in [0,t_\ast]$ and $M\geq \tilde M$,
\begin{align}
\alpha_M'(t) &\leq C \left(C_0 + \alpha_M(t) + \eps \, \alpha_M^{q+1}(t)\right), \\
\alpha_M(0) &= \alpha_0 \geq 0,
\end{align}
where $C>0$, $C_0\geq 0$, $\eps>0$, and $q\in \N$ are independent of $M$ and $t_\ast$. If $\eps$ is small enough so that 
\begin{align}\label{eq:epsilon}
\eps \, e^{TCq}(\alpha_0+C_0)^q < 1,
\end{align}
it holds that, for $M\geq \tilde{M}$,
\begin{align}
\alpha_M(t) \leq \dfrac{(\alpha_0+C_0) \, e^{TC}}{\sqrt[q]{1 - \eps\, e^{TCq} (\alpha_0+C_0)^q}} - C_0.
\end{align}
%

\end{lemma}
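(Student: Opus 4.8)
The plan is to reduce the nonlinear (Bernoulli-type) differential inequality to a linear one by an explicit substitution, integrate it, and then invert, with the smallness hypothesis \eqref{eq:epsilon} entering precisely to keep a denominator positive. First I would absorb the constant $C_0$ by setting $\beta_M := \alpha_M + C_0$, so that $\beta_M(0) = \alpha_0 + C_0$ and $\beta_M' = \alpha_M'$. Since $C_0 \geq 0$ gives $\alpha_M \leq \beta_M$, and hence $\alpha_M^{q+1} \leq \beta_M^{q+1}$, the hypothesis yields the clean Bernoulli inequality
\[
\beta_M'(t) \leq C\beta_M(t) + C\eps\,\beta_M(t)^{q+1}, \qquad t \in [0, t_\ast].
\]

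Next, assuming $\beta_M > 0$ (automatic whenever $C_0 > 0$; the degenerate case $C_0 = \alpha_0 = 0$ forces $\beta_M \equiv 0$ by comparison and the bound is then trivial), I set $z := \beta_M^{-q}$. Since $z' = -q\beta_M^{-q-1}\beta_M'$ and the factor $-q\beta_M^{-q-1}$ is negative, the inequality flips and becomes linear,
\[
z'(t) \geq -qC\,z(t) - qC\eps .
\]
Multiplying by the integrating factor $e^{qCt}$ and integrating from $0$ to $t$ gives
\[
z(t) \geq e^{-qCt}z(0) - \eps\bigl(1 - e^{-qCt}\bigr), \qquad z(0) = (\alpha_0+C_0)^{-q}.
\]

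Finally I would use $t \leq T$ to bound the right-hand side below by the $t$-independent quantity $(\alpha_0+C_0)^{-q}e^{-qCT} - \eps$, factor it as
\[
e^{-qCT}(\alpha_0+C_0)^{-q}\bigl[\,1 - \eps\,e^{qCT}(\alpha_0+C_0)^q\,\bigr],
\]
and observe that this bracket is strictly positive exactly under \eqref{eq:epsilon}. Inverting the decreasing map $x \mapsto x^{-1/q}$ on $(0,\infty)$ then yields
\[
\beta_M(t) = z(t)^{-1/q} \leq \frac{(\alpha_0+C_0)\,e^{TC}}{\sqrt[q]{\,1 - \eps\,e^{TCq}(\alpha_0+C_0)^q\,}},
\]
and subtracting $C_0$ gives the claimed estimate, uniformly for $M \geq \tilde M$.

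The main obstacle is the positivity of the denominator: the whole argument reduces to checking that $z(t)$ stays strictly positive up to time $T$, and it is precisely here that \eqref{eq:epsilon} is needed and used. The only other point requiring care is the legitimacy of the substitution $z = \beta_M^{-q}$ when $C_0 = 0$, which I would dispatch either by a standard ODE comparison with the Bernoulli equation $\Psi' = C\Psi + C\eps\Psi^{q+1}$, $\Psi(0) = \alpha_0 + C_0$ (whose explicit solution coincides with the bound above and exists on $[0,T]$ thanks to \eqref{eq:epsilon}), or by the regularisation $\beta_M + \eta$ followed by $\eta \to 0^+$.
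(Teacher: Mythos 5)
Your proof is correct and is essentially the paper's argument in different clothing: the paper absorbs $C_0$ by the same shift $\tilde\alpha_M=\alpha_M+C_0$ and then integrates the exact derivative $\frac{1}{q}\frac{d}{dt}\log\bigl(\alpha_M^q/(1+\eps\,\alpha_M^q)\bigr)\leq C$, which under your substitution $z=\beta_M^{-q}$ is precisely the linear inequality $z'\geq -qC(z+\eps)$ that you integrate with the factor $e^{qCt}$, leading to the identical final bound. If anything, your explicit handling of the positivity of $\beta_M$ (via the regularisation $\beta_M+\eta$ with $\eta\to 0^+$, using the strictness of \eqref{eq:epsilon}) patches a point the paper's proof passes over silently, since dividing by $\alpha_M+\eps\,\alpha_M^{q+1}$ there also tacitly assumes $\alpha_M>0$.
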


\begin{proof}
Suppose first that $C_0=0$, then observe that 
\begin{align}
\dfrac{1}{q}\dfrac{d}{dt} \log \left(\dfrac{\alpha_M(t)^q}{1+\eps\,\alpha_M(t)^q}\right) = \dfrac{\alpha'_M(t)}{\alpha_M(t) + \eps\,\alpha_M(t)^{q+1}} \leq C,
\end{align}
and we can integrate the above over $[0,t]$, for any $t\in [0,t_\ast]$, to obtain
\begin{align}
\log \left(\dfrac{\alpha_M(t)^q}{1+\eps\,\alpha_M(t)^q}\right)  \leq \log \left(\dfrac{\alpha_0^q}{1+\eps\,\alpha_0^q}\right) + t_\ast Cq \leq \log \left(\dfrac{\alpha_0^q}{1+\eps\,\alpha_0^q}\right) + TCq
\end{align}
Now this implies that
\begin{align}
\dfrac{\alpha_M(t)^q}{1+\eps\,\alpha_M(t)^q} \leq e^{TCq} \, \dfrac{\alpha_0^q}{1+\eps\,\alpha_0^q} \leq e^{TCq} \, \alpha_0^q,
\end{align}
which we can solve for $\alpha_M(t)$ to obtain 
\begin{align}
\left( 1 - \eps\,e^{TCq} \alpha_0^q\right) \alpha_M(t)^q \leq \alpha_0^q \, e^{TCq}.
\end{align}
Using \eqref{eq:epsilon}, it follows that
\begin{align}
\alpha_M(t) \leq \dfrac{\alpha_0 \, e^{TC}}{\sqrt[q]{1 -  \, \eps\,e^{TCq}\alpha_0^q}} \leq \dfrac{\alpha_0 \, e^{TC}}{\sqrt[q]{1 - \, \eps\,e^{TCq} \alpha_0^q}}, \quad \text{ for } M\geq \widetilde{M},
\end{align}
as desired.

If $C_0\neq 0$, then define $\widetilde{\alpha}_M(t) := \alpha_M(t) + C_0$, so that
\begin{align}
\widetilde{\alpha}_M'(t) = \alpha_M'(t) \leq C \left(\widetilde{\alpha}_M(t) + \eps\,\alpha_M^{q+1}\right) \leq C \left(\widetilde{\alpha}_M(t) + \eps \, \widetilde{\alpha}_M^{q+1}\right),
\end{align}
and the previous case gives, for some $\widetilde{M}\in\N$,
\begin{align}
\widetilde{\alpha}_M(t) \leq \dfrac{\widetilde{\alpha}_0 \, e^{TC}}{\sqrt[q]{1 - \, \eps \, e^{TCq} \widetilde{\alpha}_0^q}} = \dfrac{(\alpha_0+C_0) \, e^{TC}}{\sqrt[q]{1 - \, \eps\, e^{TCq} (\alpha_0+C_0)^q}}, \quad \forall M\geq \tilde{M},
\end{align}
which translates to
\begin{align}
\alpha_M(t) \leq \dfrac{(\alpha_0+C_0) \, e^{TC}}{\sqrt[q]{1 - \, \eps \, e^{TCq} (\alpha_0+C_0)^q}} - C_0,
\end{align}
\end{proof}

\section{Dominated convergence theorems on evolving surfaces}

In this Appendix we return to the setting of an evolving surface $\{\Gamma(t)\}_{t\in [0,T]}$ under the same assumptions as in Section \ref{sec:prelim}. We start by establishing an evolving space version of Lebesgue's classical dominated convergence theorem.

\begin{theorem}[Dominated Convergence Theorem]\label{thm:evolv_dct}
Suppose $(f_M)_M\subset L^2_{L^2}$ be a sequence satisfying
\begin{itemize}
\item[(i)] for almost all $t\in [0,T]$, we have $f_M(t,x)\to f(t,x)$ as $M\to\infty$ for a.e. $x\in \Gamma(t)$;
\item[(ii)] there exists $g\in L^2_{L^2}$ such that, for a.a. $t\in [0,T]$ and all $M\in \N$, $|f_M(t,x)|\leq g(t,x)$ for a.e. $x\in \Gamma(t)$.
\end{itemize}
Then $f\in L^2_{L^2}$ and $\|f_M\|_{L^2_{L^2}} \to \|f\|_{L^2_{L^2}}$ as $M\to\infty$.
\end{theorem}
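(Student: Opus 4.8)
The plan is to reduce the statement to two successive applications of the classical Lebesgue dominated convergence theorem: first on the time-slices $\Gamma(t)$, and then on the time interval $[0,T]$. This mirrors the iterated structure of the norm $\|f\|_{L^2_{L^2}}^2=\int_0^T \|f(t)\|_{L^2(\Gamma(t))}^2\,\mathrm{d}t$.

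First I would fix a time $t\in[0,T]$ in the full-measure set on which hypotheses (i)–(ii) hold simultaneously. On this slice $f_M(t)\to f(t)$ almost everywhere on $\Gamma(t)$, while $|f_M(t)|\le g(t)$ with $g(t)\in L^2(\Gamma(t))$ (the latter because $g\in L^2_{L^2}$). Applying the classical dominated convergence theorem on the fixed surface $\Gamma(t)$, equipped with its surface measure, to the sequence $f_M(t)^2\to f(t)^2$ dominated by $g(t)^2\in L^1(\Gamma(t))$, yields both $f(t)\in L^2(\Gamma(t))$ and
\[
\|f_M(t)\|_{L^2(\Gamma(t))}^2 \longrightarrow \|f(t)\|_{L^2(\Gamma(t))}^2, \qquad M\to\infty,
\]
together with the uniform slice bound $\|f_M(t)\|_{L^2(\Gamma(t))}^2\le \|g(t)\|_{L^2(\Gamma(t))}^2$.

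Next I would pass to the time integral. Set $h_M(t):=\|f_M(t)\|_{L^2(\Gamma(t))}^2$ and $h(t):=\|f(t)\|_{L^2(\Gamma(t))}^2$. By the previous step $h_M\to h$ pointwise for almost every $t$, and $0\le h_M(t)\le \gamma(t):=\|g(t)\|_{L^2(\Gamma(t))}^2$, where $\gamma\in L^1(0,T)$ precisely because $g\in L^2_{L^2}$. A second application of dominated convergence, now on $[0,T]$ with Lebesgue measure, gives $\int_0^T h_M\to\int_0^T h$, which is exactly $\|f_M\|_{L^2_{L^2}}^2\to\|f\|_{L^2_{L^2}}^2$; the same bound shows $\|f\|_{L^2_{L^2}}^2=\int_0^T h\le \|g\|_{L^2_{L^2}}^2<\infty$.

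It then remains to confirm that $f$ is a genuine element of $L^2_{L^2}$, i.e. that $t\mapsto \phi_{-t}f(t)$ is strongly measurable into $L^2(\Gamma_0)$; this follows because it is the almost-everywhere pointwise limit of the strongly measurable maps $t\mapsto \phi_{-t}f_M(t)$, via Pettis' theorem, after transporting the slice data to $\Gamma_0$ through the change-of-variables formula \eqref{eq:changeofvariables}. The main technical obstacle, and the only point requiring real care, is exactly this measurability bookkeeping: ensuring that $t\mapsto h_M(t)$ and $t\mapsto h(t)$ are Lebesgue measurable, so that the outer integral and the second convergence step are legitimate, and that the two nested ``almost everywhere'' statements in (i) combine, via Fubini on the cylinder $[0,T]\times\Gamma_0$ with the uniformly bounded weight $J_t^0$ (see \eqref{eq:changeofvariables}, \eqref{eq:areas}), into genuine almost-everywhere convergence of the pullbacks. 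Equivalently, one may run the whole argument in a single step by introducing the finite measure $\mathrm{d}\mu=J_t^0\,\mathrm{d}\Gamma_0\,\mathrm{d}t$ on $[0,T]\times\Gamma_0$, under which $\|\cdot\|_{L^2_{L^2}}^2$ equals the squared norm of the pullback in $L^2(\mu)$, and then invoking the classical dominated convergence theorem once in $L^2(\mu)$.
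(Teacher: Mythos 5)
Your proposal is correct and follows essentially the same route as the paper's proof: two successive applications of the classical dominated convergence theorem, first on the slices $\Gamma(t)$ for a.e.\ fixed $t$ and then in time, using the slice bound $\|f_M(t)\|_{L^2(\Gamma(t))}\le\|g(t)\|_{L^2(\Gamma(t))}\in L^2(0,T)$ to dominate the second step. Your additional care with the measurability of $t\mapsto\phi_{-t}f(t)$ (and the equivalent one-shot argument in $L^2(\mu)$ with $\mathrm{d}\mu=J_t^0\,\mathrm{d}\Gamma_0\,\mathrm{d}t$) addresses a point the paper's proof leaves implicit, but does not change the argument.
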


\begin{proof}
The proof follows from consecutive applications of the classical dominated convergence theorem (first in space, then in time). 

For the first step, by fixing $t$ in the set of positive measure for which $f_M(t)\to f(t)$ a.e. in $\Gamma(t)$, $|f_M(t)|\leq |g(t)|$ and $g(t)\in L^2(\Gamma(t))$ we can apply the DCT to obtain $f(t)\in L^2(\Gamma(t))$ and $\|f_M(t)\|_{L^2(\Gamma(t))} \to \|f(t)\|_{L^2(\Gamma(t))}$, for almost all $t\in [0,T]$. 

For the second step, define $\tilde f_M(t) = \|f_M(t)\|_{L^2(\Gamma(t))}$ and $\tilde f(t)=\|f(t)\|_{L^2(\Gamma(t))}$. Then the first step shows that $\tilde f_M(t)\to \tilde f(t)$ for a.a. $t\in [0,T]$, and by assumption $|\tilde f_M(t)| \leq \| g(t)\|_{L^2(\Gamma(t))}\in L^2(0,T)$, so that again an application of the DCT implies that $\tilde f\in L^2(0,T)$ and $\|\tilde f_M\|_{L^2(0,T)} \to \|\tilde f\|_{L^2(0,T)}$. But this is exactly the statement that $f\in L^2_{L^2}$ with $f_M\to f$ in $L^2_{L^2}$, concluding the proof.
\end{proof}

We now prove a version of Lebesgue's dominated convergence theorem for weak convergence. The statement and proof are a direct adaptation of \cite[Lemma 8.3]{Rob01}. 

\begin{theorem}[Generalised Dominated Convergence Theorem]\label{lem:weak_dct}
Suppose that $(g_M)_M\subset L^2_{L^2}$ is such that there exists $C>0$ such that, for all $M\in\N$, 
\begin{align}\label{eq:ass_app_1}
\|g_M\|_{L^2_{L^2}} \leq C.
\end{align} If $g\in L^2_{L^2}$ and $g_M\to g$ almost everywhere, then $g_M\rightharpoonup g$ in $L^2_{L^2}$. 
\end{theorem}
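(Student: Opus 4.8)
The plan is to prove weak convergence directly from the definition: since $(L^2_{L^2})^*\cong L^2_{L^2}$ via the Riesz map, it suffices to fix an arbitrary $\phi\in L^2_{L^2}$ and show that the pairing
\[
(g_M-g,\phi)_{L^2_{L^2}} = \int_0^T\int_{\Gamma(t)}(g_M-g)\,\phi \longrightarrow 0 \quad\text{as } M\to\infty.
\]
Because a single test function is handled at a time, no subsequence extraction or reflexivity argument is needed; the whole content is the evolving-space analogue of the classical proof in \cite[Lemma 8.3]{Rob01}, whose only genuine ingredient is Egorov's theorem. First I would reduce to a fixed, finite product-measure space so that Egorov is available verbatim.

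To set up this reduction, set $h_M(t):=\phi_{-t}g_M(t)$ and $h(t):=\phi_{-t}g(t)$. Compatibility of the pair $(L^2(\Gamma(t)),\phi_t)_t$ in Lemma \ref{lem:assumptionsgive} gives $\|h_M\|_{L^2(0,T;L^2(\Gamma_0))}\leq C'\|g_M\|_{L^2_{L^2}}\leq C'C$ uniformly in $M$, and since each $\Phi_t^0$ is a diffeomorphism with Jacobian $J_t^0$ bounded above and below (see \eqref{eq:formulaJ}, \eqref{eq:areas}), null sets are preserved in both directions. Hence the hypothesis $g_M\to g$ a.e. transfers, via \eqref{eq:pull} and Tonelli, to $h_M\to h$ a.e. on the genuine finite product-measure space $(0,T)\times\Gamma_0$, on which Egorov applies: for any $\eps>0$ there is a measurable $E\subseteq(0,T)\times\Gamma_0$ with $|((0,T)\times\Gamma_0)\setminus E|<\eps$ and $h_M\to h$ uniformly on $E$.

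With this in hand I would carry out the standard splitting. Changing variables $x=\Phi_t^0(p)$ in the pairing introduces the bounded weight $J_t^0$, so that $(g_M-g,\phi)_{L^2_{L^2}}=\int_0^T\int_{\Gamma_0}(h_M-h)\,\psi$ with $\psi:=(\phi_{-(\cdot)}\phi)\,J_t^0\in L^2((0,T)\times\Gamma_0)$. On $E$, uniform convergence combined with $\psi\in L^1(E)$ (finite measure) gives $\big|\int_E(h_M-h)\psi\big|\leq \|h_M-h\|_{L^\infty(E)}\|\psi\|_{L^1(E)}\to 0$ as $M\to\infty$. On the complement, Cauchy--Schwarz and the uniform bound yield $\big|\int_{E^c}(h_M-h)\psi\big|\leq (C'+\|h\|)\,\|\psi\|_{L^2(E^c)}$, where the hypothesis $g\in L^2_{L^2}$ enters through $\|h\|$; this is small by absolute continuity of $\int|\psi|^2$ once $|E^c|<\eps$. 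Letting $M\to\infty$ for fixed $E$ and then $\eps\to 0$ gives $\limsup_M|(g_M-g,\phi)_{L^2_{L^2}}|=0$, which is the assertion $g_M\rightharpoonup g$.

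The hard part, and the only place the evolving-surface structure is felt, is the appeal to Egorov: the ``space--time'' domain $\bigcup_t\Gamma(t)\times\{t\}$ is not presented as a product measure space, so I would route both the a.e.\ convergence and the $L^2$ bound through the reference configuration $(0,T)\times\Gamma_0$ using $\phi_{\pm t}$, relying on the two-sided bounds on $J_t^0$ to guarantee that null sets, finiteness of total measure, and the boundedness of the pairing all transfer faithfully. Once this reduction is in place, every remaining step coincides with the fixed-domain argument of \cite[Lemma 8.3]{Rob01}.
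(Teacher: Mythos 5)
Your argument is correct, but it takes a genuinely different route from the paper's. The paper adapts \cite[Lemma 8.3]{Rob01} intrinsically on the moving surfaces: it introduces the tail sets $\Gamma_M(t)=\{x\in\Gamma(t)\colon |g_j(t,x)-g(t,x)|\leq 1 \text{ for all } j\geq M\}$, shows that the test functions supported in them form a dense subspace $L^2_U\subset L^2_{L^2}$, passes the pairing to the limit against such test functions using the evolving-space Dominated Convergence Theorem \ref{thm:evolv_dct}, and finishes with a density argument combining the uniform bound \eqref{eq:ass_app_1} with Fatou's lemma to control $\|g\|_{L^2_{L^2}}$; Egorov never appears and nothing is pulled back to $\Gamma_0$. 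You instead fix a single test function, transfer the whole problem to the fixed finite product-measure space $(0,T)\times\Gamma_0$ via $\phi_{-t}$ and the change of variables \eqref{eq:changeofvariables}, and run the classical Egorov splitting there. Your reduction is sound: compatibility gives the uniform $L^2(0,T;L^2(\Gamma_0))$ bound, the two-sided bounds on $J_t^0$ coming from \eqref{eq:formulaJ} and \eqref{eq:velbound} preserve null sets in both directions and keep the weight $\psi$ square-integrable, and your order of quantifiers (fix $\eps$ by absolute continuity of $\int|\psi|^2$, apply Egorov, then let $M\to\infty$ on the fixed set $E$) is the right one; the only point worth making explicit is that the set where $h_M\not\to h$ is product-measurable (take jointly measurable representatives) so that the Tonelli step is licit -- which you flag. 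As for what each approach buys: your pullback principle is reusable, since any statement about $L^2_{L^2}$ that is stable under uniformly bounded weighted isomorphisms can be imported verbatim from the fixed-domain theory, so the theorem follows from the classical one with only transfer bookkeeping; the paper's intrinsic argument avoids Jacobians and measurability transfer altogether and produces Theorem \ref{thm:evolv_dct} along the way, a tool the paper reuses elsewhere (e.g. for the nonlinear terms in Sections \ref{sec:smooth} and \ref{sec:log}). One bibliographical quibble: the proof of \cite[Lemma 8.3]{Rob01} that the paper mirrors rests on these tail sets and the ordinary dominated convergence theorem, not on Egorov, so your claim that Egorov is its ``only genuine ingredient'' misdescribes the source, though this does not affect the validity of your own proof.
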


\begin{proof}
Define, for each $M\in M$ and $t\in [0,T]$, the family of sets
\begin{align*}
\Gamma_M(t) := \left\{x\in \Gamma(t) \colon |g_j(t, x)-g(t, x)| \leq 1, \, \text{for all } j\geq M\right\}.
\end{align*}
It is clear that $\Gamma_M(t)\subset \Gamma_{M+1}(t)$ and since $g_M(t)\to g(t)$ almost everywhere we have $|\Gamma_M(t)| \nearrow |\Gamma(t)|$ as $M\to\infty$. Now consider the sets
\begin{align*}
U_M(t) = \left\{ \varphi\in L^2_{L^2} \colon \text{supp } \varphi(t) \subset \Gamma_M(t) \right\} \quad \text{ and } \quad U(t) = \bigcup_{M=1}^\infty U_M(t),
\end{align*}
and define also 
\begin{align*}
L^2_{U_M} := \{\varphi\in L^2_{L^2} \colon \varphi(t)\in U_M(t)\} \quad \text{ and } \quad L^2_U = \bigcup_{M\in \N} L^2_{U_M}.
\end{align*}

We start by proving that $L^2_U$ is dense in $L^2_{L^2}$. Let $\varphi\in L^2_{L^2}$ and define a  sequence $\varphi_M(t) = \chi_{\Gamma_M(t)} \varphi(t)$, where $\chi_{\Gamma_M(t)}$ denotes the characteristic function of $V_{\Gamma_M(t)}$, so that $\varphi_M\in L^2_{U_M}$. For a.a. $t\in [0,T]$, the definition of $\Gamma_M(t)$ implies that $\varphi_M(t)\to \varphi(t)$ a.e. in $\Gamma(t)$, and we also have
\begin{align*}
|\varphi_M(t)|\leq |\varphi(t)| \quad \text{ and } \quad \varphi \in L^2_{L^2},
\end{align*}
and therefore by the Dominated Convergence Theorem \ref{thm:evolv_dct} it follows that $\varphi_M\to \varphi$ in $L^2(\Gamma)$. 

Now let $\varphi\in L^2_U$. We claim that as $M\to\infty$ we have
\begin{align}\label{eq:app_1}
\int_0^T\int_{\Gamma(t)} \varphi (g_M- g)  \to 0.
\end{align}
This is again an application of the DCT; there exists $M_0\in \N$ such that $\varphi\in L^2_{U_{M_0}}$, and thus 
\begin{align*}
\left|\varphi (g_M-g)\right| \leq |\varphi| \in L^2_{L^2}, \quad \text{for } M\geq M_0,
\end{align*}
and of course $\varphi(g_M-g)\to 0$ a.e., so \eqref{eq:app_1} follows again from Theorem \ref{thm:evolv_dct}.

To establish the weak convergence $g_M\rightharpoonup g$ we combine \eqref{eq:app_1} with the density of $L^2_U$ in $L^2_{L^2}$. Let $\eps>0$ and $\eta\in L^2_{L^2}$ be arbitrary, and pick $\varphi\in L^2_U$ such that 
\begin{align*}
\|\eta-\varphi\|_{L^2_{L^2}} \leq \dfrac{\eps}{4C},
\end{align*}
where $C$ is the constant in \eqref{eq:ass_app_1}. By \eqref{eq:app_1} we can also take $M_0\in \N$ large enough so that
\begin{align*}
\int_0^T \int_{\Gamma(t)} \varphi (g_M -g)  \leq \dfrac{\eps}{2}, \quad \text{ for } M\geq M_0.
\end{align*}
Finally, we also apply Fatou's lemma combined with \eqref{eq:ass_app_1} to obtain
\begin{align*}
\|g\|_{L^2_{L^2}} \leq \liminf_{M\to\infty} \|g_M\|_{L^2_{L^2}} \leq C,
\end{align*}
and therefore
\begin{align*}
\left| \int_0^T\int_{\Gamma(t)} \eta (g_M-g)  \right| &\leq \left|\int_0^T \int_{\Gamma(t)} (\eta-\varphi) (g_M - g)  \right| +\left|\int_0^T \int_{\Gamma(t)} \varphi (g_M - g)  \right| \\
&\leq \|\eta-\varphi\|_{L^2_{L^2}}  \left( \|g_M\|_{L^2_{L^2}} + \|g\|_{L^2_{L^2}} \right) + \dfrac{\eps}{2} \\
&\leq \dfrac{\eps}{4C} 2C + \dfrac{\eps}{2} \\
&= \eps,
\end{align*}
as desired.
\end{proof}

\begin{remark}
The statements above can be extended to the more general Banach spaces $L^p_{L^q}$, with $p,q\in [1,+\infty]$, with similar proofs, but we only need the case $p=q=2$. Although we have not introduced these spaces in Section \ref{sec:prelim}, they are defined in exactly the same way with the natural changes (see \cite{AlpEllSti15a}). The assumptions that the bounds in both results hold for all $M\in\N$ can of course be relaxed; it suffices that they hold only for large enough $M$.
\end{remark}

\section{The inverse Laplacian}\label{app:invlaplacian}

Fix $t\in [0,T]$. For each $z\in H^{-1}(\Gamma(t))$ with $$\left\langle z, 1 \right\rangle_{H^{-1}, \, H^1} = 0$$
we define the \textit{inverse Laplacian} of $z$, which we denote by $\mathcal{G}(t)z$, the unique solution of the problem
\begin{align}
\int_{\Gamma(t)} \tgrad \mathcal{G}(t)z \cdot \tgrad \eta &= \left\langle z, \eta\right\rangle_{H^{-1}, \, H^1} \quad \forall \eta\in H^1(\Gamma(t)), \\
\int_{\Gamma(t)} \mathcal G(t) z  &= 0.
\end{align} 
This defines a map $\mathcal{G}(t)\colon H^{-1}(\Gamma(t)) \to H^1(\Gamma(t))$, which we use to define a norm in $H^{-1}(\Gamma(t))$ by
\begin{align}
\|z\|_{-1,t}^2 := \|\nabla_{\Gamma(t)} \mathcal{G}(t)z \|_{L^2(\Gamma(t))}^2 = m_\ast (t; z, \mathcal{G}(t)z).
\end{align} 

More generally, setting 
\begin{align}
L^2_{H^{1}, \, 0} := \left\{ u \in L^2_{H^{1}} \colon \int_0^T \int_{\Gamma(t)} u(t) = 0 \right\}, \quad L^2_{H^{-1}, \, 0} := \left\{ z \in L^2_{H^{-1}} \colon \left\langle z, 1\right\rangle_{L^2_{H^{-1}}, \, L^2_{H^1}} = 0 \right\}
\end{align}
the above allows us to define a map
\begin{align}
\mathcal{G}\colon L^2_{H^{-1}, \, 0} \to L^2_{H^1, \, 0}, \quad (\mathcal{G}z)(t) := \mathcal{G}(t)z(t).
\end{align}
The following result will be useful to prove uniqueness of solutions.

\begin{lemma}\label{lem:invlaplace}
If $z\in H^1_{H^{-1}}$, then $\mathcal{G}z\in H^1_{H^1}$, i.e. $\mathcal G z\in H^1_{H^{-1}}$ with $\md \mathcal G z\in L^2_{H^1}$.
\end{lemma}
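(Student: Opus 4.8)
The plan is to exhibit an explicit candidate for $\md(\mathcal{G}z)$, show it lies in $L^2_{H^1}$, and then verify that it genuinely is the weak material derivative. Write $u:=\mathcal{G}z$; this is well defined because $z(t)$ has zero mean for a.e.\ $t$ (implicit in $\mathcal Gz$ making sense), and the bound $\|u(t)\|_{H^1(\Gamma(t))}\le C\|z(t)\|_{H^{-1}(\Gamma(t))}$ together with $z\in L^2_{L^2}$ gives $u\in L^2_{H^1}$. Recall $u$ is characterised by $a_S(u(t),\eta)=m(z(t),\eta)$ for all $\eta\in H^1(\Gamma(t))$, with $m(u(t),1)=0$. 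First I would derive, formally, the equation $\md u$ must satisfy: testing this identity with a material-constant field $\eta=\phi_t\zeta$ (so $\md\eta=0$) and differentiating in $t$ by means of the two transport formulas of Proposition \ref{prop:derivinnerprod} yields
\begin{align}
a_S(\md u,\eta)+b(u,\eta)=\langle\md z,\eta\rangle_{H^{-1}\times H^1}+g(z,\eta).
\end{align}
Since $\phi_t$ maps $H^1(\Gamma_0)$ isomorphically onto $H^1(\Gamma(t))$, this forces, for every $\eta\in H^1(\Gamma(t))$,
\begin{align}
a_S(\md u(t),\eta)=\langle\md z(t),\eta\rangle_{H^{-1}\times H^1}+g(z(t),\eta)-b(u(t),\eta)=:\langle G(t),\eta\rangle.
\end{align}

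Next I would show $G\in L^2_{H^{-1}}$ and build the candidate. The estimates $|g(z,\eta)|\le C\|z\|\L\|\eta\|_{H^1}$, $|b(u,\eta)|\le C\|u\|_{H^1}\|\eta\|_{H^1}$, and $\md z\in L^2_{H^{-1}}$ give $G\in L^2_{H^{-1}}$, and moreover $\langle G(t),1\rangle=\tfrac{d}{dt}m(z,1)-b(u,1)=0$ since $m(z(t),1)\equiv0$ and $b(u,1)=0$. Hence $\mathcal{G}(t)G(t)$ is defined with $\|\mathcal{G}(t)G(t)\|_{H^1}\le C\|G(t)\|_{H^{-1}}$, so $\mathcal GG\in L^2_{H^1}$. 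Crucially, $\md u$ cannot be mean-free: differentiating $m(u(t),1)\equiv0$ gives $\langle\md u,1\rangle=-g(u,1)$, so the candidate must carry the mean correction $c(t):=-\tfrac{1}{|\Gamma(t)|}\int_{\Gamma(t)}u\,\tgrad\cdot\mathbf{V}\in L^2(0,T)$. I therefore set
\begin{align}
v(t):=\mathcal{G}(t)G(t)+c(t)\in L^2_{H^1},
\end{align}
the unique element of $H^1(\Gamma(t))$ with $a_S(v(t),\cdot)=\langle G(t),\cdot\rangle$ and mean $c(t)$.

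To make this rigorous I would take one of two routes. Route (a): verify directly, through the criterion in Lemma \ref{lem:criterionweakderiv}, that $t\mapsto m(u,\phi_t\zeta)$ is weakly differentiable with derivative $\langle v,\phi_t\zeta\rangle+g(u,\phi_t\zeta)$, so that $\md u=v$ and in particular $\md u\in L^2_{H^1}$. Route (b): pull back to $\Gamma_0$, where $\hat u(t)=\phi_{-t}u(t)$ solves a uniformly elliptic problem $A(t;\hat u,\cdot)=M(t;\hat z,\cdot)$ with coefficients that are $C^1$ in $t$ (from the regularity of the flow and $\mathbf{V}\in C^0([0,T];C^2)$), and where $\hat z\in L^2(0,T;H^1(\Gamma_0))$ with $\partial_t\hat z\in L^2(0,T;H^{-1}(\Gamma_0))$; testing the incremental equation for the difference quotient $D^h\hat u=(\hat u(\cdot+h)-\hat u)/h$ against its constant-free part and using uniform coercivity yields $\sup_h\|D^h\hat u\|_{L^2(0,T;H^1(\Gamma_0))}<\infty$, hence $\partial_t\hat u\in L^2(0,T;H^1(\Gamma_0))$ and $\md u\in L^2_{H^1}$.

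The main obstacle is precisely that the transport product rules used to derive the governing equation already presuppose the regularity $\tgrad\md u\in L^2_{L^2}$ that is the conclusion, so the first step is only formal and cannot stand alone. Route (b) breaks this circularity but requires care with two points: uniform ellipticity of $A(t;\cdot,\cdot)$ on the constant-free subspace, uniformly in both $t$ and $h$; and the $t$-dependence of the constraint $\int_{\Gamma_0}\hat u(t)J_t^0=0$, which must be differenced separately to control the mean of $D^h\hat u$ — this is exactly where the correction $c(t)$ resurfaces. Once $\md u\in L^2_{H^1}$ is established, $u=\mathcal Gz\in H^1_{H^{-1}}$ holds automatically, the formal identity becomes legitimate, and we conclude $\mathcal Gz\in H^1_{H^1}$.
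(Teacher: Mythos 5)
The paper itself gives no proof of this lemma: it is quoted from \cite{EllRan15} (Lemma 4.3 there), so your argument must stand on its own, and in outline it does. Your formal first stage is exactly the right computation and reproduces the identity behind the cited result: differentiating $a_S(u,\phi_t\zeta)=m(z,\phi_t\zeta)$ with $\md\phi_t\zeta=0$ gives $a_S(\md u,\eta)=\langle\md z,\eta\rangle+g(z,\eta)-b(u,\eta)$, and you correctly record both the compatibility $\langle G(t),1\rangle=0$ (from $m(z(t),1)\equiv 0$ and $b(u,1)=0$) and, crucially, that $\md u$ is \emph{not} mean-free, with the correction $c(t)=-\frac{1}{|\Gamma(t)|}\int_{\Gamma(t)}u\,\tgrad\cdot\mathbf{V}$ --- the step most often botched. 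You are also right that this stage is circular, since both transport identities in Proposition \ref{prop:derivinnerprod} presuppose the regularity being proved; and for the same reason your Route (a) cannot stand alone even as stated, because verifying the criterion of Lemma \ref{lem:criterionweakderiv} requires knowing that $t\mapsto m(u(t),\phi_t\zeta)$ is differentiable, which is precisely what is at stake --- so (a) can only serve as the identification of the candidate after (b) is done. Route (b) is the genuine content and is sound: the pullback produces a uniformly elliptic form whose coefficients are Lipschitz in $t$ (from $\mathbf{V}\in C^0([0,T];C^2)$ and the formula \eqref{eq:formulaJ} for $J_t^0$), the weighted constraint $\int_{\Gamma_0}\hat u(t)\,J_t^0=0$ controls the mean of the difference quotients exactly as you indicate, and $\md z\in L^2_{H^{-1}}$ controls the difference quotients of the pulled-back right-hand side in $L^2(0,T;H^{-1}(\Gamma_0))$; weak compactness then yields $\partial_t\hat u\in L^2(0,T;H^1(\Gamma_0))$, and the compatibility of the pair $(H^1(\Gamma(t)),\phi_t)_t$ converts this into $\md u=\phi_t\partial_t\hat u\in L^2_{H^1}$. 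An equivalent, slightly slicker packaging of the same idea is to note that the pulled-back solution operator $T(t)\in\mathcal{L}(H^{-1}(\Gamma_0),H^1(\Gamma_0))$ (on the appropriate mean-corrected subspaces) depends differentiably on $t$, so $\hat u=T(t)\hat z(t)$ inherits $H^1$-in-time regularity by the product rule; your difference-quotient estimate is in effect the proof of that differentiability. Two points to make explicit in a final write-up: the lemma tacitly assumes $\langle z(t),1\rangle_{H^{-1},\,H^1}=0$ for a.e.\ $t$, without which $\mathcal{G}z$ is undefined (you flag this, correctly); and once $\md u\in L^2_{H^1}$ is known, your stage-one identity becomes legitimate and shows $\md u=v$, so the candidate construction is confirmed rather than wasted.
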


For a proof, see \cite[Lemma 4.3]{EllRan15}.

\bibliographystyle{alpha}
\bibliography{biblio}

\end{document}